\newcommand{\subjclass}[2][2020]{%
  \let\@oldtitle\@title%
  \gdef\@title{\@oldtitle\footnotetext{#1 \emph{Mathematics subject classification.} #2}}%
}
\newcommand{\keywords}[1]{%
  \let\@@oldtitle\@title%
  \gdef\@title{\@@oldtitle\footnotetext{\emph{Key words and phrases:} #1.}}%
}
\date{9th March 2022}
\title{Distinguishing secant from cactus varieties}
\subjclass{Primary: 14N07, Secondary: 14C05, 68W30}
\keywords{secant variety, cactus variety, Hilbert scheme}
\newtheorem{theorem}{Theorem}[section]
\newtheorem{corollary}[theorem]{Corollary}
\newtheorem{remark}[theorem]{Remark}
\newtheorem{lemma}[theorem]{Lemma}
\newtheorem{proposition}[theorem]{Proposition}
\theoremstyle{definition}
\newtheorem{definition}[theorem]{Definition}
\newtheorem{example}[theorem]{Example}
\DeclareMathOperator{\Spec}{Spec} \DeclareMathOperator{\Proj}{Proj} \DeclareMathOperator{\brr}{br} \DeclareMathOperator{\rr}{r} \DeclareMathOperator{\crr}{cr} \DeclareMathOperator{\bcrr}{bcr}
\DeclareMathOperator{\Ann}{Ann}
\DeclareMathOperator{\image}{im}
\DeclareMathOperator{\length}{length}
\DeclareMathOperator{\Slip}{Slip}
\newcommand{\Hilb}{\mathcal{H}{ilb}}
\DeclareMathOperator{\Gr}{Gr}
\DeclareMathOperator{\Apolar}{Apolar}
\DeclareMathOperator{\pr}{pr}
\DeclareMathOperator{\Hom}{Hom}
\DeclareMathOperator{\Sym}{Sym}
\DeclareMathOperator{\codim}{codim}
\def\AA{{\mathbb A}}
\def\CC{{\mathbb C}}
\def\PP{{\mathbb P}}
\def\ZZ{{\mathbb Z}}
\def\kk{{\Bbbk}}
\def\btd{{\blacktriangledown}}
\begin{document}

\author{
Maciej Ga\l{}\k{a}zka\thanks{ Faculty of Mathematics, Computer Science and Mechanics, University of Warsaw, ul.~Banacha 2, 02-097 Warsaw, Poland (\url{mgalazka@mimuw.edu.pl}) (corresponding author)} \and
Tomasz Ma\'{n}dziuk\thanks{ Faculty of Mathematics, Computer Science and Mechanics, University of Warsaw, ul.~Banacha 2, 02-097 Warsaw, Poland (\url{t.mandziuk@mimuw.edu.pl})} \and
Filip Rupniewski\thanks{ 
Institute of Mathematics of the Polish Academy of Sciences, ul.~\'Sniadeckich 8, 00-656 Warsaw, Poland (\url{f.rupniewski@impan.pl})}
}

% \begin{keywords}
% Tensor rank, additivity of tensor rank, Strassen's conjecture, slices of tensor, secant variety, border rank.
% \end{keywords}
% \begin{AMS}
% Primary: 14N07, Secondary: 14C05, 68W30.
% \end{AMS}

\maketitle

\begin{abstract}
  Cactus varieties are a generalization of secant varieties. They are defined using linear spans of arbitrary finite schemes of bounded length, while secant varieties use only isolated reduced points. In particular, any secant variety is always contained in the respective cactus variety, and, except in a few initial cases, the inclusion is strict. It is known that lots of natural criteria that test membership in secant varieties are actually only tests for membership in cactus varieties. In this article, we propose the first techniques to distinguish actual secant variety from the cactus variety in the case of the Veronese variety. We focus on two initial cases, $\kappa_{14}(\nu_d(\PP^n))$ and $\kappa_{8,3}(\nu_d(\PP^n))$, the simplest that exhibit the difference between cactus and secant varieties.
We show that for $d\geq 5$, the component of the cactus variety $\kappa_{14}(\nu_d(\PP^6))$ other than the secant variety $\sigma_{14}(\nu_d(\PP^6))$
consists of degree $d$ polynomials divisible by a $(d-3)$-rd power of a linear form. We generalize this description to an arbitrary number of variables.
We present an algorithm for deciding whether a point in the cactus variety  $\kappa_{14}(\nu_d(\PP^n))$ belongs to the secant variety $\sigma_{14}(\nu_d(\PP^n))$ for $d\geq 6,$ $n \geq 6$.
We obtain similar results for the Grassmann cactus variety $\kappa_{8,3}(\nu_d(\PP^n))$.
Our intermediate results give also a partial answer to analogous problems for other cactus varieties and Grassmann cactus varieties to any Veronese variety.
\end{abstract}

\section{Introduction}\label{s:introduction}

The topic of secant varieties and ranks goes back to works of Sylvester on apolarity in the 19th century. But at that time, the problems of computing ranks of tensors or polynomials were inaccessible (except for a few initial cases). Nowadays, the situation is better---this is mainly due to the
development of algebraic geometry, representation theory, commutative algebra, and (last but not least) computational power, thanks to which substantial progress has been made.

Throughout this article, we consider the Waring rank of a homogeneous
polynomial $G$ of degree $d$, that is, the smallest number $r$ such that $G$ is
a sum of $r$ $d$-th powers of linear forms (such a sum is called a minimal
decomposition). By analogy with analyzing complicated data coming from the physical
world, the rank should correspond to the number of simple ingredients affecting
our complicated state. Waring decomposition is applicable in the process of blind identification of underdetermined mixtures. For more details and other applications see \cite{C02}, \cite{CGLM08}, \cite{CM96},  and the references therein.

There are two basic problems to solve: determining the rank, and finding an
explicit minimal decomposition. In this article we address the former problem.

In order to better understand the notion of rank, we consider the secant
varieties of the Veronese variety. Let us start with a complex vector space
$\mathbb{C}^{n+1}$.  For a positive integer $d$ consider the map of projective spaces $\nu_d \colon
\mathbb{P} \CC^{n+1} \to \mathbb{P}(\Sym^d \CC^{n+1})$, which
assigns to a form its $d$-th power. 
Here $\mathbb{P}$ denotes the naive
projectivization of a vector space, i.e.\ its set of lines through $0$ and $\Sym^d \CC^{n+1}$ denotes the space of symmetric tensors of order $d$ in $n+1$ variables. The $r$-th secant
variety $\sigma_r(\nu_d(\mathbb{P}\CC^{n+1}))$ is the Zariski closure in
$\mathbb{P}(\Sym^d\CC^{n+1})$ of the classes of points of Waring rank
less than or equal to $r$. In particular,
$\sigma_r(\nu_d(\mathbb{P}\CC^{n+1}))$ is given by some polynomial
equations, so if we know them, we can check if a given point is in the secant
variety. Unfortunately, these equations are hard to compute and are unknown in
general.

The paper \cite{LO13} presents many methods of obtaining equations vanishing on the
secant variety in the setting of vector bundles. However, the equations given
in this way are equations of a bigger variety, the so-called cactus variety. It is defined by
\begin{equation*}
\kappa_r(\nu_d(\PP \CC^{n+1})) = \overline {\bigcup_{R \hookrightarrow \PP \CC^{n+1}} \langle \nu_d(R)\rangle}\text{,}
\end{equation*}
where the union ranges over all zero-dimensional subschemes $R$ of length $r$. The overline denotes the Zariski closure, and $\langle \cdot \rangle$
denotes the linear span of a scheme.
See \cite{Gal16}, and the discussion in \cite{BB14} and \cite[\S 10.2]{Lan17}. In
fact, we are not aware of any explicit equation of the secant variety 
$\sigma_{r}(\nu_d(\mathbb{P} \CC^{n+1}))$ which does not vanish on the
respective cactus variety. Moreover, the cactus varieties fill up the
projective spaces much quicker than the secant varieties, see \cite{BJMR17}.

In this paper, we solve the problem of identification of points of the secant variety inside the cactus variety in
the minimal cases where these varieties differ. If $r < 14$, then the equality $\sigma_r(\nu_d(\PP \CC^{n+1})) = \kappa_r(\nu_d(\PP \CC^{n+1}))$ holds
for every $d, n \geq 1$ (\cite[Prop.~2.2]{BB14} and \cite[Thm. A]{CJN15}). That is why in Section \ref{s:14thsecant} we focus on studying
$\kappa_{14}(\nu_d(\PP\CC^{n+1}))$---the minimal cases when $\kappa_r(\nu_d(\PP \CC^{n+1}))$ can be reducible. We show that $\kappa_{14}(\nu_d(\PP
\CC^{n+1}))$ has two irreducible components for $d\geq 5,n \geq 6$, and we describe the one different from the secant variety. However, in order to give this description, we need to
introduce some notation.

We give the definitions for any algebraically closed field $\Bbbk$ since we
will need this generality in Theorems~\ref{t:general_polynomial_introduction} and \ref{t:general_subspace_introduction}. Fix a positive integer $n$
and let $T^*=\Bbbk[\alpha_0,\ldots,\alpha_n]$ be a polynomial ring with graded dual ring $T=\Bbbk_{dp}[x_0,\ldots,x_n]$, where the index $dp$ refers
to the divided power structure (see \cite[\S A2.4]{Eis95}). The duality between $T^*$ and $T$ can be written in the form
\[
  \alpha^\mathbf{u} \lrcorner x^{[\mathbf{v}]} = \begin{cases}
    x^{[\mathbf{v}-\mathbf{u}]} & \text{if } v_k\geq u_k \text{ for } k=0,\ldots,n\\
0 & \text{otherwise.}
\end{cases}
\]
In the above formula, $\mathbf{u}$ and $\mathbf{v}$ are multi-indices. Note that we use divided powers since they are a standard tool for
computing Waring rank over fields of non-zero characteristic. If $\operatorname{char} \kk = 0$, divided powers have a simple form. Namely, $x^{[\mathbf{v}]} = \frac{x^{\mathbf{v}}}{\mathbf{v}!}$, where $\mathbf{v}! = v_0!  v_1 ! \cdots v_{n}!$.

For ease of reference, in the following definition we collect a few notions and conventions.
\begin{definition}\label{d:basic}
Let $k$ be a positive integer, $R^*=\Bbbk[\alpha_0, \alpha_1,\ldots, \alpha_k]$ be a polynomial ring, and $R = \Bbbk_{dp}[x_0,x_1,\ldots, x_k]$ be its graded dual. Given a finite dimensional linear subspace $V\subseteq R$ we denote by $\Ann(V)$ the ideal $\Ann(V) = \{\theta\in R^* \mid \theta\lrcorner V = 0\}$.
We write $\Apolar(V)$ for the corresponding quotient ring $R^*/\Ann(V)$ and we call it the apolar algebra of $V$.  If $V=\langle f \rangle$ we write $\Ann(f)$ instead of $\Ann(\langle f \rangle)$ and $\Apolar(f)$ instead of $\Apolar(\langle f \rangle)$.
For a finite local $\Bbbk$-algebra $(A,\mathfrak{m})$ the (local) Hilbert function of $A$ is the Hilbert function of the associated graded ring $\operatorname{gr}_\mathfrak{m} A$ (\cite[\S5.1]{Eis95}).

For any graded ring $P$, by $P_i$ we denote the homogeneous part of degree $i$, and
\[
 P_{\leq i} = \bigoplus_{j\leq i} P_j.
\]
\end{definition}
\begin{definition}\label{d:prime_operator}
  
  Assume that $\Bbbk = \CC$. Then $R$ from Definition \ref{d:basic} becomes $\CC[x_0,\dots,x_k]$. Given positive integers $d$ and $m$  with $d\geq m$ and a linear subspace $W\subseteq R_{\leq m}$ we define
\[
  W^{\btd d} = \{(d-m)!F_m + (d-m+1)!F_{m-1} + \ldots + d!F_0 \text{, where } F_m+F_{m-1}+\ldots+ F_0 \in W \text{ and }F_j\in R_j\}.
\]

For a basis $(y_0,y_1,\dots,y_k)$ of $R_1$,  and a linear subspace $W \subseteq R$ we define $W|_{y_0=1}$ to be the dehomogenization of $W$ with
respect to that basis.
\end{definition}

We explain the need for $W^{\btd d}$ in Remark \ref{rem:why_triangle}.

%Theorem \ref{t:segre-veronese_map_general_n} says that for $n \geq 6$ and $d\geq 5$ the set
%$\kappa_{14}(\nu_d(\PP^n)) \setminus \sigma_{14}(\nu_d(\PP^n))$ is non-empty, and its 
%general point is of the form $[y_0^{d-3} P]$, where $y_0$ is a linear form, and
%there exists a completion of $y_0$ to a basis $(y_0,\dots,y_n)$ such that the
%apolar algebra of $(P|_{y_0=1})^{\btd d}$ has Hilbert function $(1,6,6,1)$.

It follows from \cite{CJN15} that for $n \geq 6$ and $d \geq 2$ the cactus variety $\kappa_{14}(\nu_d(\mathbb{P}\CC^{n+1}))$ has at most two irreducible
components. In general it can be irreducible. Consider for instance the case of $\nu_3 : \PP \CC^7 \to \PP (\Sym^3 \CC^7)$,
where the secant variety $\sigma_{14}(\nu_3(\PP \CC^7))$ fills the ambient space (which follows from the Alexander-Hirschowitz theorem).
In the following theorem in Part (i) we verify that for all $d\geq 5$ and all $n \geq 6$ the cactus variety $\kappa_{14}(\nu_d(\PP \CC^{n+1}))$ is reducible. The main result
is Part (ii) which describes the irreducible component other than $\sigma_{14}(\nu_d(\PP \CC^{n+1}))$.
\begin{theorem}\label{t:segre-veronese_map_general_n}
  Let $n\geq 6$ and $d\geq 5$ be integers and consider the polynomial ring $T=\CC[x_0,\ldots, x_n]$.
  \begin{enumerate}[label=(\roman*)]
  \item The cactus variety $\kappa_{14}(\nu_d(\PP T_1))$ has two irreducible components, one of which is $\sigma_{14}(\nu_d(\PP T_1))$, and we denote the other one by $\eta_{14}(\nu_d(\PP T_1))$.
  \item The irreducible component $\eta_{14}(\nu_d(\PP T_1))$ is the closure of the following set 
  \begin{multline*}%
    \{[y_0^{d-3} P] \in \mathbb{P}T_d\mid y_0 \in T_1\setminus \{0\}, [P] \in \mathbb{P}T_3 \text{, and there exists
    a completion of } y_0 \text{ to a basis } \\ (y_0,y_1,\ldots, y_n) \text{ of } T_1 \text{ such that } \Apolar((P|_{y_0=1})^{\btd d}) \text{ has
Hilbert function } (1,6,6,1)\}.
  \end{multline*}
  \end{enumerate}
\end{theorem}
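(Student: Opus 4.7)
The plan is to combine the classification of length-$14$ zero-dimensional subschemes in $\PP T_1$ with apolarity. By \cite{CJN15}, for $n\geq 6$ the Hilbert scheme $\Hilb^{14}(\PP T_1)$ has at most two irreducible components: the smoothable one, and an ``elementary'' component whose general member is a local Gorenstein scheme concentrated at a single point $[y_0]\in \PP T_1$ with local Hilbert function $(1,6,6,1)$. Since $\kappa_{14}(\nu_d(\PP T_1))$ is by definition the closure of the union of linear spans $\langle \nu_d(R)\rangle$ over the Hilbert scheme, it decomposes as at most a union of two pieces: the smoothable spans sweep out $\sigma_{14}(\nu_d(\PP T_1))$, and $\eta_{14}(\nu_d(\PP T_1))$ is the closure of the union of spans over the elementary component. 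Irreducibility of $\eta_{14}$ is inherited from the elementary Hilbert component via the span morphism. For part (i), to prove that $\eta_{14}\not\subseteq \sigma_{14}$, I would exhibit a polynomial of the form $F=y_0^{d-3}P$ in $\eta_{14}$ whose Waring rank exceeds $14$; the hypothesis $d\geq 5$ is what prevents a $14$-term Waring decomposition through standard apolarity lower bounds.

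For part (ii), apolarity gives an explicit parameterization of the elementary component. A length-$14$ local Gorenstein scheme $R$ supported at $[y_0]$ with Hilbert function $(1,6,6,1)$ corresponds via Macaulay's inverse systems to a dual socle generator: after completing $y_0$ to a basis of $T_1$ and dehomogenizing, $R$ is recovered as the apolar scheme of a polynomial of degree at most $3$ whose apolar algebra has Hilbert function $(1,6,6,1)$. The operator $W^{\btd d}$ of Definition \ref{d:prime_operator} is engineered to rehomogenize this picture inside the divided-power ring $T$: the factorials absorb the duality pairing of Definition \ref{d:basic}, so that if $P\in T_3$ extends the dehomogenized socle generator, then $F = y_0^{d-3}P \in T_d$ satisfies $\Apolar(F) \cong \Apolar((P|_{y_0=1})^{\btd d})$ as graded rings. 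Consequently $F \in \langle \nu_d(R)\rangle$, and the assignment $(y_0,P)\mapsto [y_0^{d-3}P]$ dominates $\eta_{14}$.

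The most delicate step will be the reverse inclusion: every point of $\eta_{14}$ arises in the stated form. I would argue that for a generic elementary scheme $R$, the linear span $\langle \nu_d(R)\rangle$ coincides with the cyclic $T^*$-submodule of $T_d$ generated by $y_0^{d-3}P$ under the apolar action, so every element of the span admits a presentation $y_0^{d-3}P'$ for some cubic $P'$; upper semicontinuity of the Hilbert function then ensures that a generic element inherits the $(1,6,6,1)$ condition while boundary degenerations are absorbed into the closure in a controlled way. The main technical nuisance will be the bookkeeping with divided powers versus ordinary polynomials---precisely the reason behind the factorials defining $\btd$---but the geometric content is the Macaulay duality between punctual Gorenstein schemes of socle degree $3$ and cubics with prescribed apolar Hilbert function.
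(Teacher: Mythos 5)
Your outline correctly identifies the structural source of the second component---the non-smoothable component $\mathcal{H}_{1661}$ of $\mathcal{H}ilb_{14}^{Gor}(\mathbb{P}^n)$ and the ensuing decomposition of $\kappa_{14}$ into projections of spans---and this does match the skeleton of the paper's proof (Equations~\eqref{eq:kappa_is_projection}--\eqref{eq:eta_is_projection}). However, three of your steps have genuine gaps.

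First, for part (i) you propose to show $\eta_{14}\not\subseteq\sigma_{14}$ by exhibiting $F=y_0^{d-3}P$ of Waring rank greater than $14$. This is insufficient: $\sigma_{14}(\nu_d(\PP T_1))$ is a Zariski closure, so it contains forms of Waring rank far exceeding $14$; what you must rule out is \emph{border} rank $\leq 14$. There is no ``standard apolarity lower bound'' that does this---the paper's argument (Proposition~\ref{p:F3_tresciwy_to_poza_eta}) uses the Border Apolarity Lemma~\ref{p:border_apolarity} to extract a $\operatorname{Slip}$ ideal $J\subseteq\Ann(G)$, and then invokes the uniqueness statement of Theorem~\ref{t:general_polynomial}(iii) to force $J^{sat}=\Ann(f^{\btd d})^{hom}$, contradicting non-smoothability. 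That uniqueness result (for which $d\geq 5$ ensures $d-3\geq d_1-1$ with $d_1=3$, and $r=14>2d_1=6$) is the technical heart of the proof, and it is not a standard apolarity estimate.

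Second, the claimed isomorphism $\Apolar(F)\cong\Apolar((P|_{y_0=1})^{\btd d})$ ``as graded rings'' is false. The left-hand side $T^*/\Ann(F)$ is a graded Gorenstein algebra of socle degree $d$ with total dimension much larger than $14$, while the right-hand side is a local Artinian algebra of length $14$. The correct statement is the ideal containment $\Ann(f^{\btd d})^{hom}\subseteq\Ann(F)$ (Lemma~\ref{l:ideals_agree_in_low_degree}(i)), which combined with saturation of $\Ann(f^{\btd d})^{hom}$ (Lemma~\ref{l:homogenization_is_saturated}) and the Cactus Apolarity Lemma yields $[F]\in\langle\nu_d(R)\rangle$.

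Third, the reverse inclusion in part (ii) cannot be dispatched by ``upper semicontinuity absorbing boundary degenerations.'' One has to account for the fact that the parameterizing set is only locally closed, and the closure $\eta_{14}$ could a priori be swept out by a larger family. The paper handles this by a dimension count: it proves $\dim\eta_{14}\leq 14n+5$ (Proposition~\ref{p:dim_eta}, using $\dim\mathcal{H}_{1661}=14n-8$ plus the $13$-dimensional fibers of the span map) and shows the parameterized set has exactly dimension $14n+5$ (Lemma~\ref{l:D_i_is_dense_and_of_dimension_13n+5}), so the inclusion of irreducible closures must be an equality. Without such a quantitative argument, the reverse inclusion is not established.
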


For $n = 6$, Theorem \ref{t:segre-veronese_map_general_n} has the following simple form.

\begin{corollary}\label{c:segre-veronese_map}
  For $d \geq 5$, the cactus variety $\kappa_{14}(\nu_d(\mathbb{P}^6))$ has two irreducible components: the secant variety
  $\sigma_{14}(\nu_d(\mathbb{P}^6))$, and the variety $\eta_{14}(\nu_d(\mathbb{P}^6))$ consisting of degree $d$ forms divisible by the $(d-3)$-rd
  power of a linear form. 
\end{corollary}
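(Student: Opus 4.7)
The plan is to deduce the corollary from Theorem~\ref{t:segre-veronese_map_general_n} in the case $n=6$. Part~(i) already provides the two irreducible components and identifies one with $\sigma_{14}(\nu_d(\PP^6))$, so I need only identify the other component $\eta_{14}(\nu_d(\PP^6))$ with
\[
  X := \{[y_0^{d-3} P] \in \PP T_d \mid y_0 \in T_1 \setminus \{0\},\ [P] \in \PP T_3\}.
\]
First I would note that $X$ is closed and irreducible, being the image of the morphism $\PP T_1 \times \PP T_3 \to \PP T_d$, $(y_0,[P]) \mapsto [y_0^{d-3} P]$ (well-defined because $T_d$ is a domain). The inclusion $\eta_{14}(\nu_d(\PP^6)) \subseteq X$ is then immediate from Part~(ii).

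For the reverse inclusion, since $X$ is irreducible and $\eta_{14}(\nu_d(\PP^6))$ is closed, it suffices to show that the set $S \subseteq \PP T_1 \times \PP T_3$ of pairs $(y_0,[P])$ satisfying the Hilbert function condition of Part~(ii) is Zariski open and nonempty. Because rescaling each graded component of a polynomial by a nonzero scalar does not change its annihilator, $\Apolar((P|_{y_0=1})^{\btd d}) = \Apolar(P|_{y_0=1})$, so the $\btd d$-operation is immaterial here. To exhibit a pair in $S$, I would take $y_0 = x_0$ and $P$ a generic cubic in $x_1,\ldots,x_6$ (independent of $x_0$); then $P|_{x_0=1} = P$ is a homogeneous cubic in $6$ variables, and a standard computation gives Hilbert function $(1,6,6,1)$: for generic $P$ the apolarity pairing $\CC[\alpha_1,\ldots,\alpha_6]_2 \to \CC[x_1,\ldots,x_6]_1$ is surjective onto the $6$-dimensional target (so $\dim \Apolar(P)_2 = 21 - 15 = 6$), and the outer slots are forced by Macaulay duality.

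The main obstacle is the openness of the Hilbert function condition. For this I would combine two facts. Firstly, $\dim \Apolar(P|_{y_0=1})$ equals the rank of a linear map whose matrix is polynomial in $(y_0,[P])$ and is therefore a lower-semicontinuous function of the parameters. Secondly, there is the universal upper bound $\dim \Apolar(P|_{y_0=1}) \leq \dim \Apolar(F_3) \leq 14$, where $F_3$ is the degree-$3$ part of $P|_{y_0=1}$, coming from the standard surjection $\Apolar(F_3) \twoheadrightarrow \gr_{\mathfrak{m}} \Apolar(P|_{y_0=1})$ together with the Macaulay duality identity $\dim \Apolar(F_3)_1 = \dim \Apolar(F_3)_2 \leq 6$ for the Gorenstein graded algebra $\Apolar(F_3)$ with socle degree $3$. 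Hence the locus where $\dim = 14$ is open, and on that locus the Hilbert function is pinned componentwise to $(1,6,6,1)$ by $h_i \leq \dim \Apolar(F_3)_i$. Therefore $S$ is open and nonempty, so its image under the multiplication morphism is dense in $X$, yielding $X \subseteq \eta_{14}(\nu_d(\PP^6))$ and completing the proof.
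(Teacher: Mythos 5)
Your proof takes a genuinely different route from the paper's. The paper (in the last paragraph of the proof of Theorem~\ref{t:segre-veronese_map_general_n}) argues via a dimension count: it shows $\dim q(C) = 14\cdot 6+5 = 89 = \dim(\PP T_1\times\PP T_3)$, so $\overline{q(C)}=\PP T_1\times\PP T_3$ and hence $\eta = \overline{\psi(q(C))} = \psi(\PP T_1\times\PP T_3)$. You instead want to show directly that the Hilbert-function locus is Zariski open in $\PP T_1\times\PP T_3$ (a fact specific to $n=6$; for $n>6$ the analogous set $A$ of Lemma~\ref{l:D_i_is_dense_and_of_dimension_13n+5} is only locally closed). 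That plan is sound in conception and, if executed correctly, would give a somewhat more self-contained proof for $n=6$ than the dimension computation of Lemma~\ref{l:D_i_is_dense_and_of_dimension_13n+5}.

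However, the key step of your openness argument is broken. You assert a surjection $\Apolar(F_3)\twoheadrightarrow\gr_\mathfrak{m}\Apolar(P|_{y_0=1})$ and deduce $\dim\Apolar(P|_{y_0=1})\leq\dim\Apolar(F_3)$ and $h_i\leq\dim\Apolar(F_3)_i$. The surjection goes the other way: since the initial form (lowest-degree part) of any element of $\Ann(f)$ annihilates the top-degree form $F_3$, one has $\operatorname{in}(\Ann(f))\subseteq\Ann(F_3)$ and hence a surjection $\gr_\mathfrak{m}\Apolar(f)\twoheadrightarrow\Apolar(F_3)$. Consequently the true inequality is $\dim\Apolar(f)\geq\dim\Apolar(F_3)$; the one you wrote is false in general (e.g.\ $f=x_1^{[3]}+x_2^{[2]}$ has $\dim\Apolar(f)=5>4=\dim\Apolar(F_3)$). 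So the "universal upper bound" and the componentwise pinning are unjustified as stated. The needed bound $\dim\Apolar(f)\leq 14$ for a cubic $f$ in six variables \emph{is} true, but the correct route is via Iarrobino's symmetric decomposition (as already invoked in the proof of Lemma~\ref{l:equivalent_description_of_1661}): the Hilbert function of $\Apolar(f)$ is $(1,c+e,c,1)$ with $c+e\leq 6$ (number of variables) and $c\leq 6$ (since $c=\dim S^*_2\lrcorner F_3\leq\dim S_1=6$), giving $\dim\leq 2+c+(c+e)\leq 14$ with equality exactly when the Hilbert function is $(1,6,6,1)$. With this corrected bound your openness argument goes through.

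A smaller issue: your claim that $\Apolar((P|_{y_0=1})^{\btd d})=\Apolar(P|_{y_0=1})$ because "rescaling each graded component by a nonzero scalar does not change its annihilator" is false (the annihilator ideal does change in general when the graded components are rescaled by constants that are not of the form $\lambda^j$). It is true, and indeed all that you need, that the Hilbert-function condition is preserved --- this follows from Lemma~\ref{l:equivalent_description_of_1661}, whose criterion depends on $F_3$ and $F_2$ only up to scalars. Alternatively, you can simply run the semicontinuity argument directly on $f^{\btd d}$, since $f\mapsto f^{\btd d}$ is a linear automorphism of $S_{\leq 3}$, and then no comparison between $\Apolar(f)$ and $\Apolar(f^{\btd d})$ is needed.
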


\begin{remark}
  For $n > 6$, the variety $\eta_{14}(\nu_d(\mathbb{P}^n))$ is strictly contained in the set of projective classes of forms divisible by the
  $(d-3)$-rd power of a linear form. This follows from the fact that for a general $P \in T_3$, the Hilbert function of $\Apolar(P|_{y_0=1})$ is
  $(1,n,n,1)$. For a non-general $P \in T_3$, other Hilbert functions can occur, such as $(1,1,1,1), (1,2,2,1),\dots, (1,n-1,n-1,1)$, as well as some
  non-symmetric ones, such as $(1,7,5,1)$. Therefore, it is only for $n=6$ that $\eta_{14}(\nu_d(\PP^n))$ is the variety of forms divisible by
  $(d-3)$-rd power of a linear form.
\end{remark}

In the following theorem we present an algorithm which checks if a point in the cactus variety $\kappa_{14}(\nu_{d}(\mathbb{P}^n))$ is
in the secant variety $\sigma_{14}(\nu_{d}(\mathbb{P}^n))$ for $d \geq 6, n
\geq 6$. 
The case of $\PP^6$ is implemented in Macaulay2, see Appendix \ref{appendix}.

\begin{theorem}\label{t:algorithm}
Let $T=\mathbb{C}[x_0,\ldots,x_n]$ be a polynomial ring with $n\geq 6$. Given an integer $d\geq 6$ and $[G] \in \kappa_{14}(\nu_{d}(\mathbb{P}T_1)) \subseteq \mathbb{P}T_{d}$ the following algorithm checks if $[G]\in \sigma_{14}(\nu_{d}(\mathbb{P}T_1))$.
\begin{adjustwidth}{15pt}{0pt}
\begin{itemize}
\item[\textbf{Step 1}] Compute the ideal $\mathfrak{a} = \sqrt{((\Ann G)_{\leq d-3})}$.
\item[\textbf{Step 2}] If $\mathfrak{a}_1$ is not $n$-dimensional, then $[G]\in \sigma_{14}(\nu_{d}(\mathbb{P}T_1))$ and the algorithm terminates. 
Otherwise compute $ \{K\in T_1 \mid \mathfrak{a}_1 \lrcorner K = 0\}$. Let $y_0$ be a generator of this one dimensional $\mathbb{C}$-vector space.

\item[\textbf{Step 3}] Let $e$ be the maximal integer such that $y_0^e$ divides $G$. If $e\neq d-3$, then $[G] \in
  \sigma_{14}(\nu_{d}(\mathbb{P}T_1))$ and the algorithm terminates. Otherwise let $G=y_0^{d-3}P$, pick a basis $(y_0,y_1,\dots,y_n)$ of $T_1$ and compute $f=P|_{y_0=1}\in R:=\mathbb{C}[y_1,\dots,y_n]$.

\item[\textbf{Step 4}]  Let  $I=\Ann (f^{\btd d}) \subseteq R^*$.
  If the Hilbert function of $R^*/I$ is not equal to $(1,6,6,1)$, then $[G] \in \sigma_{14}(\nu_{d}(\PP T_1))$, and the algorithm terminates.
\item[\textbf{Step 5}]  Compute $r=\dim_{\mathbb{C}}\operatorname{Hom}_{R^*}(I, R^*/I)$. Then $[G] \in \sigma_{14}(\nu_{d}(\mathbb{P}T_1))$ if and only if $r>14n-8$.
\end{itemize}
\end{adjustwidth}
\end{theorem}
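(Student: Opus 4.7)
The correctness of the algorithm rests on Theorem~\ref{t:segre-veronese_map_general_n}, which decomposes $\kappa_{14}(\nu_d(\PP T_1)) = \sigma_{14}(\nu_d(\PP T_1)) \cup \eta_{14}(\nu_d(\PP T_1))$ and characterizes $\eta_{14}$ as the closure of forms $[y_0^{d-3}P]$ satisfying a Hilbert function condition. Given $[G]\in\kappa_{14}$, one only needs to detect whether $[G]\in\eta_{14}\setminus\sigma_{14}$: Steps~1--4 verify conditions \emph{necessary} for this membership, so failure at any of them forces $[G]\in\sigma_{14}$ by contrapositive, and Step~5 resolves the ambiguous case when all four preceding checks pass.

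For Steps~1--3 the goal is canonical recovery of $y_0$. Assuming $[G]\in\eta_{14}\setminus\sigma_{14}$, write $G=y_0^{d-3}P$ with $y_0\nmid P$ in coordinates $y_0=x_0,\dots,x_n$; I would show $\sqrt{((\Ann G)_{\leq d-3})}=(\alpha_1,\dots,\alpha_n)$. For $d\geq 7$ this is immediate: for $i\neq 0$, $\alpha_i^{d-3}\lrcorner G = x_0^{d-3}(\partial_i^{d-3}P)=0$ since $\deg P=3$, placing $\alpha_i^{d-3}\in(\Ann G)_{\leq d-3}$ and hence $\alpha_i\in\mathfrak{a}$; for $d=6$ a more delicate combination involving mixed partials with correction terms in $\alpha_0$ gives the same conclusion. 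The opposite inclusion $\mathfrak{a}\subseteq(\alpha_1,\dots,\alpha_n)$ follows from the fact that $G$ is not annihilated in degree $\leq d-3$ by any operator with nonzero $\alpha_0$-component, using $y_0\nmid P$. Hence $\dim\mathfrak{a}_1=n$ and $y_0$ is recovered uniquely up to scalar. With $y_0$ fixed, $e=d-3$ is equivalent to $y_0\nmid P$; if $e>d-3$ then $G$ is divisible by $y_0^{d-2}$ and a direct decomposition argument places $[G]$ in $\sigma_{14}$, while $e<d-3$ is impossible for $[G]\in\eta_{14}$. Step~4 is precisely the Hilbert function condition of Theorem~\ref{t:segre-veronese_map_general_n}(ii), necessary for $[G]\in\eta_{14}\setminus\sigma_{14}$.

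For Step~5, once $G=y_0^{d-3}P$ with the prescribed Hilbert function is confirmed, the apolar scheme $\Spec(R^*/I)\subset\AA^n=\Spec R^*$ is a local Gorenstein Artinian scheme of length $14$ with Hilbert function $(1,6,6,1)$. Via apolarity, $[G]\in\sigma_{14}$ if and only if this scheme is smoothable in $\Hilb^{14}(\AA^n)$. Standard deformation theory identifies $\Hom_{R^*}(I,R^*/I)$ with the tangent space to $\Hilb^{14}(\AA^n)$ at $[R^*/I]$. The smoothable component has dimension $14n$; the Gorenstein schemes of type $(1,6,6,1)$ lie on a separate non-smoothable component of generic tangent-space dimension exactly $14n-8$; and at any scheme in the intersection of the two components the tangent dimension jumps strictly above $14n-8$. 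This yields the iff criterion $r>14n-8 \Leftrightarrow [G]\in\sigma_{14}$. The principal obstacle is this last dimension count: one must show the Gorenstein $(1,6,6,1)$ locus lies in an irreducible component of $\Hilb^{14}(\AA^n)$ of generic tangent-space dimension $14n-8$, distinct from the smoothable component and having strictly lower-dimensional intersection with it. This relies on detailed Hilbert-scheme analysis in the spirit of \cite{CJN15}.
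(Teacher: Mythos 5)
Your overall strategy matches the paper's: Steps~1--4 test whether $G$ has the canonical form $y_0^{d-3}P$ with $\Apolar(f^{\btd d})$ of Hilbert function $(1,6,6,1)$, and Step~5 tests smoothability of the apolar scheme via the tangent space. However, two of the steps you sketch have genuine gaps that the paper closes by a different route.

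First, in Steps~1--3, your argument that $\alpha_i \in \mathfrak{a}$ via $\alpha_i^{d-3}\in\Ann(G)$ only works for $d\geq 7$, and you concede the case $d=6$ requires ``a more delicate combination involving mixed partials with correction terms in $\alpha_0$'' without exhibiting it. That is not a trivial patch: for $d=6$ one has $d-3=3$ and $\alpha_i^3$ need not annihilate $G$. The paper avoids the case split entirely: using Lemma~\ref{l:ideals_agree_in_low_degree}(ii) it identifies $(\Ann G)_{\leq d-3}$ with $(\Ann(f^{\btd d})^{hom})_{\leq d-3}$, and using Lemma~\ref{l:extension_of_annihilator_generated_in_low_degrees} (valid because $F_3^{\btd d}$ is not a power of a linear form, guaranteed by the $(1,6,6,1)$ Hilbert function) it shows that these degree-$\leq(d-3)$ elements already generate the full ideal $\Ann(f^{\btd d})^{hom}$; taking radicals then gives $\mathfrak{a}=(\beta_1,\dots,\beta_n)$ uniformly for $d\geq 6$. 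You should also be careful when reading off the form $G=y_0^{d-3}P$ from Theorem~\ref{t:segre-veronese_map_general_n}(ii): that theorem describes $\eta_{14}$ as a \emph{closure}, and the paper uses an auxiliary characterization (Lemma~\ref{l:algorithm_form}) together with Proposition~\ref{p:l5q_of_cactus_rank_at_most_13} to justify that $e=d-3$ exactly.

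Second, in Step~5 you appeal to the ``generic tangent-space dimension exactly $14n-8$'' on the $\mathcal{H}_{1661}$ component and a ``jump'' at intersection points. That is not enough for the algorithm to be correct. What is needed, and what the paper's Lemma~\ref{l:dim_H1661} proves (via \cite[Claim 3]{Jel16} and the splitting of the tangent space from \cite[Lem.~2.3]{CN09}), is that $\dim_\CC T_{[R]}\Hilb^{Gor}_{14}(\PP^n)=14n-8$ at \emph{every} non-smoothable $[R]$ with Hilbert function $(1,6,6,1)$, not merely at a generic one. Without this, a non-smoothable but non-generic point could a priori have $\dim_\CC \Hom_{R^*}(I,R^*/I)>14n-8$, and the algorithm would misclassify $[G]$ as lying in $\sigma_{14}$. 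Once the exact tangent-space count is in hand, the iff in Step~5 follows because any smoothable point automatically has tangent space dimension $\geq 14n > 14n-8$.
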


We prove Theorem \ref{t:algorithm} in Section \ref{s:14thsecant}. 

The notion of the Waring rank of a homogeneous polynomial can be generalized to the rank of a subspace which corresponds to the problem of finding a minimal simultaneous decomposition of many forms as sums of powers of the same set of linear forms. This leads to a generalization of secant varieties to the notion of a Grassmann secant variety $\sigma_{r,k}(\nu_d(\PP \CC^{n+1}))$ for positive integers $r,k,d,n$ (see Section \ref{s:Apolarity_lemmas} for the definitions). We have $\sigma_{r,1}(\nu_d(\PP \CC^{n+1})) = \sigma_r(\nu_d(\PP \CC^{n+1}))$ as defined above. For a Grassmann secant variety, there is an analogous notion of a Grassmann cactus variety $\kappa_{r,k}(\nu_d(\PP \CC^{n+1}))$ (see Section \ref{s:Apolarity_lemmas}).

In this paper, we solve the problem of identification of points of the Grassmann secant variety inside the Grassmann cactus variety in the minimal
cases where these varieties differ. If $r < 8$, then the equality $\sigma_{r,k}(\nu_d(\PP \CC^{n+1})) = \kappa_{r,k}(\nu_d(\PP \CC^{n+1}))$ holds for every
$d, n ,k\geq 1$ (\cite[Thm. 1.1]{CEVV09}). That is why in Section~\ref{s:83grassmann} we focus on studying
$\kappa_{8,3}(\nu_d(\PP\CC^{n+1}))$---the minimal case when $\kappa_{r,k}(\nu_d(\PP \CC^{n+1}))$ can be reducible (see Remark~\ref{r:why_83} for the
reason why we study the particular case of $\kappa_{8,3}(\nu_d(\PP \CC^{n+1}))$). Theorem~\ref{t:segre-veronese_map_143_general_case}, which is proven in Section~\ref{s:83grassmann}, is a counterpart of Theorem~\ref{t:segre-veronese_map_general_n} in
the Grassmann case.

In Theorem~\ref{t:algorithm_143} we present an algorithm analogous to Theorem~\ref{t:algorithm} which checks if a point in the Grassmann cactus variety $\kappa_{8,3}(\nu_{d}(\mathbb{P}^n))$ is
in the Grassmann secant variety $\sigma_{8,3}(\nu_{d}(\mathbb{P}^n))$ for $d \geq 5$, $n\geq 4$. 

On our way to establishing Theorems  \ref{t:segre-veronese_map_general_n} and \ref{t:segre-veronese_map_143_general_case}, we prove Theorems \ref{t:general_polynomial_introduction} and \ref{t:general_subspace_introduction}, which determine the border cactus rank of forms and subspaces divisible by a large power of a linear form, and which can be applied in more general settings.

Let $S^* = \kk[\alpha_1\dots,\alpha_n] \subseteq T^*$ (we omit the variable $\alpha_0$ from $T^*)$. Then the graded dual ring $S = \kk_{dp}[x_1,\dots,x_n]$ is naturally a subring of $T$. 

\begin{definition}\label{d:homogenization_of_space}
Let $d_1 \geq 1, d_2 \geq 0$ be integers.
Given a linear subspace $W\subseteq S_{\leq d_1}$,
define a linear subspace
\[W^{hom, d_2} = \left\{ \sum_{i=0}^{\deg f} F_i x_0^{[d_2+d_1-i]} \mid f=F_{\deg f} + \ldots + F_0\in W, \text{ where } F_i \in S_i \right\} \subseteq T_{d_1+d_2}.
\]
If $W = \langle f \rangle$, we write $f^{hom, d_2}$ instead of $\langle f\rangle^{hom,d_2}$.
\end{definition}

In the statements of Theorems \ref{t:general_polynomial_introduction},
\ref{t:general_subspace_introduction} below we will use the notion of the
cactus rank and the border cactus rank defined in Section
\ref{s:Apolarity_lemmas}.  
\begin{theorem}[Polynomial case]\label{t:general_polynomial_introduction}
 Let $f \in S_{\leq d_1}, f = F_{d_1} +\dots+F_0$, where $F_i \in S_i$
and $r = \dim_{\kk} S^*/\Ann(f)$. Assume that $F_{d_1}$ is not a power of a linear
 form. Then  we have the following:
\begin{itemize}
  \item[(i)] The cactus rank of $f^{hom,d_2}$ is not greater than $r$.
  \item[(ii)] If $d_2\geq d_1-1$, then the border cactus rank  of $f^{hom,d_2}$ equals $r$. In particular, the cactus rank and border cactus rank of
    $f^{hom,d_2}$ are equal. 
\end{itemize}
\end{theorem}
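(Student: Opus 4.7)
The plan is to use the Apolarity Lemma in tandem with a careful comparison between the (inhomogeneous) apolar algebra $\Apolar(f) = S^*/\Ann(f)$ and the homogeneous apolar algebra $\Apolar(f^{hom,d_2}) = T^*/\Ann(f^{hom,d_2})$.

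For (i), I would exhibit an explicit length-$r$ scheme in $\PP T_1$ witnessing cactus membership. Concretely, take the Artinian scheme $\Spec\Apolar(f)\subseteq \Spec S^* = \AA^n$ of length $r$, supported at the origin, and embed $\AA^n$ into $\PP T_1 = \Proj T^*$ as the affine chart $D_+(\alpha_0)$. This yields a length-$r$ subscheme $\tilde R \subseteq \PP T_1$ concentrated at $[x_0]$, whose saturated homogeneous ideal $\mathcal{I}(\tilde R) \subseteq T^*$ is the $\alpha_0$-homogenization of $\Ann(f)$. A direct computation in divided powers shows that for every $p = p_k + \cdots + p_0 \in \Ann(f)$ with $p_j \in S^*_j$ and $\deg p = k$, the homogenization $\tilde p = \sum_j \alpha_0^{k-j} p_j \in T^*_k$ satisfies $\tilde p \lrcorner f^{hom,d_2} = 0$. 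Hence $\mathcal{I}(\tilde R) \subseteq \Ann(f^{hom,d_2})$, and the Apolarity Lemma yields $[f^{hom,d_2}] \in \langle \nu_{d_1+d_2}(\tilde R)\rangle$, giving $\crr(f^{hom,d_2}) \le r$.

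For (ii), since (i) already shows $\bcrr(f^{hom,d_2}) \le \crr(f^{hom,d_2}) \le r$, it suffices to prove $\bcrr(f^{hom,d_2}) \ge r$. I would first establish the identity
\[
  \dim \Apolar(f^{hom,d_2})_k \;=\; \dim \bigl( S^*_{\le k} / (\Ann(f) \cap S^*_{\le k}) \bigr) \qquad \text{for } k \le d_2,
\]
by analyzing the linear bijection $T^*_k \to S^*_{\le k}$ given by $\sum_j \alpha_0^{k-j}\beta_j \mapsto \sum_j \beta_j$ and checking the condition for such an element to annihilate $f^{hom,d_2}$. Since $S^*_{>d_1} \subseteq \Ann(f)$, the right-hand side equals $r$ for every $k \ge d_1$. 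So when $d_2 \ge d_1$, the Hilbert function of $\Apolar(f^{hom,d_2})$ attains $r$ at $k = d_1$, and the standard catalecticant lower bound $\bcrr(g) \ge \max_k \dim \Apolar(g)_k$ immediately yields $\bcrr(f^{hom,d_2}) \ge r$.

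The main obstacle is the edge case $d_2 = d_1 - 1$. Here Gorenstein symmetry forces the Hilbert function of $\Apolar(f^{hom,d_2})$ to peak only at $r - 1$, so the catalecticant bound alone is insufficient---this mirrors the familiar gap between border rank and cactus rank for monomials like $x_0 x_1 x_2$. It is precisely here that the hypothesis ``$F_{d_1}$ is not a power of a linear form'' has to be used. My plan is to argue contrapositively: if $[f^{hom,d_2}] \in \kappa_{r-1}(\nu_{d_1+d_2}(\PP T_1))$, then a Hilbert-scheme limit argument produces a length-$(r-1)$ scheme $Z \subseteq \PP T_1$ whose saturated ideal in degree $d_1+d_2$ is contained in $\Ann(f^{hom,d_2})_{d_1+d_2}$; the Gorenstein structure of $\Apolar(f^{hom,d_2})$, combined with the fact that its top non-socle piece is controlled by $F_{d_1}$, then forces $F_{d_1}$ to be a pure power of a linear form, a contradiction. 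This last step is the delicate part of the proof, and relies on the apolarity machinery developed in Section~\ref{s:Apolarity_lemmas}.
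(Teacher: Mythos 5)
Your argument for Part~(i) and for Part~(ii) in the range $d_2\ge d_1$ is essentially the paper's: you homogenize $\Ann(f)$, verify it is saturated and annihilates $f^{hom,d_2}$ (Lemmas~\ref{l:homogenization_is_saturated}, \ref{l:ideals_agree_in_low_degree}(i)), and then use the fact that the Hilbert function of $\Apolar(f^{hom,d_2})$ reaches $r$ in some degree $\le d_2$ (Lemma~\ref{l:ideals_agree_in_low_degree}(ii) plus Corollary~\ref{c:HF_of_extension_stabilizes}), applying the catalecticant lower bound, which is a special case of the Weak Border Cactus Apolarity Lemma~\ref{p:weak_border_cactus_apolarity_lemma}.

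The case $d_2=d_1-1$ is where your plan breaks, and the break is substantive. First, a small point: Gorenstein symmetry does \emph{not} force the peak of the Hilbert function of $\Apolar(f^{hom,d_1-1})$ to be $r-1$; by Lemma~\ref{l:ideals_agree_in_low_degree}(iii) it equals $r$ or $r-1$, and when it equals $r$ the catalecticant bound already finishes the proof. Second and more seriously: when the peak is $r-1$, you propose to take a Hilbert-scheme limit and obtain a length-$(r-1)$ scheme $Z$ with $I(Z)_{d_1+d_2}\subseteq\Ann(f^{hom,d_2})_{d_1+d_2}$. This step does not go through. In a flat family $R_t\to Z$ of length-$(r-1)$ subschemes, the dimension of $I(R_t)_d$ is only lower-semicontinuous: it can \emph{increase} at the special fiber, so $I(Z)_d$ can strictly contain the Grassmannian limit of $I(R_t)_d$ and need \emph{not} sit inside $\Ann(f^{hom,d_2})_d$. (This is exactly why the rational map $\Hilb\dashrightarrow\Gr$ that sends a scheme to its linear span is not everywhere defined, and it is the phenomenon the multigraded Hilbert scheme and the Weak Border Cactus Apolarity Lemma are built to circumvent.) Third, the subsequent step --- that a Gorenstein-structure argument ``controlled by $F_{d_1}$'' forces $F_{d_1}$ to be a pure power --- is left as a plan rather than a proof.

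The paper's actual argument at $d_2=d_1-1$ avoids schemes altogether. One invokes the Weak Border Cactus Apolarity Lemma~\ref{p:weak_border_cactus_apolarity_lemma}: if $\bcrr(f^{hom,d_2})\le r-1$, there is a homogeneous ideal $J\subseteq\Ann(f^{hom,d_2})$ with $T^*/J$ having an $(r-1,n+1)$-standard Hilbert function (so $H(T^*/J,d_1)\le r-1$ and the Hilbert polynomial of $T^*/J$ is $r-1>0$). Since $H(T^*/\Ann(f^{hom,d_2}),d_1)=r-1$, this forces $J_{d_1}=\Ann(f^{hom,d_2})_{d_1}$. The hypothesis that $F_{d_1}$ is not a power of a linear form enters through Lemma~\ref{l:extension_of_annihilator_generated_in_low_degrees}: it guarantees $\Ann(f)^{hom}$, and hence (by Lemma~\ref{l:ideals_agree_in_low_degree}(iii)) $\Ann(f^{hom,d_2})$, is generated in degrees $\le d_1$. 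Then Lemma~\ref{l:ideals_contain_each_other_if_agree_in_degree_d}(ii) forces the Hilbert polynomial of $T^*/J$ to be zero, a contradiction. This route is both more elementary and actually correct; you would need to replace your limit-scheme plan with some argument at the level of homogeneous ideals with prescribed Hilbert function rather than at the level of schemes.
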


\begin{theorem}[Subspace case]\label{t:general_subspace_introduction} 
 Let $W \subseteq S_{\leq d_1}$, and $r = \dim_\kk S^*/\Ann(W)$. We have the following:
\begin{itemize}
\item[(i)] The cactus rank of $W^{hom, d_2}$ is not greater than $r$.
\item[(ii)] If $d_2\geq d_1$, then the border cactus rank of $W^{hom, d_2}$  equals $r$. In particular, the cactus rank and border cactus rank of
  $W^{hom, d_2}$ are equal.
\end{itemize}
\end{theorem}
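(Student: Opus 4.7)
The plan is to prove (i) and (ii) separately, both via apolarity. For (i), the natural candidate for a length-$r$ subscheme realizing the cactus-rank bound is the projective closure (in $\PP T_1$) of the zero-dimensional affine subscheme $\Spec(S^*/\Ann(W)) \subseteq \AA^n$, embedded through the open chart $\{\alpha_0 \neq 0\}$; call this scheme $R$, so $\length R = r$. Its saturated homogeneous ideal in $T^*$ is $I_R = \{g \in T^* \text{ homogeneous} : g|_{\alpha_0=1} \in \Ann(W)\}$. By the apolarity lemma for subspaces, it is enough to verify $I_R \subseteq \Ann(W^{hom,d_2})$, i.e.\ that $g|_{\alpha_0=1} \in \Ann(W)$ forces $g \lrcorner F = 0$ for every $F \in W^{hom,d_2}$. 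Writing $g = \sum_{i=0}^{k} \alpha_0^{k-i} g_i$ with $g_i \in S^*_i$ and $F = \sum_{j=0}^{d_1} F_j x_0^{[d_1+d_2-j]}$ with $f := \sum_j F_j \in W$, a direct expansion of $g \lrcorner F$ shows that the coefficient of $x_0^{[d_1+d_2-\ell-k]}$ is precisely the degree-$\ell$ component of $(g|_{\alpha_0=1}) \lrcorner f \in S$, which vanishes by hypothesis.

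For (ii), combining (i) with the inequality $\bcrr \leq \crr$ reduces the problem to $\bcrr(W^{hom,d_2}) \geq r$. The tool is the Hilbert-function lower bound: for any subspace $V \subseteq T_{d_1+d_2}$ and any $k$, $\dim \Apolar(V)_k \leq \crr(V)$ (via the surjection $T^*/I_Z \twoheadrightarrow \Apolar(V)$ for a realizing scheme $Z$, together with $\dim(T^*/I_Z)_k \leq \length Z$), and the same bound holds with $\bcrr$ replacing $\crr$ by lower semicontinuity of $\dim \Apolar(-)_k$ in flat families. So the problem reduces to finding a degree $k$ where $\dim \Apolar(W^{hom,d_2})_k = r$; the natural choice is $k = d_1$.

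For the final step I would use the $\kk$-linear isomorphism $\phi \colon T^*_{d_1} \xrightarrow{\sim} S^*_{\leq d_1}$ given by $\alpha_0 \mapsto 1$. Specialising the coefficient identity from (i) to $k = d_1$, the coefficient of $x_0^{[d_2-\ell]}$ in $g \lrcorner F$ equals the degree-$\ell$ part of $(g|_{\alpha_0=1}) \lrcorner f$ for each $\ell \in [0, d_2]$. The hypothesis $d_2 \geq d_1$ guarantees this range covers all degrees $[0, d_1]$ in which $(g|_{\alpha_0=1}) \lrcorner f$ can be supported, upgrading the implication of (i) to an equivalence: $g \in \Ann(W^{hom,d_2})_{d_1}$ if and only if $\phi(g) \in \Ann(W) \cap S^*_{\leq d_1}$. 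Hence $\Apolar(W^{hom,d_2})_{d_1} \cong S^*_{\leq d_1}/(\Ann(W) \cap S^*_{\leq d_1})$, and since $W \subseteq S_{\leq d_1}$ forces $S^*_{>d_1} \subseteq \Ann(W)$, the right-hand side equals $S^*/\Ann(W)$ of dimension $r$. The main point demanding care is the index bookkeeping in the divided-power expansion; the hypothesis $d_2 \geq d_1$ is sharp for this argument precisely because it lets us work in the degree where the apolar algebra $\Apolar(W)$ has already stabilized.
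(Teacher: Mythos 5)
Your proof is correct, and for Part (i) it follows essentially the same route as the paper: identify the length-$r$ scheme $\Spec(S^*/\Ann(W))$ sitting inside the affine chart, observe that its saturated ideal is $\Ann(W)^{hom}$ (the paper's Lemma~\ref{l:homogenization_is_saturated}), and verify the containment $\Ann(W)^{hom} \subseteq \Ann(W^{hom,d_2})$ by the same divided-power expansion (the paper's Equation~\eqref{e:contracting} and Lemma~\ref{l:subspace_case_annihilators}(i)), then apply the Cactus Apolarity Lemma.

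For Part (ii) you diverge from the paper in an interesting way. Both proofs hinge on the same computation: that $\dim \Apolar(W^{hom,d_2})_{d_1} = r$ when $d_2 \geq d_1$ (the paper gets this from Lemma~\ref{l:subspace_case_annihilators}(ii) together with Corollary~\ref{c:HF_of_extension_stabilizes}; your dehomogenization isomorphism $\phi \colon T^*_{d_1} \xrightarrow{\sim} S^*_{\leq d_1}$ is an equivalent reformulation). But for the actual lower bound $\bcrr \geq r$, the paper invokes the Weak Border Cactus Apolarity Lemma (Proposition~\ref{p:weak_border_cactus_apolarity_lemma}, cited from \cite{BB20}): the existence of an intermediate ideal $J$ with $(r-1,n+1)$-standard Hilbert function, which would force $H(T^*/J,d_1)\le r-1$, contradicting $J\subseteq \Ann(W^{hom,d_2})$ and $H(T^*/\Ann(W^{hom,d_2}),d_1)=r$. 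You instead use only the classical catalecticant bound $\dim\Apolar(V)_k \le \crr(V)$ (via $T^*/I_Z\twoheadrightarrow\Apolar(V)$) extended to $\bcrr$ by lower semicontinuity of $V\mapsto\dim\Apolar(V)_k$ on the Grassmannian. This is a weaker and more elementary tool than Proposition~\ref{p:weak_border_cactus_apolarity_lemma}, but it suffices here, so your Part~(ii) is more self-contained — it does not need the border cactus apolarity machinery from \cite{BB20} at all, only the Cactus Apolarity Lemma and basic semicontinuity. The trade-off is that the paper's formulation (Theorem~\ref{t:general_subspace}(ii)) proves a slightly stronger non-existence statement about ideals with standard Hilbert function, which is then re-used in Part~(iii) of that theorem and in the algorithm; your catalecticant argument would not yield that stronger statement directly.
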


Additionally, we show more or less the uniqueness of the border cactus decomposition (see Theorems \ref{t:general_polynomial},  \ref{t:general_subspace} for more precise statements).

\begin{remark}\label{rem:why_triangle}
Assume that $\Bbbk = \CC$. Then the ring $S$ is canonically isomorphic to the polynomial ring $\CC[x_1,x_2 \ldots, x_n]$.
Therefore, given a polynomial $f\in S_{\leq d_1}$, we may consider $f^{hom}$, i.e.\ its homogenization  with respect to $x_0$ in $T=\CC[x_0, x_1, \ldots, x_n]$. Then for any non-negative integer $d_2$ we have the equality
\[
  (f^{\btd d_1 + d_2})^{hom, d_2} = x_0^{d_2+d_1-\deg(f)}f^{hom}.
\]
Since Theorem \ref{t:general_polynomial_introduction} is one of our main tools, and the $()^{hom, d_2}$ operator appears there, we consider the triangle operator.
\end{remark}

The structure of the paper is as follows. In Section \ref{s:Apolarity_lemmas} we recall the definitions of different kinds of rank of a subspace of a polynomial ring. We also discuss the related versions of Apolarity Lemma. In Section~\ref{s:algebraic_results} we prove a series of algebraic results which will be needed later. These are mainly about Hilbert functions and annihilator ideals. Section~\ref{s:general_results} is devoted to proving Theorems \ref{t:general_polynomial_introduction} and \ref{t:general_subspace_introduction}.
In Section~\ref{s:14thsecant} we study $\kappa_{14}(\nu_d(\PP^n))$ for $n\geq 6$ and $d\geq 5$. Then we describe the irreducible components of the Grassmann cactus variety $\kappa_{8,3}(\nu_d(\PP^n))$ for $n\geq 4$ and $d\geq 5$, see Section \ref{s:83grassmann}. Proofs of our main results use the existence of a certain morphism to the Hilbert scheme. Since its proof is technical, we defer it to Appendix \ref{s:hilbert}. Finally, in Appendix \ref{appendix} we present an implementation of the algorithm from Theorem \ref{t:algorithm}.

\subsection{Acknowledgments}
We thank Jarosław Buczyński for introduction to this subject and constant support. We are grateful to Jarosław Buczyński and Joachim Jelisiejew for
many discussions, in particular suggestions on how to improve the presentation. Piotr Achinger and Francesco Galuppi read our article and provided us
with helpful comments.

This project originated as an aftermath of the secant variety working group of the Simons Semester \textit{Varieties: Arithmetic and Transformations} at the Banach Center (\url{https://www.impan.pl/~vat/}). The event was supported by the grant 346300 for IMPAN from the Simons Foundation and the matching 2015-2019 Polish MNiSW fund.

Gałązka is supported by the National Science Center, Poland, project number 2017/26/E/ST1/00231.

Mańdziuk is supported by the National Science Center, Poland, projects number 2017/26/E/ST1/00231 and 2019/33/N/ST1/00858.

Rupniewski is supported  by the National Science Center, Poland, projects number 2017/26/E/ST1/00231 and 2019/33/N/ST1/00068.

\section{Ranks and Apolarity Lemmas}\label{s:Apolarity_lemmas}
One of the main tools in studying certain notions of rank is the Apolarity Lemma in its many variants. 
It translates geometric questions into algebraic problems of existence of some ideals. Not only can it be applied for
establishing upper bounds for rank by constructing certain ideals, but it also can provide lower bounds. The latter is done 
by proving the non-existence of ideals satisfying given properties.
Some of the many examples of applying Apolarity Lemmas in both directions are \cite[Thm.~1.5]{Gal20}, \cite[\S 4]{BB15}, as well as Theorems \ref{t:general_subspace}, \ref{t:general_polynomial}.

In this section we recall the definitions of various types of ranks and the corresponding variants of the Apolarity Lemma. 
Subsections \ref{ss:rank_and_border_rank} and \ref{ss:cactus_rank_and_cactus_border_rank} introduce ranks of subspaces and corresponding geometric objects, Grassmann secant and Grassmann cactus varieties.

The problem of decomposing many forms simultaneously as sums of powers of the same set of linear forms and the connected notion of Grassmann secant variety originates from the work of Terracini \cite{Ter1915} and was later studied by Bronowski \cite{Bro33}. The paper \cite{BL13} investigates the relation between the ranks of tensors in the Segre embedding of 3 copies of projective space and the ranks of subspaces in the Segre embedding of 2 copies of projective space. The problem of defectivity of Grassmann secant varieties is addressed in  \cite{BBCC13}, \cite{CC08}, \cite{Fon02}. Simultaneous decomposition of forms of different degrees is studied among others in \cite{AGMO18} and \cite{CV17}.

We will use the following notation.
Let $n$ be a positive integer, $T^* = \kk [\alpha_0, \alpha_1, \dots ,\alpha_n ]$ be a polynomial ring over an algebraically closed field $\kk$ and $T$ be the graded dual ring of $T^*$.
Let $\nu_d : \mathbb{P}T_1 \to \mathbb{P}T_d$, $[L] \mapsto [L^{[d]}]$ be the $d$-th Veronese map. 
Given a subscheme $R \subseteq \PP T_d$ we denote by $\langle R \rangle$  the projective linear span in $\PP T_d$ of the scheme $R$,
  i.e.\ the smallest projective linear subspace of $\PP T_d$ containing the scheme $R$.
  For  a subset $Y$ in a variety $X$, by $\overline{Y}$ we denote the Zariski closure of $Y$ in $X$.
 Let $\Gr(k, V)$ be the Grassmannian of $k$-dimensional subspaces of a linear space $V$.
We use these notations in the whole section. 

We introduce the following definition, which will be used in the next subsection.

\begin{definition}\label{d:standard_hilbert_function}
For a positive integer $s$, let
$h_s\colon\mathbb{Z}_{\geq 0}\to\mathbb{Z}_{\geq 0}$ be given by $h_s(a)=\min\{\dim_\Bbbk T^*_a,s\}$. Notice that $h_s$ depends on $n$. More generally, a function $h \colon \ZZ_{\geq 0} \to \ZZ_{\geq 0}$ satisfying the following conditions will be called an $(s,n+1)$-standard Hilbert function:
\begin{enumerate}[label=(\alph*)]
 \item $h(d) \leq h (d + 1)$ for all $d$,
 \item if $h(d) = h(d+1)$, then $h(e) = s$ for all $e \geq d$,
 \item $0 \leq h(d) \leq h_s (d)$ for all $d$.
\end{enumerate}
\end{definition}

\subsection{Rank and border rank}\label{ss:rank_and_border_rank}

\begin{definition}
  The rank of a $k$-dimensional linear subspace $V$ of $T_d$  is
  \[
    \rr(V) = \min\{r \in \mathbb{Z}_{>0} \mid \PP V \subseteq \langle L_1^{[d]} , \dots , L_r^{[d]} \rangle\text{ for some } L_i \in T_1\}\text{.}
  \]
  The $(r,k)$-th Grassmann secant variety of the $d$-th Veronese variety is
  \begin{equation*}
    \sigma_{r,k}(\nu_d(\mathbb{P}T_1)) = \overline{\{[V] \in \Gr (k,T_d) \mid \rr(V) \leq r \}}\text{.}
  \end{equation*}

\noindent  The border rank of $V$ is
  \[
    \brr(V) = \min\{r \in \mathbb{Z}_{>0} \mid [V] \in \sigma_{r,k}(\nu_d(\mathbb{P}T_1))\}\text{.}
  \]
\end{definition}

If $k=1$, i.e. if $V=\langle F \rangle$ for an element $F\in T_d$ we obtain the classical notions of rank and border rank of $F$ and the secant variety $\sigma_r(\nu_d(\PP T_1))$.

Note that border rank is more natural from the point of view of algebraic geometry than rank, since it provides a condition for $[F]$ to be a point of a Zariski closed subset of $\PP T_d$. However, the variant of apolarity for border rank has been stated only very recently (see \cite{BB19} for the case $V=\langle F \rangle$ and \cite{BB20} for the general case). Nevertheless, it has already been applied in \cite{CHL19},\cite{HMV20}, and \cite{Man20}.

In order to formulate this result, we have to consider multigraded Hilbert schemes, see \cite{HS04}. Denote by
$\operatorname{Hilb}_{T^*}^{h_r}$ the multigraded Hilbert scheme associated 
with the polynomial ring $T^*$ (with the standard $\mathbb{Z}$-grading) and the function $h_r$, as defined in Definition \ref{d:standard_hilbert_function}. Let $\Slip_{r, \mathbb{P}T_1}$ be the closure in
$\operatorname{Hilb}_{T^*}^{h_r}$ of points corresponding to saturated ideals of $r$ points. Then the following holds.

\begin{proposition}[Border Apolarity Lemma]\label{p:border_apolarity}
  Let $V \subseteq T_d$ be a $k$-dimensional subspace. Then $\brr(V) \leq r$ if and only if there exists an ideal $[I] \in \Slip_{r,\mathbb{P}T_1}$ such that
  \[
    I \subseteq \Ann(V)\text{.}
  \]
\end{proposition}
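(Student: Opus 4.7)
The plan is to interpret both sides of the equivalence as images of a natural incidence variety sitting over $\Slip_{r,\PP T_1}$, and to identify them using the classical Apolarity Lemma. First I would reduce the homogeneous containment $I \subseteq \Ann(V)$ to the single-degree condition $V \subseteq (I_d)^\perp \subseteq T_d$, where $(\cdot)^\perp$ denotes the apolarity-orthogonal inside $T_d$. This follows because for $a \leq d$ and $\theta \in I_a$, the ideal property gives $T^*_{d-a}\cdot \theta \subseteq I_d$, so $I_d \lrcorner V = 0$ forces $\theta \lrcorner V = 0$ by non-degeneracy of the pairing in degree $d-a$; for $a > d$ the containment is automatic since $V$ sits in degree $d$.

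Next I would build the incidence variety
\[
  \mathcal{I} = \{([I],[V]) \in \Slip_{r,\PP T_1} \times \Gr(k,T_d) \mid V \subseteq (I_d)^\perp\}.
\]
By the defining property of the multigraded Hilbert scheme, $\dim I_d$ is constant equal to $\dim T^*_d - h_r(d)$ across $\operatorname{Hilb}_{T^*}^{h_r}$, so $(I_d)^\perp$ assembles into a rank-$h_r(d)$ vector bundle over $\Slip_{r,\PP T_1}$, and $\mathcal{I}$ is the relative Grassmannian $\Gr(k,(I_d)^\perp)$. Since $\Slip_{r,\PP T_1}$ is irreducible (closure of the locus of $r$ distinct reduced points) and projective (closed in a projective Hilbert scheme), the same holds for $\mathcal{I}$, and the first projection $p_1 : \mathcal{I} \to \Slip_{r,\PP T_1}$ is a $\Gr(k,h_r(d))$-bundle.

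The heart of the argument is then the identification of images. Over the dense open $U \subseteq \Slip_{r,\PP T_1}$ of radical ideals of $r$ reduced points $Z$, the classical Apolarity Lemma identifies $(I_d)^\perp$ with $\langle \nu_d(Z)\rangle$; hence the image of $p_1^{-1}(U)$ under the second projection $p_2: \mathcal{I} \to \Gr(k, T_d)$ is exactly the set of subspaces of rank at most $r$. Because $p_1^{-1}(U)$ is a non-empty open of the irreducible $\mathcal{I}$, and $p_2$ is proper, the image $p_2(\mathcal{I})$ is closed, irreducible, and contains the rank-$\leq r$ locus as a dense subset; by definition of the secant variety this forces $p_2(\mathcal{I}) = \sigma_{r,k}(\nu_d(\PP T_1))$. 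Combined with the first step, this is exactly the claimed equivalence $\brr(V) \leq r \iff \exists [I] \in \Slip_{r,\PP T_1} : I \subseteq \Ann(V)$.

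The main obstacle I anticipate is making the second step rigorous, namely verifying that $\{(I_d)^\perp\}_{[I]}$ genuinely forms a flat family over $\Slip_{r,\PP T_1}$ so that $\mathcal{I}$ is a well-behaved Grassmann bundle and $p_1^{-1}(U)$ is dense in it. This is precisely where the multigraded Hilbert scheme $\operatorname{Hilb}_{T^*}^{h_r}$ is essential: fixing the full Hilbert function $h_r$ (and not just the length $r$) forces $\dim I_d$ to be locally constant, which would fail on the standard Hilbert scheme of $r$ points at non-smoothable limits. This delicate point is what separates the border version of apolarity from the classical one, and it is the reason the statement has to be phrased in terms of $\Slip_{r,\PP T_1}$ rather than the ordinary Hilbert scheme of $r$ points.
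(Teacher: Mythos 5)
Your proposal is correct, but it takes a more self-contained route than the paper. The paper's proof has two parts: (1) invoke \cite[Prop.~6.1]{BB20} for the equivalence $[V] \in \sigma_{r,k}(\nu_d(\PP T_1)) \iff \exists\,[I] \in \Slip_{r,\PP T_1}$ with $I_d \subseteq V^\perp$, and (2) upgrade the degree-$d$ containment $I_d \subseteq V^\perp$ to the full containment $I \subseteq \Ann(V)$ using the ideal property $T^*_{d-e} \cdot I_e \subseteq I_d$ together with non-degeneracy of the apolarity pairing. Your opening reduction is exactly the paper's step (2), carried out as a preliminary rather than a postscript, and your argument for it matches the paper's word for word in spirit. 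The bulk of your proposal, however---the construction of the incidence variety $\mathcal{I}$ as a $\Gr(k,h_r(d))$-bundle over $\Slip_{r,\PP T_1}$, the identification of $(I_d)^\perp$ with $\langle \nu_d(Z)\rangle$ over the radical locus via classical Apolarity, and the properness/irreducibility argument forcing $p_2(\mathcal{I}) = \sigma_{r,k}(\nu_d(\PP T_1))$---is not in the paper at all: it is precisely a proof of the cited \cite[Prop.~6.1]{BB20}. So you prove more than the paper proves; the trade-off is that your argument is fully self-contained while the paper's is terse and leans on the literature for the geometric heart of the matter. You also correctly pinpoint why the multigraded Hilbert scheme (with $h_r$ fixed) is essential: it is what makes $[I] \mapsto (I_d)^\perp$ into a vector bundle, and hence $\mathcal{I}$ into a genuine Grassmann bundle over an irreducible projective base, which would fail over the ordinary Hilbert scheme of points because $\dim\langle \nu_d(R)\rangle$ can drop on degenerations.
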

\begin{proof}
This follows from the proof of \cite[Thm. 1.3]{BB20} if we specify $\mathcal{H} = \Hilb^{sm}_r(\PP^n)$, the smoothable component of the Hilbert scheme. Here are the details.
  We know that
  \begin{align*}
    \brr(V) \leq r &\iff [V] \in \sigma_{r,k}(\nu_d(\mathbb{P}^n)) 
    \iff \exists_{[I] \in \Slip_{r,\mathbb{P}T_1}} I_d\subseteq V^\perp
  \end{align*}
  where $V^\perp$ is the subspace of $T^*_d$ of forms annihilating $V$. The
  latter equivalence follows from \cite[Prop. 6.1]{BB20}.
  We need to prove that
  \[
   \exists_{[I] \in \Slip_{r,\mathbb{P}T_1}} I_d\subseteq V^\perp 
    \iff \exists_{[I] \in \Slip_{r,\mathbb{P}T_1}} I\subseteq \Ann(V) .
  \]
  One implication is clear. We show the implication from the left to the right. 
  Let $\phi \in I_{e}$ for $e \in \ZZ$, then $T_{d-e}^* \cdot \phi   \in I_d \subset V^\perp \subset \Ann(V)$. Thus ($T_{d-e}^* \cdot \phi)  \lrcorner V = 0$, which implies $T_{d-e}^* \lrcorner (\phi  \lrcorner V) = 0$. We obtain $\phi  \lrcorner V = 0$.   
\end{proof}

\subsection{Cactus rank and border cactus rank}\label{ss:cactus_rank_and_cactus_border_rank}
\begin{definition}\label{d:cactus_rank}
  The cactus rank of a $k$-dimensional linear subspace $V$ of $T_d$  is
  \[
    \crr(V) = \min\{r \in \mathbb{Z}_{>0} \mid \PP V \subseteq \langle \nu_d(R) \rangle \text{ for a zero-dimensional subscheme } R \subseteq \mathbb{P}T_1,\length R = r \}\text{.}
  \]
  The $(r,k)$-th Grassmann cactus variety of the $d$-th Veronese variety is
  \begin{equation*}
    \kappa_{r,k}(\nu_d(\mathbb{P}T_1)) = \overline{\{[V] \in \Gr (k,T_d) \mid \crr(V) \leq r \}}\text{.}
  \end{equation*}

  \noindent The border cactus rank of $V$ is
  \[
    \bcrr(V) = \min\{r \in \mathbb{Z}_{>0} \mid [V] \in \kappa_{r,k}(\nu_d(\mathbb{P}T_1))\}\text{.}
  \]
\end{definition}

\begin{proposition}[Cactus Apolarity Lemma]\label{p:cactus_apolarity}
 Let $V \subseteq T_d$ be a non-zero subspace and $I(R)$  be the saturated ideal of a subscheme $R  \subseteq  \PP T_1$. Then
  \[
    I(R) \subseteq \Ann(V) \iff \PP V \subseteq  \langle \nu_d(R) \rangle\text{.}
  \]
  Therefore $\crr(V) \leq r$ if and only if there exists a zero-dimensional subscheme $R \subseteq \mathbb{P}T_1$ of length $r$ such that
  \[
    I(R) \subseteq \Ann(V) \text{.}
  \]
\end{proposition}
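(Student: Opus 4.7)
The plan is to deduce the statement about cactus rank directly from the first equivalence, and to reduce the first equivalence to a degree-$d$ version by identifying the linear span $\langle \nu_d(R)\rangle$ with the zero locus of $I(R)_d$, then upgrade the degree-$d$ containment to containment of the whole ideal via the argument already used at the end of the proof of Proposition~\ref{p:border_apolarity}.

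First I would establish the geometric input: $\langle \nu_d(R) \rangle = \PP W$ in $\PP T_d$, where $W = \{ v \in T_d \mid \phi \lrcorner v = 0 \text{ for all } \phi \in I(R)_d\}$. Since $\nu_d^* \mathcal{O}_{\PP T_d}(1) = \mathcal{O}_{\PP T_1}(d)$, linear forms on $\PP T_d$ pull back to degree $d$ forms on $\PP T_1$, and the subspace of linear forms in $T_d^*$ vanishing scheme-theoretically on $\nu_d(R)$ is exactly $I(R)_d$. The linear span of any subscheme of projective space is cut out by all linear forms containing it, which gives the claimed description of $\langle \nu_d(R)\rangle$. Consequently $\PP V \subseteq \langle \nu_d(R)\rangle$ if and only if $I(R)_d \lrcorner V = 0$, i.e.\ $I(R)_d \subseteq \Ann(V)_d$.

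Next I would prove the equivalence $I(R)_d \subseteq \Ann(V)_d \iff I(R) \subseteq \Ann(V)$. The implication $\Longleftarrow$ is immediate from restricting to degree $d$. For $\Longrightarrow$, take any $\phi \in I(R)_e$. If $e > d$, then $\phi \lrcorner V \subseteq T_{d-e} = 0$, so $\phi \in \Ann(V)$ automatically. If $e \leq d$, then for every $\psi \in T^*_{d-e}$ we have $\psi \phi \in I(R)_d \subseteq \Ann(V)_d$, hence $\psi \lrcorner (\phi \lrcorner V) = (\psi \phi) \lrcorner V = 0$. Since the pairing between $T^*_{d-e}$ and $T_{d-e}$ is perfect and $\phi \lrcorner V \subseteq T_{d-e}$, this forces $\phi \lrcorner V = 0$, so $\phi \in \Ann(V)$. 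This is exactly the argument used at the end of the proof of Proposition~\ref{p:border_apolarity}, and notably it uses only that $I(R)$ is an ideal, not that it is saturated.

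Combining the two equivalences yields $I(R) \subseteq \Ann(V) \iff \PP V \subseteq \langle \nu_d(R)\rangle$. The second assertion of the proposition then follows from Definition~\ref{d:cactus_rank}: $\crr(V) \leq r$ means there exists a zero-dimensional $R \subseteq \PP T_1$ of length $r$ with $\PP V \subseteq \langle \nu_d(R)\rangle$, which by the equivalence just proved is the same as the existence of such $R$ with $I(R) \subseteq \Ann(V)$. The only real subtlety lies in justifying the identification $\langle \nu_d(R)\rangle = V(I(R)_d)$ for a possibly non-reduced subscheme $R$, but this is standard once one writes the span as the intersection of the hyperplanes containing $\nu_d(R)$ scheme-theoretically and uses that $\nu_d$ is a closed embedding with the described pullback of line bundles.
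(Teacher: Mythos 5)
The paper does not prove this proposition; it cites \cite[Thm.~4.7]{Tei14}. Your proof is a correct, self-contained argument along the standard lines one would expect. The decomposition into (a) identifying $\langle \nu_d(R)\rangle$ with the common zero set of $I(R)_d$ via pull-back of linear forms, and (b) upgrading a degree-$d$ containment $I(R)_d \subseteq \Ann(V)_d$ to full containment $I(R) \subseteq \Ann(V)$ by multiplying by $T^*_{d-e}$ and using perfectness of the pairing, is exactly the structure of the classical apolarity argument. The upgrading step is, as you note, the same computation the paper already performs at the end of its proof of Border Apolarity (Proposition~\ref{p:border_apolarity}), so you are reusing a lemma already present in the text.

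One small caveat worth flagging: you write that the degree-upgrading step ``uses only that $I(R)$ is an ideal, not that it is saturated,'' which is true of that step in isolation, but the other half of your argument — the identification of the linear forms on $\PP T_d$ vanishing scheme-theoretically on $\nu_d(R)$ with $I(R)_d$ — does rely on $I(R)$ being the saturated ideal of $R$. For a non-saturated ideal $I$ with $\Proj T^*/I = R$ one would only get $I_d \subseteq I(R)_d$, and the equivalence as stated would fail. Since the proposition assumes $I(R)$ saturated this is not an error in your proof, but the phrasing could mislead a reader into thinking the entire equivalence is saturation-free. Otherwise the argument is complete and correct; the ``real subtlety'' you identify (the scheme-theoretic span computation) is indeed where the geometric input lies, and your appeal to $\nu_d$ being a closed embedding with $\nu_d^*\mathcal{O}(1) = \mathcal{O}(d)$ is the right justification.
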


\noindent For a proof, see \cite[Thm.~4.7]{Tei14}. Due to Proposition \ref{p:cactus_apolarity}, the cactus rank of $V$ could be defined as the smallest length of a scheme $R\subseteq \PP T_1$ such that $I(R)\subseteq \Ann(V)$. This approach is taken for instance in \cite{BR13} or \cite[Def.~5.1]{IK06}.

In order to formulate a version of apolarity for border cactus rank,  we need to consider all $(r, n+1)$-standard Hilbert functions (see Definition \ref{d:standard_hilbert_function}), instead of $h_r$ as in Border Apolarity Lemma \ref{p:border_apolarity}.

\begin{proposition}[Weak Border Cactus Apolarity Lemma]\label{p:weak_border_cactus_apolarity_lemma}
 Let $V \subseteq T_d$ be a non-zero subspace. If $\bcrr (V) \leq r$, then there exists a homogeneous ideal $I \subseteq \Ann(V) \subseteq T^*$ such that $T^*/I$ has an  $(r, n+1)$-standard Hilbert function.
\end{proposition}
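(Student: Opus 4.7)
The plan is to realize $[V]$ as the special fibre of a family over a discrete valuation ring whose generic fibre has cactus rank at most $r$, invoke Proposition~\ref{p:cactus_apolarity} along the generic fibre, and then use properness of a multigraded Hilbert scheme to transport the resulting saturated ideal to the special fibre, while checking that the Hilbert function occurring is $(r,n+1)$-standard.

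First I would use the definition of $\kappa_{r,k}(\nu_d(\PP T_1))$ as a Zariski closure to choose a discrete valuation ring $(\mathcal{O},\mathfrak{p})$ with residue field $\kk$ and a morphism $\Spec\mathcal{O}\to\Gr(k,T_d)$ sending the closed point to $[V]$ and the generic point to some $[V_\eta]$ with $\crr(V_\eta)\leq r$. Proposition~\ref{p:cactus_apolarity} then yields a saturated homogeneous ideal $I(R_\eta)\subseteq T^*\otimes K(\eta)$ of a zero-dimensional length-$r$ subscheme $R_\eta$ with $I(R_\eta)\subseteq\Ann(V_\eta)$. Let $h$ be the Hilbert function of $A:=T^*/I(R_\eta)$. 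Saturation forces $A$ to be Cohen--Macaulay of Krull dimension one, so a general linear form $\ell$ is a homogeneous non-zero-divisor, and the short exact sequence $0\to A(-1)\xrightarrow{\ell}A\to A/(\ell)\to 0$ shows that $h$ is non-decreasing, giving condition (a). If $h(d)=h(d+1)$ then $(A/(\ell))_{d+1}=0$; since $A/(\ell)$ is generated in degree one, a trivial induction yields $(A/(\ell))_e=0$ for all $e\geq d+1$, so $h$ stabilizes, and as $A$ has multiplicity $r$ the stable value must be $r$, giving (b). The bound (c) is immediate.

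There are only finitely many $(r,n+1)$-standard Hilbert functions (pointwise bounded by $h_r$ and constant beyond a bounded degree), so the multigraded Hilbert scheme $\operatorname{Hilb}_{T^*}^{h}$, projective over $\kk$ by~\cite{HS04}, contains the point $[I(R_\eta)]$. The valuative criterion of properness extends the $K(\eta)$-point to an $\mathcal{O}$-point, giving a flat family of homogeneous ideals $\mathcal{I}\subseteq\mathcal{O}\otimes T^*$ whose special fibre $I$ also has Hilbert function $h$. To verify $I\subseteq\Ann(V)$, I would, for each $e\leq d$, assemble the apolar contractions into an $\mathcal{O}$-linear map $\mathcal{I}_e\otimes_{\mathcal{O}}\mathcal{V}\to\mathcal{O}\otimes T_{d-e}$, where $\mathcal{V}\subseteq\mathcal{O}\otimes T_d$ is the tautological family of subspaces. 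This map vanishes on the generic fibre by construction, and its image is a torsion submodule of a free $\mathcal{O}$-module, hence zero; specializing to the closed point gives $I_e\lrcorner V=0$, while the case $e>d$ is trivial since $\Ann(V)_e=T^*_e$. The main obstacle is precisely this final step: because $\Ann(\cdot)$ is not flat in families of subspaces, one cannot pass to a limit of annihilators directly, and must instead exploit flatness of the Hilbert-scheme family together with torsion-freeness of $\mathcal{O}\otimes T_{d-e}$ to push the generic containment through the specialization.
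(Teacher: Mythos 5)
Your proof is sound, and it fills a genuine gap: the paper does not give a proof of this proposition, but merely cites [BB20, Thm.~1.1], a set of (as of the paper's writing) unpublished working notes. So there is no in-paper argument to compare against; what I can do is assess your argument on its own terms.

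Your outline is the natural one and I believe it works. The key points all check out: (1) the DVR degeneration exists because $\kappa_{r,k}(\nu_d(\PP T_1))$ is by definition the closure of a constructible set, and after a finite extension of $\mathcal{O}$ one may assume the generic-fibre scheme $R_\eta$ of length exactly $r$ is defined over $K=\operatorname{Frac}\mathcal{O}$; (2) saturatedness of $I(R_\eta)$ is equivalent to $\operatorname{depth} T^*_K/I(R_\eta)\geq 1$, which together with Krull dimension $1$ gives Cohen--Macaulayness, so a general $\ell\in T^*_1\otimes K$ (available since $K$ is infinite) is a non-zero-divisor and the $A/(\ell)$ argument gives conditions (a) and (b) of Definition~\ref{d:standard_hilbert_function}; condition (c) then follows from non-decrease plus eventual stabilization at the multiplicity $r$; (3) properness of $\operatorname{Hilb}_{T^*}^{h}$ over $\kk$ [HS04] and the valuative criterion give the limit ideal $I$ with the same Hilbert function; (4) the torsion argument for $I\subseteq\Ann(V)$ is exactly right, and it correctly sidesteps the usual pitfall that limits of saturated ideals need not be saturated and that the span-of-a-limit may be smaller than the limit-of-spans. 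Two tiny remarks: you should say explicitly that one may enlarge $R_\eta$ to have length exactly $r$ (not merely $\leq r$), since otherwise $h$ would stabilize below $r$ and condition~(b) would fail; and the identification of $\mathcal{I}_e\otimes_{\mathcal{O}}\kk$ with $I_e$ uses that $(\mathcal{O}\otimes T^*_e)/\mathcal{I}_e$ is $\mathcal{O}$-free of rank $h(e)$, which is part of the definition of the multigraded Hilbert scheme functor and is worth stating. Given the nature of the reference being replaced, this proof is very likely close in spirit to the one in [BB20].
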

\noindent See \cite[Thm. 1.1]{BB20} for a proof.

\section{Algebraic results}\label{s:algebraic_results}

In this section we present some algebraic results which will be needed in
Section \ref{s:general_results}. We will use the following
notation. Fix a positive integer $n$ and let
$S^*=\Bbbk[\alpha_1,\ldots,\alpha_n]\subseteq
T^*=\Bbbk[\alpha_0,\ldots,\alpha_n]$ be polynomial rings over an algebraically closed field $\kk$ with graded dual rings
$S=\Bbbk_{dp}[x_1,\ldots,x_n]\subseteq T=\Bbbk_{dp}[x_0,\ldots,x_n]$. We shall also use notations from Definitions \ref{d:prime_operator} and \ref{d:homogenization_of_space}.

If $J \subseteq S^*$ is an ideal, we denote by $J^{hom} \subseteq T^*$ its 
homogenization with respect to $\alpha_0$, see \cite[\S 8.4]{CLO}. 
If $J\subseteq T^*$ is an ideal, we denote by $J^{sat}$ its saturation with respect 
to the ideal $(\alpha_0,\dots,\alpha_n)$, see \cite[\S 15.10.6]{Eis95}.
If $M$ is a $\ZZ$-graded module, and $e$ is an integer, we denote by $H(M,e)$ the
Hilbert function of $M$ at $e$. 

The following lemma says that the homogenization in $T^*$ of an ideal in $S^*$ is saturated. This will enable us to use  Cactus Apolarity Lemma \ref{p:cactus_apolarity} in the proofs of Theorems \ref{t:general_polynomial_introduction} and \ref{t:general_subspace_introduction}.

\begin{lemma}\label{l:homogenization_is_saturated} Let $I\subseteq S^*$ be an
  ideal. Then the homogenization $I^{hom}\subseteq T^*$ is saturated with respect to the
  irrelevant ideal $(\alpha_0,\ldots, \alpha_n)$.
\end{lemma}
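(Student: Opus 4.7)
The plan is to give an explicit elementwise description of $I^{hom}$ and then read off saturation directly from it. Concretely, I claim that a homogeneous $F \in T^*$ lies in $I^{hom}$ if and only if its dehomogenization $F|_{\alpha_0 = 1}$ lies in $I$. Once this is established, saturation becomes a one-line check: if $G \in T^*$ is homogeneous and $\mathfrak{m}^k G \subseteq I^{hom}$ for some $k$, where $\mathfrak{m} = (\alpha_0, \ldots, \alpha_n)$, then in particular $\alpha_0^k G \in I^{hom}$, so setting $\alpha_0 = 1$ gives $G|_{\alpha_0 = 1} = (\alpha_0^k G)|_{\alpha_0 = 1} \in I$, whence $G \in I^{hom}$ by the characterization.

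To prove the characterization, first note the ``only if'' direction is immediate because dehomogenization $T^* \to S^*$, $\alpha_0 \mapsto 1$, is a ring homomorphism sending each generator $f^{hom}$ of $I^{hom}$ back to $f \in I$, so it sends all of $I^{hom}$ into $I$. For the ``if'' direction, take a homogeneous $F \in T^*$ of degree $d$ with $f := F|_{\alpha_0 = 1} \in I$, and write $F = \alpha_0^k F'$ where $k$ is the largest exponent such that $\alpha_0^k \mid F$. Then $F'$ is homogeneous of degree $d - k$ and, since $\alpha_0 \nmid F'$, its dehomogenization $F'|_{\alpha_0 = 1}$ has degree exactly $d - k$ and coincides with $f$. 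Rehomogenizing gives $f^{hom} = F'$, so $F = \alpha_0^k f^{hom}$ with $f \in I$, hence $F \in I^{hom}$.

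There is essentially no obstacle here: the only subtlety is making sure the largest-power-of-$\alpha_0$ trick is applied so that the dehomogenization of $F'$ does not drop in degree, which is precisely what guarantees $(F'|_{\alpha_0=1})^{hom} = F'$ rather than a proper divisor. Everything else is formal, and the passage from $\alpha_0$-saturation to full $\mathfrak{m}$-saturation is automatic because $\alpha_0 \in \mathfrak{m}$ implies $\mathfrak{m}^k G \subseteq I^{hom}$ forces $\alpha_0^k G \in I^{hom}$.
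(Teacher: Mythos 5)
Your proof is correct and follows essentially the same route as the paper: both argue that $\alpha_0^k G \in I^{hom}$ forces $G|_{\alpha_0=1} \in I$, and then recover $G$ as $\alpha_0^e (G|_{\alpha_0=1})^{hom}$. The only difference is that you spell out the characterization ``$F \in I^{hom}$ iff $F|_{\alpha_0=1} \in I$'' as a standalone step (and prove the homogenization facts directly), whereas the paper does the same reasoning inline and cites the relevant properties from Cox--Little--O'Shea.
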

\begin{proof}
  It is enough to show that $\{\theta \in S^* \mid \theta \alpha_0 \in
  I^{hom} \} \subseteq I^{hom}$. Take $\theta \in S^*$ such that $\theta
  \alpha_0 \in I^{hom}$. We may assume that $\theta$ is homogeneous. By assumption, for some integer $s$, there are $\zeta_1,\ldots,\zeta_s\in I$ and
  $\xi_1,\ldots,\xi_s\in T^*$ such that

\[
\theta\alpha_0=\xi_1\zeta_1^{hom}+\ldots +\xi_s\zeta_s^{hom}.
\]
Hence $\theta_{\mid_{\alpha_0=1}} \in I$, so $(\theta_{\mid_{\alpha_0=1}})^{hom} \in I^{hom}$. Thus 
\[
\theta = \alpha_0^{\deg \theta - \deg(\theta_{\mid_{\alpha_0=1}})}(\theta_{\mid_{\alpha_0=1}})^{hom} \in I^{hom},
\]
as claimed.
We used \cite[Prop. 8.2.7 (iii) and (iv)]{CLO}.
\end{proof}

If $I \subseteq S^*$ is a homogeneous ideal, then there is a simple way to calculate the Hilbert function of $T^*/I^{hom}$ from the Hilbert function of $S^*/I$, namely
\[
 H(T^*/I^{hom}, e)= \sum_{i=0}^e H(S^*/I, i) \text{ for } e \in \ZZ_{\geq 0}.
\]
In particular if the Hilbert polynomial of $S^*/I$ is zero, then for large enough $e$ we have $ H(T^*/I^{hom}, e) = \dim_\kk S^*/I$. Lemma \ref{l:Hilbert_function_of_homogenization} and Corollary \ref{c:HF_of_extension_stabilizes} generalize this to inhomogeneous ideals.

In the following lemma we use the definitions
of monomial orders $<$, leading terms $\operatorname{LT}_<$, leading monomials
$\operatorname{LM}_<$, and Gr\"obner bases as in \cite[Ch. 2]{CLO}.

\begin{lemma}\label{l:Hilbert_function_of_homogenization} Let $I\subseteq S^*$ be an  ideal. Let $<$ be any monomial order on $S^*$ that respects the degree. Then for every non-negative integer $e$
\[
H(T^*/I^{hom}, e) = \#\{\mu\in S^* \mid \mu \emph{ is a monomial, } \deg \mu \leq e \emph{ and } \mu\notin \operatorname{LT}_<(I)\}.
\]
\end{lemma}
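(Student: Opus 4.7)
The plan is to apply Macaulay's basis theorem to $T^*/I^{hom}$ once one identifies the initial ideal of $I^{hom}$ with respect to a suitable extension of $<$ to $T^*$. First I would extend $<$ to a degree-respecting monomial order $<'$ on $T^*$ in which $\alpha_0$ is the smallest variable---for instance, compare two monomials of $T^*$ first by total degree, then by the degree in $\alpha_1,\ldots,\alpha_n$ (higher wins), and finally by $<$ on the $\alpha_1,\ldots,\alpha_n$-part. Because $<$ respects degree, the leading monomial of any $f\in S^*$ already lies in the top-degree component of $f$, and this monomial remains the leading monomial of $f^{hom}$ under $<'$. In particular $\operatorname{LM}_{<'}(f^{hom})=\operatorname{LM}_{<}(f)$ for every $f\in S^*$.

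Next, take a Gröbner basis $g_1,\dots,g_s$ of $I$ with respect to $<$ and invoke the classical theorem on homogenization of Gröbner bases (see e.g.\ \cite[Ch.~8, \S 4, Thm.~4]{CLO}), which says that for a degree-respecting order the set $\{g_1^{hom},\dots,g_s^{hom}\}$ is a Gröbner basis of $I^{hom}$ with respect to $<'$. Combined with the previous observation, this gives
\[
\operatorname{LT}_{<'}(I^{hom})=\bigl(\operatorname{LM}_{<}(g_1),\dots,\operatorname{LM}_{<}(g_s)\bigr)T^{*}=\operatorname{LT}_{<}(I)\cdot T^{*}.
\]

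Since the generators of $\operatorname{LT}_{<'}(I^{hom})$ are monomials in $\alpha_1,\dots,\alpha_n$ only, a monomial $\alpha_0^{a}\mu$ of $T^*$ (with $\mu\in S^*$ a monomial) lies in $\operatorname{LT}_{<'}(I^{hom})$ if and only if $\mu\in\operatorname{LT}_{<}(I)$. By Macaulay's basis theorem, the standard monomials of $T^*/I^{hom}$ of degree $e$ form a $\Bbbk$-basis of $(T^*/I^{hom})_e$, so $H(T^*/I^{hom},e)$ equals the number of such standard monomials. Each of them is uniquely of the form $\alpha_0^{\,e-\deg\mu}\mu$ with $\mu\in S^*$ a monomial satisfying $\deg\mu\leq e$ and $\mu\notin\operatorname{LT}_{<}(I)$, and conversely every such $\mu$ gives rise to exactly one degree-$e$ standard monomial in this way. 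Counting these yields the asserted equality.

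The only nontrivial ingredient is the theorem on homogenization of Gröbner bases, and the hypothesis that $<$ respects the degree is precisely what is needed to apply it (otherwise the leading term can fail to survive homogenization). Everything else is bookkeeping with standard monomials, so I expect no genuine obstacle beyond setting up the extended order carefully.
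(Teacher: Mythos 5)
Your proof is correct and follows essentially the same strategy as the paper: identify the degree-$\le e$ monomials of $S^*$ with the degree-$e$ monomials of $T^*$ via $\mu\mapsto\alpha_0^{e-\deg\mu}\mu$, pick a suitable monomial order on $T^*$, invoke the CLO result that homogenizing a Gr\"obner basis yields a Gr\"obner basis, and count standard monomials. The one small deviation is the choice of extended order: you use a degree-graded order $<'$ on $T^*$, whereas the paper (following CLO, Thm.~8.4.4) uses the order $<_h$ which compares the $\alpha_1,\ldots,\alpha_n$-parts by $<$ first and only breaks ties by the $\alpha_0$-exponent; $<_h$ is not degree-graded. Since $I^{hom}$ is homogeneous and your $<'$ agrees with $<_h$ on each fixed total degree, the two choices produce the same initial ideal and the argument goes through identically either way; just be aware that CLO's theorem is stated literally for $<_h$, so quoting it verbatim for $<'$ implicitly uses this degree-by-degree agreement.
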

\begin{proof}
Let $e\in \mathbb{Z}_{\geq 0}$ and consider sets 
\[
A_{\leq e} = \{\mu\in S^* \mid \mu \text{ is a monomial, } \deg \mu \leq e\}
\]
and 
\[
B_e = \{\mu\in T^* \mid \mu\text{ is a monomial, }\deg \mu = e\}.
\]
These sets are in bijection given by
\[
A_{\leq e} \ni \mu \mapsto \alpha_0^{e-\deg \mu}\mu \in B_e
\]
and
\[
B_e \ni \mu \mapsto \mu|_{\alpha_0=1} \in A_{\leq e}.
\]
Let $<_h$ be the monomial order on $T^*$ defined by $\alpha_0^{a_0}\ldots\alpha_n^{a_n}<_h \alpha_0^{b_0}\ldots\alpha_n^{b_n}$ if and only if $\alpha_1^{a_1}\ldots\alpha_n^{a_n} < \alpha_1^{b_1}\ldots\alpha_n^{b_n}$ or $\alpha_1^{a_1}\ldots\alpha_n^{a_n}=\alpha_1^{b_1}\ldots\alpha_n^{b_n}$ and $a_0 < b_0$. We have $H(T^*/I^{hom}, e) = \# \{\mu\in B_e \mid  \mu\notin \operatorname{LT}_{<_h}(I^{hom})\}$ (see \cite[Thm. 15.3]{Eis95}). Therefore it is enough to show that for $\mu\in A_{\leq e}$ we have $\mu\in \operatorname{LT}_<(I)$ if and only if $\alpha_0^{e-\deg \mu}\mu\in \operatorname{LT}_{<_h}(I^{hom}).$

Assume that $\mu\in A_{\leq e}\cap \operatorname{LT}_<(I)$ and let $\theta\in I$ be such that $\operatorname{LM}_<(\theta) = \mu$. Then $\alpha_0^{e-\deg \mu}\theta^{hom} \in I^{hom}$ and $\operatorname{LM}_{<_h}(\alpha_0^{e-\deg \mu}\theta^{hom}) = \alpha_0^{{e-\deg \mu}} \operatorname{LM}_{<_h}(\theta^{hom}) = \alpha_0^{{e-\deg \mu}}\mu$. The latter equality follows from the following observation:
 for $\theta\in S^*$ we have $\operatorname{LM}_{<_h}(\theta^{hom}) = \operatorname{LM}_{<}(\theta)$.

Now assume that $\alpha_0^{{e-\deg \mu}}\mu\in B_e\cap \operatorname{LT}_{<_h}(I^{hom})$. Let $G=\{\zeta_1,\ldots,\zeta_k\}$ be a Gr\"obner basis for
$I$ with respect to $<$. Then $G^{hom}=\{\zeta_1^{hom},\ldots,\zeta_k^{hom}\}$ is a Gr\"obner basis for $I^{hom}$ with respect to $<_h$ (see
\cite[Thm. 8.4.4]{CLO}). Therefore for some $j\in\{1,\dots,k\}$ the monomial $\operatorname{LM}_{<_h}(\zeta_j^{hom}) = \operatorname{LM}_{<}(\zeta_j)$ divides $\alpha_0^{{e-\deg \mu}}\mu$. Thus, $\operatorname{LM}_{<}(\zeta_j)$ divides $\mu$.
\end{proof}

The following corollary of Lemma \ref{l:Hilbert_function_of_homogenization} will be used extensively. It shows that the Hilbert polynomial of the subscheme defined by $\Ann(W)^{hom}$ is equal to $\dim_\Bbbk S^*/\Ann(W)$. Moreover it provides an upper bound on the minimal degree from which the Hilbert function agrees with the Hilbert polynomial.

\begin{corollary}\label{c:HF_of_extension_stabilizes}
Let $W\subseteq S_{\leq d_1}$ be a linear subspace. Then for $e\geq d_1$
\[
H(T^*/\Ann(W)^{hom}, e) = \dim_\Bbbk S^*/\Ann(W).
\]
\end{corollary}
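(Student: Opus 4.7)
The plan is to reduce the statement to a counting of standard monomials via Lemma \ref{l:Hilbert_function_of_homogenization}. Fix any monomial order $<$ on $S^*$ that respects the degree (for example, the degree lexicographic order). By the lemma, for every $e\geq 0$,
\[
H(T^*/\Ann(W)^{hom},e) = \#\{\mu \in S^* \mid \mu \text{ is a monomial},\ \deg\mu\leq e,\ \mu\notin \operatorname{LT}_<(\Ann(W))\}.
\]
Thus the corollary will follow once I show two things: first, that every monomial outside $\operatorname{LT}_<(\Ann(W))$ has degree at most $d_1$; and second, that the residues of all such standard monomials form a $\kk$-basis of $S^*/\Ann(W)$.

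The first point is the heart of the matter. The key observation is that $S^*_{>d_1}\subseteq \Ann(W)$. Indeed, for any $\theta\in S^*$ homogeneous of degree $d>d_1$ and any $f=F_{d_1}+\cdots+F_0\in W$ with $F_i\in S_i$, one has $\theta\lrcorner F_i=0$ for each $i\leq d_1<d$ by the explicit formula for the contraction $\lrcorner$, and hence $\theta\lrcorner f=0$. Consequently every monomial of degree $>d_1$ lies in $\Ann(W)$, so it is its own leading term and belongs to $\operatorname{LT}_<(\Ann(W))$. Therefore all monomials outside $\operatorname{LT}_<(\Ann(W))$ have degree $\leq d_1$, and for $e\geq d_1$ the counting set in Lemma \ref{l:Hilbert_function_of_homogenization} stabilizes to the set of \emph{all} standard monomials of $\Ann(W)$.

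The second point is a standard consequence of Gr\"obner basis theory (Macaulay's theorem, e.g.\ \cite[Ch.~5, \S 3, Thm.~4]{CLO}): for any ideal $J\subseteq S^*$, the residues of the monomials not lying in $\operatorname{LT}_<(J)$ form a $\kk$-basis of $S^*/J$. Applying this with $J=\Ann(W)$ (note that no homogeneity of $J$ is needed, which is fortunate since $W$ is not required to be homogeneous) yields that the total number of standard monomials equals $\dim_\kk S^*/\Ann(W)$. Combining the two points gives the desired equality for all $e\geq d_1$. The only potential subtlety, namely that $\Ann(W)$ may fail to be homogeneous, is harmless because both the Gr\"obner basis counting and the inclusion $S^*_{>d_1}\subseteq \Ann(W)$ go through verbatim for inhomogeneous ideals.
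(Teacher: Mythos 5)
Your proof is correct and follows essentially the same route as the paper: both invoke Lemma \ref{l:Hilbert_function_of_homogenization}, use that $S^*_{>d_1}\subseteq\Ann(W)$ to see the count of standard monomials of degree $\leq e$ stabilizes once $e\geq d_1$, and then appeal to the fact that standard monomials form a $\kk$-basis of the quotient. The only difference is cosmetic (you spell out the contraction argument and cite CLO for Macaulay's basis theorem, whereas the paper cites Eisenbud Thm.~15.3).
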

\begin{proof}
All monomials of degree at least $d_1+1$ are in $\Ann(W)$. Therefore for $e\geq d_1$ it follows from  Lemma~\ref{l:Hilbert_function_of_homogenization} that
\[H(T^*/\Ann(W)^{hom}, e) = H(T^*/\Ann(W)^{hom}, d_1)\]
is equal to the number of monomials in $S^*$ that do not belong to $\operatorname{LT}_<(\Ann(W))$. This number is the dimension of the quotient algebra $S^*/\Ann(W)$ as a $\Bbbk$-vector space (\cite[Thm. 15.3]{Eis95}). 
\end{proof}

The following result is similar to Lemma \ref{l:Hilbert_function_of_homogenization}. It compares the  Hilbert functions of two related quotient algebras, one of $S^*$ and one of $T^*$.
We will use it in the proof of Part (iii) of Theorem \ref{t:general_polynomial}.
\begin{lemma}\label{l:hilbert_function_and_contraction}
Let $J\subseteq T^*$ be a homogeneous ideal and $\theta=\alpha_0^d+\rho$ be an element of $J_d$ with $\rho$ of degree smaller than $d$ with respect to $\alpha_0$. Consider the contraction $J^c = J\cap S^*$. 
Then for any integer $e$ we have
\[
H(T^*/J, e) \leq H (S^*/J^c,e)+H(S^*/J^c, e-1)+\ldots +H(S^*/J^c, e-d+1). 
\]
\end{lemma}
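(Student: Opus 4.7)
The plan is to exploit the fact that $\theta$ is monic in $\alpha_0$ (of $\alpha_0$-degree exactly $d$) in order to reduce arbitrary homogeneous elements of $T^*$ modulo $J$ to expressions of bounded $\alpha_0$-degree with coefficients in $S^*$, and then relate those coefficients to the contraction $J^c$.

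First I would use the relation
\[
\alpha_0^d \equiv -\rho \pmod{J},
\]
where by assumption $\rho \in T^*_d$ has $\alpha_0$-degree strictly less than $d$. Write every $f \in T^*_e$ uniquely as $f=\sum_{i\geq 0}\alpha_0^i f_i$ with $f_i\in S^*_{e-i}$. By induction on the largest $i$ with $f_i\neq 0$, multiplying $\theta$ by appropriate monomials in $S^*$ and subtracting shows that $f$ is congruent modulo $J$ to an expression $\sum_{i=0}^{d-1}\alpha_0^i g_i$ with $g_i\in S^*_{e-i}$. The induction step works because $\theta\cdot \alpha_1^{a_1}\cdots\alpha_n^{a_n}$ eliminates the highest $\alpha_0$-power while only introducing terms of strictly smaller $\alpha_0$-degree.

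Next I would assemble these reductions into a $\Bbbk$-linear map
\[
\bar\phi\colon \bigoplus_{i=0}^{d-1}(S^*/J^c)_{e-i}\longrightarrow T^*_e/J_e,\qquad (g_0,\ldots,g_{d-1})\longmapsto \sum_{i=0}^{d-1}\alpha_0^i g_i \pmod{J}.
\]
Well-definedness is immediate: if a representative $g_i$ lies in $J^c=J\cap S^*$, then $\alpha_0^i g_i\in J$ since $J$ is an ideal, so the class of $\sum\alpha_0^i g_i$ in $T^*_e/J_e$ does not depend on the chosen lifts. Surjectivity of $\bar\phi$ is exactly the content of the reduction carried out in the previous step.

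Taking $\Bbbk$-dimensions of both sides then gives
\[
H(T^*/J,e)=\dim_{\Bbbk}T^*_e/J_e\;\leq\;\sum_{i=0}^{d-1}\dim_{\Bbbk}(S^*/J^c)_{e-i}\;=\;\sum_{i=0}^{d-1}H(S^*/J^c,e-i),
\]
which is the desired inequality (using the convention that $H(S^*/J^c,j)=0$ for $j<0$, so the cases $e<d$ are automatic). I do not expect any serious obstacle here: the argument is essentially Euclidean division by the monic polynomial $\theta\in S^*[\alpha_0]=T^*$, carried out on the level of graded vector spaces, and the only thing to check carefully is that the reduction procedure produces coefficients in the correct graded pieces of $S^*$, which follows from the homogeneity of $\theta$.
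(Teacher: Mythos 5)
Your proof is correct, and it takes a genuinely different route from the paper's. The paper works with a graded lexicographic monomial order that prioritizes $\alpha_0$, uses the fact that $H(T^*/J,e)$ counts monomials of degree $e$ outside $\operatorname{LT}_<(J)$, observes that $\theta=\alpha_0^d+\rho\in J$ forces every monomial divisible by $\alpha_0^d$ into $\operatorname{LT}_<(J)$, and then shows that a monomial $\nu\in\operatorname{LT}_{<'}(J^c)$ lifts to $\alpha_0^i\nu\in\operatorname{LT}_<(J)$. Your argument instead performs Euclidean division by $\theta$ (which is monic in $\alpha_0$ of degree $d$ over $S^*$), so that $T^*/(\theta)$ is freely generated over $S^*$ by $1,\alpha_0,\ldots,\alpha_0^{d-1}$, and the induced surjection $\bigoplus_{i=0}^{d-1}\alpha_0^i(S^*/J^c)_{e-i}\twoheadrightarrow (T^*/J)_e$ gives the dimension inequality directly. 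This is more elementary and avoids Gr\"obner-basis machinery entirely, while the paper's approach has the mild advantage of plugging into the same monomial-order framework already set up in Lemma~\ref{l:Hilbert_function_of_homogenization}. One small inaccuracy in your write-up: to eliminate the top $\alpha_0$-power of $f=\sum_i\alpha_0^if_i$ you subtract $\alpha_0^{k-d}f_k\cdot\theta$, so the multipliers are elements of $T^*$ (involving $\alpha_0$), not merely monomials in $S^*$ as you wrote; this does not affect the validity of the argument since $J$ is an ideal of $T^*$.
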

\begin{proof}
Let $<$ be the graded lexicographic order on $T^*$ with $\alpha_n<\alpha_{n-1} < \ldots < \alpha_0$ and consider its restriction $<'$ to $S^*$. It follows from \cite[Thm. 15.3]{Eis95}) that $H(T^*/J, e)$ is the number of monomials of degree $e$, not in $\operatorname{LT}_<(J)$. Observe that every monomial divisible by $\alpha_0^d$ is in $\operatorname{LT}_<(J)$. Therefore, we have
\[
H(T^*/J, e) = \sum_{i=0}^{d-1} \#\{\mu\in S^*_{e-i} \mid \mu \text{ is a monomial and } \alpha_0^i\mu\notin \operatorname{LT}_<(J)\}.
\]
Fix $0\leq i \leq d-1$ and let $\mu$ be a monomial of degree $e-i$ from $S^*$.  If $\mu\in \operatorname{LT}_{<'}(J^c)$, then there is a homogeneous $\zeta\in J^c$ such that $\operatorname{LM}_{<'}(\zeta) = \mu$. Therefore, $\alpha_0^i\zeta \in J$ and $\operatorname{LM}_{<}(\alpha_0^i\zeta) = \alpha_0^i\mu$. Thus for $i\in \{0,\ldots,d-1\}$ we have 
\[
\#\{\mu\in S^*_{e-i} \mid \mu \text{ is a monomial and } \alpha_0^i\mu\notin \operatorname{LT}_<(J)\} \leq H(S^*/J^c, e-i).
\]
\end{proof}

In \cite[Prop. 1.6]{BBKT15} it is proven that annihilator of a homogeneous degree $d$ polynomial that is not a power of a linear form has a set of minimal generators of degrees at most $d$. The following lemma generalizes it to  inhomogeneous polynomials.

\begin{lemma}\label{l:extension_of_annihilator_generated_in_low_degrees} 
  Let $f=F_{d_1}+F_{d_1-1}+ \ldots + F_0$ be a polynomial of degree $d_1 \geq 2$ in
  $S$ where $F_i\in S_i$. Assume that $F_{d_1}$ is not a power of a linear form.
  Then $\Ann(f)^{hom}\subseteq T^*$ has a set of minimal generators of degrees not
  greater than $d_1$.
\end{lemma}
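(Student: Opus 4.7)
The plan is to reduce the lemma to the single claim $\Ann(f)^{hom}_e = T^*_1 \cdot \Ann(f)^{hom}_{e-1}$ for every $e > d_1$, which by induction shows that $\Ann(f)^{hom}$ is generated in degrees at most $d_1$. For a homogeneous $\phi \in \Ann(f)^{hom}_e$ I would write $\phi = \sum_{i=0}^e \alpha_0^i p_i$ with $p_i \in S^*_{e-i}$ uniquely, and peel off the summand $p_0 \in S^*_e$ that does not involve $\alpha_0$. The remainder satisfies $\phi - p_0 = \alpha_0 \psi$ with $\psi = \sum_{j=0}^{e-1} \alpha_0^j p_{j+1} \in T^*_{e-1}$, and $\psi|_{\alpha_0=1} = \phi|_{\alpha_0=1} - p_0$; the right hand side lies in $\Ann(f)$ because $\phi|_{\alpha_0=1} \in \Ann(f)$ by assumption, while $p_0 \in S^*_e \subseteq \Ann(f)$ using $e > d_1$. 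Since a homogeneous $\xi \in T^*$ belongs to $\Ann(f)^{hom}$ exactly when $\xi|_{\alpha_0=1} \in \Ann(f)$, this forces $\psi \in \Ann(f)^{hom}_{e-1}$ and thus $\phi - p_0 \in T^*_1 \cdot \Ann(f)^{hom}_{e-1}$. So the whole lemma reduces to verifying the inclusion $S^*_e \subseteq T^*_1 \cdot \Ann(f)^{hom}_{e-1}$ for $e>d_1$.

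For $e \geq d_1 + 2$ this is immediate: any monomial $\mu \in S^*_e$ factors as $\alpha_i \cdot (\mu/\alpha_i)$, with $\mu/\alpha_i \in S^*_{e-1}$ of degree strictly greater than $d_1$, so $\mu/\alpha_i \in \Ann(f) \cap S^*_{e-1} \subseteq \Ann(f)^{hom}_{e-1}$ automatically. The substantive case is $e = d_1 + 1$, and this is precisely where the hypothesis on $F_{d_1}$ is invoked: the cited \cite[Prop.~1.6]{BBKT15} applied to the homogeneous form $F_{d_1}$, which is not a power of a linear form, gives $S^*_1 \cdot \Ann(F_{d_1})_{d_1} = S^*_{d_1+1}$. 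Together with the two trivial inclusions $\Ann(F_{d_1})_{d_1} \subseteq \Ann(f) \cap S^*_{d_1}$ (because a homogeneous degree $d_1$ element of $S^*$ annihilates each $F_{j}$ for $j < d_1$ by degree reasons) and $\Ann(f) \cap S^*_{d_1} \subseteq \Ann(f)^{hom}_{d_1}$ (a homogeneous element of $S^*$ is its own homogenization in $T^*$), this yields $S^*_{d_1+1} \subseteq T^*_1 \cdot \Ann(f)^{hom}_{d_1}$, completing the reduction.

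The main obstacle is the step $\psi \in \Ann(f)^{hom}_{e-1}$, which rests on the characterization $\xi \in \Ann(f)^{hom} \iff \xi|_{\alpha_0=1} \in \Ann(f)$ for homogeneous $\xi$; the nontrivial direction $\Leftarrow$ can be made explicit by writing $\xi = \alpha_0^{\deg \xi - \deg(\xi|_{\alpha_0=1})} (\xi|_{\alpha_0=1})^{hom}$, which lies in $\Ann(f)^{hom}$ because $(\xi|_{\alpha_0=1})^{hom}$ does and $\Ann(f)^{hom}$ is an ideal. With this formal point in place, the induction above runs through and yields $\Ann(f)^{hom}_e = T^*_1 \cdot \Ann(f)^{hom}_{e-1}$ for every $e > d_1$, hence $\Ann(f)^{hom}$ admits a set of minimal generators of degrees at most $d_1$.
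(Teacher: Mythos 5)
Your proof is correct, and it takes a genuinely different route from the paper's. The paper works through Gröbner bases: it chooses generators of $\Ann(f)$ consisting of the monomials of degree $d_1+1$ together with some $\zeta_i$ of degree at most $d_1$, runs Buchberger's algorithm with a graded order to argue that the resulting Gröbner basis lies in degrees at most $d_1+1$, invokes the fact that homogenizing a Gröbner basis gives generators of $\Ann(f)^{hom}$, and finally eliminates the degree-$(d_1+1)$ monomial generators using the same citation of \cite[Prop.~1.6]{BBKT15}. You instead prove directly that $\Ann(f)^{hom}_e = T^*_1 \cdot \Ann(f)^{hom}_{e-1}$ for $e > d_1$ by splitting a homogeneous element as $p_0 + \alpha_0\psi$ with $p_0 \in S^*_e$, showing $\psi \in \Ann(f)^{hom}_{e-1}$ via the criterion $\xi \in I^{hom} \iff \xi|_{\alpha_0=1} \in I$ (for homogeneous $\xi$), and absorbing $p_0$ using \cite[Prop.~1.6]{BBKT15} only at the boundary degree $e = d_1+1$ (the case $e \geq d_1+2$ being automatic). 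Your argument avoids Gröbner-basis machinery entirely, working directly with graded pieces and the dehomogenization criterion; the paper's version instead leans on the Buchberger step-count argument and the theorem that Gröbner bases homogenize well. Both reduce the essential content to the same external input on $\Ann(F_{d_1})$, but yours is more elementary and arguably more transparent about exactly where the hypothesis on $F_{d_1}$ is used.
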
 
\begin{proof}
We have $\Ann(f)\supseteq S^*_{d_1+1}$ so we may choose a set of its generators of the form
\[
  \Ann(f)=(\{\alpha^\mathbf{u} \mid \mathbf{u}\in \mathbb{Z}_{\geq 0}^n \text{ s.t. } |\mathbf{u}| =d_1+1\})+(\zeta_1,\dots,\zeta_k) \text{ with } \deg(\zeta_i)\leq d_1.
\]

Using Buchberger's algorithm for this set of generators and grevlex monomial order, we obtain a Gr\"obner basis of $\Ann(f)$ of the form
\begin{equation}\label{eq:buchberger}
  \{\alpha^\mathbf{u} \mid \mathbf{u}\in \mathbb{Z}_{\geq 0}^n \text{ s.t. } |\mathbf{u}| =d_1+1\} \cup \{\zeta_1,\dots,\zeta_k\} \cup \{\xi_1,\dots,\xi_l \}.
\end{equation}
We claim that $\deg \xi_i \leq d_1$. Let $\mathcal{G} = \{\alpha^\mathbf{u} \mid \mathbf{u}\in \mathbb{Z}_{\geq 0}^n \text{ s.t. } |\mathbf{u}| =d_1+1\}$. Note that each $S$-polynomial considered in the Buchberger's algorithm is divided with remainder by a set of polynomials containing $\mathcal{G}$. Therefore, $S$-polynomials of degree at least $d_1+1$ do not give new elements  of the Gr\"obner basis.

The ideal $\Ann(f)^{hom}$ is generated by the homogenizations of the
elements in Equation \eqref{eq:buchberger} (\cite[Thm. 8.4.4]{CLO}).  It is
enough to show that we can replace the monomial generators of degree $d_1+1$
written above by some generators of degree not greater than $d_1$.  Let
$\mathbf{u}\in \mathbb{Z}_{\geq 0}^n$ with $|\mathbf{u}|=d_1+1$. Then in $S^*$,
we can write $\alpha^\mathbf{u} = \sum_{i=1}^m \delta_i\gamma_i$ for some
$\delta_i\in \Ann(F_{d_1})_{d_1}$ and $\gamma_i\in S^*_1$ (\cite[Prop.
1.6]{BBKT15}). We have $\delta_i\in \Ann(f)$ for degree reasons. Therefore
$\alpha^\mathbf{u} \in ((\Ann(f)^{hom})_{\leq d_1})$ as an element of $T^*$.
\end{proof}

For a homogeneous polynomial $F_{d_1} \in S_{d_1}$ of positive degree, $\Ann(F_{d_1}x_0^{[d_2]}) = (\alpha_0^{d_2+1})+\Ann(F_{d_1})^{hom}$. In particular, $\Ann(F_{d_1}x_0^{[d_2]})_{\leq d_2} = (\Ann(F_{d_1})^{hom})_{\leq d_2}$. Lemma \ref{l:ideals_agree_in_low_degree} generalizes it to an arbitrary polynomial. Part (i) was proven in \cite[Lem. 2]{BR13}. However, from the
notation of the authors it is not clear that they use divided powers, but they
are essential for the lemma to work (see Example \ref{e:divided_powers}). For
this reason we present their proof with explicit use of divided powers.

Recall the notation of $f^{hom,d_2}$ from Definition \ref{d:homogenization_of_space}.

\begin{lemma}\label{l:ideals_agree_in_low_degree} 
  Let $f=F_{d_1}+F_{d_1-1}+\ldots+F_0$ be a degree $d_1 \geq 1$ polynomial in $S$ and $r=\dim_\Bbbk
  S^*/\Ann(f)$.
Let  $d_2$ be a non-negative integer. 
We have
\begin{itemize}
\item[(i)] $     \Ann(f)^{hom} \subseteq \Ann(f^{hom,d_2})\text{.}
  $
\item[(ii)] $(\Ann(f)^{hom})_{\leq d_2} = \Ann(f^{hom,d_2})_{\leq d_2} $.
\item[(iii)] If $d_2=d_1-1$, then $H(T^*/\Ann(f^{hom,d_2}), d_1) = r$ or $H(T^*/\Ann(f^{hom,d_2}),
  d_1)=r-1$. Moreover, in the latter case $\Ann(f^{hom,d_2}) =
  (\alpha_0^{d_1}+\rho)+\Ann(f)^{hom}$, where $\rho\in T^*_{d_1}$ has degree
  smaller than $d_1$ with respect to $\alpha_0$.
\end{itemize}
\end{lemma}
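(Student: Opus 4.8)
First I would set up notation and deal with the easier inclusions and identities. Part (i) is essentially a restatement of the familiar fact that differential operators annihilating $f$ still annihilate its homogenization: if $\theta \in \Ann(f)$, write $\theta^{hom} = \sum_j \alpha_0^j \theta_j$ where $\theta = \sum_j \theta_j$ with $\theta_j$ homogeneous of degree $\deg\theta - j$ (so $\theta_0$ is the top-degree part). Expanding $f^{hom,d_2} = \sum_i F_i x_0^{[d_2 + d_1 - i]}$ and contracting term by term, one checks that the divided-power bookkeeping makes $\theta^{hom}\lrcorner f^{hom,d_2} = 0$; the key identity is $\alpha_0^a \lrcorner x_0^{[b]} = x_0^{[b-a]}$ which matches the shift produced by homogenizing. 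This is the point where divided powers are essential (cf. Example~\ref{e:divided_powers}), so I would spell out this contraction carefully rather than wave at it. Part (i) gives immediately the inclusion ``$\subseteq$'' in Part (ii), so for (ii) it remains to prove the reverse inclusion in degrees $\leq d_2$, i.e. that $\Ann(f^{hom,d_2})_{\leq d_2} \subseteq (\Ann(f)^{hom})_{\leq d_2}$; equivalently, by a dimension count, that the two graded pieces have the same dimension in each degree $e \leq d_2$.

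For the dimension count in (ii), I would compute $H(T^*/\Ann(f^{hom,d_2}), e)$ for $e \le d_2$ directly. A homogeneous $\theta \in T^*_e$ with $e \le d_2$ annihilates $f^{hom,d_2} = \sum_i F_i x_0^{[d_2+d_1-i]}$; writing $\theta = \sum_a \alpha_0^a \theta_a$ with $\theta_a \in S^*_{e-a}$ and contracting, the divided-power structure decouples the conditions so that $\theta \lrcorner f^{hom,d_2} = 0$ forces, degree by degree, exactly the conditions cutting out $(\Ann(f)^{hom})_e$; here one uses $e \le d_2$ so that no ``overflow'' in the $x_0$-exponent occurs and all the relevant $x_0^{[\cdot]}$ terms are genuinely present and independent. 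Combined with Part (i), this pins down $\Ann(f^{hom,d_2})_{\le d_2} = (\Ann(f)^{hom})_{\le d_2}$. I expect the main obstacle of the whole lemma to be getting this contraction computation exactly right, keeping track of which divided powers survive and verifying the needed linear independence; it is routine but fiddly.

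For Part (iii), set $d_2 = d_1 - 1$, so $f^{hom, d_1-1} = f^{hom}$ in the $\kk = \CC$ identification, a form of degree $d_1$ in $T$. By Part (ii), $\Ann(f^{hom})$ and $\Ann(f)^{hom}$ agree in degrees $\le d_1 - 1$; by Corollary~\ref{c:HF_of_extension_stabilizes}, $H(T^*/\Ann(f)^{hom}, d_1) = r$. So $\Ann(f^{hom})_{d_1}$ either equals $(\Ann(f)^{hom})_{d_1}$ — giving $H(T^*/\Ann(f^{hom}), d_1) = r$ — or is strictly larger by exactly one dimension, since $\Ann(f^{hom})$ is generated in degrees $\le d_1$ (apply Lemma~\ref{l:extension_of_annihilator_generated_in_low_degrees} using that the top part $F_{d_1}$ is not a power of a linear form — wait, $f^{hom}$'s top form is $F_{d_1}$ which by hypothesis of the ambient theorems is not a power of a linear form, so this applies) combined with the fact that $f^{hom}$ is a single form whose apolar algebra is Gorenstein with symmetric Hilbert function $\le r$ in total, forcing any jump to be minimal. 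In that latter case, the extra generator $\theta \in \Ann(f^{hom})_{d_1} \setminus (\Ann(f)^{hom})_{d_1}$ cannot lie in $S^*$ (else it would be in $\Ann(f)_{d_1}\subseteq \Ann(f)^{hom}$) nor can its $\alpha_0$-leading term have positive degree in the other variables times a lower power of $\alpha_0$ matching something already in the ideal; analyzing the grevlex leading term as in Lemma~\ref{l:Hilbert_function_of_homogenization}, the new generator may be normalized to $\alpha_0^{d_1} + \rho$ with $\deg_{\alpha_0}\rho < d_1$, and then $\Ann(f^{hom}) = (\alpha_0^{d_1} + \rho) + \Ann(f)^{hom}$ follows because adding this one element already drops the Hilbert function to $r - 1$ and nothing further can be added (the ideal is generated in degrees $\le d_1$ and the two ideals already agree below $d_1$). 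I would present this last normalization via the monomial-order argument to keep it clean.
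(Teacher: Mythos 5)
Parts (i) and (ii) of your proposal are essentially the paper's argument: write $\Gamma=\sum_j\alpha_0^{d-j}\Theta_j$, expand $\Gamma\lrcorner f^{hom,d_2}$ via the identity $\alpha_0^a\lrcorner x_0^{[b]}=x_0^{[b-a]}$, and observe that for $\deg\Gamma\le d_2$ the $x_0$-exponents in the resulting sum are pairwise distinct, so the coefficient of each $x_0^{[\,\cdot\,]}$ must vanish separately, which is precisely the condition $(\Gamma|_{\alpha_0=1})\lrcorner f=0$. The paper carries this out in its Equation~(\ref{e:contracting}); your plan is a correct sketch of the same.

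Part (iii), however, has a genuine gap. First, you identify $f^{hom,d_1-1}$ with $f^{hom}$, ``a form of degree $d_1$ in $T$.'' That is not what $f^{hom,d_2}$ is: by Definition~\ref{d:homogenization_of_space}, $f^{hom,d_1-1}=\sum_{i}F_ix_0^{[2d_1-1-i]}$ has degree $2d_1-1$, not $d_1$ (compare Remark~\ref{rem:why_triangle}, which expresses $f^{hom,d_2}$ as $x_0^{d_2+d_1-\deg f}\cdot f^{hom}$ up to the $\btd$-rescaling). Consequently the socle degree of $\Apolar(f^{hom,d_1-1})$ is $2d_1-1$, and your appeal to ``Gorenstein with symmetric Hilbert function $\le r$ in total, forcing any jump to be minimal'' does not give what you want: symmetry only yields $H(d_1)=H(d_1-1)$, and you would still have to prove that $H(T^*/\Ann(f)^{hom},d_1-1)\in\{r-1,r\}$, which you do not. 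Second, you invoke Lemma~\ref{l:extension_of_annihilator_generated_in_low_degrees} (a statement about $\Ann(f)^{hom}$, not $\Ann(f^{hom,d_2})$) and explicitly import the hypothesis that $F_{d_1}$ is not a power of a linear form. That hypothesis is not part of Lemma~\ref{l:ideals_agree_in_low_degree}, and the paper's proof does not need it. Third, for the ``moreover'' statement your argument that ``nothing further can be added'' presupposes that $\Ann(f^{hom,d_1-1})$ is generated in degrees $\le d_1$, which is not justified and is essentially circular.

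The missing ingredient is the ``Observation'' that the paper extracts from the contraction formula: for any $k\ge 0$, a homogeneous element $\Gamma=\alpha_0^{d_1-1}\Theta_{1+k}+\dots+\Theta_{d_1+k}$ (i.e.\ with $\alpha_0$-degree $<d_1$) that annihilates $f^{hom,d_1-1}$ already lies in $\Ann(f)^{hom}$. This single fact, applied with $k=0$, shows that $\Ann(f^{hom,d_1-1})_{d_1}/(\Ann(f)^{hom})_{d_1}$ is at most one-dimensional (any representative can be normalized to $\alpha_0^{d_1}+\rho$ with $\deg_{\alpha_0}\rho<d_1$, and the difference of two such is killed by the Observation), giving the dichotomy $H\in\{r-1,r\}$; and applied for all $k\ge 0$ it yields the final equality $\Ann(f^{hom,d_1-1})=(\alpha_0^{d_1}+\rho)+\Ann(f)^{hom}$ directly, by subtracting $\Theta_0(\alpha_0^{d_1}+\rho)$ from an arbitrary homogeneous element to reduce its $\alpha_0$-degree below $d_1$. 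I recommend replacing the Gorenstein-symmetry and monomial-order detour by this Observation; it is a small extension of the same contraction computation you already carry out for (i) and (ii), and it avoids the extraneous hypothesis on $F_{d_1}$.
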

\begin{proof}

The proof of the lemma is based on the following calculation.
  Let $\Gamma=\alpha_0^d\Theta_0+\alpha_0^{d-1}\Theta_1+ \ldots +\Theta_d$, where $\Theta_i\in S^*_i$. We can rewrite $\Gamma \lrcorner f^{hom,d_2}$ as follows
  \begin{align}\label{e:contracting}\begin{split}
    \Gamma \lrcorner f^{hom,d_2} &= \sum_{e=0}^{d_1} \sum_{j=0}^{\operatorname{min}(d_1-e, d)} (\alpha_0^{d-j}\Theta_j) \lrcorner (x_0^{[d_1+d_2-(e+j)]}F_{e+j}) \\
    &= \sum_{e=0}^{d_1} \sum_{j=0}^{\operatorname{min}(d_1-e, d)} (\alpha_0^{d-j}\lrcorner x_0^{[d_1+d_2-(e+j)]}) (\Theta_j\lrcorner F_{e+j}) \\
    &= \sum_{e=0}^{\min(d_1,d_1+d_2-d)} \sum_{j=0}^{\operatorname{min}(d_1-e, d)} x_0^{[d_1+d_2-d-e]} (\Theta_j\lrcorner F_{e+j}) \\
    &=\sum_{e=0}^{\min(d_1,d_1+d_2-d)}x_0^{[d_1+d_2-d-e]}\sum_{j=0}^{\min(d_1-e, d)}\Theta_j\lrcorner F_{e+j}\text{.}
\end{split}\end{align}
\begin{itemize}
\item[(i)]
Let $\theta = \Theta_0 + \dots + \Theta_d \in \Ann(f)$, where $\Theta_i$ is  homogeneous of degree $i$. We show that $\theta^{hom} =
  \alpha_0^d\Theta_0+\alpha_0^{d-1}\Theta_1+ \ldots +\Theta_d$ is in the annihilator
  of $f^{hom,d_2}$. We put $\Gamma=\theta^{hom}$ in Equation \eqref{e:contracting}.

For every $e =0,\dots,{\min(d_1, d_1+d_2-d)}$ the sum $\sum_{j=0}^{\min(d_1-e, d)}\Theta_j\lrcorner
F_{e+j}$ is zero since $\theta\lrcorner f = 0$. Hence $\Gamma \lrcorner f^{hom,d_2} = 0$, and the claim follows.

  \item[(ii)] We have $\Ann(f)^{hom}\subseteq \Ann(f^{hom,d_2})$ by Part (i).
 Assume that $d\leq d_2$ and let $\Gamma=\alpha_0^d \Theta_0+\alpha_0^{d-1} \Theta_1+ \ldots  + \Theta_d$,
 where $ \Theta_i\in S^*_i$, be such that $\Gamma \lrcorner f^{hom,d_2} = 0$. We claim that
 $(\Gamma|_{\alpha_0=1}) \lrcorner f = 0$.
 
 By Equation \eqref{e:contracting} we have

\[
0  =\sum_{e=0}^{d_1}x_0^{[d_1+d_2-d-e]}\sum_{j=0}^{\min(d_1-e, d)}\Theta_j\lrcorner F_{e+j}.
\]
Since the exponents at $x_0$ are pairwise different, we have 
\[
\sum_{j=0}^{\min(d_1-e, d)} \Theta_j \lrcorner F_{e+j} = 0 \text{ for every } d_1\geq e \geq 0.
\]
This implies that $(\Gamma|_{\alpha_0=1}) \lrcorner f = 0.$

\item[(iii)] We start with the following

\noindent\textbf{Observation.} Assume that $k\geq 0$. For $\Gamma = \alpha_0^{d_1-1} \Theta_{1+k} + \alpha_0^{d_1-2}\Theta_{2+k}+ \ldots + \Theta_{d_1+k}$ we have
\[
\Gamma\lrcorner f^{hom,d_2} = 0 \Rightarrow \Gamma\in \Ann(f)^{hom}.
\]

Indeed, Equation \eqref{e:contracting} with $d_2=d_1-1, d=d_1+k$ becomes
\[
0=\Gamma\lrcorner f^{hom,d_2} =\sum_{e=0}^{d_1-k-1}x_0^{[d_1-k-e-1]}\sum_{j=0}^{d_1-e}\Theta_j\lrcorner F_{e+j}\text{.} 
\]
Since the exponents at $x_0$ are pairwise different, we have
\begin{equation}\label{e:almost_conclusion_2}
\sum_{j=0}^{d_1-e} \Theta_j \lrcorner F_{e+j} = 0 \text{ for every } d_1-k-1 \geq e \geq 0.
\end{equation}
For $d_1\geq e>d_1-k-1$ we have $\sum_{j=0}^{d_1-e} \Theta_j \lrcorner F_{e+j} =0$ since $\Theta_j = 0$ for $j<k+1$.
Together with Equation \eqref{e:almost_conclusion_2}, it implies that $\Gamma|_{\alpha_0=1}$ annihilates $f$ and thus $\Gamma\in \Ann(f)^{hom}$, as claimed.

We proceed to the proof of Part (iii). We claim that $\Ann(f^{hom,d_2})$ has at most one minimal homogeneous generator of degree $d_1$ modulo the generators of $(\Ann(f)^{hom})_{d_1}$. Indeed, by the above observation with $k=0$, any such generator is (up to a scalar) of the form $\alpha_0^{d_1}+\rho$, where $\alpha_0^{d_1}$ does not divide any monomial in $\rho$. Given two such generators, say $\alpha_0^{d_1}+\rho$ and $\alpha_0^{d_1}+\rho'$, we have $\alpha_0^{d_1}+\rho = (\alpha_0^{d_1}+\rho') + (\rho-\rho')$. From the above observation for $k=0$, it follows that $\rho-\rho'$ is in $(\Ann(f)^{hom})_{d_1}$, so the second new generator is not needed. Therefore, either 
\begin{align*}
  H(T^*/\Ann(f^{hom,d_2}),d_1) &= H(T^*/\Ann(f)^{hom},d_1) = r\text{, or} \\
  H(T^*/\Ann(f^{hom,d_2}),d_1) &= H(T^*/\Ann(f)^{hom},d_1)-1 = r - 1 \text{.}
\end{align*}

Now we assume $H(T^*/\Ann(f^{hom,d_2}), d_1) = r-1$. Then there exists a homogeneous
generator of $\Ann(f^{hom,d_2})$ of the form $\alpha_0^{d_1} + \rho$, where
$\alpha_0^{d_1}$ does not divide any monomial in $\rho$. It is enough to show
that for any $k\geq 0$, if $\Gamma = \alpha_0^{d_1-1} \Theta_{1+k} +
\alpha_0^{d_1-2}\Theta_{2+k}+ \ldots + \Theta_{d_1+k}$ annihilates $f^{hom, d_2}$, then $\Gamma
\in \Ann(f)^{hom}$. This is the observation from the beginning of the proof of
Part (iii).
\end{itemize}
\end{proof}

%The next example shows that without using our special version of homogenization $f \mapsto f^{hom,d_2}$ (Definition \ref{d:homogenization_of_space}), which uses divided powers, Lemma \ref{l:ideals_agree_in_low_degree}(i) 
%is not true.

\begin{example}\label{e:divided_powers}
  Observe that Lemma \ref{l:ideals_agree_in_low_degree} works only when $f \in \kk_{dp}[x_0,\dots,x_n]$. We show what goes wrong when we use polynomial rings and the usual homogenization.
  Let $f = x_1^3 + x_2 \in \mathbb{C}[x_1,x_2]$ and  $G = x_1^3  + x_0^2 x_2
  \in \mathbb{C}[x_0,x_1,x_2]$ be its homogenization.
  Then
  \begin{equation*}
    \Ann(f)^{hom} = (\alpha_2^2, \alpha_1\alpha_2, \alpha_1^3 - 6\alpha_0^2\alpha_2)\text{,}
  \end{equation*}
  and
  \begin{equation*}
    \Ann(G) = (\alpha_0^3, \alpha_2^2, \alpha_0\alpha_1, \alpha_1\alpha_2, \alpha_1^3 - 3\alpha_0^2\alpha_2)\text{.}
  \end{equation*}
  The element $\alpha_1^3 - 6\alpha_0^2\alpha_2 \in \Ann(f)^{hom}$ does not annihilate $G$.
\end{example}

The following lemma is a generalization of Lemma \ref{l:ideals_agree_in_low_degree}. Here we use a subspace $W\subseteq S_{\leq d_1}$ instead of a polynomial $f\in S_{\leq d_1}$.

\begin{lemma}\label{l:subspace_case_annihilators} 
Let $W\subseteq S_{\leq d_1}$ be a linear subspace with $d_1\geq 1$ and fix a non-negative integer $d_2$.
We have:
\begin{itemize}
\item[(i)] $\Ann(W)^{hom}\subseteq \Ann(W^{hom, d_2})$,
\item[(ii)] $(\Ann(W)^{hom})_{\leq d_2} = \Ann(W^{hom, d_2})_{\leq d_2}$.
\end{itemize}
\end{lemma}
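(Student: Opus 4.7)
The plan is to reduce both parts to the single-polynomial statement, Lemma \ref{l:ideals_agree_in_low_degree}, by expressing $W^{hom, d_2}$ as the image of $W$ under the linear map $\phi \colon S_{\leq d_1} \to T_{d_1+d_2}$ given by $F_{d_1} + \ldots + F_0 \mapsto \sum_{i=0}^{d_1} F_i x_0^{[d_2+d_1-i]}$. For each $f \in W$ of degree $d_f$, the image $\phi(f)$ coincides with the element $f^{hom, d_2 + d_1 - d_f}$ produced by Definition \ref{d:homogenization_of_space} applied to the single polynomial $f$, so Lemma \ref{l:ideals_agree_in_low_degree} is applicable with $d_2 + d_1 - d_f \geq d_2$ replacing the parameter $d_2$.

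For Part (i), since $\Ann(W) = \bigcap_{f \in W} \Ann(f)$, any $\theta \in \Ann(W)$ lies in $\Ann(f)$ for every $f \in W$, and Lemma \ref{l:ideals_agree_in_low_degree}(i) gives $\theta^{hom} \in \Ann(\phi(f))$. As $W^{hom, d_2} = \phi(W)$ is spanned by the $\phi(f)$'s, $\theta^{hom}$ annihilates all of $W^{hom, d_2}$. For Part (ii), the inclusion $\subseteq$ is immediate from Part (i). For the reverse, let $\Gamma \in \Ann(W^{hom, d_2})$ be homogeneous of degree $d \leq d_2$. For every $f \in W$, applying Lemma \ref{l:ideals_agree_in_low_degree}(ii) to $f$ (with the parameter $d_2 + d_1 - d_f \geq d$ in place of $d_2$) yields $\Gamma \in \Ann(f)^{hom}$. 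The substitution $\alpha_0 \mapsto 1$ sends $\Ann(f)^{hom}$ into $\Ann(f)$, so $\Gamma|_{\alpha_0=1} \in \Ann(f)$ for every $f \in W$, hence $\Gamma|_{\alpha_0=1} \in \Ann(W)$. Consequently $(\Gamma|_{\alpha_0=1})^{hom} \in \Ann(W)^{hom}$, and writing $\Gamma = \sum_i \alpha_0^{d-i}\Theta_i$ with $\Theta_i \in S^*_i$ one checks directly that $\Gamma = \alpha_0^{d-\deg(\Gamma|_{\alpha_0=1})} \cdot (\Gamma|_{\alpha_0=1})^{hom}$, so $\Gamma \in \Ann(W)^{hom}$ since the latter is an ideal.

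The main technical point, more a matter of bookkeeping than a genuine obstacle, is that elements of $W$ can have various degrees $d_f \leq d_1$ and so Lemma \ref{l:ideals_agree_in_low_degree} must be applied to each $f$ with parameter $d_2 + d_1 - d_f$; the key estimate $d_2 + d_1 - d_f \geq d_2$ is exactly what preserves the degree hypothesis on $\Gamma$ in Part (ii). Small edge cases such as $d_f = 0$ (falling outside the hypothesis $d_1 \geq 1$ of the previous lemma) can be handled directly by observing that $\phi(f)$ is then a scalar multiple of $x_0^{[d_1+d_2]}$, for which the analogues of Parts (i)--(ii) are trivial.
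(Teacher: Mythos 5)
Your proof is correct and takes essentially the same route as the paper: both reduce to Lemma~\ref{l:ideals_agree_in_low_degree} applied to each $f\in W$ separately (with the shifted parameters $(\deg f, d_1+d_2-\deg f)$), using that $\Ann(W)=\bigcap_{f\in W}\Ann(f)$ and, in Part~(ii), the standard fact that a homogeneous $\Gamma$ with $\Gamma|_{\alpha_0=1}\in\Ann(W)$ lies in $\Ann(W)^{hom}$. You are slightly more explicit than the paper about the edge case where $W$ contains elements of degree $0$ (to which Lemma~\ref{l:ideals_agree_in_low_degree} does not literally apply) and about the final identity $\Gamma=\alpha_0^{d-\deg(\Gamma|_{\alpha_0=1})}(\Gamma|_{\alpha_0=1})^{hom}$; both points are correct and worth noting.
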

\begin{proof}
\hfill
\begin{itemize}
\item[(i)] Let $f\in W$. Then $\Ann(W)\subseteq \Ann(f)$. Therefore, 
\[
\Ann(W)^{hom}\subseteq \Ann(f)^{hom}\subseteq \Ann \left(\sum_{i=0}^{\deg f}  F_i x_0^{[d_2+d_1-i]}\right)
\]by Lemma \ref{l:ideals_agree_in_low_degree} (i). Varying $f$, this shows that 
\begin{equation*}
  \Ann(W)^{hom}\subseteq \bigcap_{H \in W^{hom, d_2}}\Ann(H) = \Ann(W^{hom, d_2})\text{.}
\end{equation*}
\item[(ii)] Let $\Theta\in \Ann(W^{hom, d_2})_{\leq d_2}$ be homogeneous and let $f\in W$. Then 
\[
\Theta \in \Ann\left(\sum_{i=0}^{\deg f}  F_i x_0^{[d_2+d_1-i]}\right)_{\leq d_2}.
\]
Since $d_2\leq d_2+d_1-\deg f$, it follows from Lemma~\ref{l:ideals_agree_in_low_degree}(ii) that $\Theta|_{\alpha_0 = 1}\in \Ann(f)$. We stress that when we apply Lemma \ref{l:ideals_agree_in_low_degree}(ii), we use $(\deg f, d_1+d_2-\deg f)$ instead of $(d_1,d_2)$. Since $f$ was arbitrary, we obtain 
\[
\Theta|_{\alpha_0 = 1} \in \bigcap_{f\in W} \Ann(f) = \Ann(W).
\]
Therefore, $\Theta \in \Ann(W)^{hom}$. 
\end{itemize}
\end{proof}

We recall some notation from Subsection \ref{ss:rank_and_border_rank} which will be used in the proof of the following lemma.
Let $\operatorname{Hilb}^{h_r}_{T^*}$ denote the multigraded Hilbert scheme associated 
with the polynomial ring $T^*$ (with the standard $\mathbb{Z}$-grading) and the function $h_r$, as defined in Definition \ref{d:standard_hilbert_function}. Let $\Slip_{r, \mathbb{P}T_1}$ be the closure in
$\operatorname{Hilb}_{T^*}^{h_r}$ of points corresponding to saturated ideals of $r$ points. Let $\Hilb_{r}(\PP^n)$ denote the Hilbert scheme of $r$ points on $\PP^n$ and $\mathcal{H}ilb^{sm}_{r}(\PP^n)$ denote the closure of the set of smooth schemes. 

\begin{lemma}\label{l:smoothable_then_br_of_hom_at_most_r}
 Let $d_1 \geq 1, d_2 \geq 0 $ be integers and $W \subseteq S_{\leq d_1}$ a linear subspace. Let $r= \dim_\kk S^*/\Ann(W)$.
If $S^*/\Ann(W)$ is smoothable, then the border rank of $W^{hom, d_2}$ is at most $r$.
\end{lemma}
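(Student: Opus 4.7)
The plan is to apply the Border Apolarity Lemma (Proposition~\ref{p:border_apolarity}): it suffices to exhibit a homogeneous ideal $[I]\in\Slip_{r,\PP T_1}$ with $I\subseteq \Ann(W^{hom,d_2})$.

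First, set $J=\Ann(W)^{hom}\subseteq T^*$. By Lemma~\ref{l:homogenization_is_saturated} the ideal $J$ is saturated with respect to the irrelevant ideal, and by Corollary~\ref{c:HF_of_extension_stabilizes} we have $H(T^*/J,e)=r$ for every $e\geq d_1$. Consequently $J$ is the saturated ideal of a zero-dimensional subscheme $Z\subseteq \PP T_1$ of length $r$, which lies in the affine chart $D(\alpha_0)\cong \AA^n$ and corresponds there to $\Spec(S^*/\Ann(W))$. The smoothability hypothesis therefore ensures that $[Z]\in \Hilb^{sm}_r(\PP T_1)$.

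Next, I would invoke the morphism $\Hilb^{sm}_r(\PP T_1)\to\Slip_{r,\PP T_1}$ constructed in Appendix~\ref{s:hilbert} and apply it to $[Z]$, producing $[I]\in\Slip_{r,\PP T_1}$. The construction of this morphism is arranged so that the resulting $I$ satisfies the degreewise containment $I\subseteq J$: in any degree $e$ with $\dim_\kk T^*_e\leq r$ the Hilbert-function constraint $h_r$ forces $I_e=0\subseteq J_e$; in every other degree $I_e$ is the flat limit (inside a fixed Grassmannian of $T^*_e$) of $(I(Z_t))_e$ for a family $Z_t$ of $r$ distinct points specializing to $Z$, and by upper semicontinuity of $\dim H^0(\mathbb{P}^n,\mathcal{I}_{Z_t}(e))$ this limit lies in $I(Z)_e=J_e$. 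In particular, for $e\geq d_1$ both $I_e$ and $J_e$ have codimension $r$ in $T^*_e$, so the containment is an equality there.

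Combining $I\subseteq J$ with Lemma~\ref{l:subspace_case_annihilators}(i), which gives $J=\Ann(W)^{hom}\subseteq \Ann(W^{hom,d_2})$, we obtain
\[
  I\subseteq J\subseteq \Ann(W^{hom,d_2}).
\]
The Border Apolarity Lemma then yields $\brr(W^{hom,d_2})\leq r$, as required.

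The main obstacle is the construction of the morphism $\Hilb^{sm}_r(\PP T_1)\to\Slip_{r,\PP T_1}$ and the verification that the image ideal is contained in $J$ in every degree. Since this is precisely the content of Appendix~\ref{s:hilbert}, the argument in this section is merely the assembly of previously established algebraic lemmas together with that technical input.
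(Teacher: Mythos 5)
Your proposal has a genuine gap: you invoke ``the morphism $\Hilb^{sm}_r(\PP T_1) \to \Slip_{r, \PP T_1}$ constructed in Appendix~\ref{s:hilbert},'' but no such morphism appears there. Theorem~\ref{t:morphism} constructs a morphism from a locally closed subset $E$ of a Grassmannian $\Gr(l, S_{\leq m})$ to $\Hilb_r(\AA^n)$, given by $[W] \mapsto [\Spec S^*/\Ann(W)]$; this is unrelated to producing a section of the projection $\Slip_{r, \PP T_1} \to \Hilb^{sm}_r(\PP^n)$, and there is no reason to expect such a section to exist as a morphism.

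The paper's actual argument is simpler and runs in the opposite direction. One observes that the natural map $\Slip_{r, \PP T_1} \to \Hilb^{sm}_r(\PP^n)$, $[I] \mapsto [\Proj T^*/I]$, is \emph{surjective}; since $[\Proj T^*/\Ann(W)^{hom}] \in \Hilb^{sm}_r(\PP^n)$ by the smoothability hypothesis, some preimage $[J'] \in \Slip_{r, \PP T_1}$ exists. For that $J'$, the equality $(J')^{sat} = \Ann(W)^{hom}$ holds because $\Ann(W)^{hom}$ is saturated (Lemma~\ref{l:homogenization_is_saturated}) and defines the same subscheme, and then $J' \subseteq (J')^{sat} = \Ann(W)^{hom} \subseteq \Ann(W^{hom,d_2})$ by Lemma~\ref{l:subspace_case_annihilators}(i). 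The degreewise containment you try to prove is automatic from the trivial inclusion $J' \subseteq (J')^{sat}$, so no semicontinuity-of-linear-systems argument is needed. Your flat-limit sketch is, in essence, one way to see the surjectivity of $\Slip_{r,\PP T_1} \to \Hilb^{sm}_r(\PP^n)$, but it is misattributed and it produces a single preimage rather than a morphism; the rest of your assembly ($\Ann(W)^{hom} \subseteq \Ann(W^{hom,d_2})$ followed by the Border Apolarity Lemma) matches the paper.
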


\begin{proof}
Observe that $\Slip_{r, \PP T_1}$ surjects onto  $\mathcal{H}ilb^{sm}_{r}(\PP^n)$ 
  under the natural map 
  \[
  \operatorname{Hilb}_{T^*}^{h_r} \to \mathcal{H}ilb_{r}(\PP^n)
  \]
  given on closed points by $[I] \mapsto [\Proj T^*/I]$. Thus there is an ideal $[J]\in
  \Slip_{r, \PP T_1}$ with $J^{sat} = \Ann(W)^{hom}$ (we used Lemma \ref{l:homogenization_is_saturated}). Since $\Ann(W)^{hom}\subseteq
  \Ann(W^{hom, d_2})$  by Lemma \ref{l:subspace_case_annihilators}(i), we have $J\subseteq \Ann(W^{hom, d_2})$. Hence $[W^{hom, d_2}] \in \sigma_{r,\dim(W^{hom, d_2})}(\nu_{d}(\PP T_1))$ by the Border Apolarity Lemma \ref{p:border_apolarity}.
\end{proof}

\section{General results}\label{s:general_results}

In this section we apply the results of Section \ref{s:algebraic_results} to prove Theorems \ref{t:general_subspace} and \ref{t:general_polynomial}. These imply Theorems \ref{t:general_subspace_introduction} and \ref{t:general_polynomial_introduction}, respectively. 

We will use the following notation. Fix a positive integer $n$ and let
$S^*=\Bbbk[\alpha_1,\ldots,\alpha_n]\subseteq
T^*=\Bbbk[\alpha_0,\ldots,\alpha_n]$ be polynomial rings  over an algebraically closed field $\kk$ with graded dual rings
$S=\Bbbk_{dp}[x_1,\ldots,x_n]$ $\subseteq T=\Bbbk_{dp}[x_0,\ldots,x_n]$.
Recall the definitions of an $(s,n+1)$-standard Hilbert function given in  Definition \ref{d:standard_hilbert_function} and $W^{hom,d_2}$ from Definition~\ref{d:homogenization_of_space}.

We frequently use the following simple lemma.

\begin{lemma}\label{l:ideals_contain_each_other_if_agree_in_degree_d}
 Let $J, K \subseteq T^*$ be homogeneous ideals such that $J_s = K_s$ for a positive integer $s$. If $K$ is generated in degrees at most $s$, then $J_t \supseteq K_t$ for $t \geq s$. In particular:
 \begin{enumerate}[label=(\roman*)]
  \item if $J$ and $K$ have the same Hilbert polynomial then $J^{sat}=K^{sat}$,
  \item if the Hilbert polynomial of $T^*/K$ is zero, then the Hilbert polynomial of  $T^*/J$ is zero.
 \end{enumerate}

\end{lemma}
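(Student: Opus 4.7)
The plan is to establish the main inclusion by a simple lifting-to-degree-$s$ argument, and then derive (i) and (ii) as immediate consequences.

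For the main claim, I would argue as follows. Fix $t \geq s$ and take any $k \in K_t$. Since $K$ is generated in degrees at most $s$, one can write $k = \sum_i a_i k_i$ with $k_i \in K_{d_i}$, $d_i \leq s$, and $a_i \in T^*_{t-d_i}$. The crucial observation is that in the polynomial ring $T^*$ we have $T^*_{t-d_i} = T^*_{s-d_i}\cdot T^*_{t-s}$ (this uses $t \geq s \geq d_i$). So each $a_i$ decomposes as a sum $\sum_j b_{ij} c_{ij}$ with $b_{ij} \in T^*_{s-d_i}$ and $c_{ij} \in T^*_{t-s}$. Then $b_{ij} k_i \in K_s$, and by hypothesis $K_s = J_s$, hence $b_{ij} k_i \in J_s$. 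Multiplying by $c_{ij}$ keeps us inside $J_t$, and summing shows $k \in J_t$. Thus $K_t \subseteq J_t$ for all $t \geq s$.

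For part (i), the containment $K_t \subseteq J_t$ for $t \geq s$ combined with equality of Hilbert polynomials forces $\dim_\kk K_t = \dim_\kk J_t$, and hence $K_t = J_t$, for all sufficiently large $t$. Since the saturation of a homogeneous ideal with respect to the irrelevant ideal depends only on the graded pieces in high degree, this yields $J^{sat} = K^{sat}$. More concretely, if $f \in J^{sat}$, then $\mathfrak{m}^N f \subseteq J$ for some $N$, and by enlarging $N$ we can assume $\mathfrak{m}^N f$ sits in a degree range where $J$ and $K$ agree, so $f \in K^{sat}$; the reverse inclusion is symmetric.

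For part (ii), the inclusion $K_t \subseteq J_t$ for $t \geq s$ gives a surjection $(T^*/K)_t \twoheadrightarrow (T^*/J)_t$, so $\dim_\kk (T^*/J)_t \leq \dim_\kk(T^*/K)_t$. If the Hilbert polynomial of $T^*/K$ is zero, then $\dim_\kk(T^*/K)_t = 0$ for $t \gg 0$, forcing the same for $T^*/J$, which means its Hilbert polynomial is also zero.

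There is essentially no hard step here; the whole content of the lemma is the factorization $T^*_{t-d_i} = T^*_{s-d_i}\cdot T^*_{t-s}$, which reduces every high-degree element of $K$ to an element of $K_s = J_s$ multiplied back up. Parts (i) and (ii) are then formal consequences of the inclusion in high degrees.
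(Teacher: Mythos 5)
Your proof is correct and is essentially the same argument as the paper's, just spelled out in more detail: the paper's one-line proof is $J_t \supseteq (J_s)_t = (K_s)_t = K_t$, and your factorization $T^*_{t-d_i} = T^*_{s-d_i}\cdot T^*_{t-s}$ is exactly the justification (left implicit in the paper) for the final equality $(K_s)_t = K_t$ when $K$ is generated in degrees at most $s$. Your treatments of (i) and (ii) are the standard formal consequences, which the paper also leaves implicit.
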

\begin{proof}
 We have $J_t \supseteq (J_s)_t = (K_s)_t = K_t$.
\end{proof}

\begin{theorem}[Subspace case]\label{t:general_subspace} 
Let $W\subseteq S_{\leq d_1}$ be a linear subspace and $r=\dim_\Bbbk S^*/\Ann(W)$.
We have the following:
\begin{itemize}
\item[(i)] The cactus rank  of $W^{hom, d_2}$ is not greater than $r$.
\item[(ii)] If $d_2\geq d_1$, then there is no homogeneous ideal $J\subseteq \Ann(W^{hom, d_2})$  such that $T^*/J$ has an $(r-1,n+1)$-standard Hilbert function. In particular, the border cactus rank  of $W^{hom, d_2}$  equals $r$.
\item[(iii)] If $d_2\geq d_1+1$, and $J\subseteq \Ann(W^{hom, d_2})$ is a homogeneous ideal such that $T^*/J$ has an $(r,n+1)$-standard Hilbert function, then $J^{sat} = \Ann(W)^{hom}$.
\end{itemize}
\end{theorem}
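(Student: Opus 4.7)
The plan, in all three parts, is to use the saturated ideal $\Ann(W)^{hom}$ as the natural apolar ideal for $W^{hom,d_2}$. The relevant facts are: Lemma \ref{l:homogenization_is_saturated} says $\Ann(W)^{hom}$ is saturated; Corollary \ref{c:HF_of_extension_stabilizes} identifies the length of the corresponding subscheme with $r$; and Lemma \ref{l:subspace_case_annihilators} gives both $\Ann(W)^{hom}\subseteq \Ann(W^{hom,d_2})$ and the equality of these two ideals in degrees $\leq d_2$. Part~(i) is then immediate: Cactus Apolarity (Proposition \ref{p:cactus_apolarity}) applied to the length-$r$ zero-dimensional subscheme cut out by $\Ann(W)^{hom}$ gives $\crr(W^{hom,d_2})\leq r$.

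For Part~(ii), I would argue by contradiction. Suppose $J \subseteq \Ann(W^{hom,d_2})$ is homogeneous and $T^*/J$ has an $(r-1,n+1)$-standard Hilbert function. Condition~(c) of Definition \ref{d:standard_hilbert_function} forces $H(T^*/J,d_1)\leq r-1$. On the other hand, the inclusion $J_{d_1}\subseteq \Ann(W^{hom,d_2})_{d_1} = \Ann(W)^{hom}_{d_1}$ from Lemma \ref{l:subspace_case_annihilators}(ii) (using $d_1\leq d_2$), together with Corollary \ref{c:HF_of_extension_stabilizes}, yields $H(T^*/J,d_1)\geq r$, a contradiction. The ``in particular'' conclusion $\bcrr(W^{hom,d_2})=r$ is then formal: Part~(i) gives $\bcrr\leq \crr\leq r$, and any strict inequality $\bcrr\leq r-1$ would, via Weak Border Cactus Apolarity (Proposition \ref{p:weak_border_cactus_apolarity_lemma}), produce precisely the ideal just ruled out.

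For Part~(iii), let $J\subseteq \Ann(W^{hom,d_2})$ be homogeneous with $T^*/J$ having $(r,n+1)$-standard Hilbert function. The same degree-$d_1$ comparison, combined now with the upper bound $H(T^*/J,d_1)\leq r$ from condition~(c), forces $H(T^*/J,d_1)=r$; the axioms of a standard Hilbert function then propagate this equality to $H(T^*/J,e)=r$ for every $e\geq d_1$. Together with $J_e\subseteq \Ann(W)^{hom}_e$ for $e\leq d_2$, this gives the equality $J_{d_2}=\Ann(W)^{hom}_{d_2}$. Because $\Ann(W)\supseteq S^*_{d_1+1}$, the ideal $\Ann(W)^{hom}$ is generated in degrees at most $d_1+1\leq d_2$, so Lemma \ref{l:ideals_contain_each_other_if_agree_in_degree_d}(i) applied with $s=d_2$---the two ideals agree in that degree, share Hilbert polynomial $r$, and $\Ann(W)^{hom}$ is generated in degrees $\leq d_2$---concludes $J^{sat}=(\Ann(W)^{hom})^{sat}$; the latter is $\Ann(W)^{hom}$ itself by Lemma \ref{l:homogenization_is_saturated}. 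The main obstacle is essentially bookkeeping: lining up the thresholds $d_1$, $d_1+1$, $d_2$ with the axioms of an $(s,n+1)$-standard Hilbert function so that a single numerical equality at degree $d_1$ propagates first to every degree $\geq d_1$, and then, via the generation bound on $\Ann(W)^{hom}$, all the way to the saturations.
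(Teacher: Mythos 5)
Your proof is correct and follows essentially the same route as the paper's: Part (i) via $\Ann(W)^{hom}\subseteq\Ann(W^{hom,d_2})$, its saturatedness, and Cactus Apolarity; Part (ii) by comparing Hilbert functions at degree $d_1$ using Lemma \ref{l:subspace_case_annihilators}(ii) (which needs $d_1\le d_2$) and Corollary \ref{c:HF_of_extension_stabilizes}; Part (iii) by pinning down $J_{d_2}=(\Ann(W)^{hom})_{d_2}$, noting $\Ann(W)^{hom}$ is generated in degrees $\le d_1+1\le d_2$, and invoking Lemma \ref{l:ideals_contain_each_other_if_agree_in_degree_d}(i). The only cosmetic difference is that in Part (iii) you first show $H(T^*/J,d_1)=r$ and propagate it upward to reach $d_2$, while the paper compares directly at degree $d_2$; both are valid and amount to the same bookkeeping.
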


\begin{proof}
  \hfill
  \begin{itemize}
  \item[(i)]\label{i:1general_subspace} We have $\Ann(W)^{hom}\subseteq  \Ann(W^{hom, d_2})$ by Lemma \ref{l:subspace_case_annihilators}(i). Since the Hilbert polynomial of the algebra $T^*/\Ann(W)^{hom}$ is $r$ 
  by Corollary \ref{c:HF_of_extension_stabilizes}
  and the ideal $\Ann(W)^{hom}$ is saturated by Lemma \ref{l:homogenization_is_saturated}, the claim follows from the Cactus Apolarity Lemma \ref{p:cactus_apolarity}.
  
\item[(ii)]\label{i:2general_subspace} We have $H(T^*/\Ann(W)^{hom}, d_1) = r$ by Corollary \ref{c:HF_of_extension_stabilizes}. Therefore, by Lemma \ref{l:subspace_case_annihilators}(ii) we have 
\[
H(T^*/\Ann(W^{hom, d_2}), d_1) = r.
 \]
 Thus there exists no ideal $J\subseteq \Ann(W^{hom, d_2})$ such that $T^*/J$ has an $(r-1, n+1)$-standard Hilbert function.
  By the Weak Border Cactus Apolarity Lemma \ref{p:weak_border_cactus_apolarity_lemma} we get $\bcrr(W^{hom, d_2}) \geq r$, which together with Part (i) implies that $\bcrr(W^{hom, d_2}) = r$.

\item[(iii)]\label{i:3general_subspace} Assume that $J\subseteq \Ann(W^{hom, d_2})$ is such that $T^*/J$ has an $(r,n+1)$-standard Hilbert function. By Lemma \ref{l:subspace_case_annihilators}(ii) and Corollary \ref{c:HF_of_extension_stabilizes}
\[
H(T^*/\Ann(W^{hom, d_2}),d_2)=H(T^*/\Ann(W)^{hom}, d_2)=r.
\] 
In particular $J_{d_2} = (\Ann(W)^{hom})_{d_2}$. 
Since $\Ann(W)^{hom}$ is generated in degrees at most  $d_1+1 \leq d_2$ and the ideals $J$ and $\Ann(W)^{hom}$ have the same Hilbert polynomial, it follows from Lemma \ref{l:ideals_contain_each_other_if_agree_in_degree_d}(i) that 
$J^{sat} = (\Ann(W)^{hom})^{sat}=\Ann(W)^{hom}$. 
The last equality is true by Lemma \ref{l:homogenization_is_saturated}.
\end{itemize}
\end{proof}

Now we can state Theorem \ref{t:general_polynomial_introduction} in the following form, which includes the uniqueness statement hinted at in the introduction.

\begin{theorem}[Polynomial case]\label{t:general_polynomial}
 Let 
$f=F_{d_1}+F_{d_1-1}+\ldots +F_0 \in S=\kk_{dp}[x_1,...,x_n]$ be a degree $d_1 \geq 1$ polynomial, $r=\dim_\Bbbk S^*/\Ann(f)$. For a non-negative integer $d_2$, we have the following:
\begin{itemize}
\item[(i)] The cactus rank  of $f^{hom,d_2}$ is not greater than $r$.
\item[(ii)] If $d_2\geq d_1$, then there is no homogeneous ideal $J\subseteq \Ann(f^{hom,d_2})$ such that $T^*/J$ has an $(r-1,n+1)$-standard Hilbert function. 
Moreover, the same is true for $d_2=d_1-1$ if we assume further that  $F_{d_1}$ is not a power  of a linear form.

In particular, in both cases the border cactus rank of $f^{hom,d_2}$ equals $r$.
\item[(iii)] Assume that $F_{d_1}$ is not a power of a linear form. If  $d_2\geq d_1$ and $J\subseteq \Ann(f^{hom,d_2})$ is a homogeneous ideal  such that $T^*/J$ has an $(r,n+1)$-standard Hilbert function, then $J^{sat} = \Ann(f)^{hom}$. Moreover, the same is true for $d_2=d_1-1$ if we assume further that $r>2d_1$.
\end{itemize}

\end{theorem}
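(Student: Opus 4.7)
For all parts when $d_2 \geq d_1$, the plan is to deduce the statement from the subspace version (Theorem \ref{t:general_subspace}) applied to $W = \langle f \rangle$; with this choice $\dim_\kk S^*/\Ann(W) = r$, $W^{hom, d_2} = \langle f^{hom, d_2}\rangle$, and $\Ann(W^{hom, d_2}) = \Ann(f^{hom, d_2})$. Parts (i) and (ii) then follow at once. Part (iii) here is slightly stronger than Theorem \ref{t:general_subspace}(iii) because it requires only $d_2 \geq d_1$ in place of $d_2 \geq d_1+1$; the gain comes from the non-power-of-linear-form hypothesis on $F_{d_1}$, which by Lemma \ref{l:extension_of_annihilator_generated_in_low_degrees} implies that $\Ann(f)^{hom}$ is generated in degrees at most $d_1$ rather than $d_1+1$. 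The proof of Theorem \ref{t:general_subspace}(iii) then carries over with degree $d_1$ playing the role of $d_2$: the equalities $H(T^*/\Ann(f^{hom,d_2}),d_1) = H(T^*/\Ann(f)^{hom},d_1) = r$ (from Corollary \ref{c:HF_of_extension_stabilizes} and Lemma \ref{l:ideals_agree_in_low_degree}(ii)) force $J_{d_1} = (\Ann(f)^{hom})_{d_1}$, and Lemma \ref{l:ideals_contain_each_other_if_agree_in_degree_d}(i) closes the argument.

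The new case $d_2 = d_1 - 1$ will be handled via the finer Lemma \ref{l:ideals_agree_in_low_degree}(iii), which gives a dichotomy at degree $d_1$: either $\Ann(f^{hom,d_2})_{d_1} = (\Ann(f)^{hom})_{d_1}$ (Case A, with $H(T^*/\Ann(f^{hom,d_2}),d_1)=r$), or there is a single extra generator $\alpha_0^{d_1} + \rho$ with $\rho \in T^*_{d_1}$ of $\alpha_0$-degree $< d_1$, giving $\Ann(f^{hom,d_2})_{d_1} = (\Ann(f)^{hom})_{d_1} \oplus \kk \cdot (\alpha_0^{d_1}+\rho)$ (Case B, with $H = r-1$). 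In Case A the argument of the previous paragraph applies verbatim for both Parts (ii) and (iii); the remaining work is Case B.

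In Case B of Part (ii), the non-power-of-linear-form hypothesis is essential. Combined via Lemma \ref{l:extension_of_annihilator_generated_in_low_degrees} with the extra generator $\alpha_0^{d_1}+\rho$, it ensures $\Ann(f^{hom,d_2})$ is generated in degrees at most $d_1$. For any $J \subseteq \Ann(f^{hom,d_2})$ with an $(r-1,n+1)$-standard Hilbert function, the chain $r-1 = H(T^*/\Ann(f^{hom,d_2}),d_1) \leq H(T^*/J,d_1) \leq r-1$ forces $J_{d_1} = \Ann(f^{hom,d_2})_{d_1}$, and generation propagates this to $J_e = \Ann(f^{hom,d_2})_e$ for every $e \geq d_1$. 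But $T^*/\Ann(f^{hom,d_2})$ is Artinian with socle in degree $2d_1-1$, so $H(T^*/J,e) = 0$ for $e > 2d_1-1$, whereas the standard Hilbert function stabilizes at $r-1 \geq 1$---a contradiction.

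Case B of Part (iii) under $r > 2d_1$ will be the main obstacle. To conclude $J^{sat} = \Ann(f)^{hom}$, the plan is to prove the inclusion $J_{d_1} \subseteq (\Ann(f)^{hom})_{d_1}$; the dimension count $H(T^*/J,d_1) \leq r = H(T^*/\Ann(f)^{hom},d_1)$ will then promote it to equality, after which Lemma \ref{l:ideals_contain_each_other_if_agree_in_degree_d}(i) applied with $K = \Ann(f)^{hom}$ (generated in degrees at most $d_1$) closes the proof. Suppose for contradiction that $J_{d_1}$ contains some $\alpha_0^{d_1}+\rho'$ with $\rho'$ of $\alpha_0$-degree $< d_1$ (the only way to leave $(\Ann(f)^{hom})_{d_1}$, by the direct sum decomposition above). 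Apply Lemma \ref{l:hilbert_function_and_contraction} to this element to obtain $r = H(T^*/J,e) \leq \sum_{i=0}^{d_1-1} H(S^*/J^c, e-i)$ for $e \gg 0$, where $J^c = J \cap S^*$. Since $J^c$ lies in the Artinian ideal $\Ann(f^{hom,d_2}) \cap S^* = \bigcap_{i=0}^{d_1} \Ann(F_i)$, the Hilbert function of $S^*/J^c$ can be controlled carefully; the hardest step will be to show that its stable value is at most $2$, so that the displayed inequality yields $r \leq 2d_1$, contradicting $r > 2d_1$. The precise estimate that makes the threshold $2d_1$ tight is the key analytical content of the proof.
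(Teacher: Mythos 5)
Your proposal follows the same route as the paper: reduce to the subspace version where possible, use Lemma~\ref{l:ideals_agree_in_low_degree}(iii) to set up the dichotomy at degree $d_1$ when $d_2 = d_1 - 1$, argue via generation in degrees at most $d_1$ (Lemma~\ref{l:extension_of_annihilator_generated_in_low_degrees}) and Lemma~\ref{l:ideals_contain_each_other_if_agree_in_degree_d} for the easy sub-cases, and use Lemma~\ref{l:hilbert_function_and_contraction} with the contraction $J^c = J\cap S^*$ in the hard sub-case of Part~(iii). Parts~(i), (ii), and the sub-cases of Part~(iii) you handle are correct.

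The genuine gap is precisely the step you flag as ``the key analytical content'': you assert that the stable value of $H(S^*/J^c, \cdot)$ is at most $2$ but do not prove it, and this is where the real work is. Observing that $J^c$ lies in $\bigcap_{i=0}^{d_1}\Ann(F_i)$ is not, by itself, enough: that intersection is just the contracted ideal $\Ann(f^{hom,d_2})^c$, and you still need a quantitative estimate on the Hilbert function of $S^*/J^c$ degree by degree. The paper's mechanism is a two-step codimension count followed by Macaulay's growth bound. First, since $J_{d_1}$ contains $\alpha_0^{d_1}+\rho'$ and has codimension at most $1$ inside $\Ann(f^{hom,d_2})_{d_1}$, one gets $\codim_{(\Ann(f^{hom,d_2})^c)_{d_1}}(J^c)_{d_1}\leq 1$. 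Second, $\Ann(F_{d_1})_{d_1}\subseteq (\Ann(f^{hom,d_2})^c)_{d_1}$ and $H(S^*/\Ann(F_{d_1}),d_1)=1$, so $H(S^*/J^c,d_1)\leq 2$. Finally, because $F_{d_1}$ is not a power of a linear form forces $d_1\geq 2$, Macaulay's bound propagates $H(S^*/J^c,d)\leq 2$ to all $d\geq d_1$, and then Lemma~\ref{l:hilbert_function_and_contraction} gives $r = H(T^*/J,e)\leq 2d_1$ for $e\geq 2d_1-1$, the desired contradiction. Without this chain of estimates the argument does not close; as written, your proposal is a correct plan, not a proof, of Case~B of Part~(iii).
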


\begin{proof}
  \hfill
\begin{itemize}
\item[(i)] 
It follows directly from Theorem \ref{t:general_subspace}(i).
\item[(ii)] 
If $d_2\geq d_1$, 
then the claim follows from Theorem \ref{t:general_subspace}(ii). 

Suppose that $d_2=d_1-1$ and $F_{d_1}$ is not a power of a linear form. If $H(T^*/\Ann(f^{hom,d_2}), d_1) = r$, then there is no ideal $J\subseteq \Ann(f^{hom,d_2})$ such that $T^*/J$ has an $(r-1,n+1)$-standard Hilbert function. 
Suppose that
\begin{equation}\label{eq:notequal_r}
 H(T^*/\Ann(f^{hom,d_2}), d_1) \neq r.
\end{equation}
From Lemma \ref{l:extension_of_annihilator_generated_in_low_degrees} it follows that $\Ann(f)^{hom}$ is generated in degrees at most $ d_1$.
Then Equation \eqref{eq:notequal_r} and Lemma \ref{l:ideals_agree_in_low_degree}(iii) together imply that  $H(T^*/\Ann(f^{hom,d_2}), d_1)=r-1$ and $\Ann(f^{hom,d_2})$ has no minimal generator of degree greater than $d_1$. 
Let $J\subseteq \Ann(f^{hom,d_2})$ be a homogeneous ideal such that $T^*/J$ has an $(r-1,n+1)$-standard Hilbert function. Then we have $J_{d_1}=\Ann(f^{hom,d_2})_{d_1}$ since $H(T^*/\Ann(f^{hom,d_2}), d_1)=r-1=H(T^*/J, d_1)$.
Since the Hilbert polynomial of $T^*/\Ann(f^{hom,d_2})$ is $0$, it follows from Lemma \ref{l:ideals_contain_each_other_if_agree_in_degree_d}(ii) that the Hilbert polynomial of $T^*/J$ is $0$. This contradicts the fact that $T^*/J$ has an $(r-1, n+1)$-standard Hilbert function.

From the Weak Border Cactus Apolarity Lemma \ref{p:weak_border_cactus_apolarity_lemma},
it follows that $\operatorname{bcr}(f^{hom,d_2})\geq r$ and from Part~(i) we have an equality.
\item[(iii)] 
Suppose that $J\subseteq \Ann(f^{hom,d_2})$ is such that $T^*/J$ has an $(r,n+1)$-standard Hilbert function.
We will consider the following five cases:
\begin{enumerate}[label=(\Roman*)]
  \item $d_2\geq d_1$;
  \item $d_2=d_1-1$ and $H(T^*/\Ann(f^{hom, d_2}), d_1) = r$;
  \item $d_2=d_1-1$, $H(T^*/\Ann(f^{hom, d_2}),d_1) = r-1$ and $H(T^*/J, d_1) = r-1$;
  \item $d_2=d_1-1$, $H(T^*/\Ann(f^{hom, d_2}),d_1) = r-1$, $H(T^*/J, d_1) = r$ and $J_{d_1}=(\Ann(f)^{hom})_{d_1}$;
  \item $d_2=d_1-1$, $H(T^*/\Ann(f^{hom, d_2}),d_1) = r-1$, $H(T^*/J, d_1) = r$ and $J_{d_1}\neq (\Ann(f)^{hom})_{d_1}$.
\end{enumerate}
We explain that these are the only possible cases. Suppose that $d_2=d_1-1$ and 
\[
H(T^*/\Ann(f^{hom, d_2}), d_1) \neq r. 
\]
Then 
\[
 H(T^*/\Ann(f^{hom, d_2}), d_1) = r-1
\]
by Lemma \ref{l:ideals_agree_in_low_degree}(iii).
It suffices to show that if $H(T^*/\Ann(f^{hom,d_2}),d_1) = r - 1$, then $H(T^*/J, d_1) \in \{r-1, r\}$.
This holds since $T^*/J$ has an $(r,n+1)$-standard Hilbert function and $J\subseteq \Ann(f^{hom,d_2})$.

We prove that $J^{sat}=\Ann(f)^{hom}$ in each case.
\begin{enumerate}[label=(\Roman*)]
\item  By Lemma~\ref{l:subspace_case_annihilators}(ii) and Corollary~\ref{c:HF_of_extension_stabilizes}
\[
H(T^*/\Ann(f^{hom,d_2}),d_2)=H(T^*/\Ann(f)^{hom}, d_2)=r.
\] 
In particular $J_{d_2} = (\Ann(f)^{hom})_{d_2}$. 
The ideal $\Ann(f)^{hom}$ is generated in degrees at most  $d_1 \leq d_2$, by Lemma~\ref{l:extension_of_annihilator_generated_in_low_degrees}.  The ideals $J$ and $\Ann(f)^{hom}$ have the same Hilbert polynomial, so by Lemma~\ref{l:ideals_contain_each_other_if_agree_in_degree_d}(i), we have $J^{sat} = (\Ann(f)^{hom})^{sat}=\Ann(f)^{hom}$. 
The last equality is true by Lemma~\ref{l:homogenization_is_saturated}.

\item We have $J_{d_1} = \Ann(f^{hom,d_2})_{d_1} = (\Ann(f)^{hom})_{d_1}$. 
The ideal $\Ann(f)^{hom}$ is generated in degrees at most $d_1$ by Lemma \ref{l:extension_of_annihilator_generated_in_low_degrees}.
The ideals $J$ and $\Ann(f)^{hom}$ have the same Hilbert polynomial, so by Lemma \ref{l:ideals_contain_each_other_if_agree_in_degree_d}(i), we have $J^{sat} = (\Ann(f)^{hom})^{sat}=\Ann(f)^{hom}$. 
 The last equality is true by Lemma~\ref{l:homogenization_is_saturated}.

\item We have $J_{d_1} = \Ann(f^{hom,d_2})_{d_1}$ and the ideal $\Ann(f^{hom,d_2})$ is generated in degrees at most $d_1$ by Lemmas~\ref{l:ideals_agree_in_low_degree}(iii) and
\ref{l:extension_of_annihilator_generated_in_low_degrees}.
The Hilbert polynomial of $T^*/\Ann(f^{hom,d_2})$ is zero, so by Lemma~\ref{l:ideals_contain_each_other_if_agree_in_degree_d}(ii) the Hilbert polynomial of $T^*/J$ is zero. This contradicts the assumption that $T^*/J$ has an $(r,n+1)$-standard Hilbert function.

\item Proof is as in (II).

\item The ideal $J$ has a generator of the form $\alpha_0^{d_1}+\rho$, where $\rho\in
T^*_{d_1}$ has degree smaller than $d_1$ with respect to $\alpha_0$ (again by Lemma \ref{l:ideals_agree_in_low_degree}(iii)). Since
$\operatorname{codim}_{\Ann(f^{hom,d_2})_{d_1}}J_{d_1} = 1$, we have
\[
 \operatorname{codim}_{(\Ann(f^{hom,d_2})^c)_{d_1}}(J^c)_{d_1} \leq 1.
\]
Here $K^c$ denotes
$K\cap S^*$ for any ideal $K\subseteq T^*$. We shall consider $I =
\Ann(F_{d_1})$. We have $I_{d_1}\subseteq (\Ann(f^{hom,d_2})^c)_{d_1}$ and $H(S^*/I,
d_1)=1$. Therefore, we have 
\[
H(S^*/J^c, d_1)\leq H(S^*/\Ann(f^{hom,d_2})^c, d_1) + 1\leq H(S^*/I, d_1) + 1 = 2\text{.}
\] 
Since $d_1\geq 2$, it follows from the Macaulay's bound (\cite{BH98},
Theorem 4.2.10) that  for $d\geq d_1$ we have $H(S^*/J^c, d)\leq 2$. Hence
\[
H(T^*/J, d) \leq H(S^*/J^c, d)+\ldots +H(S^*/J^c, d-(d_1-1)) \leq 2d_1 < r
\]
for $d\geq 2d_1-1$.  We used here Lemma \ref{l:hilbert_function_and_contraction}. This gives a contradiction since the Hilbert polynomial of $T^*/J$ is equal to $r$.

\end{enumerate}
\end{itemize}
\end{proof}

The following examples show that the assumptions of Theorem \ref{t:general_polynomial} are in general as sharp as possible.

\begin{example}\label{ex:example_1}
Let $S=\Bbbk_{dp}[x_1,x_2]$, $f=x_1^{[2]}+x_1x_2$ and assume $d_2=d_1-2=0$. Then $r=4$ and $\Ann(f^{hom,0}) = (\alpha_0, \alpha_1^2-\alpha_1\alpha_2,\alpha_2^2)$. Consider the ideal $J=(\alpha_0^2, \alpha_0\alpha_1, \alpha_1^2-\alpha_1\alpha_2)$. Then $\kk[\alpha_0, \alpha_1, \alpha_2]/J$ has Hilbert function $h_3$. Therefore, the assumption $d_2\geq d_1-1$ in Theorem \ref{t:general_polynomial} (ii) cannot be weakened in general.
\end{example}

\begin{example}
As in Example \ref{ex:example_1}, let $S=\Bbbk_{dp}[x_1,x_2]$ and $f=x_1^{[2]}+x_1x_2$. Then  $r=4 = 2d_1$. If $d_2=1$, then $\Ann(f^{hom,1}) = (\alpha_0^2, \alpha_1^2-\alpha_1\alpha_2,\alpha_2^2)$. The ideal $J=(\alpha_0^2, \alpha_1^2-\alpha_1\alpha_2)$ is saturated 
and $\kk[\alpha_0, \alpha_1, \alpha_2]/J$ has Hilbert function $h_4$. However, $J$ does not contain $\alpha_2^2 \in \Ann(f)^{hom}$. Therefore, the assumption $r>2d_1$ in Theorem \ref{t:general_polynomial} (iii) cannot be skipped. 
\end{example}

\begin{example}
Let $S=\Bbbk_{dp}[x_1,x_2,x_3]$ and $f=x_1x_2x_3$. Then $r=8 > 6 = 2d_1$. If $d_2=d_1-2=1$, then $\Ann(f^{hom,1})=(\alpha_0^2, \alpha_1^2, \alpha_2^2, \alpha_3^2)$. Consider the ideal $J=(\alpha_0^3, \alpha_0^2\alpha_1, \alpha_1^2,\alpha_0^2\alpha_2, \alpha_2^2,\alpha_0^2\alpha_3)$. Then
$\kk[\alpha_0, \alpha_1, \alpha_2, \alpha_3]/J$ has Hilbert function $h_8$ and $J^{sat}= (\alpha_0^2, \alpha_1^2, \alpha_2^2) \neq \Ann(f)^{hom}$. Therefore, the assumption $d_2\geq d_1-1$ in Theorem \ref{t:general_polynomial} (iii) cannot be weakened in general.
\end{example}

\section[\texorpdfstring{14-th cactus variety of $d$-th Veronese embedding of $\mathbb{P}^n$}{14-th cactus variety of 
d-th Veronese embedding of projective space of dimension n}]{14-th cactus variety of $d$-th Veronese embedding of $\mathbb{P}^n$}\label{s:14thsecant}

In this section we assume that $d \geq 5$ and $n\geq 6$ are integers. We show that the cactus variety $\kappa_{14}(\nu_d(\mathbb{P}^n))$ has two irreducible components, one of which is the secant variety $\sigma_{14}(\nu_d(\PP^n))$ and we describe the other one (see Theorem \ref{t:segre-veronese_map_general_n}).
Furthermore for $n\geq 6$ and $d \geq 6$ we present an algorithm (Theorem \ref{t:algorithm}) for deciding whether 
$[G] \in \kappa_{14}(\nu_{d}(\mathbb{P}^n))$ is in $ \sigma_{14}(\nu_{d}(\mathbb{P}^n))$.

Since the results of this section depend on the paper \cite{Jel16}, 
in which the author works over the field of complex numbers, in this section we will assume that $\Bbbk = \CC$. 
In that case, the graded dual ring of a polynomial ring is isomorphic to a polynomial ring.

Let $\mathcal{H}ilb^{Gor}_{r}(X)$, where $X=\mathbb{A}^n$ or $\mathbb{P}^n$, denote the open subset of the Hilbert scheme of $r$ points on $X$ consisting of Gorenstein
subschemes. Since we shall use the results of Casnati, Jelisiejew, and Notari from \cite{CJN15} in this section, we give a brief summary.
\begin{theorem}[Casnati, Jelisiejew, Notari, \cite{CJN15}]\label{t:1661_summary}
  We have the following:
  \begin{enumerate}[label=(\roman*)]
    \item the scheme $\mathcal{H}ilb_{r}^{Gor}(\AA^n)$ is irreducible for $r < 14$ and any $n\in \mathbb{N}$,
    \item the scheme $\mathcal{H}ilb_{14}^{Gor}(\AA^n)$ is reducible if and only if $n\geq 6$,
    \item if the scheme $\mathcal{H}ilb_{14}^{Gor}(\AA^n)$ is reducible, it has
      two irreducible components: $\mathcal{H}ilb_{14}^{Gor,sm}(\AA^n)$, the
      closure of the set of smooth schemes, and $\mathcal{H}^n_{1661,af}$, the
      closure of the set of local algebras with local Hilbert function
      $(1,6,6,1)$.
  \end{enumerate}
\end{theorem}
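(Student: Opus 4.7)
The plan is to extract the three parts directly from the corresponding results in \cite{CJN15}; what follows is a sketch of the strategy one would reproduce if working from scratch. The first observation is that a zero-dimensional Gorenstein subscheme of $\AA^n$ decomposes uniquely as a disjoint union of punctual pieces, so $\mathcal{H}ilb_r^{Gor}(\AA^n)$ admits a stratification by unordered tuples of local Gorenstein algebras whose total length is $r$. One refines this stratification further by the (symmetric, since Gorenstein) local Hilbert functions $(1, h_1, \ldots, h_s, 1)$ of the punctual pieces. Because the smoothable locus is closed under disjoint unions and under varying the support points, the problem reduces to analysing the punctual Gorenstein strata one Hilbert function at a time.

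For Part~(i) I would enumerate all symmetric Hilbert functions summing to some $r < 14$. The list is short, and for each entry one exhibits a concrete smoothing---either by deforming a cubic/quartic dual generator so that the Hilbert function collapses to $(1,1,\ldots,1)$ and then continuing to a reduced scheme, or by producing an explicit flat one-parameter family whose general fibre is smooth. Combined with the disjoint-union reduction, this shows every stratum lies in $\mathcal{H}ilb_r^{Gor,sm}(\AA^n)$, hence irreducibility.

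For Parts~(ii) and (iii) the critical Hilbert function is $(1,6,6,1)$: it is the first symmetric Hilbert function summing to $14$ whose middle entry $6$ forces embedding dimension $n\geq 6$, so for $n<6$ the exotic stratum is empty and Part~(i) arguments give irreducibility. For $n\geq 6$, I would show that a generic Gorenstein algebra $A$ with Hilbert function $(1,6,6,1)$ is \emph{not} smoothable. This is established by computing the tangent space $\Hom_{R^*}(I, R^*/I)$ at $[A]$ and comparing with $\dim \mathcal{H}^n_{1661,af}$; the resulting dimension count shows $A$ cannot lie in the closure of smooth schemes, so $\mathcal{H}^n_{1661,af}$ is a separate irreducible component. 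A parallel case analysis confirms that every other symmetric Hilbert function summing to $14$ lies in the smoothable component, so exactly two components arise.

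The main obstacle is precisely the non-smoothability of a general $(1,6,6,1)$ algebra together with the matching dimension estimate forcing two distinct components. This is the technical heart of \cite{CJN15}: it uses the classification of nets of quadrics in six variables up to $\operatorname{GL}_6$-action together with a careful obstruction calculation, and would be by far the most substantial step to reproduce. The rest of the argument is a finite and fairly mechanical case analysis on Hilbert functions.
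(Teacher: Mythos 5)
The paper does not prove this statement at all—it is a summary of results from \cite{CJN15}, and the ``proof'' in the paper is just the citation ``\cite[Thm.~A and B]{CJN15} for Parts (i) and (ii); Part (iii) follows from \cite[Thm.~6.17 and Lem.~6.19]{CJN15}.'' Your sketch is therefore an attempt to reconstruct the argument in the cited reference, not to match anything the present paper actually writes out. The overall strategy you describe (decompose into punctual pieces, stratify by local Hilbert function, exhibit smoothings for small strata, and isolate $(1,6,6,1)$ as the obstruction via a tangent-space computation) is indeed the correct shape of the argument in \cite{CJN15}, and the dimension count $\dim\Hom_{R^*}(I,R^*/I) = 14n-8 < 14n$ showing a general $(1,6,6,1)$ algebra is non-smoothable is exactly the crux.

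There is, however, one genuine error in your reduction. You assert that the local Hilbert function of an Artinian Gorenstein local algebra is symmetric ``since Gorenstein.'' This is false: symmetry of the Hilbert function holds for \emph{graded} Artinian Gorenstein algebras, but the Hilbert function of the associated graded ring of a \emph{local} Artinian Gorenstein algebra need not be symmetric. Iarrobino's symmetric decomposition theorem (cited in the present paper as \cite[Thm.~2.3]{CJN15}) says the Hilbert function decomposes as a sum of symmetric pieces; the total need not be symmetric. The paper itself gives $(1,7,5,1)$ as an explicit non-symmetric local Gorenstein Hilbert function in the remark after Corollary~\ref{c:segre-veronese_map}. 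As a consequence, if you stratify only by symmetric Hilbert functions you will miss most strata, and the case analysis in Part~(i) and the ``every other Hilbert function summing to 14'' step in Part~(iii) both break. The correct organizing principle is Iarrobino's symmetric decomposition, not symmetry of the Hilbert function itself.

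Two smaller remarks: the claim that the middle entry $6$ ``forces embedding dimension $n\geq 6$'' is really a statement about the entry in degree one, not the middle; and the phrase ``classification of nets of quadrics in six variables'' does not accurately describe the tangent-space obstruction calculation in \cite{CJN15}/\cite{Jel16}—what is actually computed is the dimension of $\Hom_{R^*}(I,R^*/I)$ for a general cubic dual generator, together with the dimension of the locus of such algebras.
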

\begin{proof}
  See \cite[Thm. A and B]{CJN15} for Parts (i) and (ii). Part (iii) follows from \cite[Thm. 6.17 and Lem. 6.19]{CJN15}. For a precise proof, see \cite[p. 1567]{CJN15}. 
\end{proof}
In particular, it follows from Theorem \ref{t:1661_summary} that $\mathcal{H}ilb_r^{Gor}(\PP^n)$ is irreducible for $r < 14$. Therefore, in that case,
$\kappa_r(\nu_d(\mathbb{P}^n)) = \sigma_r(\nu_d(\mathbb{P}^n))$.
Indeed, we have
\begin{equation}\label{eq:gorenstein_give_catus}
\kappa_r(\nu_d(\mathbb{P}^n)) = \overline{\bigcup \{\langle \nu_d(R) \rangle \mid [R]\in\mathcal{H}ilb_r^{Gor}(\PP^n)\}}
\end{equation}
by \cite[Prop.~2.2]{BB14}. Therefore,  irreducibility of
$\mathcal{H}ilb^{Gor}_r(\PP^n)$ implies $\kappa_r(\nu_d(\PP^n)) =
\sigma_r(\nu_d(\PP^n))$. If $n < 6$, then $\mathcal{H}ilb^{Gor}_{14}(\PP^n)$ is irreducible. Therefore, the cactus variety
$\kappa_{14}(\nu_d(\PP^n))$ is irreducible for $n < 6$. Note that a description of the cactus variety,
similar to the one given by Equation \eqref{eq:gorenstein_give_catus}, works
over an arbitrary field (see \cite[Cor.~6.20]{BJ17}).

Part (iii) of Theorem \ref{t:1661_summary} is the reason why in the next subsection we analyze
algebras with local Hilbert function $(1,6,6,1)$. It
follows from the theory of Macaulay's inverse systems (\cite[Thm.
21.6]{Eis95}), that every such algebra is the apolar algebra of a cubic
polynomial.

\subsection[\texorpdfstring{The set of cubics with Hilbert function $(1,6,6,1)$}{The set of cubics with Hilbert function (1,6,6,1)}]{The set of cubics with Hilbert function $(1,6,6,1)$}
In Lemma
\ref{l:equivalent_description_of_1661}, we give a useful characterization of
cubics $f$ such that the Hilbert function of $\Apolar(f)$ is $(1,6,6,1)$. This
is inspired by \cite[Ex. 8]{BJMR17}. Then we establish Lemma
\ref{l:D_i_is_dense_and_of_dimension_13n+5} about topological properties of the set of such cubics.

In this subsection $S^* = \CC[\alpha_1,\dots,\alpha_n]$, and $S =
\CC[x_1,\dots,x_n]$ is its graded dual. We assume that $n\geq 6$.  Given $f \in
S$, we denote by $F_j$ its homogeneous part of degree $j$. 

 \begin{lemma}\label{l:calculation}
   Let $W \subseteq S$ be a linear subspace. Then 
   \begin{equation*}
     H(\Apolar(W), k)  =  \operatorname{codim}_{S^*_k} E_k\text{,}
   \end{equation*}
   where $E_k = \{\theta_k \in S^*_k \mid$ there exists $\theta_{\geq k+1} \in
   S^*_{\geq k+1}$ such that $(\theta_k + \theta_{\geq k+1})\lrcorner W = 0\}$.
 \end{lemma}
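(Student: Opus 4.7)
The plan is to unpack the definition of the local Hilbert function of the apolar algebra $A := \Apolar(W) = S^*/\Ann(W)$ via its associated graded ring, and then recognize $E_k$ as the kernel of a natural surjection. First I would observe that $A$ is a finite local $\CC$-algebra with maximal ideal $\mathfrak{m} = (\alpha_1,\ldots,\alpha_n)/\Ann(W)$, so by Definition \ref{d:basic},
\[
H(\Apolar(W), k) = \dim_{\CC} \mathfrak{m}^k/\mathfrak{m}^{k+1}.
\]
Since $\mathfrak{m}^k = (S^*_{\geq k} + \Ann(W))/\Ann(W)$, I would rewrite
\[
\mathfrak{m}^k/\mathfrak{m}^{k+1} \;\cong\; \big(S^*_{\geq k} + \Ann(W)\big)\big/\big(S^*_{\geq k+1} + \Ann(W)\big).
\]

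Next, I would consider the $\CC$-linear map $\varphi\colon S^*_k \to \mathfrak{m}^k/\mathfrak{m}^{k+1}$ sending $\theta_k$ to its class in the quotient above. This map is surjective: any $\theta \in S^*_{\geq k}$ splits as $\theta = \theta_k + \theta_{\geq k+1}$ with $\theta_k \in S^*_k$ and $\theta_{\geq k+1} \in S^*_{\geq k+1}$, and the second summand lies in the denominator. The main content of the proof is then identifying $\ker\varphi$. An element $\theta_k \in S^*_k$ is in the kernel exactly when $\theta_k \in S^*_{\geq k+1} + \Ann(W)$, i.e.\ when there exist $\eta \in S^*_{\geq k+1}$ and $\xi \in \Ann(W)$ with $\theta_k = \eta + \xi$. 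Setting $\theta_{\geq k+1} := -\eta \in S^*_{\geq k+1}$, this is the same as requiring $\theta_k + \theta_{\geq k+1} \in \Ann(W)$, which by definition of the apolarity pairing means $(\theta_k+\theta_{\geq k+1})\lrcorner W = 0$. That is exactly the defining condition of $E_k$, so $\ker\varphi = E_k$.

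Combining the two previous steps yields
\[
H(\Apolar(W), k) = \dim_{\CC}\mathfrak{m}^k/\mathfrak{m}^{k+1} = \dim_{\CC} S^*_k - \dim_{\CC} E_k = \operatorname{codim}_{S^*_k} E_k,
\]
which is the claim. There is no real obstacle here; the only delicate point is the bookkeeping around the three nested quotients and checking surjectivity of $\varphi$, both of which reduce to the fact that every element of $S^*_{\geq k}$ canonically decomposes into its degree-$k$ piece and a piece in $S^*_{\geq k+1}$.
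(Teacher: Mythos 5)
Your proof is correct and takes essentially the same approach as the paper: both start from the definition of the local Hilbert function as $\dim_\CC\overline{\mathfrak m}^k/\overline{\mathfrak m}^{k+1}$ and reduce the computation to identifying $E_k$ inside the graded pieces of $S^*$ relative to $\Ann(W)$. The paper organizes the bookkeeping as a chain of codimension identities culminating in the isomorphism $E_k\cong(\Ann(W)\cap S^*_{\geq k})/(\Ann(W)\cap S^*_{\geq k+1})$, while you present the dual picture via the explicit surjection $S^*_k\twoheadrightarrow\overline{\mathfrak m}^k/\overline{\mathfrak m}^{k+1}$ with kernel $E_k$; the two are the same computation.
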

 \begin{proof}
    Let $\overline{\mathfrak{m}}$ be the maximal ideal of $\Apolar(W)$. Then
  \begin{align*}
    H(\Apolar(W),k) &= \dim_\CC \overline{\mathfrak{m}}^k/\overline{\mathfrak{m}}^{k+1} \\
    &=\operatorname{codim}_{S^*_{\geq k}} \Ann(W)\cap S^*_{\geq k} - \operatorname{codim}_{S^*_{\geq k+1}} \Ann(W)\cap S^*_{\geq k+1}\\
    &= \operatorname{codim}_{S^*_{\geq k}}S^*_{\geq k+1} - \operatorname{codim}_{\Ann(W)\cap S^*_{\geq k}} \Ann(W)\cap S^*_{\geq k+1} \\
    &= \dim_\CC S^*_k -\dim_\CC \frac{\Ann(W)\cap S^*_{\geq k}}{\Ann(W)\cap S^*_{\geq k+1}} \\
    &= \dim_\CC S^*_k - \dim_\CC E_k.
\end{align*}
\end{proof}
\begin{lemma}\label{l:equivalent_description_of_1661}
 For $[f]\in \PP S_{\leq 3}$ the following are equivalent:
\begin{enumerate}[label = {(\alph*)}]
\item $\Apolar(f)$ has Hilbert function $(1,6,6,1)$,
\item there exists $[U] \in \Gr(6,S_1)$ such that $F_3 \in \Sym^3 U$, $F_2 \in U \cdot S_1$ and $H(\Apolar (F_3),1)=6$.
\end{enumerate}
\end{lemma}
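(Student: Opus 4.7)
The plan is to work throughout with Lemma \ref{l:calculation}, which expresses $H(\Apolar(f), k) = \codim_{S^*_k} E_k$, where $E_k$ is the space of $\theta_k \in S^*_k$ that admit an extension $\theta_{\geq k+1}$ making $(\theta_k + \theta_{\geq k+1}) \lrcorner f = 0$. Writing $f = F_3 + F_2 + F_1 + F_0$ with $F_j \in S_j$ and splitting each such annihilation identity into its graded pieces will be the workhorse of both directions. In both implications the candidate subspace in (b) will be $U := E_1^{\perp} \subseteq S_1$.

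For (a) $\Rightarrow$ (b), the assumption $H(\Apolar(f)) = (1,6,6,1)$ forces $\dim U = \codim_{S^*_1} E_1 = 6$. For any $\theta_1 \in E_1$ with extension $\theta_2 + \theta_3 \in S^*_{\geq 2}$, the degree-$2$ piece of $(\theta_1 + \theta_2 + \theta_3) \lrcorner f = 0$ is $\theta_1 \lrcorner F_3 = 0$; hence $E_1 \subseteq \Ann(F_3)_1$ and $F_3 \in \Sym^3 U$. The degree-$1$ piece reads $\theta_1 \lrcorner F_2 = -\theta_2 \lrcorner F_3$, which I would contract once more by any $\theta_1' \in E_1$ to obtain $\theta_1' \theta_1 \lrcorner F_2 = -\theta_2 \lrcorner (\theta_1' \lrcorner F_3) = 0$, forcing $F_2 \in U \cdot S_1$.

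The step I expect to be the main obstacle is the remaining equality $H(\Apolar(F_3), 1) = 6$, for which only $\leq 6$ is immediate from $F_3 \in \Sym^3 U$. To upgrade it I would compute $H(\Apolar(f), 2)$ again via Lemma \ref{l:calculation}. Since $F_3 \neq 0$ (otherwise the required value $H(\Apolar(f), 3) = 1$ is impossible), for each $\theta_2 \in \Ann(F_3)_2$ the degree-$0$ equation $\theta_2 \lrcorner F_2 + \theta_3 \lrcorner F_3 = 0$ can always be solved for $\theta_3$, so $E_2 = \Ann(F_3)_2$ and hence $H(\Apolar(F_3), 2) = H(\Apolar(f), 2) = 6$. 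Gorenstein duality for the graded Artinian Gorenstein algebra $\Apolar(F_3)$ of socle degree $3$ then yields $H(\Apolar(F_3), 1) = H(\Apolar(F_3), 2) = 6$.

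For (b) $\Rightarrow$ (a) I would verify that the $U$ given by (b) satisfies $E_1 = U^{\perp}$, from which the remaining Hilbert-function values follow as in the previous paragraph. The inclusion $E_1 \subseteq \Ann(F_3)_1 = U^{\perp}$ is automatic from the degree-$2$ piece once we use $H(\Apolar(F_3), 1) = 6$. For the reverse, fix $\theta_1 \in U^{\perp}$: then $\theta_1 \lrcorner F_3 = 0$ and $\theta_1 \lrcorner F_2 \in U$ because $F_2 \in U \cdot S_1$; Gorenstein duality for $\Apolar(F_3)$ shows the contraction $S^*_2 \to U$, $\theta_2 \mapsto \theta_2 \lrcorner F_3$, is surjective (both sides are $6$-dimensional), so the degree-$1$ equation is solvable for $\theta_2$, and the degree-$0$ equation is then solvable for $\theta_3$ since $F_3 \neq 0$. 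The recurring theme is that the inhomogeneity of $f$ contributes nothing beyond the graded cubic $F_3$, precisely because the contraction $S^*_3 \to \CC$ by $F_3$ is surjective.
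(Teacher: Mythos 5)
Your proof is correct, and it departs from the paper's proof in two places that are worth noting. First, for $F_2 \in U\cdot S_1$ in the direction (a)~$\Rightarrow$~(b), the paper argues by contradiction: it normalizes $U = \langle x_1,\ldots,x_6\rangle$, uses the classification of quadratic forms to write a hypothetical $F_2 = x_n^2 + H + K$ with $H$ involving only $x_7,\ldots,x_{n-1}$, and deduces $\dim_{\CC} E_1 \leq n-7$ by exhibiting $\alpha_1,\ldots,\alpha_6,\alpha_n$ as independent modulo $E_1$. Your double-contraction argument ($\theta_1'\theta_1 \lrcorner F_2 = -\theta_2 \lrcorner (\theta_1' \lrcorner F_3) = 0$ for all $\theta_1,\theta_1' \in E_1$, so $F_2 \in (\Sym^2 E_1)^\perp = U\cdot S_1$) is a genuinely cleaner route and avoids the normal-form gymnastics. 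Second, for the equality $H(\Apolar(F_3),1) = 6$, the paper invokes Iarrobino's symmetric decomposition (\cite[Thm.~2.3]{CJN15}), which immediately gives that $\Apolar(f)$ has Hilbert function $(1,c+e,c,1)$ with $(1,c,c,1)$ the Hilbert function of $\Apolar(F_3)$, forcing $c=6$; you instead compute $E_2 = \Ann(F_3)_2$ directly via Lemma~\ref{l:calculation} (using $F_3 \neq 0$ to solve the degree-$0$ constraint for $\theta_3$), obtain $H(\Apolar(F_3),2)=6$, and then use Gorenstein symmetry of the socle-degree-$3$ algebra $\Apolar(F_3)$. These are essentially equivalent inputs, but your version is self-contained modulo Gorenstein duality and keeps the whole proof in the language of the $E_k$ computation. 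The (b)~$\Rightarrow$~(a) direction matches the paper's: both establish $E_1 = \Ann(F_3)_1 = U^\perp$ by noting $\theta_1\lrcorner F_2 \in U$ when $F_2\in U\cdot S_1$ and using surjectivity of the catalecticant $S^*_2 \to U$, $\theta_2\mapsto \theta_2\lrcorner F_3$.
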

\begin{proof}
   By Iarrobino's symmetric decomposition (see \cite[Thm.~2.3~and the following remarks]{CJN15}), the algebra $\Apolar(f)$ has Hilbert
   function $(1,c+e,c,1)$, where $(1,c,c,1)$ is the Hilbert function of
   $\Apolar(F_3)$. From Lemma \ref{l:calculation}, we know that $c+e =
   \operatorname{codim}_{S^*_1} E_1$, where 
   $$E_1 = \{\theta_1 \in S^*_1 \mid \text{ there
     exists } \theta_{\geq 2} \in S^*_{\geq 2} \text{ such that } (\theta_1 +
     \theta_{\geq 2})\lrcorner f = 0\}.$$
     We use the following computation
     \begin{align}\label{eq:theta_lrcorner}
     \begin{split}
      (\theta_3 + \theta_2 + \theta_1) \lrcorner (F_3 + F_2 + F_1 + F_0)
      =( \theta_1 \lrcorner F_3) + ( \theta_1 \lrcorner F_2 +  \theta_2 \lrcorner F_3) + ( \theta_1 \lrcorner F_1 + \theta_2 \lrcorner F_2  +  \theta_3 \lrcorner F_3).
     \end{split}
     \end{align}
     
     Assume that $\Apolar(f)$ has Hilbert function $(1,6,6,1)$. We will show that condition (b) is satisfied. 
     Let $U$ be $S^*_2 \lrcorner F_3$, which is 6 dimensional, since the Hilbert function of $\Apolar(F_3)$ is $(1,6,6,1)$ by the above discussion. It is enough to show that $F_2 \in U \cdot S_1$. 
     Assume that this does not hold. Up to a linear change of variables we can assume that $U=\langle x_1,x_2,\dots,x_6 \rangle$. Let $V=\langle x_7, x_8, \dots , x_n \rangle$. By classification of quadratic forms over $\CC$ we may assume that $F_2 = x_n^2 + H+K$, where $H\in \Sym^2(\langle x_7, x_8, \ldots, x_{n-1}\rangle)$ and $K\in S_1 \cdot U$. Then $\alpha_n\lrcorner F_2 \notin U$ and hence $\alpha_n \notin E_1$ by Equation \eqref{eq:theta_lrcorner}. 
     
     We claim that $\dim_\CC E_1 \leq n-7$. It suffices to show that the classes of $\alpha_1, \alpha_2, \ldots, \alpha_6,  \alpha_n$ are linearly independent in the vector space $S^*_1/E_1$.
%     
%     
%     \[
%     a_1\alpha_1+a_2\alpha_2+\ldots + a_6\alpha_6+b\alpha_n \in E_1 \Leftrightarrow a_1=a_2=\ldots=a_6=b = 0.
%     \] 
    Let $\omega = a_1\alpha_1+a_2\alpha_2+\ldots + a_6\alpha_6$ for some $a_1, \ldots, a_6\in \CC$ and assume that there is $b\in \CC$ and $\theta_{\geq 2} \in S^*_{\geq 2}$ such that $(\omega + b\alpha_n + \theta_{\geq 2}) \lrcorner f = 0$.
    Then by Equation \eqref{eq:theta_lrcorner} we get $(\omega +  b\alpha_n )\lrcorner F_3 = 0$. Since $U = S^*_2 \lrcorner F_3 = \langle x_1, x_2,\ldots, x_6 \rangle$, it follows that $F_3$ is a polynomial in $x_1, \ldots, x_6$. Therefore, $b\alpha_n \lrcorner F_3 = 0$ and as a consequence $\omega \lrcorner F_3 = 0$. We know that $\Apolar(F_3)$ has Hilbert function $(1,6,6,1)$, thus $\omega = 0$. As a result, $b\alpha_n\in E_1$, which shows that $b=0$.
     
    Having established that $\dim_\CC E_1 \leq n-7$, we get from  Lemma \ref{l:calculation} that $H (\Apolar (f),1)\geq 7$. This contradicts the assumption that $H(\Apolar(f), 1) =6$. 
     
     For the other direction, assume that (b) holds, we will show that $\Apolar(f)$ has Hilbert function $(1,6,6,1)$. It is enough to show that $\operatorname{codim}_{S_1^*} E_1 = 6$.      
     By assumption $\dim_\CC \Ann(F_3)_1 = n-6$, so it suffices to show that $E_1=\Ann(F_3)_1$. Assume that $\theta=\theta_3 + \theta_2 + \theta_1 \in \Ann (f)$. Then it follows from Equation \eqref{eq:theta_lrcorner} that $\theta_1 \in  \Ann(F_3)_1$. Thus $E_1\subseteq \Ann(F_3)_1$.
     Let us take $\theta_1 \in \Ann(F_3)_1$. From the assumption $F_2= \sum_i u_i h_i$, where $u_i \in U, h_i \in S_1$. Therefore, 
     \[
        \theta_1 \lrcorner F_2 = \sum_i u_i (\theta_1 \lrcorner h_i) \in U.     
     \]
     Since $(-)\lrcorner F_3: S_2^* \to U$ is surjective, there exists $\theta_2 \in S_2^*$ such that $\theta_2 \lrcorner F_3 = -\theta_1 \lrcorner F_2$. By Equation \eqref{eq:theta_lrcorner} it is enough to observe that there exists $\theta_3 \in S_3^*$ such that $\theta_3 \lrcorner F_3 = - (\theta_1 \lrcorner F_1 + \theta_2 \lrcorner F_2)$.

\end{proof}

\begin{lemma}\label{l:D_i_is_dense_and_of_dimension_13n+5}
The following subset 
\begin{align*}
A=\{[f]\in \PP S_{\leq 3} \mid& \Apolar(f) \text{ has Hilbert function }(1,6,6,1)\}
\end{align*}
is irreducible, of dimension $13n+5$, and locally closed.
Moreover the set
\begin{align*}
B=\{[f]\in A \mid& [\Spec \Apolar(f)] \notin \Hilb_{14}^{Gor,sm}(\mathbb{A}^n)\}
\end{align*}
is dense in $A$.
\end{lemma}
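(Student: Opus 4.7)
The plan is to use Lemma~\ref{l:equivalent_description_of_1661} to realize $A$ as the image of a suitable incidence variety over the Grassmannian $\Gr(6,S_1)$; from this description irreducibility, dimension, and local closedness fall out, and the density of $B$ will follow from Theorem~\ref{t:1661_summary}.

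First I would consider the incidence
\[
 I = \{([f],[U]) \in \PP S_{\leq 3} \times \Gr(6,S_1) \mid F_3 \in \Sym^3 U,\ F_2 \in U\cdot S_1,\ H(\Apolar(F_3),1) = 6\}.
\]
Projection to $\Gr(6,S_1)$ has fiber over $[U]$ equal to the open subset of $\PP(\Sym^3 U \oplus (U\cdot S_1) \oplus S_1 \oplus \CC)$ cut out by the rank condition $S^*_2 \lrcorner F_3 = U$, which is non-empty since a general cubic in six variables has Hilbert function $(1,6,6,1)$. Using $\dim(U\cdot S_1) = 6n-15$ (counting monomials $x_ix_j$ with $x_i\in U$), one computes
\[
 \dim I = \dim \Gr(6,S_1) + \dim \PP(\Sym^3 U \oplus (U\cdot S_1) \oplus S_1 \oplus \CC) = (6n-36) + (7n+41) = 13n + 5,
\]
and since $I$ is open in a projective bundle over an irreducible Grassmannian, $I$ itself is irreducible.

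Next, I would argue that the projection $\pi\colon I \to \PP S_{\leq 3}$ is bijective onto $A$ with algebraic inverse. By Lemma~\ref{l:equivalent_description_of_1661}, the image of $\pi$ equals $A$, and for $[f]\in A$ every admissible $U$ satisfies $S^*_2 \lrcorner F_3 \subseteq U$; since both are six-dimensional, $U = S^*_2 \lrcorner F_3$ is uniquely determined by $F_3$. Hence $[f]\mapsto ([f], S^*_2\lrcorner F_3)$ is an algebraic inverse to $\pi$ on the open locus $\{[f]\mid \dim(S^*_2\lrcorner F_3) = 6\}$, identifying $A$ with $I$ as varieties. This yields irreducibility and dimension $13n+5$, and it exhibits $A$ as the intersection of the above open locus with the closed conditions $F_3 \in \Sym^3(S^*_2\lrcorner F_3)$ and $F_2 \in (S^*_2\lrcorner F_3)\cdot S_1$, so $A$ is locally closed in $\PP S_{\leq 3}$.

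For density of $B$, I would invoke Theorem~\ref{t:1661_summary}(iii). For $[f]\in A$ the algebra $\Apolar(f)$ is local Gorenstein of length $14$ with local Hilbert function $(1,6,6,1)$, so the morphism $A \to \mathcal{H}ilb_{14}^{Gor}(\AA^n)$, $[f] \mapsto [\Spec \Apolar(f)]$, factors through $\mathcal{H}^n_{1661,af}$. By Macaulay duality every local Gorenstein algebra with Hilbert function $(1,6,6,1)$ is of the form $\Apolar(g)$, so picking any non-smoothable such algebra (which exists for $n\geq 6$ by Theorem~\ref{t:1661_summary}(iii)) and realizing it as $\Apolar(g)$ produces a point $[g]\in B$; thus $B$ is non-empty. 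Since $\mathcal{H}^n_{1661,af} \cap \mathcal{H}ilb_{14}^{Gor,sm}(\AA^n)$ is a proper closed subset of $\mathcal{H}^n_{1661,af}$, its preimage $A\setminus B$ is a proper closed subset of the irreducible $A$, so $B$ is open and dense. The main technical care goes into checking that $\pi\colon I\to A$ is an isomorphism of varieties rather than merely a bijective morphism, which is what transfers the topological invariants of $I$ (irreducibility, dimension, local closedness inside the ambient projective bundle) to the subset $A\subseteq \PP S_{\leq 3}$.
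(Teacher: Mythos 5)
Your argument follows the same route as the paper: the incidence correspondence over $\Gr(6,S_1)$ yields irreducibility and the dimension $13n+5$, one then deduces local closedness of $A$, and finally a morphism $A\to\mathcal{H}ilb_{14}^{Gor}(\AA^n)$ makes $B$ the preimage of a nonempty open set. Two points should be tightened. The set $\{[f]\mid\dim(S_2^*\lrcorner F_3)=6\}$ is not open in $\PP S_{\leq 3}$ when $n>6$: the condition $\dim\geq 6$ is open (a catalecticant rank condition), but $\dim\leq 6$ is closed, so the locus is only locally closed. Your conclusion survives (a closed subset of a locally closed subset is locally closed), but calling it ``open'' is wrong; the paper instead writes $A=\pi_2(\mathcal{A})\cap\mathcal{C}$ with $\pi_2(\mathcal{A})$ closed as the image of a projective variety and $\mathcal{C}=\{H(\Apolar(F_3),1)\geq 6\}$ open.

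The more substantive gap is the unjustified assertion that $[f]\mapsto[\Spec\Apolar(f)]$ is a \emph{morphism} $A\to\mathcal{H}ilb_{14}^{Gor}(\AA^n)$. This requires exhibiting a flat family of length-$14$ subschemes of $\AA^n$ over $A$, and the hypotheses needed (local closedness of $A$, constant fiber length) are exactly what make that possible; the paper proves this in Theorem~\ref{t:morphism} of Appendix~\ref{s:hilbert}, which you should cite or reprove. Apart from that, your variants are fine: observing that $\pi\colon I\to A$ is an isomorphism is a valid strengthening (the paper only uses that it has finite fibers, via \cite[Thm.~11.4.1]{RV2017}), and deriving non-emptiness of $B$ from Macaulay duality together with the component decomposition in Theorem~\ref{t:1661_summary}(iii) is a clean, self-contained alternative to the paper's external citations to \cite{Jel16} and \cite{BJ17}.
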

\begin{proof}
Consider 
\[
\mathcal{A}= \{([U],[f]) \in \Gr(6, S_{1}) \times \PP S_{\leq3}  \mid  [f] \in \PP(\Sym^3 U\oplus (S_{1}\cdot U) \oplus S_{\leq 1})\}
\]
and 
\[
\mathcal{A}^0= \{([U],[f]) \in \mathcal{A} \mid H(\Apolar(F_3),1)=6\}\text{.}
\]
We have a pullback diagram
\begin{center}
\begin{tikzcd}
\mathcal{A} \arrow[r] \arrow[d] & \operatorname{Fl}(1,7n+42 , S_{\leq 3}) \arrow[d] \\
\operatorname{Gr}(6, S_1) \arrow[r] & \Gr(7n+42, S_{\leq 3})
\end{tikzcd}
\end{center}
where $\operatorname{Fl}(1,7n+42 , S_{\leq 3})$ is the flag variety parametrizing flags of subspaces  $M\subseteq N \subseteq S_{\leq 3}$ with $\dim_\CC M = 1$, $\dim_\CC N = 7n + 42$ and  the lower horizontal map sends $[U]$ to $[\Sym^3 U\oplus (S_{1}\cdot U) \oplus S_{\leq 1}]$.

The varieties $\mathcal{A}$ and $\Gr(6, S_1)$ are projective. Moreover, the left vertical map is surjective and its fibers are irreducible varieties
isomorphic to $\PP^{7n+41}$. Since $\PP^{7n+41}$ is irreducible, it follows from \cite[Thm.~1.25 and 1.26]{Sha13} that $\mathcal{A}$ is irreducible and of dimension $6(n-6)+7n+41 = 13n+5$.

We will show that $\mathcal{A}^0$ is open in $\mathcal{A}$. Consider the subset
\[
\mathcal{B}= \{([U],[f]) \in \Gr(6, S_1) \times \PP{S_{\leq 3}} \mid  H(\Apolar(F_3),1) \geq 6 \}\text{.}
\]
Observe that $\mathcal{A}^0= \mathcal{A} \cap \mathcal{B}$. It is enough to show that $\mathcal{B}$ is open in $\Gr(6, S_1) \times \PP{S_{\leq 3}}$. 
Let
\[
 \mathcal{C} = \{ [f] \in \PP{S_{\leq 3}} \mid H(\Apolar(F_3),1) \geq 6 \}.
\]
It suffices to show that $\mathcal{C}$ is open in $\PP{S_{\leq 3}}$, which holds since its complement is given by catalecticant minors. We have established that $\mathcal{A}^0= \mathcal{A} \cap \mathcal{B}$ is open in $\mathcal{A}$.

By Lemma \ref{l:equivalent_description_of_1661} we have $A = \pi_2(\mathcal{A}^0)$, where $\pi_2 \colon \Gr(6, S_{1}) \times \PP S_{\leq3}  \to \PP S_{\leq 3}$ is the projection.
Since $\pi_2|_{\mathcal{A}^0}\colon \mathcal{A}^0 \to A$ has a finite fiber over every point, it follows from \cite[Thm.~11.4.1]{RV2017} that $A$ is irreducible and of dimension $13n+5$. 

We know that $A=\pi_2(\mathcal{A}^0)= \pi_2 (\mathcal{A}) \cap \mathcal{C}$, which is locally closed since $\pi_2(\mathcal{A})$ is closed and $\mathcal{C}$ is open. Therefore we have a morphism $\mu : A \to \Hilb_{14}^{Gor}(\AA^n)$ given on closed points by $[f] \mapsto [\Spec S^*/\Ann(f)]$, see Theorem \ref{t:morphism}. 

By Theorem \ref{t:1661_summary}, the scheme $\Hilb_{14}^{Gor}(\mathbb{\AA}^n)$
has two irreducible components
$\Hilb_{14}^{Gor,sm}(\mathbb{\AA}^n)$ and $\mathcal{H}^n_{1661,af}$.
We obtain $B = \mu^{-1}(\mathcal{H}^n_{1661,af} \setminus \Hilb_{14}^{Gor,sm} (\AA^n))$, so it is open in $A$. Since $B$ is non-empty 
(see \cite[Rmk. 3.7]{Jel16}, and \cite[Thm. 3.16, Prop. 5.6]{BJ17})
and $A$ is irreducible, it follows that $B$ is dense in $A$.
\end{proof}
\subsection{Proofs of the main theorems}
We will consider the polynomial ring $T^* = \CC [\alpha_0,\alpha_1,\ldots,\alpha_n]$, and its graded dual $T = \CC[x_0,x_1,\dots,x_n]$,  where $n \geq 6$. Since we assume $\kk = \CC$, the graded dual ring $T$ is isomorphic to a polynomial ring.
Given $f \in T$, we denote by $F_j$ its homogeneous part of degree $j$. Recall
Definition \ref{d:prime_operator}.

Our goal is to characterize for $d \geq 5$ and $ n \geq 6$ the closure of the set-theoretic difference between the cactus variety $\kappa_{14}(\nu_d(\PP T_1))$ and the secant variety $\sigma_{14}(\nu_d(\PP T_1))$. 
For $n=6$ and $d\geq 5$ this closure consists of points $[G]\in \PP T_d$ with $G$ divisible by $(d-3)$-rd power of a linear form. However for $n > 6$ the situation is more complicated.

For $d \geq 3$ we will define a subset $\eta_{14}(\nu_d(\PP^n))$ of the cactus variety $\kappa_{14}(\nu_d(\PP^n))$. Later, in Theorem \ref{t:segre-veronese_map_general_n}, it will be shown that for $d\geq 5$
\[
\kappa_{14}(\nu_d(\PP^n)) = \sigma_{14}(\nu_d(\PP^n)) \cup \eta_{14}(\nu_d(\PP^n))
\]
is the decomposition into irreducible components.

Consider the following rational map $\varphi$, which assigns to a scheme
$R$ its projective linear span $\langle v_d(R) \rangle$
\[\begin{tikzcd}
    \varphi: \Hilb_{14}^{Gor}(\mathbb{P}^n) \ar[dashed]{r} & \Gr(14, \Sym^{d} \mathbb{C}^{n+1})\text{.}
\end{tikzcd}\]
Let $U \subseteq \Hilb_{14}^{Gor}(\mathbb{P}^n)$ be a dense open subset on which $\varphi$ is regular.

Consider the projectivized universal bundle $\mathbb{P}\mathcal{S}$ over $\Gr(14, \Sym^{d}\mathbb{C}^{n+1})$, given as a set by 
\begin{equation*}
  \mathbb{P}\mathcal{S} = \{ ([P],[p]) \in \Gr(14, \Sym^{d}\mathbb{C}^{n+1})\times \mathbb{P}(\Sym^{d}\mathbb{C}^{n+1}) \mid p \in P \}\text{,} 
\end{equation*}
together with the inclusion
$i: \mathbb{P}\mathcal{S} \hookrightarrow
\Gr(14,\Sym^{d}\mathbb{C}^{n+1}) \times
\mathbb{P}(\Sym^{d}\mathbb{C}^{n+1})$.
We pull the commutative diagram
\[\begin{tikzcd}
    \mathbb{P}\mathcal{S} \ar[hookrightarrow]{rr}{i}\ar{dr}{\pi} & &  \Gr(14,\Sym^{d}\mathbb{C}^{n+1}) \times \mathbb{P}(\Sym^{d}\mathbb{C}^{n+1}) \ar{dl}{\pr_1} \\
    & \Gr(14,\Sym^{d}\mathbb{C}^{n+1})
\end{tikzcd}\]
back along $\varphi$ to $U$, getting the commutative diagram
\[\begin{tikzcd}
    \varphi^* (\mathbb{P}\mathcal{S}) \ar[hookrightarrow]{rr}{\varphi^* i}\ar{dr}{\varphi^*\pi} & &  U \times \mathbb{P}(\Sym^{d}\mathbb{C}^{n+1}) \ar{dl}{\pr_1} \\
    & U\text{.}
\end{tikzcd}\]
Let $Y$ be the closure of $\varphi^*(\mathbb{P}\mathcal{S})$ inside
$\Hilb_{14}^{Gor}(\mathbb{P}^n)\times \PP (\Sym^{d} \mathbb{C}^{n+1})$. The
scheme $Y$ has two irreducible components, $Y_1$ and $Y_2$, corresponding to
two irreducible components of $\Hilb_{14}^{Gor}(\mathbb{P}^n)$, the schemes
$\Hilb_{14}^{Gor,sm}(\mathbb{P}^n)$ and $\mathcal{H}_{1661}$, respectively.
For the description of irreducible components of $\Hilb_{14}^{Gor}(\mathbb{P}^n)$, see Theorem \ref{t:1661_summary}.

Then
\begin{align}
\kappa_{14}(\nu_d(\PP^n)) &= \pr_2(Y) \label{eq:kappa_is_projection}, \\  
  \sigma_{14}(\nu_{d}(\mathbb{P}^n)) &= \pr_2(Y_1)\text{, and we define} \label{eq:sigma_is_projection}\\
  \eta_{14}(\nu_{d}(\mathbb{P}^n)) &:= \pr_2(Y_2)\label{eq:eta_is_projection}\text{.}
\end{align}

In Proposition \ref{p:dim_eta}, we bound from above the dimension of the irreducible subset $\eta_{14}(\nu_d(\PP^n))$ by $14n+5$.
Later, in Theorem \ref{t:segre-veronese_map_general_n}, we will identify a $(14n+5)$-dimensional subset of $\kappa_{14}(\nu_d(\PP^n))\setminus \sigma_{14}(\nu_d(\PP^n))$. It will allow us to conclude that the closure of this subset is $\eta_{14}(\nu_d(\PP^n))$.

\begin{lemma}\label{l:dim_H1661}
For $n\geq 6$ the component $\mathcal{H}_{1661} $ has dimension $14n-8$. Let $[R]\in \mathcal{H}_{1661}\subseteq \Hilb^{Gor}_{14}(\PP^n)$ be a non-smoothable subscheme. 
Then the dimension of the tangent space $\dim_\CC T_{[R]}\Hilb^{Gor}_{14} (\PP^n)$ equals $14n-8$. 
\end{lemma}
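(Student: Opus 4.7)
The plan has two stages: a dimension count that rests on the parametrization from Lemma~\ref{l:D_i_is_dense_and_of_dimension_13n+5}, and a tangent-space calculation that reduces to a Hom-computation in the literature on the $(1,6,6,1)$ stratum.

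\textbf{Dimension of the component.} Let $A\subseteq \PP S_{\leq 3}$ be the set from Lemma~\ref{l:D_i_is_dense_and_of_dimension_13n+5}, irreducible of dimension $13n+5$. I would consider the morphism $\mu\colon A\to \Hilb_{14}^{Gor}(\AA^n)$, $[f]\mapsto [\Spec \Apolar(f)]$, whose image consists of schemes supported at the origin and whose restriction to the dense open $B\subseteq A$ lies in the non-smoothable part of $\mathcal{H}^n_{1661,af}$; thus $\overline{\mu(A)}$ is the full origin-supported locus of $\mathcal{H}^n_{1661,af}$. The core step is identifying the generic fibre: over $[R]=[\Spec A]$ with $A=S^*/I$, the fibre is the set of $[g]\in\PP S_{\leq 3}$ with $\Ann(g)=I$. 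By Macaulay's inverse systems, such $g$ are precisely the generators of the canonical module $\omega_A=\Hom_\CC(A,\CC)$ as an $A$-module; since $A$ is Gorenstein of length $14$, the module $\omega_A$ is free of rank one, so the set of generators is the unit locus $A^\times=A\setminus\mathfrak{m}_A$, of $\CC$-dimension $14$. Projectivising gives a $13$-dimensional fibre, so
\[
\dim \mu(A)=(13n+5)-13=13n-8.
\]
Allowing the support to vary across $\AA^n$ contributes another $n$ dimensions, yielding $\dim \mathcal{H}^n_{1661,af}=14n-8$; since any zero-dimensional subscheme of $\PP^n$ lies in an affine chart, $\dim \mathcal{H}_{1661}=14n-8$ as well.

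\textbf{Tangent space.} For non-smoothable $[R]$ one has $[R]\notin \Hilb_{14}^{Gor,sm}(\PP^n)$, so the only component of $\Hilb_{14}^{Gor}(\PP^n)$ through $[R]$ is $\mathcal{H}_{1661}$, whence
\[
\dim T_{[R]}\Hilb_{14}^{Gor}(\PP^n)\geq \dim\mathcal{H}_{1661}=14n-8.
\]
For the reverse inequality I would choose an affine chart around the support, write $R=\Spec \Apolar(f)$, and use the standard identification $T_{[R]}\Hilb_{14}^{Gor}(\PP^n)\cong \Hom_{S^*}(\Ann(f),\Apolar(f))$. The explicit computation of this $\Hom$-space at non-smoothable local Gorenstein algebras with Hilbert function $(1,6,6,1)$ was carried out by Jelisiejew (see \cite[Rmk.~3.7]{Jel16}, and in a broader framework \cite[\S3--\S5]{BJ17}) and produces exactly $14n-8$. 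Combined with the lower bound, this gives equality.

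\textbf{Main obstacle.} The principal subtlety is the fibre identification for $\mu$. It requires the Matlis-duality picture in the inhomogeneous setting: the polynomials $f\in S_{\leq 3}$ realising a fixed apolar algebra $A$ form precisely an $A^\times$-torsor, so the projectivized fibre has the expected dimension $\dim A - 1 = 13$. The tangent-space ceiling is then imported as a black-box from the literature on the $(1,6,6,1)$ stratum; redoing this Hom-computation from scratch for every non-smoothable $[R]$ would be the most laborious part, and is the reason for relying on the cited results.
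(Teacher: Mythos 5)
Your dimension computation takes a genuinely different and more self-contained route than the paper's. The paper reduces $\dim Z^n$ to $\dim Z^6$ via \cite[Prop.~A.4]{BJJMM19} and then quotes \cite[Thm.~1.1, parts 2, 3]{Jel16}, whereas you compute $\dim\mathcal{H}^n_{1661,af}$ directly from Lemma~\ref{l:D_i_is_dense_and_of_dimension_13n+5} by analysing the fibres of $\mu\colon A\to\Hilb^{Gor}_{14}(\AA^n)$. Your Matlis-duality identification of the fibre over $[\Spec B]$ with $B^\times/\CC^\times$ (hence $13$-dimensional, since $B$ is Gorenstein of length $14$) is correct: $I^\perp$ is a cyclic $B$-module because $B$ is Gorenstein, the $g$ with $\Ann(g)=I$ are exactly its generators, and these form a torsor under $B^\times$. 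Thus $\dim\overline{\mu(A)} = (13n+5)-13 = 13n-8$, and adding $n$ for translation gives $14n-8$. This is a legitimate alternative which avoids the external dimension-reduction citation; what it buys is self-containment, at the cost of having to argue the fibre identification.

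The tangent-space part, however, has a gap. You correctly use $T_{[R]}\Hilb^{Gor}_{14}(\PP^n)\cong\Hom_{S^*}(\Ann f,\Apolar f)$, but you then assert that \cite[Rmk.~3.7]{Jel16} (and \cite[\S3--\S5]{BJ17}) ``produces exactly $14n-8$.'' The reference \cite{Jel16} works over $\AA^6$ only --- its title is about ``$14$ points on $\AA^6$'' --- and the explicit computation there (the paper cites \cite[Claim~3]{Jel16}) gives $76 = 14\cdot 6 - 8$, i.e.\ the case $n = 6$. To pass to arbitrary $n\geq 6$ one needs the comparison
\[
\dim_\CC T_{[R]}\Hilb^{Gor}_{14}(\PP^n) = 14(n-6) + \dim_\CC T_{[R']}\Hilb^{Gor}_{14}(\PP^6),
\]
where $R'\subseteq\PP^6$ is abstractly isomorphic to $R$ (using that $R$ has embedding dimension $6$, which is what the Hilbert function $(1,6,6,1)$ forces). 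This is precisely \cite[Lem.~2.3]{CN09}, which the paper invokes and which you omit. Without this reduction, the literature you cite does not justify the upper bound $\dim T_{[R]}\Hilb^{Gor}_{14}(\PP^n)\leq 14n-8$ for $n>6$, so your reverse inequality is not established.
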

\begin{proof}
For $m \geq 6$ we use the notation $\mathcal{H}_{1661}^m$ for the non-smoothable component of $\Hilb_{14}^{Gor}(\PP^m)$. 
Let $\AA^m \subseteq \PP^m$ be the complement of a hyperplane, then $\Hilb^{Gor}_{14} (\AA^m)$ is an open subset of  $\Hilb^{Gor}_{14} (\PP^m)$. We denote $\mathcal{H}_{1661}^{m} \cap \Hilb^{Gor}_{14} (\AA^m)$ by $\mathcal{H}_{1661,af}^{m}$, as in Theorem \ref{t:1661_summary}.
Let $Z^m:=\{[R]\in \mathcal{H}_{1661,af}^{m} \mid \operatorname{Supp}(R)=0 \}$.
Observe that $\dim Z^m = \dim \mathcal{H}_{1661,af}^{m} - m$, since $\mathcal{H}_{1661,af}^{m}$ is a trivial $\AA^m$-bundle over $Z^m$.
By \cite[Prop. A.4]{BJJMM19} we have $\dim Z^{n} = 13n + \dim Z^6 - 78$. Thus  $\dim Z^{n} = 13n-8$, since $\dim Z^6= \dim \mathcal{H}_{1661,af}^6 -\dim \AA^6=76 -6=70$, see \cite[Thm. 1.1.~parts~2., 3.]{Jel16}. It follows that $\dim \mathcal{H}_{1661}^n = \dim \mathcal{H}_{1661,af}^n = 14n - 8$.

Let $R' \subseteq \PP^6$ be a subscheme abstractly isomorphic with $R$.  From \cite[Lem. 2.3]{CN09} we have \[
\dim_\CC T_{[R]}\Hilb^{Gor}_{14} (\PP^n) = 14n + T_{[R']}\Hilb^{Gor}_{14} (\PP^6) -84.
\]
From \cite[Thm. 1.1]{BJ17} $R'$ is non-smoothable, hence $\dim T_{[R']}\Hilb^{Gor}_{14} (\PP^6) = 76$  by \cite[Claim 3]{Jel16}.
\end{proof}

\begin{proposition}\label{p:dim_eta}
The dimension of $\eta_{14}(\nu_{d} (\PP^n))$ is at most $14n+5$.
\end{proposition}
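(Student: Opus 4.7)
The plan is to read the dimension bound off from the construction of $\eta_{14}(\nu_d(\PP^n))$ given in Equation \eqref{eq:eta_is_projection} as $\pr_2(Y_2)$, combined with the structural description of $Y_2$ as (the closure of) a pullback of the projectivized universal bundle. Since $\pr_2$ is a morphism of varieties, it cannot increase dimension, so it suffices to bound $\dim Y_2$.

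First, I would recall that $Y_2$ is the closure in $\Hilb^{Gor}_{14}(\PP^n) \times \PP(\Sym^d \CC^{n+1})$ of $\varphi^*(\PP\mathcal{S})$ restricted to $\mathcal{H}_{1661} \cap U$, where $U$ is the open locus on which $\varphi$ is regular. The morphism $\varphi^*\pi : \varphi^*(\PP\mathcal{S}) \to U$ is the pullback of the projectivized universal bundle $\pi: \PP\mathcal{S} \to \Gr(14,\Sym^d\CC^{n+1})$, whose fibers are $\PP^{13}$ (the projectivization of a $14$-dimensional linear subspace). Consequently each fiber of $\varphi^*\pi$ over a point of $\mathcal{H}_{1661}\cap U$ has dimension $13$, so
\[
\dim \bigl(Y_2 \cap (\mathcal{H}_{1661}\cap U)\times \PP(\Sym^d\CC^{n+1})\bigr) \leq \dim (\mathcal{H}_{1661}\cap U) + 13.
\]

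Next, by Lemma \ref{l:dim_H1661} the irreducible component $\mathcal{H}_{1661}$ has dimension $14n - 8$, so $\dim(\mathcal{H}_{1661}\cap U) = 14n - 8$ as $U$ is open and dense. Taking closures preserves this dimension bound, giving $\dim Y_2 \leq (14n - 8) + 13 = 14n + 5$. Finally, applying $\pr_2$,
\[
\dim \eta_{14}(\nu_d(\PP^n)) = \dim \pr_2(Y_2) \leq \dim Y_2 \leq 14n + 5,
\]
which is the claimed bound. There is no real obstacle here; the only subtlety worth spelling out is that $\varphi$ being only rational is handled by working over the open set $U$ and then taking closures, which is why $Y_2$ is defined as a closure in the first place.
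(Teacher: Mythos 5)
Your argument is correct and follows essentially the same route as the paper: bound $\dim Y_2$ by $\dim\mathcal{H}_{1661} + 13$ using the $\PP^{13}$ fibers of the projectivized universal bundle, invoke Lemma \ref{l:dim_H1661} for $\dim\mathcal{H}_{1661} = 14n-8$, and use that $\pr_2$ cannot increase dimension.
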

\begin{proof}
We have the following commutative diagram

\begin{center}
\begin{tikzcd}[column sep = small, row sep = small]
\PP (\Sym^{d} \CC^{n+1}) \supseteq \sigma \cup \eta & Y_1\cup Y_2 \arrow[l] \arrow{d}{\chi} \arrow[r, dashed] & \PP \mathcal{S} \arrow[d] \\
\mathcal{H}ilb_{14}^{Gor} (\PP^n) \arrow[r, equal] & \mathcal{H}ilb_{14}^{Gor,sm} (\PP^n) \cup \mathcal{H}_{1661} \arrow[r, dashed] &  \Gr (14, \Sym^{d} \CC^{n+1})
\end{tikzcd}
\end{center}

\noindent where $\sigma$ and $\eta$ denote $\sigma_{14}( \nu_{d} (\PP^n))$ and $\eta_{14} ( \nu_{d} (\PP^n))$, respectively, and $\chi:Y_1\cup Y_2 \to
\Hilb_{14}^{Gor}(\mathbb{P}^n)$ is the projection. 
Then $\dim \eta_{14}(\nu_{d} (\PP^n)) \leq \dim (Y_2)
= \dim \mathcal{H}_{1661} + 13$, where $13$ is the dimension of the general fiber of the map $\chi|_{Y_2}: Y_2
  \to \mathcal{H}_{1661}$. 
It follows from Lemma \ref{l:dim_H1661}, that $\dim \mathcal{H}_{1661}=14n-8$ and therefore $\dim \eta_{14}(\nu_{d} (\PP^n)) \leq 14n+5$.
\end{proof}

In the rest of the section we use the notation $f^{\btd d}$ as introduced in Definition \ref{d:prime_operator}. 

\begin{proposition}\label{p:F3_tresciwy_to_poza_eta}
Let $T$ be defined as at the beginning of this subsection and let $(y_0, y_1, \ldots, y_n)$ be a $\CC$-basis of $T_1$.
 Assume that  $G=y_0^{d-3} P$ for some  natural number  $d \geq 5$ and $P \in T_3$. Define $f := P|_{y_0=1} = F_3 + F_2 + F_1 + F_0 \in R:=\CC[y_1, \ldots, y_n]$. If $f$ satisfies the following conditions:
 \begin{enumerate}[label=(\alph*)]
   \item $\Apolar(f^{\btd d})$ has Hilbert function $(1,6,6,1)$,
   \item $[\Spec \Apolar(f^{\btd d})] \notin \mathcal{H}ilb^{Gor, sm}_{14}(\AA^n)$,
\end{enumerate}
then $[G]\in \eta_{14} ( \nu_{d} (\PP^n)) \setminus \sigma_{14}( \nu_{d} (\PP^n))$.
\end{proposition}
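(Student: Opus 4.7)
The plan is to exhibit a specific non-smoothable Gorenstein scheme $R\subseteq \PP^n$ of length $14$ whose linear span contains $[G]$, then use the decomposition $\kappa_{14}(\nu_d(\PP^n))=\sigma_{14}(\nu_d(\PP^n))\cup\eta_{14}(\nu_d(\PP^n))$ coming from the two components of $\mathcal{H}ilb^{Gor}_{14}(\PP^n)$.

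First I would translate the geometric data into apolar language. By Remark~\ref{rem:why_triangle} we have $G=(f^{\btd d})^{hom,d-3}$, so it is natural to set $R := V(\Ann(f^{\btd d})^{hom})\subseteq \PP^n$. Lemma~\ref{l:homogenization_is_saturated} shows that $\Ann(f^{\btd d})^{hom}$ is saturated, and Corollary~\ref{c:HF_of_extension_stabilizes} together with assumption (a) gives that $R$ is a length-$14$ Gorenstein scheme with local Hilbert function $(1,6,6,1)$; thus $[R]\in\mathcal{H}_{1661}$, while (b) says $[R]\notin\mathcal{H}ilb^{Gor,sm}_{14}(\PP^n)$. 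Applying Lemma~\ref{l:ideals_agree_in_low_degree}(i) gives $\Ann(f^{\btd d})^{hom}\subseteq \Ann(G)$, so the Cactus Apolarity Lemma~\ref{p:cactus_apolarity} puts $[G]\in\langle\nu_d(R)\rangle\subseteq \kappa_{14}(\nu_d(\PP^n))$.

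The main step is proving $[G]\notin\sigma_{14}(\nu_d(\PP^n))$. Suppose otherwise; by the Border Apolarity Lemma~\ref{p:border_apolarity} there is an ideal $[I]\in\Slip_{14,\PP T_1}$ with $I\subseteq\Ann(G)$, so $T^*/I$ has Hilbert function $h_{14}$, which is $(14,n+1)$-standard. I would apply Theorem~\ref{t:general_polynomial}(iii) to $f^{\btd d}$ with $d_1=3$, $d_2=d-3$, $r=14$. The hypothesis that the top part $(d-3)!\,F_3$ of $f^{\btd d}$ is not a power of a linear form follows from (a) via Iarrobino's symmetric decomposition: the Hilbert function of $\Apolar(F_3)$ must be $(1,6,6,1)$, hence $H(\Apolar(F_3),1)=6>1$. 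The case hypothesis holds because either $d_2\geq d_1$ (when $d\geq 6$) or $d_2=d_1-1=2$ together with $r=14>2d_1=6$ (when $d=5$). Theorem~\ref{t:general_polynomial}(iii) then forces $I^{sat}=\Ann(f^{\btd d})^{hom}$, so $V(I^{sat})=R$. But $\Slip_{14,\PP T_1}$ surjects onto $\mathcal{H}ilb^{sm}_{14}(\PP^n)$, as noted in the proof of Lemma~\ref{l:smoothable_then_br_of_hom_at_most_r}, so $R$ would have to be smoothable, contradicting (b).

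To conclude, equations \eqref{eq:kappa_is_projection}--\eqref{eq:eta_is_projection} give $\kappa_{14}(\nu_d(\PP^n))=\pr_2(Y_1)\cup\pr_2(Y_2)=\sigma_{14}(\nu_d(\PP^n))\cup\eta_{14}(\nu_d(\PP^n))$, so combining the two previous steps yields $[G]\in\eta_{14}(\nu_d(\PP^n))\setminus\sigma_{14}(\nu_d(\PP^n))$. The only nontrivial obstacle in this plan is the non-membership in $\sigma_{14}$, and it rests entirely on the uniqueness clause Theorem~\ref{t:general_polynomial}(iii); this is precisely the reason that condition required either $d_2\geq d_1$ or the numerical bound $r>2d_1$, which for our $r=14$, $d_1=3$ is automatic.
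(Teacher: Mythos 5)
Your proof is correct and takes essentially the same approach as the paper. The only cosmetic differences are that you unfold the cactus-rank bound explicitly via Lemma~\ref{l:ideals_agree_in_low_degree}(i) and the Cactus Apolarity Lemma~\ref{p:cactus_apolarity} where the paper simply invokes Theorem~\ref{t:general_polynomial}(i), and you spell out the case split ($d\geq6$ vs.\ $d=5$, i.e.\ $d_2\geq d_1$ vs.\ $d_2=d_1-1$ with $r>2d_1$) needed to apply Theorem~\ref{t:general_polynomial}(iii), which the paper leaves implicit; the core argument — Border Apolarity plus the uniqueness clause of Theorem~\ref{t:general_polynomial}(iii) forcing $I^{sat}=\Ann(f^{\btd d})^{hom}$, contradicting non-smoothability — is identical.
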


\begin{proof}
  By Condition (a) we have $\dim_{\mathbb{C}}(R^*/\Ann(f^{\btd d})) = 14$. Therefore from
Theorem \ref{t:general_polynomial} (i)
\begin{equation*}
  \crr(G)=\crr{(\sum_{i=0}^3 y_0^{[d-i]} F^{\btd d}_i)} \leq 14.
\end{equation*} 
From the Border Apolarity Lemma \ref{p:border_apolarity}, if $[G] \in
\sigma_{14}( \nu_{d} (\PP^n))$ then there exists $J \subseteq \Ann(G)$ with
$[J] \in \Slip_{14, \mathbb{P}T_1} \subseteq
\operatorname{Hilb}_{T^*}^{h_{14}}$. Thus $[\Proj (T^* / J^{sat})] \in
\mathcal{H}ilb^{sm}_{14} (\PP^n)$.  The Hilbert function of $R^*/\Ann(F_3^{\btd d})$ is $(1,6,6,1)$ by \cite[Thm.~2.3 and the following remarks]{CJN15}. In particular, $F_3^{\btd d}$ is not a power of a linear form. From Theorem~\ref{t:general_polynomial}
(iii)  it follows that $J^{sat} = \Ann(f^{\btd d})^{hom}$, so
\[
  [\Spec(R^*/\Ann(f^{\btd d}))] \in \mathcal{H}ilb^{Gor,sm}_{14}(\AA^{n}).
\]
This contradicts Condition (b).
\end{proof}

Finally we present the proof of the  characterization of points of the second irreducible component of the cactus variety.

\begin{proof}[Proof of Theorem \ref{t:segre-veronese_map_general_n} and Corollary \ref{c:segre-veronese_map}]

We first prove Part (ii) of Theorem \ref{t:segre-veronese_map_general_n} for $\eta_{14}(\nu_d(\PP^n))$ defined as in Equation~\eqref{eq:eta_is_projection}. Let $\psi\colon \PP T_1\times \PP T_3\to \PP T_d$ be given by $([y_0], [P])\mapsto [y_0^{d-3}P]$ and let $q\colon (T_1\setminus \{0\})\times (T_3\setminus \{0\}) \to \PP T_1\times \PP T_3$ be the natural map. 
Let  
\begin{multline*}
C = \{(y_0, P) \in T_1\times T_3 \mid\text{ there exists a completion of } y_0 \text{ to a basis } (y_0,y_1,\ldots, y_n) \text{ of } T_1 \text{ such that } \\
\Apolar((P|_{y_0=1})^{\btd d}) \text{ has Hilbert function } (1,6,6,1) \}\text{.}
\end{multline*}
Note that the set from the statement is $\psi(q(C))$. We define
\begin{equation*}
D = \{(y_0, P) \in C\mid [\Spec \Apolar ((P|_{y_0=1})^{\btd d})]\notin \Hilb_{14}^{Gor, sm}(\AA^n)\}.
\end{equation*}
We claim that the set $C$ is irreducible, $D$ is dense in $C$, and that $\dim D = \dim C = 14n + 7$. In order to prove the claim, we consider the morphism $\varphi : GL(T_1) \times T_3 \to T_3$ given by a change of basis. Then we have a product morphism
\begin{equation*}
\tau : GL(T_1) \times T_3 \to T_1 \times T_3\text{, given by } (a, P) \mapsto (a(x_0),\varphi(a,P))\text{.}
\end{equation*}
Recall the sets $A$, and $B$ from Lemma \ref{l:D_i_is_dense_and_of_dimension_13n+5}. Let $\widehat{A}$ and $\widehat{B}$ be the affine cones over $A$ and $B$ with origins removed. Let $\chi:S_{\leq 3} \to T_3$ be the inverse of the $\CC$-linear isomorphism $T_3 \to S_{\leq 3}$ given by $P \mapsto (P|_{x_0=1})^{\btd d}$.
  We have $\tau(GL(T_1)\times \chi(\widehat{A})) = C$ and $\tau(GL(T_1)\times\chi(\widehat{B})) = D$. These follow from the definitions of the sets $A,B,C,D$ and the identity
  \[
  (\varphi(a, \chi(f))|_{a(x_0)=1})^{\btd d} = f(a(x_1), \ldots, a(x_n)) \text{ for every } f\in S_{\leq 3} \text{ and } a\in GL(T_1).
  \]
  
  It follows from Lemma \ref{l:D_i_is_dense_and_of_dimension_13n+5} that $C$ is irreducible, $D$ is dense in $C$, and $\dim D = \dim C = (n+1) + (13n+6) = 14n +7$. 
  By Proposition~\ref{p:F3_tresciwy_to_poza_eta} if $(y_0, P)\in D$ and $G=y_0^{d-3}P$ then $[G]\in \eta_{14} ( \nu_{d} (\PP^n))$. Hence we have the
  inclusion $\overline{\psi(q(C))} \subseteq \eta_{14}(\nu_d(\mathbb{P}^n))$.

Now we prove that in fact $\overline{\psi(q(C))} = \eta_{14}(\nu_{d} (\PP^n))$.
 It follows from Proposition \ref{p:dim_eta}  that for every $ d  \geq 5$ we have
 \[\dim(\eta_{14}(\nu_{d} (\PP^n))) \leq 14n+5 \leq \dim (q(C)) = \dim (\overline{q(C)}) = \dim \overline{\psi(q(C))}.\]
 The last equality follows from  \cite[Thm. 11.4.1]{RV2017}, since the fibers of $\psi$ are finite.
 Hence $\overline{\psi(q(C))} = \eta_{14}(\nu_{d} (\PP^n))$.
 This concludes the proof of Theorem \ref{t:segre-veronese_map_general_n}(ii).
 
 We proceed to the proof of Theorem \ref{t:segre-veronese_map_general_n}(i). By Theorem \ref{t:1661_summary}, the scheme $\mathcal{H}ilb^{Gor}_{14}(\PP^n)$ has two irreducible components. Thus, the variety $\kappa_{14}(\nu_d(\PP^n))$ has at most two irreducible components: $\sigma_{14}(\nu_d(\PP^n))$ and $\eta_{14}(\nu_d(\PP^n))$ by Equations \eqref{eq:kappa_is_projection}, \eqref{eq:sigma_is_projection}, \eqref{eq:eta_is_projection}. Let $[f]\in B$, where $B$ is as in Lemma \ref{l:D_i_is_dense_and_of_dimension_13n+5}. Then by Proposition~\ref{p:F3_tresciwy_to_poza_eta} we get $[f^{hom, d-3}]\in \kappa_{14}(\nu_d(\PP^n)) \setminus \sigma_{14}(\nu_d(\PP^n))$. It is enough to show that $\eta_{14}(\nu_d(\PP^n) \neq \kappa_{14}(\nu_d(\PP^n))$. By Part (ii) every $[G]\in \eta_{14}(\nu_d(\PP^n))$ is divisible by the $(d-3)$-rd power of a linear form. Therefore, 
%  $[G]\in \kappa_{14}(\nu_d(\PP^n))\setminus \eta_{14}(\nu_d(\PP^n))$ for 
%  $G=L_1^{d}+\ldots + L_{14}^d$, where $L_1, \ldots, L_{14}$ are general linear forms.
$[x_0^d + x_1^d]\in \kappa_{14}(\nu_d(\PP^n))\setminus \eta_{14}(\nu_d(\PP^n))$.

   Now we prove Corollary \ref{c:segre-veronese_map}. The cactus variety $\kappa_{14}(\nu_d(\mathbb{P}^6))$ has two irreducible components by Part (i) of Theorem \ref{t:segre-veronese_map_general_n}. 
Assume that $n=6$. Then in the above notation the closure of $q(C)$ in $\PP T_1 \times \PP T_3$ has the maximal dimension $14\cdot 6+5=89$. Thus $\overline{q(C)} = \PP T_1\times \PP T_3$. It follows that $\eta_{14}(\nu_d(\PP^6)) = \overline{\psi(q(C))} = \psi(\PP T_1 \times \PP T_3)$.
\end{proof}

\begin{proposition}\label{p:l5q_of_cactus_rank_at_most_13}
Let $d\geq 4$ be an integer, $y_0\in T_1$, $Q\in T_2$. Define $G=y_0^{d-2} Q \in T_d$. If $[G]\in \kappa_{r}(\nu_d(\PP^n))$ for a positive integer $r$, then $[G]\in \sigma_{r}(\nu_d(\PP^n))$.
\end{proposition}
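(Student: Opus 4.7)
\emph{Plan.} The claim is equivalent to the equality $\brr(G) = \bcrr(G)$, as $\sigma_r \subseteq \kappa_r$ holds unconditionally. The idea is to identify both sides with the $\CC$-dimension of a single auxiliary apolar algebra attached to $G$ via Remark~\ref{rem:why_triangle}. If $Q = 0$, then $G = 0$; if $Q$ is a scalar multiple of $y_0^2$, then $G$ is a $d$-th power of a linear form, of Waring rank at most~$1$. In both cases the statement is immediate. Otherwise, fix a basis $(x_0, x_1, \ldots, x_n)$ of $T_1$ with $x_0 = y_0$, set $S = \CC[x_1, \ldots, x_n]$, decompose $Q = F_2 + x_0 F_1 + x_0^2 F_0$ with $F_i \in S_i$, and put $f := F_2 + F_1 + F_0 \in S_{\leq 2}$, a polynomial of some degree $d_1 \in \{1, 2\}$. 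Remark~\ref{rem:why_triangle} then yields $G = (f^{\btd d})^{hom,\,d-2}$.

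\emph{Both ranks equal $r_0 := \dim_{\CC}\Apolar(f^{\btd d})$.} The hypothesis $d \geq 4$ gives $d - 2 \geq 2 \geq d_1$, so the assumption $d_2 \geq d_1$ of Theorem~\ref{t:general_polynomial}(ii) is satisfied. Combining Part~(ii) with the Weak Border Cactus Apolarity Lemma~\ref{p:weak_border_cactus_apolarity_lemma} and the upper bound $\crr(G) \leq r_0$ from Part~(i) gives $\bcrr(G) = r_0$. For the matching bound $\brr(G) \leq r_0$, I will invoke Lemma~\ref{l:smoothable_then_br_of_hom_at_most_r} with $W = \langle f^{\btd d}\rangle$ and $d_2 = d - 2$; the hypothesis to verify is that $\Apolar(f^{\btd d})$ is smoothable. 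Once this is granted, $[G] \in \kappa_r(\nu_d(\PP T_1))$ forces $r \geq \bcrr(G) = r_0 \geq \brr(G)$, so $[G] \in \sigma_r(\nu_d(\PP T_1))$.

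\emph{Main obstacle.} The only non-formal step is the smoothability of $\Apolar(f^{\btd d})$, which is a local Artinian Gorenstein $\CC$-algebra of socle degree at most $2$. Its associated graded has Hilbert function $(1, 1)$ or $(1, h, 1)$ for some $h \geq 1$, and all algebras of these shapes are smoothable: the nontrivial case is apolar to a rank-$h$ quadric, and $\Spec$ of it is a classical flat limit of $h + 2$ reduced points in $\AA^h$ (see e.g.\ \cite{CEVV09} and the references therein). This smoothability assertion is the technical heart of the argument; everything else is a formal deduction from the results already established in Section~\ref{s:general_results}.
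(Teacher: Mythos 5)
Your proof is correct and takes essentially the same route as the paper's: both write $G$ as a homogenization of a polynomial $f$ (your $f^{\btd d}$ is the paper's $q$, up to the equivalent divided-power/triangle conventions from Remark~\ref{rem:why_triangle}), invoke Theorem~\ref{t:general_polynomial}(ii) to pin down the border cactus rank, and close with Lemma~\ref{l:smoothable_then_br_of_hom_at_most_r} together with smoothability of socle-degree-$\leq 2$ Gorenstein algebras from \cite{CEVV09}. Your explicit treatment of the degenerate cases ($Q=0$ or $Q$ a multiple of $y_0^2$, where $f$ has degree $0$) is a slight improvement in rigor, since Theorem~\ref{t:general_polynomial} is stated for $d_1\geq 1$.
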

\begin{proof}
 Complete $y_0$ to a basis $(y_0,y_1,\dots,y_n)$ of $T_1$.
 Let $S=\CC[y_1,...,y_n]$ and $q=Q_2 + Q_1 + Q_0 \in S$ be such that $G= Q_2 y_0^{[d-2]} + Q_1 y_0^{[d-1]} + Q_0 y_0^{[d]}$. By Theorem \ref{t:general_polynomial}(ii) we have $\dim_{\CC} S^*/\Ann(q) = s$ for some $s\leq r$.  Therefore, 
 \[
 [\Proj T^*/\Ann(q)^{hom}] \in \Hilb_s(\PP^n)\text{.}
 \]
By \cite[Prop.~4.9]{CEVV09} this subscheme is smoothable.
Hence, it follows from Lemma \ref{l:smoothable_then_br_of_hom_at_most_r}  that $[G]\in \sigma_{r}(\nu_d(\PP^n))$.
\end{proof}

The following lemma gives a description of the set-theoretic difference of the cactus variety and the secant variety. We need it to give a clear proof of Theorem \ref{t:algorithm}.

\begin{lemma}\label{l:algorithm_form}
Let $d\geq 6, n\geq 6$. The point $[G] \in \kappa_{14}(\nu_{d}(\PP^n))$
does not belong to $\sigma_{14}(\nu_{d}(\PP^n))$ if and only if there exists a
linear form $y_0 \in T_1$, and $P \in T_3$ such that $G = y_0^{d-3}P$ and for any
completion of $y_0$ to a basis of $T_1$ we have:
  \begin{enumerate}[label=(\alph*)]
    \item $\Apolar((P|_{y_0 = 1})^{\btd d})$ has Hilbert function $(1,6,6,1)$,
    \item $[\Spec \Apolar((P|_{y_0 = 1})^{\btd d})] \notin \mathcal{H}ilb^{Gor, sm}_{14}(\AA^n)$.
   \end{enumerate}
\end{lemma}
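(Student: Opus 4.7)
The plan is to handle the two implications separately. The $(\Leftarrow)$ direction will reduce directly to Proposition~\ref{p:F3_tresciwy_to_poza_eta}: if (a) and (b) hold for one completion, the proposition immediately puts $[G]$ in $\eta_{14}(\nu_d(\PP^n))\setminus\sigma_{14}(\nu_d(\PP^n))\subseteq\kappa_{14}(\nu_d(\PP^n))\setminus\sigma_{14}(\nu_d(\PP^n))$. I will separately observe that conditions (a) and (b) depend only on the pair $(y_0,P)$ and not on the completion: any two completions of $y_0$ differ by an element of $GL(T_1)$ fixing $y_0$, which restricts to an affine automorphism of the chart $\{y_0\ne 0\}\cong\AA^n$, and both the local Hilbert function of $\Apolar((P|_{y_0=1})^{\btd d})$ and the smoothability of its $\Spec$ are invariants of the embedded subscheme of $\AA^n$ under such automorphisms.

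For $(\Rightarrow)$, I will assume $[G]\in\kappa_{14}(\nu_d(\PP^n))\setminus\sigma_{14}(\nu_d(\PP^n))$ and first extract the decomposition. By Theorem~\ref{t:segre-veronese_map_general_n}(i) we have $[G]\in\eta_{14}(\nu_d(\PP^n))$, and by Part~(ii) the variety $\eta_{14}(\nu_d(\PP^n))$ sits inside the image of the closed morphism $\psi\colon\PP T_1\times\PP T_3\to\PP T_d$, $([y_0],[P])\mapsto[y_0^{d-3}P]$, so $G=y_0^{d-3}P$ for some $y_0\in T_1\setminus\{0\}$ and $P\in T_3\setminus\{0\}$. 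Picking any completion $(y_0,y_1,\dots,y_n)$ to a basis of $T_1$, I set $f:=(P|_{y_0=1})^{\btd d}\in S_{\leq 3}$, where $S=\CC[y_1,\dots,y_n]$. By Remark~\ref{rem:why_triangle} we have $G=f^{hom,d-3}$; since $d\geq 6$ the inequality $d-3\geq 3\geq\deg f$ holds, so Theorem~\ref{t:general_polynomial}(ii) applies and yields $\bcrr(G)=r$, where $r:=\dim_{\CC}\Apolar(f)$. In particular $r\leq 14$, because $[G]\in\kappa_{14}(\nu_d(\PP^n))$.

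It then remains to rule out the configurations incompatible with (a) or (b). If $r<14$, then $\Spec\Apolar(f)$ is Gorenstein of length strictly less than $14$, so by Theorem~\ref{t:1661_summary}(i) it is smoothable, and Lemma~\ref{l:smoothable_then_br_of_hom_at_most_r} gives $\brr(G)\leq r<14$, forcing $[G]\in\sigma_{14}(\nu_d(\PP^n))$ and contradicting our hypothesis. If $r=14$ but the Hilbert function of $\Apolar(f)$ differs from $(1,6,6,1)$, I will invoke Jelisiejew's classification~\cite{Jel16} that every non-smoothable length-$14$ Gorenstein subscheme of $\AA^n$ has local Hilbert function $(1,6,6,1)$; this forces $\Spec\Apolar(f)\in\Hilb^{Gor,sm}_{14}(\AA^n)$, and Lemma~\ref{l:smoothable_then_br_of_hom_at_most_r} again puts $[G]$ in $\sigma_{14}$, a contradiction. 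This establishes (a). Condition (b) follows at once: if $[\Spec\Apolar(f)]$ were in $\Hilb^{Gor,sm}_{14}(\AA^n)$, the same lemma would give $[G]\in\sigma_{14}(\nu_d(\PP^n))$. I expect the main obstacle to be the appeal to the Hilbert-function classification from \cite{Jel16}; without it, one could not exclude a hypothetical length-$14$ non-smoothable apolar subscheme whose local Hilbert function is not $(1,6,6,1)$, and the clean dichotomy that drives Step~4 of the algorithm would break down.
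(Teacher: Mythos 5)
Your proof is correct and follows essentially the same strategy as the paper's: the backward direction is Proposition~\ref{p:F3_tresciwy_to_poza_eta}, and the forward direction passes from $[G]\in\eta_{14}$ to a factorization $G=y_0^{d-3}P$, applies Theorem~\ref{t:general_polynomial}(ii) to pin down the border cactus rank, and then excludes the smoothable alternative via Lemma~\ref{l:smoothable_then_br_of_hom_at_most_r} and the Hilbert-function classification. The main organizational difference is that the paper first invokes Proposition~\ref{p:l5q_of_cactus_rank_at_most_13} to rule out $y_0^{d-2}\mid G$ (so that $\deg(P|_{y_0=1})=3$ from the start), whereas you absorb that case into the dichotomy ``$r<14$ or $r=14$ with Hilbert function $\neq(1,6,6,1)$ implies smoothable''; both routes are sound. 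Two minor points: your classification citation is better placed as \cite[Thm.~2.3, Prop.~6.11]{CJN15} (which the paper uses and which covers arbitrary $n$) rather than \cite{Jel16}, which treats $\AA^6$; and when you write $G=f^{hom,d-3}$ you should note that in the convention of Theorem~\ref{t:general_polynomial} one takes $d_1=\deg f$ and $d_2=d-\deg f$, so the hypothesis to check is $d\geq 2\deg f$, which indeed holds for $d\geq 6$. You are also right that the completion-independence of (a) and (b) deserves a sentence; the paper leaves it implicit.
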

\begin{proof}
  If $y_0 \in T_1$ and $P \in T_3$ are such that $G = y_0^{d-3} P$, and there exists
  a completion of $y_0$ to a basis $(y_0,\dots,y_n)$ of $T_1$, for which
  Conditions (a),(b) hold, we get 
  \begin{equation*}
    [G] \notin \sigma_{14}(\nu_{d}(\PP^n))
  \end{equation*}
  by Proposition \ref{p:F3_tresciwy_to_poza_eta}.

  Assume that $[G] \notin \sigma_{14}(\nu_{d}(\PP^n))$. Then by Theorem
  \ref{t:segre-veronese_map_general_n} there exists a linear form $y_0 \in T_1$ such that
  $y_0^{d-3} \mid G$. Using Proposition \ref{p:l5q_of_cactus_rank_at_most_13} we
  conclude that $G$ is not divisible by $y_0^{d-2}$. Hence we showed that $G = y_0^{d-3} P$
  for some $P \in T_3$. Extend $y_0$ to a basis
  $(y_0,y_1,\dots,y_n)$. Let $f = P|_{y_0 = 1}$. Suppose $f = F_3 + F_2 + F_1 + F_0$.

  Now we prove Conditions (a), (b) hold. It follows from the definition of $()^{\btd d}$ that $f^{\btd d} = F^{\btd d}_3+F_2^{\btd d}+F_1^{\btd d}+F_0^{\btd d} \in \CC[y_1,\ldots,
  y_n]$. 
We have 
\[
  G=\sum_{i=0}^3 y_0^{[d-i]} F^{\btd d}_i.
\]
  By Lemma \ref{l:ideals_agree_in_low_degree} (i)
  \begin{equation*}
    \Ann(f^{\btd d})^{hom} \subseteq \Ann(G)\text{.}
  \end{equation*}
  If  $\dim_\CC (\Apolar(f^{\btd d})) \leq 13$, then $\crr(G) \leq 13$ by the Cactus Apolarity Lemma \ref{p:cactus_apolarity}, since $\Ann(f^{\btd d})^{hom}$ is saturated by Lemma \ref{l:homogenization_is_saturated}. Therefore, $[G]\in \kappa_{13}(\nu_d(\PP^n)) = \sigma_{13}(\nu_d(\PP^n)) \subseteq \sigma_{14}(\nu_d(\PP^n))$, a contradiction.
  
  From Theorem \ref{t:general_polynomial}(ii) we obtain $\dim_\CC (\Apolar(f^{\btd d})) \leq 14$. We proved that $\dim_\CC (\Apolar(f^{\btd d})) = 14$.
  Since we assumed that $[G] \not \in \sigma_{14} (\nu_d (\PP^n))$, it follows from Lemma \ref{l:smoothable_then_br_of_hom_at_most_r} that $\Spec \Apolar(f^{\btd d})$ is not smoothable.
  This implies Condition (b) holds.
  By \cite[Thm. 2.3]{CJN15} and \cite[Prop. 6.11]{CJN15} the algebra $\Apolar(f^{\btd d})$ has Hilbert function $(1,6,6,1)$.
  Thus we proved Condition (a) holds.
\end{proof}
Steps 2--5 of the algorithm from Theorem \ref{t:algorithm} check whether $G$ is of the form as in Lemma \ref{l:algorithm_form}.

\begin{proof}[Proof of Theorem \ref{t:algorithm}]
Assume that $[G ]\notin \sigma_{14}(\nu_{d}(\PP^n))$. Then there exist a basis
$(y_0,\ldots, y_n)$ of $T_1$ and $P\in T_3$ as in Lemma
\ref{l:algorithm_form}. Let $f=P|_{y_0=1}$. It follows from the definition of $()^{\btd d}$ that $f^{\btd d} = F^{\btd d}_3+F_2^{\btd d}+F_1^{\btd d}+F_0^{\btd d} \in
\CC[y_1,\ldots, y_n]$. Therefore, $G  =
y_0^{[d-3]}F^{\btd d}_3+y_0^{[d-2]}F^{\btd d}_2+y_0^{[d-1]}F_1^{\btd d}+y_0^{[d]}F_0^{\btd d}$. By Lemma
\ref{l:ideals_agree_in_low_degree}(ii), we have $\Ann (G)_{\leq d-3} =
(\Ann(f^{\btd d})^{hom})_{\leq d-3}$. Moreover, by Lemma
\ref{l:extension_of_annihilator_generated_in_low_degrees},
\begin{equation}\label{eq:anihilators}
  ((\Ann(f^{\btd d})^{hom})_{\leq d-3}) = \Ann(f^{\btd d})^{hom}
\end{equation}
(the assumptions of the lemma are satisfied since $\Apolar(F_3^{\btd d})$ and $\Apolar(f^{\btd d})$ have the same Hilbert function by \cite[Thm.~2.3 and the following remarks]{CJN15}).
Therefore, we have
\[
\mathfrak{a} = \sqrt{(\Ann(G )_{\leq d-3})} = \sqrt{\Ann(f^{\btd d})^{hom}} = (\beta_1,\ldots, \beta_n),
\]
where $\beta_1,\ldots, \beta_n \in T^*_1$ are dual to $y_1,\ldots, y_n\in T_1$. This shows that if the $\CC$-linear space $\big(\sqrt{(\Ann(G)_{\leq d-3})}\big)_1$ is not $n$-dimensional, then $[G] \in \sigma_{14}(\nu_{d}(\PP^n))$. Therefore, in that case, algorithm stops correctly at Step 2.

Assume that the algorithm did not stop at Step 2. Then if $G$ is of the form as in Lemma \ref{l:algorithm_form}, then $y_0$ divides $G$ exactly $d-3$ times. Otherwise $[G]\in \sigma_{14}(\nu_{d}(\PP^n))$ and the algorithm stops correctly at Step 3.

Assume that the algorithm did not stop at Step 3. Then the algorithm does not stop at Step 4 if and only if Condition (a) of Lemma \ref{l:algorithm_form} is fulfilled. Therefore, if the Hilbert function of $R^*/\Ann(f^{\btd d})$ is not equal to $(1,6,6,1)$, the algorithm stops correctly at Step 4. 

Assume that the algorithm did not stop at Step 4. Then $P$ satisfies Condition
(a) from Lemma \ref{l:algorithm_form}. Hence $[G]$ is in
$\sigma_{14}(\nu_{d}(\PP^n))$ if and only if $P$ does not satisfy Condition
(b). Using Lemma \ref{l:dim_H1661}, this is equivalent to
\[
  \dim_{\mathbb{C}}\operatorname{Hom}_{R^*}(I, R^*/I) > 14n-8\text{.}
\]
The left term is the dimension of the tangent space to the Hilbert scheme $\mathcal{H}ilb^{Gor}_{14}(\AA^n)$ at the point $[\Spec R^*/I]$ (see \cite[Prop. 2.3]{H09} or \cite[Thm. 18.29]{MS04}).
\end{proof}

\begin{remark}
The algorithm is stated for $d \geq 6$ even though it is based on Theorem
\ref{t:segre-veronese_map_general_n} which works for $d \geq 5$. The reason for
this is that we needed $d\geq 6$ to obtain Equation \eqref{eq:anihilators} and for Lemma \ref{l:algorithm_form} to work. We
do not know a counterexample for the algorithm in case $d=5$.
\end{remark}

Equations defining the cactus variety $\kappa_{14}(\nu_6(\mathbb{P}^n))$ for $n \geq 6$ are unknown and there is no example of an explicit equation of the secant variety $\sigma_{14}(\nu_6(\mathbb{P}^n))$ which does not vanish on the cactus variety. We present some known results about 14-th secant and cactus varieties of  Veronese embeddings of $\PP^6$.
    
\begin{remark} Let $V$ be a $7$-dimensional complex vector space. The
  catalecticant minors define a subscheme of $\mathbb{P}(\Sym^6V)$, one of whose
  irreducible components is the secant variety $\sigma_{14}(\nu_6(\mathbb{P}V))$
  (see \cite[Thm. 4.10A]{IK06}). Moreover, these equations are known to vanish
  on the cactus variety $\kappa_{14}(\nu_6(\mathbb{P}V))$ (see
  \cite[Prop. 3.6]{BB14},  or \cite{Gal16}). Example
  \ref{e:not_enough_equations} shows that the catalecticant minors do not
  define $\kappa_{14}(\nu_6(\mathbb{P}V))$ set-theoretically. However, if we
  consider the $d$-th Veronese for $d \geq 28$, then the catalecticant minors
  are enough to define $\kappa_{14}(\nu_d(\mathbb{P}V))$ set-theoretically, see
  \cite[Thm. 1.5]{BB14}. The article \cite{LO13} gives an extensive list of
  results on equations of secant varieties but in the case of $\sigma_{14}(\nu_6(\mathbb{P}V))$
  it does not improve the result in $\cite{IK06}$.
\end{remark}

\begin{example}\label{e:not_enough_equations}
  Let $F=x_0^{6}+x_1^{2}x_2^{2}x_3^{2}+x_4^{3}x_5^{2}x_6\in T =
  \mathbb{C}[x_0,\ldots,x_6]$. Then Hilbert function of $T^*/\Ann(F)$ is
  $(1,7,12,14,12,7,1)$ but there is only one minimal homogeneous generator of
  $\Ann(F)$ in degree 4. Therefore, there is no homogeneous ideal $J$ in $T^*$ such that  $T^*/J$ has a $(14,7)$-standard Hilbert function and $J$ is contained in $\Ann(F)$. 
  Thus $\operatorname{bcr}(F) > 14$ by the Weak Border Cactus Apolarity Lemma \ref{p:weak_border_cactus_apolarity_lemma},
  even though the Hilbert function of $T^*/\Ann(F)$ is bounded by $14$. 
\end{example}

\section[\texorpdfstring{(8,3)-th Grassmann cactus variety of Veronese embeddings of $\mathbb{P}^n$}{(8,3)-th Grassmann cactus variety of Veronese embeddings of projective space of dimension 4}]{(8,3)-th Grassmann cactus variety of  Veronese embeddings of $\mathbb{P}^n$}\label{s:83grassmann}

In this section we show that the Grassmann cactus variety
$\kappa_{8,3}(\nu_d(\mathbb{P}^n))$ has two irreducible components for $d \geq 5$ and $n\geq 4$, one of which is the Grassmann secant variety $\sigma_{8,3}(\nu_d(\PP^n))$ and the other one is described in Theorem \ref{t:segre-veronese_map_143_general_case}. 
Furthermore, we present an algorithm (Theorem \ref{t:algorithm_143}) for deciding whether $[V] \in
\kappa_{8,3}(\nu_{d}(\mathbb{P}^n))$  is in $
\sigma_{8,3}(\nu_{d}(\mathbb{P}^n))$.

We will assume that $\Bbbk = \CC$ because of technical reasons.  
In that case, the graded dual ring of a polynomial ring is isomorphic to a polynomial ring.

It follows from \cite{CEVV09} that for $n \geq 4, k \geq 1$ and $d \geq 2$ the Grassmann cactus variety $\kappa_{8,k}(\nu_d(\mathbb{P}\CC^{n+1}))$ has at most two irreducible
components. 
The main result of this section is Theorem~\ref{t:segre-veronese_map_143_general_case}, analogous to Theorem~\ref{t:segre-veronese_map_general_n}.
In Part (i)  we verify that for all $d\geq 5$ and all $n \geq 4$ the Grassmann cactus variety $\kappa_{8,3}(\nu_d(\PP \CC^{n+1}))$ is reducible. The main result
is Part (ii) which describes the irreducible component other than $\sigma_{8,3}(\nu_d(\PP \CC^{n+1}))$.

\begin{theorem}\label{t:segre-veronese_map_143_general_case}
  Let $n\geq 4$ and $d\geq 5$ be integers and consider the polynomial ring $T=\CC[x_0,\ldots, x_n]$. 
 \begin{enumerate}[label=(\roman*)]
\item The Grassmann cactus variety $\kappa_{8,3}(\nu_d(\PP T_1))$ has two irreducible components, one of which is the Grassmann secant variety $\sigma_{8,3}(\nu_d(\PP T_1))$, and we denote the other one by $\eta_{8,3}(\nu_d(\mathbb{P}T_1))$.
\item The irreducible component $\eta_{8,3}(\nu_d(\PP T_1))$ is the closure of the following set
  \begin{multline*}%
    \{[y_0^{d-2} U] \in \Gr(3, T_d) \mid y_0 \in T_1\setminus \{0\}, U \in \Gr(3, T_2) \text{, and there exists
    a completion of } y_0 \text{ to a basis } \\ (y_0,y_1,\ldots, y_n) \text{ of } T_1 \text{ such that } \Apolar((U|_{y_0=1})^{\btd d}) \text{ has
Hilbert function } (1,4,3)\}.
  \end{multline*}
  \end{enumerate}
\end{theorem}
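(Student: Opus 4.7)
The plan is to adapt the proof of Theorem~\ref{t:segre-veronese_map_general_n} to the Grassmann setting, replacing cubic forms by three-dimensional subspaces of quadrics and the Hilbert function $(1,6,6,1)$ by $(1,4,3)$ (note $1+4+3=8$). The essential structural input, analogous to Theorem~\ref{t:1661_summary}, is \cite[Thm.~1.1]{CEVV09}: for $n\geq 4$ the Hilbert scheme $\mathcal{H}ilb_8(\PP^n)$ has exactly two irreducible components, the smoothable one $\mathcal{H}ilb_8^{sm}(\PP^n)$ and a second component $\mathcal{H}_{143}$ whose general point parameterizes a local scheme with Hilbert function $(1,4,3)$. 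By Macaulay's duality (\cite[Thm.~21.6]{Eis95}), such local algebras are precisely the apolar algebras of three-dimensional subspaces of quadrics.

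Imitating the construction that precedes Proposition~\ref{p:dim_eta}, I define $\eta_{8,3}(\nu_d(\PP T_1)) := \pr_2(Y_2)$, where $Y_2 \subseteq \mathcal{H}ilb_8^{Gor}(\PP^n) \times \Gr(3,T_d)$ is the closure of the incidence variety whose fibre over $[R] \in \mathcal{H}_{143}$ is $\Gr(3, \langle \nu_d(R)\rangle)$. Because the generic fibre of $Y_2 \to \mathcal{H}_{143}$ is $\Gr(3,8)$ of dimension $15$, an analog of Lemma~\ref{l:dim_H1661} (using \cite[Prop.~A.4]{BJJMM19} together with the dimension computations in \cite{CEVV09}) yields an explicit upper bound $\dim \eta_{8,3}(\nu_d(\PP T_1)) \leq \dim \mathcal{H}_{143} + 15$.

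The next step is the subspace analog of Proposition~\ref{p:F3_tresciwy_to_poza_eta}: given a basis $(y_0,\dots,y_n)$ of $T_1$ and $U \in \Gr(3,T_2)$ with $u := U|_{y_0=1}$ satisfying $H(\Apolar(u^{\btd d})) = (1,4,3)$ and whose associated affine scheme is non-smoothable, I claim $[y_0^{d-2}U] \in \eta_{8,3}\setminus\sigma_{8,3}$. Cactus rank at most $8$ is immediate from Theorem~\ref{t:general_subspace}(i) with $d_1 = 2$, $d_2 = d-2$. If $[y_0^{d-2}U]$ were in $\sigma_{8,3}$, Border Apolarity (Proposition~\ref{p:border_apolarity}) would give $[J] \in \Slip_{8,\PP T_1}$ with $J \subseteq \Ann(y_0^{d-2}U)$; since $d\geq 5$ ensures $d_2 \geq d_1+1$, Theorem~\ref{t:general_subspace}(iii) forces $J^{sat} = \Ann(u^{\btd d})^{hom}$, contradicting non-smoothability.

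Finally, the analog of Lemma~\ref{l:D_i_is_dense_and_of_dimension_13n+5} — namely, that the set of $[u] \in \Gr(3, S_{\leq 2})$ with $H(\Apolar(u)) = (1,4,3)$ is irreducible and locally closed of the expected dimension, and that its non-smoothable sublocus is dense (nonempty by \cite[Rmk.~3.7]{Jel16} and \cite[Thm.~3.16, Prop.~5.6]{BJ17}) — combined with the $GL(T_1)$-sweep via the finite-fibred map $([y_0],[U])\mapsto [y_0^{d-2}U]$ from $\PP T_1 \times \Gr(3,T_2)$, produces an irreducible subset of $\eta_{8,3}$ of dimension matching the upper bound of Step~1, which proves Part~(ii). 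Part~(i) follows: $\kappa_{8,3}(\nu_d(\PP T_1))$ has at most two components by the Grassmann analog of \eqref{eq:gorenstein_give_catus}; Step~3 produces a point of $\kappa_{8,3}\setminus\sigma_{8,3}$, while $[\langle x_0^d,x_1^d,x_2^d\rangle]\in \sigma_{8,3}$ contains elements not divisible by any common $(d-2)$-nd power of a linear form and hence lies outside $\eta_{8,3}$. The main technical obstacle is carrying out the Grassmann version of Lemma~\ref{l:equivalent_description_of_1661}: giving a clean characterization of which inhomogeneous three-dimensional subspaces of $S_{\leq 2}$ have apolar Hilbert function $(1,4,3)$ in terms of the leading homogeneous part and its interaction with the lower-degree components, as this drives both the dimension count and the density of the non-smoothable locus.
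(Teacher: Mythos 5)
Your proposal follows essentially the same route as the paper: the incidence-variety definition of $\eta_{8,3}$, the dimension bound using \cite[Thm.~1.1]{CEVV09}, Theorem~\ref{t:general_subspace}(i),(iii) plus Border Apolarity to exclude points from $\sigma_{8,3}$, the $GL(T_1)$-sweep of the locally closed $(1,4,3)$-locus, and the same argument showing $\eta_{8,3}\neq\kappa_{8,3}$. One small correction: the citations you give for nonemptiness of the non-smoothable sublocus (\cite{Jel16}, \cite{BJ17}) belong to the $14$-point $(1,6,6,1)$ case; in the $8$-point $(1,4,3)$ setting the paper instead exhibits the explicit non-smoothable example $W=\langle x_2x_4,\,x_1x_3,\,x_2x_3-x_1x_4\rangle$ following \cite[proof of Prop.~5.1]{CEVV09}.
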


For $n = 4$, Theorem \ref{t:segre-veronese_map_143_general_case} has the following simple form.

\begin{corollary}\label{c:segre-veronese_map_143}
  Let $d\geq 5$, the Grassmann cactus variety $\kappa_{8,3}(\nu_{d}(\mathbb{P}^4))$ has two irreducible components: the Grassmann secant variety $\sigma_{8,3}(\nu_{d}(\PP^4))$ and the variety $\eta_{8,3}(\nu_{d}(\PP^4))$ consisting of $3$-dimensional vector spaces divisible by a $(d-2)$-nd power of a linear form.
\end{corollary}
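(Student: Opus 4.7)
The plan follows closely that of Corollary \ref{c:segre-veronese_map}. By Theorem \ref{t:segre-veronese_map_143_general_case}(i) the cactus variety $\kappa_{8,3}(\nu_d(\mathbb{P}^4))$ decomposes into its two irreducible components $\sigma_{8,3}(\nu_d(\mathbb{P}^4))$ and $\eta_{8,3}(\nu_d(\mathbb{P}^4))$, so everything reduces to identifying $\eta_{8,3}(\nu_d(\mathbb{P}^4))$ with the image $E$ of the multiplication morphism
\[
\psi\colon \mathbb{P}T_1 \times \Gr(3, T_2) \longrightarrow \Gr(3, T_d), \qquad ([y_0], [U]) \longmapsto [y_0^{d-2} U].
\]
Being a morphism of projective varieties, $\psi$ is closed, so $E$ is closed in $\Gr(3, T_d)$; and by construction $E$ is exactly the set of $3$-dimensional subspaces of $T_d$ divisible by the $(d-2)$-nd power of a linear form.

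By Theorem \ref{t:segre-veronese_map_143_general_case}(ii), $\eta_{8,3}(\nu_d(\PP^4)) = \overline{\psi(C)}$, where $C \subseteq \mathbb{P}T_1 \times \Gr(3, T_2)$ denotes the set of pairs $([y_0], [U])$ for which $\Apolar((U|_{y_0=1})^{\btd d})$ has Hilbert function $(1,4,3)$. Clearly $\psi(C) \subseteq E$, so once we show that $C$ is dense in $\mathbb{P}T_1 \times \Gr(3, T_2)$, continuity of $\psi$ and the closedness of $E$ immediately give $\eta_{8,3}(\nu_d(\PP^4)) = \overline{\psi(C)} = E$.

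For $n=4$ specifically, the Hilbert function $(1,4,3)$ is the maximum possible for the apolar algebra of a $3$-dimensional subspace of $S_{\leq 2}$: since $\dim S_1 = n = 4$ we always have $H(\Apolar(W),1) \leq 4$ and $H(\Apolar(W),2) \leq \dim W = 3$, while higher degrees vanish because the elements sit in $S_{\leq 2}$. Thus $C$ is the locus where certain catalecticant matrices attain their maximal rank, hence $C$ is open in the irreducible variety $\mathbb{P}T_1 \times \Gr(3, T_2)$, and density reduces to the non-emptiness of $C$.

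The main step is therefore to exhibit a single point of $C$. I would try the explicit pair $y_0 = x_0$ and $U = \langle x_1 x_4, x_2 x_4, x_3 x_4 \rangle$. All generators of $U|_{y_0=1} = \langle y_1 y_4, y_2 y_4, y_3 y_4 \rangle$ are homogeneous of degree $2$, so the operator $(\cdot)^{\btd d}$ merely rescales them and $\Apolar((U|_{y_0=1})^{\btd d}) = \Apolar(U|_{y_0=1})$. A direct computation then shows that the annihilator contains no non-zero linear form (each $\alpha_i$ with $1 \leq i \leq 4$ sends at least one generator to a non-zero element of $S_1$) while containing the seven independent quadrics $\alpha_1^2, \alpha_2^2, \alpha_3^2, \alpha_4^2, \alpha_1\alpha_2, \alpha_1\alpha_3, \alpha_2\alpha_3$; this forces the Hilbert function to be exactly $(1,4,3)$ and completes the plan.
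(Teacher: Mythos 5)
Your argument is correct and reaches the same destination as the paper, but by a slightly different route. The paper's proof of the corollary rests on the numerical coincidence from Lemma \ref{l:D_i_is_dense_and_of_dimension_7n+8}: the irreducible locally-closed set $A$ has dimension $7n+8$, which for $n=4$ equals $36 = \dim \Gr(3, S_{\leq 2})$, forcing $\overline{(q\times\operatorname{Id})(C)} = \PP T_1\times\Gr(3,T_2)$ and hence $\eta_{8,3}(\nu_d(\PP^4)) = \psi(\PP T_1\times\Gr(3,T_2))$. You instead argue directly that for $n=4$ the Hilbert function $(1,4,3)$ is the coordinatewise maximum over all $[W]\in\Gr(3,S_{\leq 2})$ — which is right, since $H(\Apolar(W),1)\leq\dim S_1^*=4$ and $H(\Apolar(W),2)=\dim W_2\leq\dim W=3$ — so $C$ is the locus where certain ranks are maximal, hence open, and you supply an explicit witness $U=\langle x_1x_4,\,x_2x_4,\,x_3x_4\rangle$ (which indeed has $\Ann(W)_1=0$ and $\dim\Ann(W)_2=7$, giving Hilbert function $(1,4,3)$; here $(\cdot)^{\btd d}$ is a harmless rescaling since the generators are homogeneous). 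The two arguments are equivalent in substance — both reduce to density of $C$ — but yours trades the dimension count for an openness-plus-nonemptiness argument, at the cost of leaving the algebraicity/openness of the Hilbert-function locus somewhat sketched (the paper does this rigorously in Lemma \ref{l:D_i_is_dense_and_of_dimension_7n+8} via a flag-variety construction and Lemma \ref{l:equivalent_description_of_143}, reducing to conditions on $W_2$ alone). Your explicit example is a nice concrete addition that the paper itself uses in Lemma \ref{l:D_i_is_dense_and_of_dimension_7n+8}, though there for nonemptiness of $B$ rather than $A$.
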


%The following theorem says that for $n \geq 4$ and $d\geq 5$ the set
%$\kappa_{8,3}(\nu_d(\PP^n)) \setminus \sigma_{8,3}(\nu_d(\PP^n))$ is non-empty, and its
%general point is of the form $[y_0^{d-2} U]$, where $y_0$ is a linear form, and
%$U$ has some dehomogenization $W$ such that the apolar algebra of $W^{\btd
%d}$ has Hilbert function $(1,4,3)$.

% Jeśli nie przyjmą artykułu, to możemy zmienić tw. 1.4 i 1.5 na takie, w
% których C będzie podzbiorem PSym \CC^n
% komentarz do usunięcia, jeśli poproszą o texa

Let $\mathcal{H}ilb_{r}(X)$, where $X=\mathbb{A}^n$ or $\mathbb{P}^n$, denote
the Hilbert scheme of $r$ points on $X$. Since we shall use
the results of Cartwright, Erman, Velasco and Viray from \cite{CEVV09} in this section,
we give a brief summary.
\begin{theorem}[{\cite[Thm. 1.1]{CEVV09}}]\label{t:143_summary}
  We have the following:
  \begin{enumerate}[label=(\roman*)]
    \item the scheme $\mathcal{H}ilb_{r}(\AA^n)$ is irreducible for $r < 8$ and any $n\in \mathbb{N}$,
    \item the scheme $\mathcal{H}ilb_{8}(\AA^n)$ is reducible if and only if $n\geq 4$,
    \item if the scheme $\mathcal{H}ilb_{8}(\AA^n)$ is reducible, it has
      two irreducible components $\mathcal{H}ilb_{8}^{sm}(\AA^n)$, the
      closure of the set of smooth schemes, and $\mathcal{H}^n_{143,af}$ the
      closure of the set of local algebras with local Hilbert function
      $(1,4,3)$.
  \end{enumerate}
\end{theorem}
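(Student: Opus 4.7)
The plan is to reproduce the strategy of Cartwright--Erman--Velasco--Viray by reducing the question to a Hilbert-function-indexed case analysis of graded local Artinian algebras, and then doing a dimension count in the distinguished case $(1,4,3)$.

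\textbf{Step 1: Reduction to graded local algebras.} A length-$r$ subscheme $Z\subseteq \AA^n$ splits as a disjoint union $Z=\bigsqcup Z_i$ of local pieces with $\sum \length(Z_i)=r$. Translating the supports independently exhibits an open subset of $\mathcal{H}ilb_r(\AA^n)$ as an $\AA^n$-bundle of products of \emph{punctual} Hilbert schemes at the origin, so it suffices to analyse the punctual Hilbert scheme of length $r$. For each local Artinian algebra $A$ the standard $\mathbb{G}_m$-action on $\AA^n$ provides a flat family with generic fibre $A$ and special fibre $\mathrm{gr}_{\mathfrak{m}}(A)$; hence $[A]$ and $[\mathrm{gr}_{\mathfrak{m}}(A)]$ lie in the same irreducible component. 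Thus I only need to classify \emph{graded} local quotients of $\kk[x_1,\dots,x_n]$ of length $\leq 8$, organised by their Hilbert function $h=(1,h_1,h_2,\dots)$.

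\textbf{Step 2: Enumeration and easy smoothings.} Using Macaulay's bound and standard facts about Gorenstein Artin algebras, the list of admissible Hilbert functions with $\sum h_i\leq 8$ is finite and short. For every such $h$ other than $(1,4,3)$ I would exhibit an explicit smoothing: curvilinear schemes $(1,1,\dots,1)$ are trivially smoothable; schemes with $h=(1,k)$ deform to $k+1$ generic points; Gorenstein functions of the form $(1,n,1,\dots,1)$ and stretched functions $(1,\cdot,1,\dots,1)$ admit principal-component deformations; and the remaining compressed functions $(1,n,k)$ with $k<\binom{n+1}{2}$ deform to complete intersections. Combined with the reduction of Step~1, this proves part~(i) for all $r<8$ and also handles every length-$8$ stratum except $H_{(1,4,3)}$.

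\textbf{Step 3: The $(1,4,3)$ stratum and non-smoothability.} The heart of the proof, and the main obstacle, is to show that $H_{(1,4,3)}\subseteq \mathcal{H}ilb_8(\AA^n)$ with $n\geq 4$ is not contained in the smoothable component. My plan is a global dimension count: parametrise $H_{(1,4,3)}$ via the choice of the degree-$2$ piece $V\in \Gr(3,\Sym^2 \kk^n)$ of the ideal and show that $H_{(1,4,3)}$ fibres irreducibly over $\AA^n\times \Gr(3,\Sym^2\kk^n)$, computing $\dim H_{(1,4,3)}$ exactly. Then I would compute the tangent space $\dim_{\kk}\Hom_{R^*}(I,R^*/I)$ at a generic point $[I]\in H_{(1,4,3)}$ and show it is strictly smaller than $\dim H_{(1,4,3)}+\dim \mathcal{H}ilb_8^{sm}(\AA^n)-\dim (H_{(1,4,3)}\cap \mathcal{H}ilb_8^{sm}(\AA^n))$, which is impossible if $H_{(1,4,3)}\subseteq \mathcal{H}ilb_8^{sm}(\AA^n)$. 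Equivalently, the generic $(1,4,3)$-algebra has a tangent space of dimension $< 8n = \dim \mathcal{H}ilb_8^{sm}(\AA^n)$, so the smoothable component cannot contain it; on the other hand $H_{(1,4,3)}$ itself is irreducible by the Bia{\l}ynicki--Birula cell description and the irreducibility of $\Gr(3,\Sym^2\kk^n)$. This simultaneously yields part~(iii) and the non-trivial direction of part~(ii) for $n\geq 4$.

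\textbf{Step 4: Completing the dichotomy in~(ii).} For $n\leq 3$ the Hilbert function $(1,4,3)$ cannot occur since $h_1\leq n<4$, so the analysis of Steps~1--2 shows every length-$8$ local graded algebra is smoothable, whence $\mathcal{H}ilb_8(\AA^n)$ is irreducible. Combining with Step~3 gives~(ii) in both directions; Step~3 gives~(iii); and Step~2 gives~(i). The critical difficulty throughout is Step~3: the non-smoothability assertion is an obstruction-theoretic statement which, in the absence of a direct cohomological obstruction, has to be forced through an explicit tangent-space versus dimension comparison, and this is where the delicate computations of Cartwright--Erman--Velasco--Viray are indispensable.
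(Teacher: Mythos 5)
This theorem is not proved in the paper at all: it is quoted verbatim from \cite[Thm.~1.1]{CEVV09}, so there is no internal argument to compare yours against; the relevant comparison is with Cartwright--Erman--Velasco--Viray's own proof. Your outline does follow their genuine strategy: stratify the punctual Hilbert scheme by the Hilbert function of the associated graded algebra, smooth every stratum except $(1,4,3)$, and obstruct smoothability of the generic $(1,4,3)$ algebra by a tangent-space count (the tangent space at a generic point of the $(1,4,3)$ locus in $\AA^4$ has dimension $25 < 32 = \dim \Hilb_8^{sm}(\AA^4)$, whence $8n-7 < 8n$ in general). Step 3 and Step 4 are sound in outline.

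There is, however, a genuine gap in Step 1. The $\mathbb{G}_m$-degeneration of $A$ to $\gr_{\mathfrak{m}}(A)$ shows only that every irreducible component containing $[\Spec A]$ also contains $[\Spec \gr_{\mathfrak{m}}(A)]$ (each component is invariant under the connected group $\mathbb{G}_m$ and is closed); it does \emph{not} show the two points lie on the same set of components. In particular, smoothability of $\gr_{\mathfrak{m}}(A)$ does not imply smoothability of $A$ --- the implication goes the other way --- so ``it suffices to classify graded quotients'' is not a valid reduction for parts (i) and (ii). What \cite{CEVV09} actually prove is that each stratum $H_h$, consisting of \emph{all} subschemes (graded or not) whose associated graded has Hilbert function $h$, is irreducible, and that it contains a smoothable point; only then does $H_h\subseteq \Hilb_r^{sm}(\AA^n)$ follow from closedness of the smoothable component. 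Establishing irreducibility of these strata is a substantial portion of their paper and is entirely elided in your Step 2, where exhibiting ``an explicit smoothing'' of one representative per Hilbert function proves nothing about the rest of the stratum without that irreducibility. By contrast, your Step 3 does not suffer from this issue: a tangent space of dimension strictly less than $8n$ at a single point already forces that point off the smoothable component, and you do address irreducibility of $H_{(1,4,3)}$ via the fibration over the degree-two data, which is what yields (iii) and the hard direction of (ii).
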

\begin{remark}\label{r:why_83}
  By Theorem \ref{t:143_summary}, we know that $\sigma_{r,k}(\nu_d(\PP^n)) = \kappa_{r,k}(\nu_d(\PP^n))$ for $r \leq 7$, and any $k,n,d$, and that
  $\sigma_{8,k}(\nu_d (\PP^n )) = \kappa_{8,k}(\nu_d(\PP^n))$ for $n \leq 3$, and any $k,d$. In addition, we claim that $\sigma_{8,2}(\nu_d(\PP^n)) = \kappa_{8,2}(\nu_d(\PP^n))$ for any $n$. We sketch the proof. All local algebras of length at most $8$ and socle dimension at most $2$ are smoothable by Theorem \ref{t:143_summary}. Hence the claim follows from the fact that $\kappa_{r,k}(\nu_d(\PP^n))$ is the closure of the following set
  \begin{equation*}
    \{R \hookrightarrow \mathbb{P}^n \mid \operatorname{length} R \leq r\text{, }
    H^0(R,\mathcal{O}_R) \text{ \emph{is a product of local algebras of socle dimension at most} } k\}
  \end{equation*}
  (a generalization of \cite[Prop.~2.2]{BB14}). Detailed proof of this fact is
  outside the main interests of this article, hence we skip it.

  It follows from the above discussion that the number $k=3$ is the smallest one such that the variety $\kappa_{8,k}(\nu_d (\PP^n))$ can be reducible for some $d,n$. 
%   Furthermore $n \geq 4$.
  That is why we focus on studying $\kappa_{8,3}(\nu_d(\PP^n))$ for $n \geq 4$.
\end{remark}

Part (iii) of Theorem \ref{t:143_summary} is why we analyze algebras with local
Hilbert function $(1,4,3)$ in the next section. It follows from the theory of
Macaulay's inverse systems (\cite[Thm. 21.6]{Eis95}) that every such algebra
is the apolar algebra of a linear subspace of three (non-necessarily
homogeneous) quadrics.

\subsection[\texorpdfstring{The set of subspaces with Hilbert function $(1,4,3)$}{The set of subspaces with Hilbert function (1,4,3)}]{The set of subspaces with Hilbert function $(1,4,3)$}

In Lemma
\ref{l:equivalent_description_of_143}, we give a useful characterization of
subspaces $W$ of a polynomial ring such that the Hilbert function of $\Apolar(W)$ is $(1,4,3)$. Then we establish Lemma
\ref{l:D_i_is_dense_and_of_dimension_7n+8} about topological properties of the set of such subspaces.

In this subsection $S^* = \CC[\alpha_1,\dots,\alpha_n]$, and $S =
\CC[x_1,\dots,x_n]$ is its graded dual. We assume that $n\geq 4$. Given an integer $i$, and a linear subspace $W \subseteq S$, we denote by $W_i$ the image of the projection of $W$ onto the $i$-th graded part.

\begin{lemma}\label{l:equivalent_description_of_143}
For $[W]\in \Gr(3, S_{\leq 2})$ the following are equivalent:
\begin{enumerate}[label = {(\alph*)}]
\item $\Apolar(W)$ has Hilbert function $(1,4,3)$,
\item $\dim_\CC W_2 = 3$,  $[W] \in \Gr(3, \Sym^2 U \oplus S_{\leq 1})$ for some $[U]\in \Gr(4, S_1)$ and $H(\Apolar(W_2),1) = 4$,
\item $\Apolar (W_2)$ has Hilbert function $(1,4,3)$.
\end{enumerate}
\end{lemma}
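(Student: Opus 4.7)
The strategy is to show that all three conditions are equivalent to the pair ``$\dim_\CC W_2 = 3$ and $H(\Apolar(W_2),1)=4$'', mirroring the proof of Lemma~\ref{l:equivalent_description_of_1661} one degree lower. First, each of (a), (b), (c) forces $\dim W_2 = 3$. For (a) this is because an element $\theta \in S^*_2$ contracted against $F = F_2 + F_1 + F_0 \in W$ equals $\theta \lrcorner F_2$; hence $\Ann(W)_2 = \Ann(W_2)_2$ and $H(\Apolar(W),2) = \dim W_2$, which is required to be $3$. For (b) and (c) it is hypothesis. Since $\dim W = 3 = \dim W_2$, the projection $\pi_2 \colon W \to W_2$, $F \mapsto F_2$, is a linear isomorphism; this is the crucial structural fact.

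The main computation is $H(\Apolar(W),1) = H(\Apolar(W_2),1)$. By Lemma~\ref{l:calculation},
\[
H(\Apolar(W),1) = \operatorname{codim}_{S^*_1} E_1, \quad E_1 = \{\theta_1 \in S^*_1 \mid \exists\, \theta_{\geq 2}\in S^*_{\geq 2}\text{ s.t. }(\theta_1 + \theta_{\geq 2}) \lrcorner W = 0\}.
\]
Expanding $(\theta_1 + \theta_2 + \theta_3 + \cdots) \lrcorner F$ and keeping only the terms whose total contraction degree is non-negative, one obtains $(\theta_1 \lrcorner F_2) + (\theta_1 \lrcorner F_1 + \theta_2 \lrcorner F_2)$, with the two summands lying in $S_1$ and $S_0$ respectively. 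So $\theta_1 \in E_1$ if and only if both $\theta_1 \in \Ann(W_2)_1$ and there exists $\theta_2 \in S^*_2$ with $\theta_1 \lrcorner F_1 = -\theta_2 \lrcorner F_2$ for every $F \in W$. Since $\pi_2$ is an isomorphism, every linear functional $\phi \colon W \to \CC$ factors uniquely through $\pi_2$; combined with the surjectivity of the natural map $S^*_2 \to W_2^*$, $\theta_2 \mapsto (F_2 \mapsto -\theta_2 \lrcorner F_2)$, the second clause is automatic for any $\theta_1$. Hence $E_1 = \Ann(W_2)_1$, giving $H(\Apolar(W),1) = H(\Apolar(W_2),1)$.

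The equivalence (a)~$\iff$~(c) follows since also $H(\Apolar(W),2) = \dim W_2 = H(\Apolar(W_2),2)$ and $H(\Apolar(W),k) = 0$ for $k \geq 3$ (as $W \subseteq S_{\leq 2}$). For (b)~$\iff$~(c), given $\dim W_2 = 3$, the existence of $[U] \in \Gr(4, S_1)$ with $W \subseteq \Sym^2 U \oplus S_{\leq 1}$ is equivalent to $W_2 \subseteq \Sym^2 U$ (the $S_{\leq 1}$-summand absorbs $F_1 + F_0$ trivially), and then the additional condition $H(\Apolar(W_2),1)=4$ pins this down to $U^\perp = \Ann(W_2)_1$, which is exactly (c). The main obstacle is the identification $E_1 = \Ann(W_2)_1$; once the degree-by-degree expansion and the isomorphism $\pi_2$ are in hand, the rest is formal.
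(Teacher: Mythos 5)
Your proof is correct and takes essentially the same route as the paper: in both, the reduction is to showing (via Lemma~\ref{l:calculation}) that $E_1 = \Ann(W_2)_1$ once $\dim_\CC W_2 = 3$, so that $H(\Apolar(W),k) = H(\Apolar(W_2),k)$ for all $k$. Your phrasing in terms of the isomorphism $\pi_2\colon W \to W_2$ and the surjection $S^*_2 \twoheadrightarrow W_2^*$ is the same mechanism the paper invokes (with a chosen basis $Q_1,Q_2,Q_3$) when it asserts the existence of $\theta_2$ with $\theta_2\lrcorner Q_j = -\theta_1\lrcorner L_j$; you also make the (b)\,$\iff$\,(c) equivalence explicit where the paper simply states it.
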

\begin{proof}
Conditions $(b)$ and $(c)$ are equivalent. We shall show that Conditions $(a)$ and $(c)$ are equivalent.
Observe that $H(\Apolar(W),2) = 3$ if and only if $\dim_\CC W_2 =  3$ since $H(\Apolar(W),2) = H(\Apolar(W_2), 2)$.

Therefore, we are left to show that $H(\Apolar(W),1) = 4$ if and only if $H(\Apolar(W_2),1) = 4$.
By Lemma~\ref{l:calculation}, we obtain $H(\Apolar(W), 1) = \codim_{S_1^*} (E_1)$, where 

\[
E_1=\{ \theta_1\in S^*_1 \mid \text{ there exists } \theta_{\geq 2} \in S^*_{\geq 2} \text{ such that } \theta_1+\theta_{\geq 2} \in \Ann(W) \}.
\]
We will show that $E_1 = \Ann(W_2)_1$.

\noindent Let $W=\langle Q_j+L_j+C_j \mid j\in \{1,2,3\}, \text{ } Q_j \in S_2,  L_j \in S_1 \text{ and }  C_j \in S_0 \rangle$.
Assume that $\theta_1 \in E_1$ and let $\theta_1+\theta_{\geq 2} \in \Ann(W)$ for some $\theta_{\geq 2} \in S^*_{\geq 2}$. Then for $j\in \{1,2,3\}$
\[
0=(\theta_1 + \theta_{\geq 2}) \lrcorner (Q_j+L_j+C_j) = (\theta_1 \lrcorner Q_j) + (\theta_1 \lrcorner L_j + \theta_{\geq 2} \lrcorner Q_j),
\]
so $\theta_1 \lrcorner Q_j=0$ for $j\in\{1,2,3\}$.

Now assume that $\theta_1\in \Ann(W_2)$. 
Since $\dim_\CC W_2 = 3$, there is $\theta_2 \in S^*_{2}$ such that $\theta_2 \lrcorner Q_j = - \theta_1\lrcorner L_j$ for $j\in \{1,2,3\}$. Then $\theta_1+\theta_2\in \Ann(W)$, so $\theta_1\in E_1$.

\end{proof}

\begin{lemma}\label{l:D_i_is_dense_and_of_dimension_7n+8}
The following subset 
$$
A=\{[W]\in \Gr(3,S_{\leq 2}) \mid \Apolar(W) \text{ has Hilbert function }(1,4,3)\}.
$$
is irreducible, of dimension $7n+8$, and locally closed.
Moreover the set
$$
B=\{[W]\in A \mid  [\Spec \Apolar(W)] \notin \Hilb_8^{sm}(\mathbb{A}^n)\}
$$
is dense in $A$.
\end{lemma}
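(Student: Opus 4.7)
The plan is to closely mirror the proof of Lemma \ref{l:D_i_is_dense_and_of_dimension_13n+5}, using Lemma \ref{l:equivalent_description_of_143} as the characterization that makes the incidence-variety approach work. I would set
\[
\mathcal{A} = \{([U],[W]) \in \Gr(4, S_1) \times \Gr(3, S_{\leq 2}) \mid [W] \in \Gr(3, \Sym^2 U \oplus S_{\leq 1})\}
\]
and
\[
\mathcal{A}^0 = \{([U],[W]) \in \mathcal{A} \mid \dim_\CC W_2 = 3,\ H(\Apolar(W_2), 1) = 4\}.
\]
Exactly as in the earlier proof, $\mathcal{A}$ fits into a pullback diagram with the flag variety $\operatorname{Fl}(3, n+11, S_{\leq 2})$ over $\Gr(n+11, S_{\leq 2})$, and the projection $\mathcal{A} \to \Gr(4, S_1)$ is surjective with irreducible fibers isomorphic to $\Gr(3, S_{\leq 2 \mid U})$ of dimension $3(n+8) = 3n+24$. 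Hence by \cite[Thm.~1.25, 1.26]{Sha13}, $\mathcal{A}$ is irreducible of dimension $4(n-4) + 3(n+8) = 7n+8$.

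Next I would verify that $\mathcal{A}^0$ is open in $\mathcal{A}$: the condition $\dim_\CC W_2 = 3$ is open (upper semicontinuity of rank of the projection to the degree-$2$ part), and given this, $W_2 \subseteq \Sym^2 U$ forces $H(\Apolar(W_2), 1) \leq 4$, with equality being the complement of a determinantal (catalecticant) locus, hence open. Thus $\mathcal{A}^0$ is irreducible of the same dimension $7n+8$. By Lemma \ref{l:equivalent_description_of_143}, $A = \pi_2(\mathcal{A}^0)$, where $\pi_2$ is the projection to the second factor. Moreover, for any $[W] \in A$, the subspace $U$ is recovered uniquely as $U = S^*_1 \lrcorner W_2 \subseteq S_1$ (a $4$-dimensional subspace by hypothesis), so the map $\pi_2|_{\mathcal{A}^0}\colon \mathcal{A}^0 \to A$ is bijective and in particular has finite fibers. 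This yields irreducibility of $A$ with $\dim A = 7n + 8$ via \cite[Thm.~11.4.1]{RV2017}, and $A = \pi_2(\mathcal{A}) \cap \mathcal{C}$, where $\mathcal{C}$ is the open subset of $\Gr(3,S_{\leq 2})$ cut out by the conditions defining $\mathcal{A}^0$ on $W$ alone; hence $A$ is locally closed.

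Finally, for the density of $B$ in $A$, I would invoke (the Grassmann-case analogue of) Theorem \ref{t:morphism} to produce a morphism
\[
\mu \colon A \longrightarrow \Hilb_8(\AA^n), \qquad [W] \mapsto [\Spec S^*/\Ann(W)],
\]
well defined because the local Hilbert function $(1,4,3)$ is constant on $A$, giving a flat family. By Theorem \ref{t:143_summary}(iii), the scheme $\Hilb_8(\AA^n)$ has two irreducible components $\Hilb_8^{sm}(\AA^n)$ and $\mathcal{H}^n_{143,af}$, and $B = \mu^{-1}(\mathcal{H}^n_{143,af} \setminus \Hilb_8^{sm}(\AA^n))$ is open in $A$. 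By definition of $\mathcal{H}^n_{143,af}$ as a separate component, a general local algebra with Hilbert function $(1,4,3)$ is non-smoothable, so $B$ is non-empty. Since $A$ is irreducible, $B$ is dense.

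The only step that I expect to need nontrivial checking is the construction of the morphism $\mu$, i.e., confirming that Theorem \ref{t:morphism} (in the appendix) applies to subspaces as well as to polynomials; but this is essentially identical to the polynomial case, since what matters is flatness of the family $\{\Spec S^*/\Ann(W)\}_{[W]\in A}$ guaranteed by the constancy of the Hilbert function. Every other step is either a direct analogue of the argument for Lemma \ref{l:D_i_is_dense_and_of_dimension_13n+5} or follows from Lemma \ref{l:equivalent_description_of_143} and Theorem \ref{t:143_summary}.
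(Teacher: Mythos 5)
Your proof is correct and follows essentially the same strategy as the paper: the incidence variety $\mathcal{A}$ over $\Gr(4,S_1)$, the pullback via the flag variety, openness of $\mathcal{A}^0$, projection to $A$ with finite fibers, and the morphism $\mu$ to $\Hilb_8(\AA^n)$ from Theorem~\ref{t:morphism} (which, as you suspected, is already stated for subspaces, so no adaptation is needed). The only substantive divergence is the non-emptiness of $B$: the paper exhibits an explicit subspace $W=\langle x_2x_4,\,x_1x_3,\,x_2x_3-x_1x_4\rangle$ with non-smoothable $\Apolar(W)$, while you argue abstractly that a general algebra with local Hilbert function $(1,4,3)$ is non-smoothable and lies in the image of $\mu$; the latter is fine but implicitly uses that every such algebra has socle dimension exactly $3$ (so that it is $\Apolar(W)$ for a $3$-dimensional $W$), a fact worth flagging since it requires a short argument (no graded algebra with Hilbert function $(1,4,3,0)$ has a degree-$1$ socle element).
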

\begin{proof}
Consider 
\[
\mathcal{A}= \{([U],[W]) \in \Gr(4, S_1) \times \Gr(3, S_{\leq 2}) \mid  [W] \in \Gr(3, \Sym^2 U \oplus S_{\leq 1})\}\text{.}
\]
and 
\[
\mathcal{A}^0= \{([U],[W]) \in \mathcal{A} \mid H(\Apolar(W_2),1)=4\}\text{.}
\]
We have a pullback diagram
\begin{center}
\begin{tikzcd}
\mathcal{A} \arrow[r] \arrow[d] & \operatorname{Fl}(3,n+11 , S_{\leq 2}) \arrow[d] \\
\operatorname{Gr}(4, S_1) \arrow[r] & \Gr(n+11, S_{\leq 2})
\end{tikzcd}
\end{center}
where $\operatorname{Fl}(3,n+11 , S_{\leq 2})$ is the flag variety parametrizing flags of subspaces  $M\subseteq N \subseteq S_{\leq 2}$ with $\dim_\CC M = 3$, $\dim_\CC N = n + 11$ and the lower horizontal map sends $[U]$ to $[\Sym^2 U \oplus S_{\leq 1}]$.

The varieties $\mathcal{A}$ and $\Gr(4, S_1)$ are projective. Moreover, the left vertical map is surjective and its fibers are irreducible and
isomorphic to $\Gr(3, n+11)$. Since $\Gr(3, n+11)$ is irreducible, it follows from \cite[Thm.~1.25 and 1.26]{Sha13} that $\mathcal{A}$ is irreducible and of dimension $4(n-4)+3(n+8) = 7n+8$.

We will show that $\mathcal{A}^0$ is open in $\mathcal{A}$. Consider the subset
\[
\mathcal{B}= \{([U],[W]) \in \Gr(4, S_1) \times \Gr(3, S_{\leq 2}) \mid  H(\Apolar(W_2),1) \geq 4 \}\text{.}
\]
Observe that $\mathcal{A}^0= \mathcal{A} \cap \mathcal{B}$, therefore it is enough to show that $\mathcal{B}$ is open in $\Gr(4, S_1) \times \Gr(3, S_{\leq 2})$. 
Let
\[
 \mathcal{C} = \{ [W] \in \Gr(3, S_{\leq 2}) \mid H(\Apolar(W_2),1) \geq 4 \}.
\]
It is enough to show that $\mathcal{C}$ is open in $\Gr(3, S_{\leq 2})$.
Let 
\[
\mathcal{D}= \{([U],[W]) \in \Gr(3, S_1) \times \Gr(3, S_{\leq 2}) \mid  [W] \in \Gr(3, \Sym^2(U)\oplus S_{\leq 1})\}\text{,}
\]
and $\rho_2:\Gr(3, S_1) \times \Gr(3, S_{\leq 2}) \to \Gr(3, S_{\leq 2})$ be the natural projection.
Observe that the complement of $\mathcal{C}$ in $\Gr(3, S_{\leq 2})$ is equal to $\rho_2 (\mathcal{D})$ which is closed since $\mathcal{D}$ is projective. This concludes the proof that $\mathcal{A}^0$ is open in $\mathcal{A}$.

By Lemma \ref{l:equivalent_description_of_143} we have $A = \pi_2(\mathcal{A}^0) \cap \mathcal{F}$, where 
\[
\mathcal{F}= \{[W] \in \Gr(3, S_{\leq 2}) \mid \dim_\CC W_2 = 3\}
\]
and $\pi_2 \colon \Gr(4, S_{1}) \times \Gr(3, S_{\leq 2})  \to \Gr(3, S_{\leq 2})$ is the projection.

Since $\pi_2|_{\mathcal{A}^0}\colon \mathcal{A}^0 \to \pi_2(\mathcal{A}^0)$ has a finite fiber over a general point, it follows from \cite[Thm.~11.4.1]{RV2017} that $\pi_2(\mathcal{A}^0)$ is irreducible and of dimension $7n+8$. The subset $\mathcal{F}\subseteq \Gr(3, S_{\leq 2})$ is open and $\pi_2(\mathcal{A}^0)\cap \mathcal{F}$ is non-empty, so $A=\pi_2(\mathcal{A}^0)\cap \mathcal{F}$ is irreducible and of dimension $7n+8$.

We know that $A=\pi_2 (\mathcal{A}) \cap \mathcal{C} \cap \mathcal{F}$, so $A$ is locally closed since $\pi_2 (\mathcal{A})$ is closed and $\mathcal{C} $, $\mathcal{F}$ are open.
Therefore, we have a morphism $\mu : A \to \Hilb_{8}(\AA^n)$ given on closed points by $[W] \mapsto [\Spec S^*/\Ann(W)]$, see Theorem~\ref{t:morphism}. 

By Theorem~\ref{t:143_summary}, the scheme $\Hilb_{8}(\mathbb{\AA}^n)$
has two irreducible components
$\Hilb_{8}^{sm}(\mathbb{\AA}^n)$ and $\mathcal{H}^n_{143,af}$.
We obtain $B = \mu^{-1}(\mathcal{H}_{143,af}^n \setminus \Hilb_{8}^{sm} (\AA^n))$,
so it is open in $A$. We claim that $B$ is non-empty. Indeed, consider the
subspace $W = \langle x_2 x_4, x_1 x_3 ,x_2 x_3 - x_1 x_4\rangle \subseteq
S_{\leq 2}$. By Lemma \ref{l:equivalent_description_of_143} we have that $[W] \in A$. Furthermore, we can calculate that
\begin{equation*}
  \Ann(W) = (\alpha_1^2, \alpha_2^2, \alpha_3^2, \alpha_4^2, \alpha_1 \alpha_2, \alpha_3 \alpha_4, \alpha_1\alpha_4 + \alpha_2 \alpha_3,\alpha_5, \alpha_6,\dots,\alpha_n),
\end{equation*}
and therefore $\Apolar W$ is non-smoothable, see \cite[the proof of Prop. 5.1]{CEVV09}. This finishes the proof of the claim. Since $B$ is open and non-empty and $A$ is
irreducible, it follows that $B$ is dense in $A$.
\end{proof}
\subsection{Proofs of the main theorems}
We will consider the polynomial ring $T^* = \CC
[\alpha_0,\alpha_1,\ldots,\alpha_n]$, and its graded dual $T =
\CC[x_0,x_1,\dots,x_n]$,  where $n \geq 4$. Since we assume $\kk = \CC$, the
graded dual ring $T$ is isomorphic to a polynomial ring. Recall Definitions
\ref{d:prime_operator}, \ref{d:homogenization_of_space}.

Our goal is to characterize for $d \geq 5$ and $ n \geq 4$ the closure of the set-theoretic difference between the cactus variety $\kappa_{8,3}(\nu_d(\PP T_1))$ and the secant variety $\sigma_{8,3}(\nu_d(\PP T_1))$. 
For $n=4$ and $d\geq 5$ this closure consists of points $[V]\in \Gr(3,T_d)$ with $V$ divisible by $(d-2)$-nd power of a linear form. However for $n > 4$ the situation is more complicated.

We start with showing that points of $\Gr(3, T_d)$  corresponding to subspaces divisible by $(d-1)$-st power of a linear form are in the Grassmann secant variety $\sigma_{8,3}(\nu_d(\PP T_1))$.
\begin{proposition}\label{p:l5l_of_cactus_rank_at_most_4_143}
Let $d\geq 2$ and $n \geq 4$ be integers, $y_0\in T_1$ and $[U]\in \Gr(3, T_1)$. Define $V=y_0^{d-1} U \in \Gr(3, T_d)$. Then $\crr(V) \leq 4$, so $[V] \in \kappa_{4,3}(\nu_d  (\PP T_1)) = \sigma_{4,3}(\nu_d  (\PP T_1)) \subseteq \sigma_{8,3}(\nu_d  (\PP T_1))$. 
\end{proposition}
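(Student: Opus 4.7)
The plan is to invoke the Cactus Apolarity Lemma (Proposition \ref{p:cactus_apolarity}): it suffices to exhibit a zero-dimensional subscheme $R \subseteq \mathbb{P}T_1$ of length $4$ with $I(R) \subseteq \Ann(V)$. The natural candidate is a fat point structure supported at $[y_0]$ whose tangent directions span $U$.

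First I would extend $y_0$ to a basis $(y_0,y_1,\dots,y_n)$ of $T_1$ chosen so that $U \subseteq \langle y_0,y_1,y_2,y_3 \rangle$; this is possible because $\dim_\CC(U+\CC y_0) \leq 4$. Let $(\alpha_0,\dots,\alpha_n)$ denote the dual basis in $T^*_1$. Passing to the larger subspace
\[
 V' = \langle y_0^d,\, y_0^{d-1}y_1,\, y_0^{d-1}y_2,\, y_0^{d-1}y_3 \rangle \subseteq T_d,
\]
which contains $V$, it is enough to find a length-$4$ scheme $R$ with $I(R) \subseteq \Ann(V')$, since then automatically $I(R) \subseteq \Ann(V)$.

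Next, I would take the homogeneous ideal
\[
 I = (\alpha_4,\dots,\alpha_n) + (\alpha_1,\alpha_2,\alpha_3)^2 \subseteq T^*.
\]
This is the homogenization with respect to $\alpha_0$ of the ideal $(\alpha_1,\alpha_2,\alpha_3)^2 + (\alpha_4,\dots,\alpha_n) \subseteq S^*$ defining a length-$4$ fat point at the origin of the affine chart $\{\alpha_0 \neq 0\}$. By Lemma \ref{l:homogenization_is_saturated}, $I$ is saturated; and since $T^*/I \cong \CC[\alpha_0,\alpha_1,\alpha_2,\alpha_3]/(\alpha_1,\alpha_2,\alpha_3)^2$ has Hilbert polynomial equal to $4$, the ideal $I$ cuts out a length-$4$ subscheme $R \subseteq \mathbb{P}T_1$ supported at $[y_0]$.

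The remaining check $I \subseteq \Ann(V')$ is a direct calculation: the $\alpha_i$ with $i \geq 4$ annihilate $V'$ because every generator of $V'$ only involves $y_0,\dots,y_3$; and for any $i,j \in \{1,2,3\}$ and any $k \in \{0,1,2,3\}$, the contraction $\alpha_i \alpha_j \lrcorner y_0^{d-1}y_k$ vanishes since $y_0^{d-1}y_k$ is at most linear in the variables $y_1,y_2,y_3$ combined. Applying the Cactus Apolarity Lemma then yields $\crr(V) \leq \crr(V') \leq 4$, so $[V] \in \kappa_{4,3}(\nu_d(\PP T_1))$. The remaining assertions in the statement follow from $\kappa_{4,3}(\nu_d(\PP T_1)) = \sigma_{4,3}(\nu_d(\PP T_1))$, which is the content of Remark~\ref{r:why_83} (a consequence of the irreducibility of $\mathcal{H}ilb_4(\AA^n)$ from Theorem~\ref{t:143_summary}), together with the obvious inclusion $\sigma_{4,3} \subseteq \sigma_{8,3}$. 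There is no real obstacle here; the only care required is in the initial coordinate choice ensuring that $U$ lies in a $4$-dimensional coordinate subspace containing $y_0$.
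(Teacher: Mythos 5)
Your proof is correct, and it takes a slightly different route from the paper's. The paper reduces $V$ to one of two normal forms (depending on whether $y_0 \in U$ or not), writes $V = W^{hom,d-1}$ for an appropriate $W \subseteq S_{\leq 1}$ with $\dim_\kk S^*/\Ann(W) \leq 4$, and then cites Theorem~\ref{t:general_subspace}(i) to conclude. You avoid the case split entirely by enlarging $V$ to the uniform $4$-dimensional subspace $V' = \langle y_0^d, y_0^{d-1}y_1, y_0^{d-1}y_2, y_0^{d-1}y_3\rangle \supseteq V$ and then apply the Cactus Apolarity Lemma~\ref{p:cactus_apolarity} directly to an explicit length-$4$ fat-point scheme. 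In effect you are unwinding Theorem~\ref{t:general_subspace}(i) in a special case: your ideal $I = (\alpha_4,\dots,\alpha_n)+(\alpha_1,\alpha_2,\alpha_3)^2$ is precisely $\Ann(\langle y_1,y_2,y_3\rangle)^{hom}$, which is the object the paper's general machinery would produce. The paper's route is shorter because the machinery has already been built; yours is more self-contained and makes the annihilating scheme visible. Both correctly reduce the last assertions to the irreducibility of $\mathcal{H}ilb_4(\AA^n)$ from Theorem~\ref{t:143_summary} via Remark~\ref{r:why_83}. One small remark: you should note explicitly that $V \subseteq V'$ forces $\Ann(V') \subseteq \Ann(V)$, so an annihilating scheme for $V'$ automatically annihilates $V$; you state the resulting inequality $\crr(V) \leq \crr(V')$ but the one-line justification is worth including.
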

\begin{proof}
Up to a linear change of variables, $V$ is of one of the following forms
\begin{enumerate}
\item $V= \langle x_0^{d-1}x_1, x_0^{d-1}x_2, x_0^{d-1}x_3 \rangle$ or
\item $V= \langle x_0^d, x_0^{d-1}x_1, x_0^{d-1}x_2 \rangle$.
\end{enumerate}
Then $V = W^{hom, d_2}$ for $d_2=d-1$, where $W$ is correspondingly
\begin{enumerate}
\item $W = \langle x_1, x_2, x_3 \rangle$ or
\item $W = \langle 1, x_1, x_2 \rangle$.
\end{enumerate}
In either case, $\dim_\CC S^* /\Ann(W) \leq 4$, so $\crr(V) = \crr(W^{hom, d_2}) \leq 4$ by Theorem \ref{t:general_subspace}(i).
\end{proof}

For $d \geq 2$ we will define a subset $\eta_{8,3}(\nu_d(\PP^n))$ of the Grassmann cactus variety $\kappa_{8,3}(\nu_d(\PP^n))$. Later, in Theorem \ref{t:segre-veronese_map_143_general_case}, it will be shown that for $d\geq 5$
\[
\kappa_{8,3}(\nu_d(\PP^n)) = \sigma_{8,3}(\nu_d(\PP^n)) \cup \eta_{8,3}(\nu_d(\PP^n))
\]
is the decomposition into irreducible components.

Consider the following rational map $\varphi$, which assigns to a scheme
  $R$ its projective linear span $\langle v_d(R) \rangle$ 
  \[\begin{tikzcd}
      \varphi: \Hilb_{8}(\mathbb{P}^n) \ar[dashed]{r} & \Gr(8, \Sym^{d} \mathbb{C}^{n+1})\text{.}
  \end{tikzcd}\]
  Let $U \subseteq \Hilb_{8}(\mathbb{P}^n)$ be a dense open subset on which $\varphi$ is regular.
  Consider the projectivized incidence bundle $\mathbb{P}\mathcal{S}$ over the Grassmannian $\Gr(8,\Sym^{d} \mathbb{C}^{n+1})$, given as a set by
  \begin{equation*}
    \mathbb{P}\mathcal{S} = \{ ([V_1],[V_2]) \in \Gr(8,\Sym^{d} \mathbb{C}^{n+1}) \times
  \Gr(3, \Sym^{d} \mathbb{C}^{n+1}) \mid V_2 \subseteq V_1 \}\text{,} 
  \end{equation*}
  together with the inclusion
  $i: \mathbb{P}\mathcal{S} \hookrightarrow
  \Gr(8,\Sym^{d} \mathbb{C}^{n+1}) \times
  \Gr(3, \Sym^{d} \mathbb{C}^{n+1})$.
  We pull the commutative diagram
  \[\begin{tikzcd}
      \mathbb{P}\mathcal{S} \ar[hookrightarrow]{rr}{i}\ar{dr}{\pi} & &  \Gr(8,\Sym^{d} \mathbb{C}^{n+1}) \times
  \Gr(3, \Sym^{d} \mathbb{C}^{n+1}) \ar{dl}{\pr_1} \\
      & \Gr(8,\Sym^{d} \mathbb{C}^{n+1}) 
  \end{tikzcd}\]
  back along $\varphi$ to $U$, getting the commutative diagram
  \[\begin{tikzcd}
      \varphi^* (\mathbb{P}\mathcal{S}) \ar[hookrightarrow]{rr}{\varphi^* i}\ar{dr}{\varphi^*\pi} & &  U \times \Gr(3,\Sym^{d} \mathbb{C}^{n+1})  \ar{dl}{\pr_1} \\
      & U\text{.}
  \end{tikzcd}\]
  Let $Y$ be the closure of $\varphi^*(\mathbb{P}\mathcal{S})$ inside
  $\Hilb_{8}(\mathbb{P}^n)\times \Gr(3,\Sym^{d} \mathbb{C}^{n+1})$. The
  scheme $Y$ has two irreducible components, $Y_1$ and $Y_2$, corresponding to
  two irreducible components of $\Hilb_{8}(\mathbb{P}^n)$, the schemes
  $\Hilb_{8}^{sm}(\mathbb{P}^n)$ and $\mathcal{H}_{143}$, respectively, see Theorem \ref{t:143_summary}. Then for $d \geq 2$
  \begin{align}
    \kappa_{8,3}(\nu_d(\mathbb{P}^n)) & = \pr_2(Y) \label{eq:kappa_is_projection_143},\\
    \sigma_{8,3}(\nu_{d}(\mathbb{P}^n)) &= \pr_2(Y_1)\text{, and we define} \label{eq:sigma_is_projection_143}\\
    \eta_{8,3}(\nu_{d}(\mathbb{P}^n)) &:= \pr_2(Y_2)\label{eq:eta_is_projection_143}\text{.}
  \end{align}

In the following proposition we bound from above the dimension of the irreducible subset $\eta_{8,3}(\nu_d(\PP^n))$ by $8n+8$. 
Later, in Theorem \ref{t:segre-veronese_map_143_general_case}, we will identify a $(8n+8)$-dimensional subset of $\kappa_{8,3}(\nu_d(\PP^n))\setminus \sigma_{8,3}(\nu_d(\PP^n))$. We will be able to conclude that the closure of this subset is $\eta_{8,3}(\nu_d(\PP^n))$.

\begin{proposition}\label{p:dim_eta_143}
 The dimension of $\eta_{8,3}(\nu_{d} (\PP^n))$ is at most $8n+8$.
\end{proposition}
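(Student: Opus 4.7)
The plan is to mirror the proof of Proposition \ref{p:dim_eta}, with numerical changes suited to the Grassmann $(8,3)$ case. By Equation \eqref{eq:eta_is_projection_143}, $\eta_{8,3}(\nu_d(\PP^n)) = \pr_2(Y_2)$, and using the commutative diagram exactly analogous to the one in the proof of Proposition \ref{p:dim_eta} (with $\chi : Y_1 \cup Y_2 \to \mathcal{H}ilb_8(\PP^n)$ the natural projection), we get
\begin{equation*}
\dim \eta_{8,3}(\nu_d(\PP^n)) \leq \dim Y_2 = \dim \mathcal{H}_{143} + \dim F,
\end{equation*}
where $F$ is the general fiber of $\chi|_{Y_2} : Y_2 \to \mathcal{H}_{143}$. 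It therefore suffices to compute $\dim \mathcal{H}_{143}$ and $\dim F$.

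For the fiber: for generic $[R]\in\mathcal{H}_{143}$ and $d\geq 5$, the length-$8$ scheme $R$ spans an $8$-dimensional subspace of $\Sym^d\CC^{n+1}$ under $\nu_d$, so $F \cong \Gr(3,8)$ and $\dim F = 15$.

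For $\dim \mathcal{H}_{143}$, I would establish the analog of Lemma \ref{l:dim_H1661}, namely $\dim \mathcal{H}_{143} = 8n - 7$ for $n\geq 4$. Setting $Z^m := \{[R] \in \mathcal{H}_{143,af}^m \mid \operatorname{Supp}(R) = 0\}$, the fact that $\mathcal{H}_{143,af}^m$ is a trivial $\AA^m$-bundle over $Z^m$ gives $\dim Z^m = \dim \mathcal{H}_{143,af}^m - m$. Applying the bootstrap of \cite[Prop. A.4]{BJJMM19} (whose coefficient for length-$r$ schemes is $r-1$, here equal to $7$) with base case $m=4$ yields $\dim Z^n = 7n + \dim Z^4 - 28$. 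Since $\dim \mathcal{H}_{143,af}^4 = 25$ by \cite[Thm. 1.1]{CEVV09}, we obtain $\dim Z^4 = 21$, hence $\dim Z^n = 7n-7$ and $\dim \mathcal{H}_{143} = 8n-7$. Substituting, $\dim \eta_{8,3}(\nu_d(\PP^n)) \leq (8n-7) + 15 = 8n+8$.

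The main obstacle I anticipate is verifying that the bootstrap of \cite[Prop. A.4]{BJJMM19} applies to the $\mathcal{H}_{143}$ component exactly as it applied to $\mathcal{H}_{1661}$ in Lemma \ref{l:dim_H1661}; a secondary technical point is the genericity statement that for generic $[R]\in\mathcal{H}_{143}$ and $d\geq 5$, the scheme $\nu_d(R)$ spans a full $8$-dimensional subspace, so that the fiber of $\chi|_{Y_2}$ is indeed all of $\Gr(3,8)$.
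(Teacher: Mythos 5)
Your argument is correct and mirrors the paper's proof: same commutative diagram, same bound $\dim \eta_{8,3} \leq \dim \mathcal{H}_{143} + 15$, same fiber dimension $\dim\Gr(3,8)=15$, and same final count $8n+8$. The one divergence is in how you justify $\dim\mathcal{H}_{143}=8n-7$: you rerun the bootstrap of \cite[Prop.~A.4]{BJJMM19} (as in Lemma~\ref{l:dim_H1661}), whereas the paper simply cites \cite[Thm.~1.1]{CEVV09}, which already states the dimension of the non-smoothable component of $\Hilb_8(\AA^n)$ for all $n\geq 4$. The bootstrap was needed for $\mathcal{H}_{1661}$ because \cite{Jel16} gives the base case only for $n=6$, but for $\mathcal{H}_{143}$ the result is available directly; your route is valid but takes an unnecessary detour. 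Your worry about the applicability of the bootstrap is thereby moot, and the genericity of the fiber being $\Gr(3,8)$ is treated as standard in the paper as well.
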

\begin{proof}
  We have the following commutative diagram

\begin{center}
\begin{tikzcd}[column sep = small, row sep = small]
\Gr(3,\Sym^{d} \mathbb{C}^{n+1}) \supseteq \sigma \cup \eta & Y_1\cup Y_2 \arrow[l] \arrow[d, "\chi"] \arrow[r, dashed] & \PP \mathcal{S} \arrow[d] \\
\mathcal{H}ilb_{8} (\PP^n) \arrow[r, equal] & \mathcal{H}ilb_{8}^{sm} (\PP^n) \cup \mathcal{H}_{143} \arrow[r, dashed] &  \Gr(8,\Sym^{d} \mathbb{C}^{n+1}){,}
\end{tikzcd}
\end{center}
where $\sigma$ and $\eta$ denote $\sigma_{8,3}( \nu_{d} (\PP^n))$ and $\eta_{8,3} ( \nu_{d} (\PP^n))$, respectively, and $\chi:Y_1\cup Y_2 \to
\Hilb_{8}(\mathbb{P}^n)$ is the projection. 
Then $\dim \eta_{8,3}(\nu_{d} (\PP^n)) \leq \dim (Y_2)
  = \dim   \mathcal{H}_{143} + 15$, where 15 is the dimension of the general fiber of the map $\chi|_{Y_2}: Y_2
  \to \mathcal{H}_{143}$. 
  It follows from \cite[Thm. 1.1]{CEVV09} that $\dim   \mathcal{H}_{143}=8n-7$ and therefore, $\dim \eta_{8,3}(\nu_{d} (\PP^n)) \leq 8n+8$.
\end{proof}

In the rest of the section we use the notation $W^{\btd d}$ from Definition \ref{d:prime_operator}. 
% Note that it depends on $d$, which is the degree of polynomials in $V$.

In the following proposition
we identify many points from the Grassmann cactus variety which are outside of the Grassmann secant variety. 
In fact, the closure of the set of these points  is the second irreducible component of the Grassmann cactus variety. This will be established in Theorem \ref{t:segre-veronese_map_143_general_case}.

\begin{proposition}\label{p:F3_tresciwy_to_poza_eta_143}
Let $T$ be defined as at the beginning of this subsection and let $(y_0, y_1, \ldots, y_n)$ be a $\CC$-basis of $T_1$.
 Assume that  $V=y_0^{d-2} U$ for some  natural number  $d \geq 5$ and $[U] \in \Gr(3, T_2)$. Define $[W] := [U|_{y_0=1}]\in \Gr(3, R_{\leq 2})$, where $R:=\CC[y_1, \ldots, y_n]$. If $W$ satisfies the following conditions:
 \begin{enumerate}[label=(\alph*)]
   \item $\Apolar(W^{\btd d})$ has Hilbert function $(1,4,3)$,
   \item $[\Spec \Apolar(W^{\btd d})] \notin \mathcal{H}ilb^{sm}_{8}(\AA^n)$,
\end{enumerate}
then $[V]\in \eta_{8,3} ( \nu_{d} (\PP^n)) \setminus \sigma_{8,3}( \nu_{d} (\PP^n))$.
\end{proposition}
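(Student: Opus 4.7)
The plan is to mirror the proof of Proposition \ref{p:F3_tresciwy_to_poza_eta} in the subspace setting, using Theorem \ref{t:general_subspace} in place of Theorem \ref{t:general_polynomial}. The first step is to establish the identity $V = (W^{\btd d})^{hom, d-2}$. The dehomogenization $u \mapsto u|_{y_0 = 1}$ is a linear isomorphism $T_2 \xrightarrow{\sim} R_{\leq 2}$ which restricts to a bijection $U \to W$. For each $u \in U$, writing $f := u|_{y_0 = 1} = F_2 + F_1 + F_0$, a direct unwinding of Definitions \ref{d:prime_operator} and \ref{d:homogenization_of_space} in the spirit of Remark \ref{rem:why_triangle} yields
\[
(f^{\btd d})^{hom, d-2} = (d-2)!\, F_2\, y_0^{[d-2]} + (d-1)!\, F_1\, y_0^{[d-1]} + d!\, F_0\, y_0^{[d]} = F_2\, y_0^{d-2} + F_1\, y_0^{d-1} + F_0\, y_0^{d} = y_0^{d-2}\, u.
\]
Taking $\CC$-spans over $U$ gives $V = (W^{\btd d})^{hom, d-2}$.

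Next, Condition (a) gives $r := \dim_\CC R^*/\Ann(W^{\btd d}) = 1 + 4 + 3 = 8$. With $d_1 = 2$ and $d_2 = d - 2 \geq 3$, Theorem \ref{t:general_subspace}(i) applied to $W^{\btd d}$ yields $\crr(V) \leq 8$, so $[V] \in \kappa_{8,3}(\nu_d(\PP T_1))$. By Equations \eqref{eq:kappa_is_projection_143}, \eqref{eq:sigma_is_projection_143}, and \eqref{eq:eta_is_projection_143}, one has the set-theoretic equality $\kappa_{8,3}(\nu_d(\PP T_1)) = \sigma_{8,3}(\nu_d(\PP T_1)) \cup \eta_{8,3}(\nu_d(\PP T_1))$, so it remains to rule out $[V] \in \sigma_{8,3}(\nu_d(\PP T_1))$.

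Suppose for contradiction that $[V] \in \sigma_{8,3}(\nu_d(\PP T_1))$. The Border Apolarity Lemma \ref{p:border_apolarity} produces an ideal $[J] \in \Slip_{8, \PP T_1} \subseteq \operatorname{Hilb}_{T^*}^{h_8}$ with $J \subseteq \Ann(V)$. The quotient $T^*/J$ has Hilbert function $h_8$, which is $(8, n+1)$-standard in the sense of Definition \ref{d:standard_hilbert_function}. Since $d_2 = d - 2 \geq d_1 + 1 = 3$ thanks to $d \geq 5$, Theorem \ref{t:general_subspace}(iii) applied to $W^{\btd d}$ gives $J^{sat} = \Ann(W^{\btd d})^{hom}$. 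As $[J] \in \Slip_{8, \PP T_1}$, the subscheme $[\Proj T^*/J^{sat}] = [\Proj T^*/\Ann(W^{\btd d})^{hom}]$ lies in $\mathcal{H}ilb^{sm}_{8}(\PP^n)$; since it is supported on the affine chart $D(\alpha_0) \cong \AA^n$, it corresponds to $[\Spec \Apolar(W^{\btd d})] \in \mathcal{H}ilb^{sm}_{8}(\AA^n)$, contradicting Condition (b). Therefore $[V] \in \eta_{8,3}(\nu_d(\PP T_1)) \setminus \sigma_{8,3}(\nu_d(\PP T_1))$.

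The main technical point is the identification $V = (W^{\btd d})^{hom, d-2}$; once it is in place, the argument is almost entirely parallel to the polynomial case. The hypothesis $d \geq 5$ enters precisely through the requirement $d_2 \geq d_1 + 1$ needed to invoke Theorem \ref{t:general_subspace}(iii), and the smoothable-versus-non-smoothable dichotomy from Theorem \ref{t:143_summary} is what converts the Border Apolarity obstruction into a contradiction with (b).
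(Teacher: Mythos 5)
Your proof is correct and mirrors the paper's argument: bound the cactus rank of $V=(W^{\btd d})^{hom,d-2}$ by $8$ via Theorem \ref{t:general_subspace}(i), then rule out $[V]\in\sigma_{8,3}$ by Border Apolarity together with Theorem \ref{t:general_subspace}(iii) (with $d_2=d-2\geq d_1+1=3$), which would force $\Apolar(W^{\btd d})$ to be smoothable, contradicting (b). You have in fact been slightly more careful than the paper in two places: you spell out the identity $V=(W^{\btd d})^{hom,d-2}$ and the final passage from ``$[V]\in\kappa_{8,3}\setminus\sigma_{8,3}$'' to ``$[V]\in\eta_{8,3}$'' via the set-theoretic decomposition \eqref{eq:kappa_is_projection_143}--\eqref{eq:eta_is_projection_143}, and you cite Theorem \ref{t:general_subspace}(i) where the paper (apparently by typo) cites the polynomial-case Theorem \ref{t:general_polynomial}(i).
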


\begin{proof}

  By Condition (a) we have $\dim_{\mathbb{C}}(R^*/\Ann(W^{\btd d})) = 8$. Therefore, from
Theorem \ref{t:general_polynomial} (i)
\begin{equation*}
  \crr(V)=\crr ({(W^{\btd d})^{hom, d-2}}) \leq 8.
\end{equation*} 
From the Border Apolarity Lemma \ref{p:border_apolarity}, if $[V] \in
\sigma_{8,3}( \nu_{d} (\PP^n))$ then there exists $J \subseteq \Ann(V)$ with
$[J] \in \Slip_{8, \mathbb{P}T_1} \subseteq
\operatorname{Hilb}_{T^*}^{h_{8}}$. Thus $[\Proj (T^* / J^{sat})] \in
\mathcal{H}ilb^{sm}_{8} (\PP^n)$. From Theorem \ref{t:general_subspace}
(iii)  it follows that $J^{sat} = \Ann(W^{\btd d})^{hom}$, so
\[
  [\Spec(R^*/\Ann(W^{\btd d}))] \in \mathcal{H}ilb^{sm}_{8}(\AA^{n}).
\]
This contradicts Condition (b).
\end{proof}

Finally we present the proof of the  characterization of points of the second irreducible component of the Grassmann cactus variety.

\begin{proof}[Proof of Theorem \ref{t:segre-veronese_map_143_general_case} and Corollary \ref{c:segre-veronese_map_143}]
We first prove Part (ii) of Theorem \ref{t:segre-veronese_map_143_general_case} for $\eta_{8,3}(\nu_d(\PP^n))$ defined as in Equation~\eqref{eq:eta_is_projection_143}.
% We start with showing that the closure of the set of points from the statement is contained in $\eta_{8,3}(\nu_d(\PP^n))$. 

Let $\psi \colon \PP T_1 \times \Gr(3, T_2)  \to \Gr(3, T_{d})$ be given by $([y_0], [U]) \mapsto [y_0^{d-2}U]$ and let $q\colon (T_1\setminus \{0\}) \to \PP T_1$ be the natural map. Let
\begin{multline*}
C = \{(y_0, [U]) \in T_1\times \Gr(3, T_2) \mid \text{there exists a completion of } y_0 \text{ to a basis } (y_0,y_1,\ldots, y_n) \text{ of } T_1 \text{ such that } \\
\Apolar((U|_{y_0=1})^{\btd d}) \text{ has Hilbert function } (1,4,3)\}.
\end{multline*}

Note that the set from the statement is $\psi((q\times \operatorname{Id}_{\Gr(3, T_2)})(C))$. 
We define
\begin{equation*}
D = \{(y_0, [U]) \in C \mid  [\Spec \Apolar ((U|_{y_0=1})^{\btd d})]\notin \Hilb_{8}^{sm}(\AA^n)\}.
\end{equation*}
% It suffices to show that $\eta_{8,3}(\nu_d(\PP T_1))$ is the closure of $\psi((q\times \operatorname{Id}_{\Gr(3, T_2)})(C))$. 

We claim that the set $C$ is irreducible, $D$ is dense in $C$, and that $\dim D = \dim C = 8n + 9$. Consider the morphism $\varphi : GL(T_1)\times \Gr(3,T_2) \to \Gr(3,T_2)$, given by a change of basis. We have a product morphism 
\begin{equation*}
  \tau : GL(T_1)\times \Gr(3,T_2) \to T_1 \times\Gr(3,T_2)\text{, given by } (a, [U]) \mapsto (a(x_0), \varphi(a,[U]))\text{.}
\end{equation*}
Recall the sets $A, B$ from Lemma \ref{l:D_i_is_dense_and_of_dimension_7n+8}. 
Let $\chi:S_{\leq 2} \to T_2$ be the inverse of the $\CC$-linear isomorphism $T_2 \to S_{\leq 2}$ given by $P \mapsto (P|_{x_0=1})^{\btd d}$.
We have $\tau(GL(T_1)\times {A}) = C$ and $\tau(GL(T_1)\times {B}) = D$.
These follow from the definitions of the sets $A,B,C,D$ and the identity
  \[
  (\varphi(a, \chi(W))|_{a(x_0)=1})^{\btd d} = \langle f(a(x_1), \ldots, a(x_n)) \mid f\in W\rangle \text{ for every } [W]\in \operatorname{Gr}(3,S_{\leq 2}) \text{ and } a\in GL(T_1).
  \]

It follows from Lemma \ref{l:D_i_is_dense_and_of_dimension_7n+8} that $C$ is irreducible, $D$ is dense in $C$, and that $\dim D = \dim C = 8n +9$.
By Proposition~\ref{p:F3_tresciwy_to_poza_eta_143} if $(y_0, [U])\in D$ and $V=y_0^{d-2}U$ then $[V]\in \eta_{8,3} ( \nu_{d} (\PP^n)) $. Hence we have
the inclusion $\overline{\psi((q\times \operatorname{Id}_{\Gr(3, T_2)})(C))} \subseteq \eta_{8,3}(\nu_{d} (\PP^n))$.

Now we prove that, in fact, $\overline{\psi((q\times \operatorname{Id}_{\Gr(3, T_2)})(C))} = \eta_{8,3}(\nu_{d} (\PP^n))$.
 It follows from Proposition \ref{p:dim_eta_143}  that for every $ d  \geq 5$ we have
 \[\dim(\eta_{8,3}(\nu_{d} (\PP^n))) \leq 8n+8 \leq \dim ((q\times \operatorname{Id}_{\Gr(3, T_2)})(C)) = \dim (\overline{(q\times
 \operatorname{Id}_{\Gr(3, T_2)})({C})})   = \dim \overline{\psi((q\times \operatorname{Id}_{\Gr(3, T_2)})(C))}.\]
 The last equality follows from  \cite[Thm. 11.4.1]{RV2017}, since the fibers of $\psi$ are finite.
 Hence $\overline{\psi((q\times \operatorname{Id}_{\Gr(3, T_2)})(C))} = \eta_{8,3}(\nu_{d} (\PP^n))$.
 This concludes the proof of Theorem \ref{t:segre-veronese_map_143_general_case}(ii).
 
 We proceed to the proof of Theorem \ref{t:segre-veronese_map_143_general_case}(i). By Theorem \ref{t:143_summary}, the scheme $\mathcal{H}ilb_{8}(\PP^n)$ has two irreducible components. Thus, the variety $\kappa_{8,3}(\nu_d(\PP^n))$ has at most two irreducible components: $\sigma_{8,3}(\nu_d(\PP^n))$ and $\eta_{8,3}(\nu_d(\PP^n))$ by Equations \eqref{eq:kappa_is_projection_143}, \eqref{eq:sigma_is_projection_143}, \eqref{eq:eta_is_projection_143}. Let $[W]\in B$, where $B$ is as in Lemma \ref{l:D_i_is_dense_and_of_dimension_7n+8}. Then by Proposition~\ref{p:F3_tresciwy_to_poza_eta_143} we get $[W^{hom, d-2}]\in \kappa_{8,3}(\nu_d(\PP^n)) \setminus \sigma_{8,3}(\nu_d(\PP^n))$. It is enough to show that $\eta_{8,3}(\nu_d(\PP^n)) \neq \kappa_{8,3}(\nu_d(\PP^n))$. By Part (ii) every $[V]\in \eta_{8,3}(\nu_d(\PP^n))$ is divisible by the $(d-2)$-nd power of a linear form. Therefore, $[\langle x_0^d, x_1^d, x_2^d \rangle]\in \kappa_{8,3}(\nu_d(\PP^n))\setminus \eta_{8,3}(\nu_d(\PP^n))$. 
%  for a general three dimensional subspace $V$  of the linear space spanned by eight $d$-th powers of general linear forms.

Now we prove Corollary \ref{c:segre-veronese_map_143}.  
The Grassmann cactus variety $\kappa_{8,3}(\nu_d (\PP^4))$ has two irreducible components by Part (i) of Theorem \ref{t:segre-veronese_map_143_general_case}. Assume that $n=4$. 
Then in the above notation the closure of $(q\times \operatorname{Id}_{\Gr(3, T_2)})(C)$ in $\PP T_1 \times \Gr(3, T_2)$ has the maximal dimension
$8\cdot 4+8=40$. Thus $\overline{(q\times \operatorname{Id}_{\Gr(3, T_2)})(C)} = \PP T_1\times \Gr(3, T_2)$. It follows that $\eta_{8,3}(\nu_d(\PP^4))
= \overline{\psi((q\times \operatorname{Id}_{\Gr(3, T_2)})(C))} = \psi(\PP T_1 \times \Gr(3, T_2))$.
\end{proof}

In order to perform the last step of the algorithm in Theorem \ref{t:algorithm_143} we need to know the dimension of the tangent space to $\mathcal{H}_{143}$ at a generic point.

\begin{lemma}\label{l:dim_H143}
Let $n\geq 4$ and $[R]\in \mathcal{H}_{143}\subseteq \Hilb_{8}(\PP^n)$ be a non-smoothable subscheme. 
Then the dimension of the tangent space $\dim_\CC T_{[R]}\Hilb_{8} (\PP^n)$ equals $8n-7$. 
\end{lemma}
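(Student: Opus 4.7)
The plan is to mimic the proof of Lemma~\ref{l:dim_H1661}, replacing the numerical data $(r,m) = (14,6)$ of the $(1,6,6,1)$ case by the pair $(8,4)$ appropriate to the $(1,4,3)$ case. Since $H^0(R,\mathcal{O}_R)$ is a local algebra with Hilbert function $(1,4,3)$, its embedding dimension is exactly $4$; up to translation we may therefore assume that $R$ is supported at the origin of $\AA^n$ and lies inside an embedded $\AA^4 \subseteq \AA^n$. This allows us to choose a subscheme $R' \subseteq \PP^4$ abstractly isomorphic to $R$, such that $R'$ is also non-smoothable.

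Next I will invoke \cite[Lem.~2.3]{CN09}, the same tangent-space comparison used in Lemma~\ref{l:dim_H1661}, which here gives
\[
\dim_\CC T_{[R]}\Hilb_{8}(\PP^n) \;=\; 8n - 32 + \dim_\CC T_{[R']}\Hilb_{8}(\PP^4).
\]
This reduces the claim $\dim_\CC T_{[R]}\Hilb_{8}(\PP^n) = 8n-7$ to the equality $\dim_\CC T_{[R']}\Hilb_{8}(\PP^4) = 25$.

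Because $R'$ is non-smoothable, $[R']$ lies only on the component $\mathcal{H}_{143} \subseteq \Hilb_{8}(\PP^4)$ and not on $\Hilb_{8}^{sm}(\PP^4)$, so the local ring of $\Hilb_{8}(\PP^4)$ at $[R']$ coincides with that of $\mathcal{H}_{143}$ at $[R']$. By \cite[Thm.~1.1]{CEVV09}, $\dim \mathcal{H}_{143} = 8\cdot 4 - 7 = 25$ in this case, so $\dim_\CC T_{[R']}\Hilb_{8}(\PP^4) \geq 25$, with equality precisely when $[R']$ is a smooth point of $\mathcal{H}_{143}$.

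The main obstacle and final step will be to verify that $\mathcal{H}_{143}$ is smooth at every non-smoothable closed point $[R']$. This should be extractable from the explicit computations in the proof of \cite[Thm.~1.1]{CEVV09}: the authors describe $\mathcal{H}_{143,af}^{4}$ as a $\GL_4$-orbit closure of algebras $\Apolar(W)$ with $H(\Apolar(W)) = (1,4,3)$, and for such an explicit generic algebra they compute the tangent space dimension to be $25$ (up to the translation factor). Because the tangent space dimension is constant along $\GL_4$-orbits and the non-smoothable locus is covered by such orbits, upper semicontinuity together with the lower bound above forces the tangent space dimension to equal $25$ at every non-smoothable point, completing the argument.
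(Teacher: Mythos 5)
Your proposal shares the paper's overall architecture (reduce via \cite[Lem.~2.3]{CN09} to an abstractly isomorphic $R'\subseteq\PP^4$, then pin down $\dim T_{[R']}\Hilb_8(\PP^4)$), but the final step has a genuine gap. You argue the lower bound $\dim T_{[R']}\geq 25$ from $\dim \mathcal{H}_{143}=25$, and then invoke ``upper semicontinuity together with the lower bound'' to force equality. This cannot work: upper semicontinuity of $t\mapsto\dim T_t$ only says that the tangent dimension can \emph{jump up} at special points, so from knowing the generic value is $25$ one gets $\geq 25$ at specializations --- the same inequality as the lower bound, not the reverse. The $\GL_4$-orbit argument also does not close the gap: the tangent dimension is constant on any single orbit, but CEVV09's explicit computation is for one particular algebra, and the non-smoothable locus meets many distinct $\GL_4$-orbits (and you have not justified that every non-smoothable point even has local Hilbert function $(1,4,3)$). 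What is actually needed is the much stronger statement in \cite[Thm.~1.3]{CEVV09}: $\Hilb_8(\AA^4)$ is reduced and its singular locus is \emph{exactly} the intersection of the two components. Since a non-smoothable $[R']$ lies off $\Hilb^{sm}_8$, it is therefore a \emph{smooth} point of $\Hilb_8(\AA^4)$, hence $\dim T_{[R']}=\dim\mathcal{H}_{143}=25$, which is what the paper cites.

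A secondary issue: you assert that the abstractly isomorphic $R'\subseteq\PP^4$ is also non-smoothable, but this is not automatic --- smoothability could in principle depend on the ambient dimension, since a larger affine space gives more room for deformations. The paper cites \cite[Thm.~1.1]{BJ17} precisely for this independence; your write-up should do so as well (or supply the argument).
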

\begin{proof}
Let $R' \subseteq \PP^4$ be a subscheme abstractly isomorphic with $R$.  From \cite[Lem. 2.3]{CN09} we have 
\[
 \dim_\CC T_{[R]}\Hilb_{8} (\PP^n) = 8n + T_{[R']}\Hilb_{8} (\PP^4) -32.
\]
From \cite[Thm. 1.1]{BJ17} $R'$ is non-smoothable, hence $\dim T_{[R']}\Hilb_{8} (\PP^4) = 25$  by
\cite[Thm. 1.3 and the comment above]{CEVV09}.
\end{proof}

Using the description of the  irreducible component $\eta$ given in Theorem \ref{t:segre-veronese_map_143_general_case}, we are able to determine algorithmically if a given point from the Grassmmann cactus variety is in the Grassmann secant variety.

\begin{theorem}\label{t:algorithm_143}
Let $n$ be at least 4 and $T=\mathbb{C}[x_0,\ldots ,x_n]$ be a polynomial ring. Given an integer $d\geq 5$ and $[V] \in \kappa_{8,3}(\nu_{d}(\mathbb{P}T_1)) \subseteq \Gr(3,T_{d})$ the following algorithm checks if $[V]\in \sigma_{8,3}(\nu_{d}(\mathbb{P}T_1))$.
\begin{adjustwidth}{15pt}{0pt}
\begin{itemize}
\item[\textbf{Step 1}] Compute the ideal $\mathfrak{a} = \sqrt{((\Ann V)_{\leq d-2})}$.

\item[\textbf{Step 2}] If $\mathfrak{a}_1$ is not $n$-dimensional, then $[V]\in \sigma_{8,3}(\nu_{d}(\mathbb{P}T_1))$ and the algorithm terminates. 
Otherwise compute $ \{K\in T_1 \mid \mathfrak{a}_1 \lrcorner K = 0\}$. Let $y_0$ be a generator of this one dimensional $\mathbb{C}$-vector space.

\item[\textbf{Step 3}] Let $e$ be the maximal integer such that $y_0^e$ divides $V$. If $e\neq d-2$, then $[V] \in \sigma_{8,3}(\nu_{d}(\mathbb{P}T_1))$ and the algorithm terminates. Otherwise let $V=y_0^{d-2}U$, pick a basis $(y_0,y_1,\dots,y_n)$ of $T_1$ and compute $W=U|_{y_0=1}\subseteq R:=\mathbb{C}[y_1,\dots,y_n]$.

\item[\textbf{Step 4}]  Let  $I=\Ann (W^{\btd d}) \subseteq R^*$. If
  the Hilbert function of $R^*/I$ is not $(1,4,3)$, then $[V] \in \sigma_{8,3}(\nu_{d}(\PP T_1))$, and the algorithm terminates.
\item[\textbf{Step 5}]  Compute $r=\dim_{\mathbb{C}}\operatorname{Hom}_{R^*}(I, R^*/I)$. Then $[V] \in \sigma_{8,3}(\nu_{d}(\mathbb{P}T_1))$ if and only if $r>8n-7$. 
\end{itemize}
\end{adjustwidth}
\end{theorem}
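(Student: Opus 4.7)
The plan is to mirror the proof of Theorem \ref{t:algorithm} in the Grassmann setting. I would first establish a Grassmann analog of Lemma \ref{l:algorithm_form}: a point $[V]\in \kappa_{8,3}(\nu_d(\PP T_1))$ fails to lie in $\sigma_{8,3}(\nu_d(\PP T_1))$ if and only if there exist a nonzero $y_0\in T_1$ and $[U]\in \Gr(3,T_2)$ with $V=y_0^{d-2}U$ such that for any completion of $y_0$ to a basis $(y_0,\ldots,y_n)$ of $T_1$, the subspace $W:=U|_{y_0=1}\subseteq R_{\leq 2}$ satisfies
\begin{enumerate}[label=(\alph*)]
\item $\Apolar(W^{\btd d})$ has Hilbert function $(1,4,3)$, and
\item $[\Spec \Apolar(W^{\btd d})]\notin \mathcal{H}ilb_{8}^{sm}(\AA^n)$.
\end{enumerate}
The ``if'' direction is exactly Proposition \ref{p:F3_tresciwy_to_poza_eta_143}. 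For the ``only if'' direction, Theorem \ref{t:segre-veronese_map_143_general_case}(ii) forces $V=y_0^{d-2}U$ for some $[U]\in\Gr(3,T_2)$, while Proposition \ref{p:l5l_of_cactus_rank_at_most_4_143} rules out $y_0^{d-1}\mid V$ (otherwise $[V]\in\sigma_{4,3}\subseteq\sigma_{8,3}$). By Lemma \ref{l:subspace_case_annihilators}(i) we have $\Ann(W^{\btd d})^{hom}\subseteq\Ann(V)$, which is saturated by Lemma \ref{l:homogenization_is_saturated}; if $\dim_\CC \Apolar(W^{\btd d})\leq 7$, then the Cactus Apolarity Lemma \ref{p:cactus_apolarity} gives $[V]\in\kappa_{7,3}(\nu_d(\PP^n))=\sigma_{7,3}(\nu_d(\PP^n))\subseteq\sigma_{8,3}$ (using Theorem \ref{t:143_summary}(i)), a contradiction. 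Combined with Theorem \ref{t:general_subspace}(ii) this gives $\dim_\CC \Apolar(W^{\btd d})=8$, and Lemma \ref{l:smoothable_then_br_of_hom_at_most_r} yields (b). Finally (a) holds because the only local Hilbert function of length $8$ realised in $\mathcal{H}_{143,af}$ but not in $\mathcal{H}ilb_8^{sm}(\AA^n)$ is $(1,4,3)$ (Theorem \ref{t:143_summary}(iii)).

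For Steps 1--2, I use that (a) implies $\Ann(W^{\btd d})\supseteq R^*_{\geq 3}$, so $\Ann(W^{\btd d})$ is generated in degrees at most $3\leq d-2$; hence $\Ann(W^{\btd d})^{hom}$ is likewise generated in degrees at most $3\leq d-2$ (via a Gröbner-basis argument as in Lemma \ref{l:extension_of_annihilator_generated_in_low_degrees}). Combining with Lemma \ref{l:subspace_case_annihilators}(ii) yields
\[
(\Ann(V)_{\leq d-2}) = ((\Ann(W^{\btd d})^{hom})_{\leq d-2}) = \Ann(W^{\btd d})^{hom}.
\]
Its radical is $(\beta_1,\ldots,\beta_n)\subseteq T^*$, where $\beta_i$ is dual to $y_i$; this is $n$-dimensional in degree $1$ and its common kernel in $T_1$ is exactly $\langle y_0\rangle$. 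So if $\dim_\CC \mathfrak{a}_1\neq n$, necessarily $[V]\in\sigma_{8,3}$ and the algorithm terminates correctly. Step~3 recovers the constraint $e=d-2$ (again by Proposition \ref{p:l5l_of_cactus_rank_at_most_4_143}), and Step~4 directly tests Condition (a).

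For Step 5, Condition (b) is equivalent, via Lemma \ref{l:dim_H143}, to the statement that the tangent space to $\mathcal{H}ilb_{8}(\AA^n)$ at $[\Spec R^*/I]$ has dimension exactly $8n-7$: indeed, every non-smoothable point of $\mathcal{H}_{143}$ has this tangent dimension by Lemma \ref{l:dim_H143}, whereas any point of $\mathcal{H}ilb_8^{sm}(\AA^n)$ has tangent dimension at least $\dim\mathcal{H}ilb_8^{sm}(\AA^n)=8n>8n-7$. Using the standard deformation-theoretic identification $\dim_\CC T_{[\Spec R^*/I]}\mathcal{H}ilb_8(\AA^n)=\dim_\CC\Hom_{R^*}(I,R^*/I)$ (\cite[Prop.~2.3]{H09} or \cite[Thm.~18.29]{MS04}), this is precisely $r>8n-7 \Leftrightarrow [V]\in\sigma_{8,3}(\nu_d(\PP T_1))$. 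The main obstacle is carefully tracking the implications that force $\dim_\CC \Apolar(W^{\btd d})=8$ and fix its Hilbert function to be $(1,4,3)$ in the converse direction of the analog of Lemma \ref{l:algorithm_form}; once this is in hand, all other steps are direct translations of the polynomial case via the subspace versions of the structural lemmas (Lemma \ref{l:subspace_case_annihilators}, Theorem \ref{t:general_subspace}) already established in Sections \ref{s:algebraic_results}--\ref{s:general_results}.
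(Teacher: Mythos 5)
Your proposal is a faithful reconstruction of the paper's proof. You first establish the exact analog of Lemma \ref{l:algorithm_form} in the Grassmann setting (the paper calls this Lemma \ref{l:algorithm_form_143}), and the way you argue the ``only if'' direction — Theorem \ref{t:segre-veronese_map_143_general_case}(ii) for divisibility by $y_0^{d-2}$, Proposition \ref{p:l5l_of_cactus_rank_at_most_4_143} to rule out $y_0^{d-1}\mid V$, the Cactus Apolarity Lemma plus Theorem \ref{t:143_summary}(i) to force $\dim_\CC\Apolar(W^{\btd d})\geq 8$, Theorem \ref{t:general_subspace}(ii) for the upper bound, and Lemma \ref{l:smoothable_then_br_of_hom_at_most_r} for (b) — matches the paper step for step. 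Steps 1--5 of the algorithm are then validated by the same chain of lemmas (Lemma \ref{l:subspace_case_annihilators}(ii), a Gröbner-basis/low-degree-generation argument, Lemma \ref{l:dim_H143}, and the tangent-space identification).

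The one imprecision is the justification of Condition (a) in the converse direction. You write that (a) holds because ``the only local Hilbert function of length $8$ realised in $\mathcal{H}_{143,af}$ but not in $\mathcal{H}ilb_8^{sm}(\AA^n)$ is $(1,4,3)$'' and cite Theorem \ref{t:143_summary}(iii). That theorem only identifies $\mathcal{H}_{143,af}$ as the \emph{closure} of the $(1,4,3)$-locus, which a priori could contain limits with other Hilbert functions outside $\mathcal{H}ilb_8^{sm}(\AA^n)$; so the deduction does not follow from it alone. What is needed is the stronger fact, proved in \cite[Thm.~4.20]{CEVV09}, that every non-smoothable length-$8$ local algebra has Hilbert function $(1,4,3)$ — and this is exactly the reference the paper uses at this point. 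The conclusion you draw is correct, but you should replace the appeal to Theorem \ref{t:143_summary}(iii) with this sharper statement from \cite{CEVV09}.
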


The following lemma gives a description of the set-theoretic difference of the Grassmann cactus variety and the Grassmann secant variety. We need it to give a clear proof of Theorem \ref{t:algorithm_143}.

\begin{lemma}\label{l:algorithm_form_143}
Let $d\geq 5, n\geq 4$. The point $[V] \in \kappa_{8,3}(\nu_{d}(\PP^n))$
does not belong to $\sigma_{8,3}(\nu_{d}(\PP^n))$ if and only if there exists a
linear form $y_0 \in T_1$, and $U \in \Gr(3, T_2)$ such that $V = y_0^{d-2}U$ and for any
completion of $y_0$ to a basis of $T_1$ we have:
  \begin{enumerate}[label=(\alph*)]
    \item $\Apolar((U|_{y_0=1})^{\btd d})$ has Hilbert function $(1,4,3)$,
    \item $[\Spec \Apolar((U|_{y_0=1})^{\btd d})] \notin
      \mathcal{H}ilb^{sm}_{8}(\AA^n)$.
   \end{enumerate}
\end{lemma}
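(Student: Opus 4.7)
The plan is to mirror the proof of Lemma \ref{l:algorithm_form} in the subspace setting, using the parallel machinery from Theorem \ref{t:general_subspace}, Lemma \ref{l:subspace_case_annihilators} and Proposition \ref{p:F3_tresciwy_to_poza_eta_143}. The forward implication is the easy direction: assuming $V = y_0^{d-2} U$ with Conditions (a), (b) holding for some (equivalently, any) completion of $y_0$ to a basis, Proposition \ref{p:F3_tresciwy_to_poza_eta_143} immediately yields $[V]\in\eta_{8,3}(\nu_d(\PP^n))\setminus\sigma_{8,3}(\nu_d(\PP^n))$; the choice of completion is irrelevant because different completions differ by a linear change in $y_1,\ldots,y_n$, under which $\Apolar$ and its Hilbert function are invariant.

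For the backward direction, assume $[V]\in\kappa_{8,3}(\nu_d(\PP^n))\setminus\sigma_{8,3}(\nu_d(\PP^n))$. Theorem \ref{t:segre-veronese_map_143_general_case}(i)--(ii) produces $y_0\in T_1$ with $y_0^{d-2}\mid V$; Proposition \ref{p:l5l_of_cactus_rank_at_most_4_143} rules out $y_0^{d-1}\mid V$, for otherwise $[V]\in \sigma_{4,3}(\nu_d(\PP^n))\subseteq \sigma_{8,3}(\nu_d(\PP^n))$. Hence $V = y_0^{d-2}U$ with $U\in\Gr(3,T_2)$. I complete $y_0$ to a basis $(y_0,\ldots,y_n)$, set $W := U|_{y_0=1}\subseteq R_{\leq 2}$ where $R = \CC[y_1,\ldots,y_n]$, and verify via divided powers (in the spirit of Remark \ref{rem:why_triangle}) the identity $V = (W^{\btd d})^{hom,\,d-2}$; injectivity of dehomogenization on $T_2$ gives $\dim_\CC W = 3$. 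Setting $r := \dim_\CC S^*/\Ann(W^{\btd d})$, Lemma \ref{l:subspace_case_annihilators}(i), Lemma \ref{l:homogenization_is_saturated}, and the Cactus Apolarity Lemma \ref{p:cactus_apolarity} give $\crr(V)\leq r$, so $r\leq 7$ would force $[V]\in\kappa_{7,3}(\nu_d(\PP^n))=\sigma_{7,3}(\nu_d(\PP^n))\subseteq \sigma_{8,3}(\nu_d(\PP^n))$, a contradiction. On the other hand, since $d_2 = d-2\geq 2 = d_1$, Theorem \ref{t:general_subspace}(ii) gives $\bcrr(V) = r$, and $[V]\in \kappa_{8,3}(\nu_d(\PP^n))$ forces $r\leq 8$. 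Thus $r = 8$. Condition (b) then follows by contradiction from Lemma \ref{l:smoothable_then_br_of_hom_at_most_r}: smoothability of $\Spec\Apolar(W^{\btd d})$ would give $\brr(V)\leq 8$, hence $[V]\in\sigma_{8,3}(\nu_d(\PP^n))$.

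The delicate step will be Condition (a), pinning the Hilbert function to exactly $(1,4,3)$ rather than one of the other length-$8$ local Hilbert functions concentrated in degrees $\leq 2$ (since $W^{\btd d}\subseteq R_{\leq 2}$ kills $\Ann$ in degrees $\geq 3$, the possibilities a priori are $(1,h_1,h_2)$ with $h_1+h_2 = 7$, namely $(1,3,4)$, $(1,4,3)$, $(1,5,2)$, $(1,6,1)$, $(1,7,0)$). This is the subspace analogue of the appeal to \cite[Thm.~2.3 and Prop.~6.11]{CJN15} in the proof of Lemma \ref{l:algorithm_form}: the plan is to invoke Theorem \ref{t:143_summary} to place $[\Spec\Apolar(W^{\btd d})]$ in $\mathcal{H}^n_{143,af}\setminus\mathcal{H}ilb^{sm}_8(\AA^n)$ and then argue from the classification in \cite[Prop.~5.1]{CEVV09} that the boundary strata of $\mathcal{H}^n_{143,af}$ with Hilbert function different from $(1,4,3)$ lie inside the smoothable component, so the non-smoothable locus is precisely the open stratum of Hilbert function $(1,4,3)$. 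Verifying this smoothability of the boundary strata is the main obstacle in the plan.
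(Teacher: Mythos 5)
Your proof matches the paper's structure almost step for step: the forward direction via Proposition~\ref{p:F3_tresciwy_to_poza_eta_143}, and the backward direction via Theorem~\ref{t:segre-veronese_map_143_general_case} and Proposition~\ref{p:l5l_of_cactus_rank_at_most_4_143} to obtain $V = y_0^{d-2}U$, then Lemma~\ref{l:subspace_case_annihilators}(i), Lemma~\ref{l:homogenization_is_saturated} and the Cactus Apolarity Lemma to rule out $r\leq 7$, Theorem~\ref{t:general_subspace}(ii) to cap $r\leq 8$, and Lemma~\ref{l:smoothable_then_br_of_hom_at_most_r} for Condition~(b). The one place you flag as the ``main obstacle''---pinning the Hilbert function to $(1,4,3)$ rather than $(1,3,4)$, $(1,5,2)$, etc.---is exactly where the paper invokes a single citation: \cite[Thm.~4.20]{CEVV09} states directly that a non-smoothable length-$8$ local $\CC$-algebra has Hilbert function $(1,4,3)$, which is precisely the content of the boundary-strata smoothability claim you were proposing to derive from \cite[Prop.~5.1]{CEVV09}. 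So the gap in your plan is not a logical flaw but a missing reference; with \cite[Thm.~4.20]{CEVV09} in hand, Condition~(a) follows immediately once Condition~(b) is established, exactly as the appeal to \cite[Thm.~2.3 and Prop.~6.11]{CJN15} does in the proof of Lemma~\ref{l:algorithm_form}.
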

\begin{proof}
If $y_0 \in T_1$ and $U \in \Gr(3, T_2)$ are such that $V = y_0^{d-2}U$, and there exists
  a completion of $y_0$ to a basis $(y_0,\dots,y_n)$ of $T_1$, for which
  Conditions (a),(b) hold, we get 
  \begin{equation*}
    [V] \notin \sigma_{8,3}(\nu_{d}(\PP^n))
  \end{equation*}
  by Proposition \ref{p:F3_tresciwy_to_poza_eta_143}.

  Assume that $[V] \notin \sigma_{8,3}(\nu_{d}(\PP^n))$. Then by Theorem
  \ref{t:segre-veronese_map_143_general_case} there exists a linear form $y_0 \in T_1$ such that
  $y_0^{d-2} | V$. Using Proposition \ref{p:l5l_of_cactus_rank_at_most_4_143} we
  conclude that $V$ is not divisible by $y_0^{d-1}$. Hence we showed that $V = y_0^{d-2} U$
  for some $U \in \Gr(3, T_2)$. Extend $y_0$ to a basis
  $(y_0,y_1,\dots,y_n)$. Let $W = U|_{y_0 = 1}$.

  Now we prove Conditions  (a), (b) hold. We have 
\[
  V=(W^{\btd d})^{hom, d-2}.
\]
  By Lemma \ref{l:subspace_case_annihilators} (i)
  \begin{equation*}
    \Ann(W^{\btd d})^{hom} \subseteq \Ann(V)\text{.}
  \end{equation*}
  If  $\dim_\CC (\Apolar(W^{\btd d})) \leq 7$, then $\crr(V) \leq 7$ by the Cactus Apolarity Lemma \ref{p:cactus_apolarity}, since $\Ann(W^{\btd d})^{hom}$ is saturated by Lemma \ref{l:homogenization_is_saturated}. Therefore, $[V]\in \kappa_{7,3}(\nu_d(\PP^n)) = \sigma_{7,3}(\nu_d(\PP^n)) \subseteq \sigma_{8,3}(\nu_d(\PP^n))$, a contradiction.
  
  From Theorem \ref{t:general_subspace}(ii) we obtain $\dim_\CC (\Apolar(W^{\btd d})) \leq 8$. We proved that $\dim_\CC (\Apolar(W^{\btd d})) = 8$. Since we assumed that $[V] \notin \sigma_{8,3}(\nu_d(\PP^n))$, it follows by Lemma \ref{l:smoothable_then_br_of_hom_at_most_r}  that
  $\Spec(\Apolar(W^{\btd d}))$ is not smoothable. This implies Condition (b) holds.
  From \cite[Thm. 4.20]{CEVV09}, the algebra $\Apolar(W^{\btd d})$ has Hilbert function $(1,4,3)$. We proved Condition (a) holds.
\end{proof}

Steps 2--5 of the algorithm check whether $V$ is of the form given by Lemma \ref{l:algorithm_form_143}.

\begin{proof}[Proof of Theorem \ref{t:algorithm_143}]
Assume that $[V]\notin \sigma_{8,3}(\nu_{d}(\PP^n))$. Then there exist a basis
$(y_0,\ldots, y_n)$ of $T_1$ and $U\subseteq \CC[y_0,\ldots, y_n]$ as in Lemma
\ref{l:algorithm_form_143}. Let $W = U |_{y_0=1} \subseteq \CC[y_1,y_2,...,y_n]$.
Recall that
\begin{align*}
  W^{\btd d} := \{ (d-2)! F_2 + (d-1)! F_1 + d! F_0, & \text{ where } F_2 + F_1 + F_0 \in W, \\ & \text{ and } F_i \in \CC[y_1,y_2, \ldots , y_n]_i \}\text{.}
\end{align*}

Then, in the notation from Definition \ref{d:homogenization_of_space}, we get 
\[
  V= (W^{\btd d})^{hom,d-2}.
\]
By Lemma
\ref{l:subspace_case_annihilators}(ii), we have $\Ann (V)_{\leq d-2} =
(\Ann(W^{\btd d})^{hom})_{\leq d-2}$. Moreover, since $W^{\btd d} \subseteq \CC[y_1,\dots,y_n]_{\leq 2}$, and $d \geq 5$, we obtain
$((\Ann(W^{\btd d})^{hom})_{\leq d-2}) = \Ann(W^{\btd d})^{hom}$. Therefore we have
\[
  \mathfrak{a} = \sqrt{(\Ann(V )_{\leq d-2})} = \sqrt{\Ann(W^{\btd d})^{hom}} = (\beta_1,\ldots, \beta_n),
\]
where $\beta_1,\ldots, \beta_n \in T^*_1$ are dual to $y_1,\ldots, y_n \in T_1$. This shows that if the $\CC$-linear space $\big(\sqrt{(\Ann(V)_{\leq d-2})}\big)_1$ is not $n$-dimensional, then $[V] \in \sigma_{8,3}(\nu_{d}(\PP^n))$. Therefore, in that case, the algorithm stops correctly at Step~2.

Assume that the algorithm did not stop at Step 2. Then if $V$ is of the form as in Lemma \ref{l:algorithm_form_143}, then $y_0$ divides $V$ exactly $(d-2)$-times. Otherwise $[V]\in \sigma_{8,3}(\nu_{d}(\PP^n))$ and the algorithm stops correctly at Step 3.

Assume that the algorithm did not stop at Step 3. Then the Hilbert function of $R^*/I$ computed in Step 4
is $(1,4,3)$ if and only if Condition (a) of Lemma \ref{l:algorithm_form_143} is
fulfilled. Therefore, if it is not $(1,4,3)$, the algorithm stops correctly at Step 4. 

Assume that the algorithm did not stop at Step 4. Then $V$ satisfies Condition
(a) from Lemma \ref{l:algorithm_form_143}. Hence $[V]$ is in
$\sigma_{8,3}(\nu_{d}(\PP^n))$ if and only if $V$ does not satisfy Condition
(b). Using Lemma \ref{l:dim_H143}, this is equivalent to
\[
  \dim_{\mathbb{C}}\operatorname{Hom}_{R^*}(I, R^*/I) > 8n -7\text{.}
\]
The left term is the dimension of the tangent space to the Hilbert scheme $\mathcal{H}ilb_{8}(\AA^n)$ at the point $[\Spec R^*/I]$ (see \cite[Prop. 2.3.]{H09} or \cite[Thm. 18.29]{MS04}).
\end{proof}

\appendix
\section{Construction of the morphism to the Hilbert scheme}\label{s:hilbert}
Let $\kk$ be an algebraically closed field, $S^* = \kk[\alpha_1,\alpha_2,...,\alpha_n]$ be a polynomial ring and consider its graded dual $S=\kk_{dp}[x_1,x_2,...,x_n]$. In this section we prove the following theorem, which is used in Sections \ref{s:14thsecant} and \ref{s:83grassmann}.

\begin{theorem}\label{t:morphism}
  Let $l,m,r$ be positive integers. Consider a locally closed reduced subscheme $E$ of $\Gr(l, S_{\leq m})$ whose closed points satisfy
    \[
      E(\Bbbk) \subseteq \{[W] \in \Gr(l,S_{\leq m})\mid \Spec {S}^*/\Ann(W) \text{ has length } r\}\text{.}
    \]
  The natural map from $E$ to the Hilbert scheme of $r$ points in $\AA^n$, given on closed points by 
  $[W] \mapsto [\Spec {S}^*/\Ann(W)]$,
  is a morphism of $\Bbbk$-schemes.
\end{theorem}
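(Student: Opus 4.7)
The approach is to invoke the universal property of $H := \Hilb_r(\AA^n)$: a morphism $E \to H$ amounts to giving a closed subscheme $\mathcal{Z} \hookrightarrow E \times \AA^n$ flat over $E$ with all fibers of length $r$, and we want the fiber over a closed point $[W]$ to be $\Spec S^*/\Ann(W)$. I shall construct $\mathcal{Z}$ as $\Spec_E \mathcal{Q}$ for a locally free sheaf of $\mathcal{O}_E$-algebras $\mathcal{Q}$ of rank $r$, produced from contraction against the tautological bundle.

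Since $\theta \lrcorner F = 0$ whenever $\deg \theta > m \geq \deg F$, we have $S^*_{>m} \subseteq \Ann(W)$ for every $W \subseteq S_{\leq m}$, so $S^*/\Ann(W) \cong S^*_{\leq m}/\Ann(W)_{\leq m}$, of dimension $r$ at each closed point of $E$ by hypothesis. Let $\mathcal{W} \subseteq S_{\leq m} \otimes_\Bbbk \mathcal{O}_E$ be the restriction to $E$ of the tautological rank-$l$ subbundle, and consider the $\mathcal{O}_E$-linear contraction map
\begin{equation*}
  \mu \colon S^*_{\leq m} \otimes_\Bbbk \mathcal{O}_E \;\longrightarrow\; \mathcal{H}om_{\mathcal{O}_E}\bigl(\mathcal{W},\, S_{\leq m} \otimes_\Bbbk \mathcal{O}_E\bigr),
  \qquad \theta \otimes 1 \;\longmapsto\; (w \mapsto \theta \lrcorner w),
\end{equation*}
whose fiber at $[W]$ has kernel $\Ann(W)_{\leq m}$ and image isomorphic to $S^*/\Ann(W)$, hence of dimension exactly $r$.

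The key technical point is that $\mu$ has constant rank $r$ on all of $E$. The locus $\{\operatorname{rank} \mu \leq r-1\}$ is closed (cut out by the vanishing of $r \times r$ minors) and contains no closed points of $E$, so since $E$ is of finite type over $\Bbbk$ this locus must be empty; dually $\{\operatorname{rank} \mu \geq r+1\}$ is an open closed-point-free subscheme of $E$, hence also empty. Consequently $\mathcal{K} := \ker \mu$ and $\mathcal{Q} := \image \mu$ are vector subbundles of ranks $\dim_\Bbbk S^*_{\leq m} - r$ and $r$ respectively, whose formation commutes with base change; in particular the fiber of $\mathcal{K}$ at $[W]$ is $\Ann(W)_{\leq m}$.

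Finally, set $\mathcal{I} := \mathcal{K} \oplus \bigl(S^*_{>m} \otimes_\Bbbk \mathcal{O}_E\bigr) \subseteq S^* \otimes_\Bbbk \mathcal{O}_E$. Projection onto the $S^*_{\leq m}$-summand identifies $(S^* \otimes \mathcal{O}_E)/\mathcal{I}$ with $\mathcal{Q}$, which is locally free of rank $r$ over $\mathcal{O}_E$. To check that $\mathcal{I}$ is a sheaf of ideals, the only nontrivial case is a section $\sigma$ of $\mathcal{K}$ multiplied by a homogeneous $\theta \in S^*$ with $\deg(\theta\sigma) \leq m$; then $\mu(\theta\sigma)$ vanishes at every closed point $[W]$ (because $\Ann(W)$ is an ideal of $S^*$), and reducedness of $E$ forces $\mu(\theta\sigma) = 0$ globally. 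Hence $\mathcal{Z} := \Spec_E \bigl((S^* \otimes \mathcal{O}_E)/\mathcal{I}\bigr) \subseteq E \times \AA^n$ is flat of relative length $r$ with fibers $\Spec S^*/\Ann(W)$, and the universal property of $H$ yields the desired morphism $E \to H$. The main obstacle is the combination of the constant-rank step and the fiberwise-to-global passage for the ideal property, both of which crucially use that $E$ is reduced and of finite type over $\Bbbk$.
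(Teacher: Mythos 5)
Your proof is correct, and it takes a route that differs from the paper's in a useful way. The paper works affine-locally and uses a duality lemma (its Lemma~\ref{l:annihilators_and_homs}): it first proves flatness of the inverse system quotient $S_A/(S^*_A \lrcorner W)$ by a constant-rank argument, then transports flatness to $S^*_A/\Ann(W)$ via the isomorphism $S^*_A/\Ann(W) \simeq \Hom_A(S^*_A\lrcorner W, A)$, checking along the way that this duality commutes with base change (Lemmas~\ref{l:base_change_of_annihilators}, \ref{c:base_change_when_flat}, \ref{l:two_definitions_agree}). You instead apply the constant-rank-on-a-reduced-finite-type-scheme principle directly to the contraction map $\mu\colon S^*_{\leq m}\otimes\mathcal{O}_E \to \mathcal{H}om(\mathcal{W}, S_{\leq m}\otimes\mathcal{O}_E)$, whose fiberwise kernel is $\Ann(W)_{\leq m}$. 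This sidesteps the duality machinery entirely and makes the flatness of the family and its base-change behavior transparent from the theory of constant-rank vector bundle maps. Both proofs pivot on the same two facts---constant rank on closed points forces constant rank everywhere on a Jacobson scheme, and constant rank over a reduced base gives a subbundle---so the gain is mostly in streamlining. One small remark: your fiberwise-then-reducedness argument for showing $\mathcal{I}$ is a sheaf of ideals is correct but unnecessary; over any affine open $\Spec A \subseteq E$ one has $\mathcal{K}(\Spec A) = \Ann(W_A)\cap (S^*_{\leq m}\otimes A)$ directly from the definition of $\ker\mu$, whence $\mathcal{I}(\Spec A) = \Ann(W_A)$, which is automatically an ideal of $S^*_A$; no passage through closed points is needed there. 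Also be a bit more explicit that reducedness is what upgrades ``constant fiber rank'' to ``$\ker\mu$ and $\image\mu$ are locally free and commute with base change''---this is the one place where reducedness is genuinely load-bearing, whereas the Jacobson/finite-type hypothesis is what lets you read off the rank from closed points.
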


For a $\kk$-algebra $A$ we denote by $S_A$ and by $S^*_A$ the $A$-algebras $S \otimes_{\kk} A$ and $S^* \otimes_{\kk} A$, respectively.
Given a $\kk$-algebra homomorphism $\varphi: A \to B$ we will denote by the same letter the induced homomorphisms $S_A \to S_B$ and $S^*_A \to S^*_B$. 
For any $A$-submodule $W$ of $S_A$, by $S^*_A \lrcorner W$ we denote the $S^*_A$-submodule of $S_A$ generated by $W$. 
Given $t$ in $\Spec A$ we denote by $k(t)$ the residue field of $t$ on $\Spec A$ and we denote by $\iota_t$ the natural morphism from $A$ to $k(t)$.

\begin{lemma}\label{l:base_change_of_annihilators}
Let $\varphi\colon A\to B$ be a morphism of $\Bbbk$-algebras and $W\subseteq S_A$. Then the natural map
\[
(S_A^* \lrcorner W)\otimes_A B \to S_B
\]
surjects onto $S_B^*\lrcorner \varphi(W)$.
\end{lemma}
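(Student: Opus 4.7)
The plan is to unfold the definitions and exploit the fact that the contraction $\lrcorner$ is $\Bbbk$-bilinear on the monomial bases, hence compatible with any base change of scalars. First I would note that the natural map in question arises by tensoring the inclusion $S_A^* \lrcorner W \hookrightarrow S_A$ with $B$ over $A$ and postcomposing with the canonical isomorphism $S_A \otimes_A B \cong S_B$ (which holds because $S_A = S \otimes_\Bbbk A$). So the task reduces to identifying its image with $S_B^* \lrcorner \varphi(W)$ as a $B$-submodule of $S_B$.

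For the inclusion $\subseteq$, I would take a generating element $(\theta \lrcorner w) \otimes b$ of $(S_A^* \lrcorner W) \otimes_A B$ with $\theta \in S_A^*$, $w \in W$ and $b \in B$. The key compatibility $\varphi(\theta \lrcorner w) = \varphi(\theta) \lrcorner \varphi(w)$ follows by expanding on the monomial $\Bbbk$-bases $\{\alpha^{\mathbf u}\}$ of $S^*$ and $\{x^{[\mathbf v]}\}$ of $S$, because the structural formula for $\lrcorner$ has integer (in fact $0/1$) coefficients and therefore commutes with the ring map $\varphi$. Hence the image of $(\theta \lrcorner w) \otimes b$ in $S_B$ equals $b \cdot \bigl(\varphi(\theta) \lrcorner \varphi(w)\bigr)$, which visibly lies in $S_B^* \lrcorner \varphi(W)$.

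For the reverse inclusion $\supseteq$, a general element of $S_B^* \lrcorner \varphi(W)$ is a finite sum of terms $\eta \lrcorner \varphi(w)$ with $\eta \in S_B^*$ and $w \in W$. Since the monomials $\alpha^{\mathbf u} \in S^*$ form a $B$-basis of $S_B^* = S^* \otimes_\Bbbk B$, I can write $\eta = \sum_j \alpha^{\mathbf u_j}\cdot b_j$ and then, by $B$-linearity of the contraction together with the same compatibility as above,
\[
\eta \lrcorner \varphi(w) \;=\; \sum_j b_j \cdot \varphi\bigl(\alpha^{\mathbf u_j} \lrcorner w\bigr).
\]
Each $\alpha^{\mathbf u_j} \lrcorner w$ lies in $S_A^* \lrcorner W$, so the element $\sum_j (\alpha^{\mathbf u_j} \lrcorner w) \otimes b_j \in (S_A^* \lrcorner W) \otimes_A B$ is a preimage under the natural map, giving the desired surjection onto $S_B^* \lrcorner \varphi(W)$.

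The only delicate point is the base-change compatibility $\varphi(\theta \lrcorner w) = \varphi(\theta) \lrcorner \varphi(w)$, but this is essentially formal because the contraction is defined over $\Bbbk$ by a formula with integer coefficients; beyond that, the lemma is a bookkeeping exercise in tensor products, and I do not expect a substantive obstacle.
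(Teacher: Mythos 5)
Your proposal is correct and takes essentially the same approach as the paper: verify that the image lands inside $S_B^*\lrcorner\varphi(W)$ using the compatibility $\varphi(\theta\lrcorner w)=\varphi(\theta)\lrcorner\varphi(w)$, then obtain surjectivity by expanding $\eta\in S_B^*$ on the monomial $B$-basis $\{\alpha^{\mathbf u}\}$ and exhibiting $\sum_j(\alpha^{\mathbf u_j}\lrcorner w)\otimes b_j$ as an explicit preimage of $\eta\lrcorner\varphi(w)$. The only difference is that you spell out why the base-change compatibility holds, which the paper takes for granted.
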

\begin{proof}
Let $\theta\in S_A^*$, $f\in W$ and $b\in B$. Then $(\theta \lrcorner f)\otimes_A b \mapsto b(\varphi(\theta)\lrcorner \varphi(f))$, so the image of $(S_A^* \lrcorner W)\otimes_A B$ is contained in $S_B^*\lrcorner \varphi(W)$.
Let $\eta \lrcorner \varphi(f) \in S_B^*\lrcorner \varphi(W)$ with $\eta = \sum_\mathbf{u} b_\mathbf{u}\alpha^\mathbf{u}$ for some $f\in W$ and $b_\mathbf{u}\in B$. Then
\[
\sum_\mathbf{u} (\alpha^\mathbf{u} \lrcorner f) \otimes b_\mathbf{u} \mapsto (\eta \lrcorner \varphi(f)).
\]
\end{proof}

In some special cases, the surjection from Lemma \ref{l:base_change_of_annihilators} is in fact an isomorphism.
\begin{corollary}\label{c:base_change_when_flat}
If $S_A/(S_A^*\lrcorner W)$ is a flat $A$-module or if $B$ is a flat $A$-module (for instance if $B=A_\mathfrak{p}$), then the map
\[
(S_A^* \lrcorner W)\otimes_A B \to S_B^*\lrcorner \varphi(W)
\]
from Lemma \ref{l:base_change_of_annihilators}
is an isomorphism.
\end{corollary}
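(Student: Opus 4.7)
The plan is to deduce injectivity of the map in question from the short exact sequence
\[
0 \longrightarrow S_A^*\lrcorner W \longrightarrow S_A \longrightarrow S_A/(S_A^*\lrcorner W) \longrightarrow 0
\]
of $A$-modules, and then combine this injectivity with the surjectivity already established in Lemma~\ref{l:base_change_of_annihilators}. Applying $-\otimes_A B$ to the displayed sequence produces an exact sequence
\[
\operatorname{Tor}_1^A\bigl(S_A/(S_A^*\lrcorner W),\,B\bigr) \longrightarrow (S_A^*\lrcorner W)\otimes_A B \longrightarrow S_A\otimes_A B \longrightarrow \bigl(S_A/(S_A^*\lrcorner W)\bigr)\otimes_A B \longrightarrow 0,
\]
and the Tor term vanishes under either flatness assumption: if $B$ is flat over $A$ the whole functor $-\otimes_A B$ is exact, while if $S_A/(S_A^*\lrcorner W)$ is flat over $A$ the Tor vanishes for any $B$. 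In either case the map $(S_A^*\lrcorner W)\otimes_A B \to S_A\otimes_A B = S_B$ is injective.

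Next I would observe that the map from Lemma~\ref{l:base_change_of_annihilators} is, by construction, precisely the corestriction of the above map $(S_A^*\lrcorner W)\otimes_A B \to S_B$ to the submodule $S_B^*\lrcorner \varphi(W)\subseteq S_B$. Injectivity of the map into $S_B$ therefore forces injectivity of the corestricted map into $S_B^*\lrcorner \varphi(W)$. Combined with the surjectivity proved in Lemma~\ref{l:base_change_of_annihilators}, this gives the desired isomorphism.

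Finally, the parenthetical case $B = A_{\mathfrak p}$ is immediate since localization is an exact functor, so $A_{\mathfrak p}$ is a flat $A$-module and the second hypothesis applies. There is essentially no obstacle here beyond setting up the exact sequence correctly; the only subtlety to flag is that the map constructed in Lemma~\ref{l:base_change_of_annihilators} genuinely agrees, as a map of $B$-modules, with the restriction of the base-change map $(S_A^*\lrcorner W)\otimes_A B \to S_B$, which one sees by tracing an elementary tensor $(\theta\lrcorner f)\otimes b \mapsto b\,\varphi(\theta)\lrcorner\varphi(f)$ through both constructions.
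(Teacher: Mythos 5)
Your proof is correct and follows essentially the same route as the paper: both reduce to injectivity of $(S_A^*\lrcorner W)\otimes_A B \to S_B$ via the Tor exact sequence obtained by tensoring $0 \to S_A^*\lrcorner W \to S_A \to S_A/(S_A^*\lrcorner W) \to 0$ with $B$, using either flatness hypothesis to kill the Tor term, and then invoke the surjectivity from Lemma~\ref{l:base_change_of_annihilators}. Your explicit check that the lemma's map is the corestriction of the base-change map is a detail the paper leaves implicit, but nothing more.
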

\begin{proof}
  By Lemma \ref{l:base_change_of_annihilators} it is enough to show that the natural map 
\[
(S_A^* \lrcorner W)\otimes_A B \to S_B
\]
is injective.
This follows from the $\operatorname{Tor}$ exact sequence given by application of the functor $- \otimes_A B$ to the short exact sequence
  
  \[
   0 \to (S_A^*\lrcorner W) \to S_A \to S_A/(S_A^*\lrcorner W) \to 0.
  \]

\end{proof}

\begin{lemma}\label{l:annihilators_and_homs}
 Let $A$ be a $\kk$-algebra and $W$ be a finite $A$-submodule of $S_A$. 
 Then
 $\Hom_A(S^*_A \lrcorner W,A) \simeq S_A^*/\Ann(W)$.
\end{lemma}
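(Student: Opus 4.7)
The plan is to exhibit an explicit isomorphism via the apolarity pairing $S^*_A \times S_A \to A$, $(\theta, f) \mapsto (\theta \lrcorner f)_{\mathbf{0}}$, where the subscript $\mathbf{0}$ denotes extraction of the coefficient of $x^{[\mathbf{0}]}$. The candidate map is
\[
\Phi \colon S^*_A/\Ann(W) \longrightarrow \Hom_A(S^*_A \lrcorner W,\, A), \qquad [\theta] \longmapsto \bigl(f \mapsto (\theta \lrcorner f)_{\mathbf{0}}\bigr).
\]
First I would check well-definedness: if $\theta \in \Ann(W)$ and $f = \eta \lrcorner w \in S^*_A \lrcorner W$, then $\theta \lrcorner f = (\theta \eta) \lrcorner w = \eta \lrcorner (\theta \lrcorner w) = 0$, using commutativity of $S^*_A$; hence $\Ann(W)$ annihilates all of $S^*_A \lrcorner W$, not only $W$.

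Next I would verify injectivity of $\Phi$. Suppose $[\theta] \neq 0$, so $\theta \notin \Ann(W)$. Then $\theta \lrcorner w \neq 0$ for some $w \in W$, which means some coefficient $(\theta \lrcorner w)_{\mathbf{u}}$ is nonzero. The element $\alpha^{\mathbf{u}} \lrcorner w$ lies in $S^*_A \lrcorner W$, and
\[
\Phi([\theta])(\alpha^{\mathbf{u}} \lrcorner w) \;=\; \bigl(\theta \lrcorner (\alpha^{\mathbf{u}} \lrcorner w)\bigr)_{\mathbf{0}} \;=\; \bigl((\alpha^{\mathbf{u}} \theta) \lrcorner w\bigr)_{\mathbf{0}} \;=\; (\theta \lrcorner w)_{\mathbf{u}} \neq 0,
\]
so $\Phi([\theta]) \neq 0$.

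The hard step is surjectivity. Since $W$ is a finite $A$-module, its generators have bounded degree, so $W \subseteq S^{\leq N}_A$ for some $N$; consequently $M := S^*_A \lrcorner W \subseteq S^{\leq N}_A$ and $\mathfrak{m}^{N+1} \subseteq \Ann(W)$, where $\mathfrak{m} = (\alpha_1, \ldots, \alpha_n)$. The bounded truncations $P := S^*_A/\mathfrak{m}^{N+1}$ and $Q := S^{\leq N}_A$ are free $A$-modules of the same finite rank, and the pairing restricts to a perfect pairing $P \times Q \to A$, identifying $P \cong \Hom_A(Q, A)$. Under this identification the $A$-submodule $(\Ann W)/\mathfrak{m}^{N+1} \subseteq P$ coincides with the annihilator $M^\perp$ of $M$ in $Q^*$, so restriction induces an $A$-linear injection
\[
S^*_A/\Ann(W) \;=\; P/M^\perp \;\hookrightarrow\; \Hom_A(M, A),
\]
which is visibly equal to $\Phi$. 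Thus the plan reduces surjectivity of $\Phi$ to the extension statement: every $A$-linear map $M \to A$ lifts to an $A$-linear map $Q \to A$. I expect the main obstacle to be exactly this extension problem, which is precisely the vanishing of $\Ext^1_A(Q/M, A)$. I would resolve it by exploiting that both sides of the desired isomorphism are compatible with localization at primes of $A$ (using Lemma \ref{l:base_change_of_annihilators} and Corollary \ref{c:base_change_when_flat} for the left-hand side) and ultimately reducing to the case when $A$ is a field, where the injection of finite-dimensional vector spaces becomes an isomorphism by the classical Macaulay duality dimension count $\dim_\Bbbk (S^* \lrcorner W) = \dim_\Bbbk (S^*/\Ann(W))$, which itself follows from the symmetric injections produced by the same pairing on both sides. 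The absolute case plus Nakayama-style arguments (or a direct appeal to the perfect-pairing splitting in the relevant finite-free setup appearing in the applications of the lemma) then upgrade the injection $\Phi$ to an isomorphism.
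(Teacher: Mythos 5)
Your well-definedness and injectivity checks match the paper's (the paper also uses $\theta\mapsto\bigl(f\mapsto(\theta\lrcorner f)_0\bigr)$ and shows the kernel equals $\Ann(W)$). For surjectivity your route is genuinely different: you restrict the pairing to the truncation $(S_A)_{\leq N}$, identify $\Phi$ with the restriction map $\Hom_A\bigl((S_A)_{\leq N},A\bigr)\to\Hom_A(N,A)$ for $N=S^*_A\lrcorner W$, and reduce to vanishing of $\operatorname{Ext}^1_A\bigl((S_A)_{\leq N}/N,\,A\bigr)$. The paper instead works directly in the local case: it takes a minimal generating set $g_1,\dots,g_s$ of $N$, forms the coefficient matrix in the monomial basis, produces an invertible $s\times s$ minor, and solves $\psi_A(\theta)=\varphi$ by explicit linear algebra, then localizes.

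The gap is your last step, reducing the $\operatorname{Ext}^1$ vanishing to the residue-field case. Localization at primes is harmless, but neither $\Hom_A(N,A)$ nor $S^*_A/\Ann(W)$ commutes with $-\otimes_A k(\mathfrak m)$ in general, so the fiber $\Phi\otimes_A k(\mathfrak m)$ of your map is \emph{not} the map attached to the reduced data $\iota_{\mathfrak m}(W)$, and checking the latter proves nothing about the former. Concretely, take $n=1$, $A=\kk[t]_{(t)}$, and $W=A\cdot 1+A\cdot tx\subseteq S_A$. Then $N=S^*_A\lrcorner W=W\cong A^2$ with basis $(1,tx)$, $S^*_A/\Ann(W)\cong A[\alpha]/(\alpha^2)$, and in the dual basis $\Phi$ sends $1\mapsto e_1^*$, $\alpha\mapsto t\,e_2^*$, so $\operatorname{coker}\Phi\cong A/tA\neq 0$; yet $\iota_{\mathfrak m}(W)=\kk\cdot 1$ and the corresponding map over $\kk$ is the isomorphism $\kk\to\kk$, so the field case detects nothing. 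What is genuinely missing is a hypothesis forcing your $\operatorname{Ext}^1$ to vanish; in the paper's applications it is supplied by the flatness of $S_A/(S^*_A\lrcorner W)$, which Lemma~\ref{l:joachim}(i) establishes before the present lemma is invoked, and the paper's own invertible-minor step quietly uses the same flatness when it passes from a relation modulo $\mathfrak m(S_A)_{\leq n_0}$ to one in $N/\mathfrak m N$. So your identification of the extension problem as the crux is correct, but the proposed field-case reduction cannot by itself discharge it.
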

\begin{proof}
Let $N=S^*_A \lrcorner W$ and define a homomorphism $\psi_A: S_A^* \to \Hom_A(N,A)$, by $(\psi_A (\theta))(f) = (\theta \lrcorner f)_0 $.
 We have a factorization of $\psi_A$ by $S_A^*/\Ann(W)$.
 
We shall show that $\ker(\psi_A)\subseteq \Ann (W)$. Let $\theta \in \ker(\psi_A)$ and $f\in W$.
Let $\theta \lrcorner f = G_d + \ldots + G_0$ for a positive integer $d$. Then for every $j\in \{0,\ldots, d\}$
and $\eta \in (S_A^*)_j$ we have $0 =  (\theta \lrcorner (\eta \lrcorner f))_0 = \eta \lrcorner G_j$. Thus $G_j = 0$ and hence $\theta \in \Ann(f)$. Since $f\in W$ was arbitrary, we have $\theta \in \Ann(W)$.

We proceed to showing that $\psi$ is surjective. We first assume that $(A, \mathfrak{m})$ is a local ring. Let $\varphi \in \Hom_A(N,A)$ and assume that $g_1,\ldots, g_s$ is a minimal set of generators of the $A$-module $N=S_A^*\lrcorner W$. Let $\mathcal{M}$ be the set of divided power monomials in $S_A$ of degree at most $n_0 =\max \{\deg(f) \mid f\in W\}$. Form a matrix $M$ over $A$ with rows corresponding to $g_1,\ldots, g_s$ and entries equal to coordinates of $g_i$ in the basis $\mathcal{M}$. Then there exists an invertible $s\times s$ minor of $M$. 
Indeed, otherwise all minors are in the maximal ideal of $A$ and therefore $\overline{g_1}, \ldots, \overline{g_s} \in N/\mathfrak{m}N$ are $A/\mathfrak{m}$-linearly dependent. Thus, by Nakayama's lemma, $g_1,\ldots, g_s$ is not a minimal set of generators.

Let $a_i = \varphi(g_i)$ for $i=1,\ldots, s$. 
If we write $\theta \in S_A^*$ as a vector $\mathbf{v}$ in the basis dual to $\mathcal{M}$, then $\psi_A(\theta)(g_i)$ is the $i$-th coordinate of the vector $M\cdot \mathbf{v}$. Therefore, there exists $\theta\in S_A^*$ with $\psi_A(\theta)=\varphi$, as long as there exists $\mathbf{v}$ with $M\cdot \mathbf{v} = [a_1,\ldots, a_s]^T$. 
Therefore, it is enough to show that $M$ gives a surjective morphism $A^{\# \mathcal{M}} \to A^s$. Let $M'$ be a $s\times s$ submatrix of $M$ with invertible determinant. We will show that $M'$ defines a surjective morphism $A^s \to A^s$.
Let $M'^D$ be the adjoint matrix of $M'$. Given $\mathbf{w} \in A^s$, we have
$\mathbf{w}=M'\cdot \mathbf{v} $ for $\mathbf{v} = \frac{1}{\det M'} M'^D \cdot \mathbf{w}$.

Let $A$ be an arbitrary $\Bbbk$ -algebra and $Q$ be the cokernel of $\psi_A$. We claim that $Q=0$. It is enough to show that $Q_\mathfrak{p} = 0$ for all $\mathfrak{p}\in \Spec A$. 
Let $l : A \to A_\mathfrak{p}$ be the localization. Then $N_\mathfrak{p} \simeq  S^*_{A_\mathfrak{p}} \lrcorner l(W) \simeq S^*_{A_\mathfrak{p}} \lrcorner W_\mathfrak{p}$ (the first isomorphism follows from Corollary \ref{c:base_change_when_flat}).
 Therefore, by the local case considered before, it is enough to show that $(\psi_A)_\mathfrak{p} = \psi_{A_\mathfrak{p}}$.

Using isomorphisms $(S^*_A)_\mathfrak{p} \simeq S^*_{A_\mathfrak{p}}$, $S_{A_\mathfrak{p}}^* \lrcorner W_\mathfrak{p} \simeq N_\mathfrak{p}$  and $(\Hom_A(N, A))_\mathfrak{p} \simeq \Hom_{A_\mathfrak{p}}(N_\mathfrak{p}, A_\mathfrak{p})$ we can write for $\theta \in S_A^*, f\in N$, $a,b\in A\setminus\mathfrak{p}$:
\[
\Bigg(\psi_{A_\mathfrak{p}}\Big(\frac{\theta}{a}\Big)\Bigg)\Big(\frac{f}{b}\Big) = \Big(\frac{\theta}{a} \lrcorner \frac{f}{b}\Big)_0
\]
and
\[
\Bigg((\psi_A)_\mathfrak{p}\Big(\frac{\theta}{a}\Big)\Bigg) \Big(\frac{f}{b}\Big) = \frac{\psi_A(\theta)}{a} \Big(\frac{f}{b}\Big) = \frac{\big(\psi_A(\theta)\big)(f)}{ab} = \frac{(\theta \lrcorner f)_0}{ab}.
\]
\end{proof}

The following lemma is a slight modification of \cite[Prop. 2.12]{Jel16}.
Recall for $t$ a point of $\Spec A$, we denote by $\iota_t$ the natural map $S_A \to S_{k(t)}$.
\begin{lemma}\label{l:joachim}
 Let $l, m \in \mathbb{Z}_{\geq 1}$, let $A$ be a Noetherian $\kk$-algebra, and let
  $[W]$ be a $(\Spec A)$-point of $\Gr(l,S_{\leq m})$, i.e. $W$ is an
  $A$-submodule of $(S_A)_{\leq m}$ such that the quotient module is locally
  free of rank $\dim_\kk S_{\leq m} - l $.
 Define $a_W : \Spec ({S_A}^*/\Ann(W)) \to \Spec A$ to be the natural map. Then the following holds:
 \begin{enumerate}[label=(\roman*)]
  \item If $A$ is a reduced finitely generated $\kk$-algebra and the length of $ S_{k(t)}^*/\Ann(\iota_t(W))$ is independent of the choice of a closed point $t \in \Spec A$ then $S_A/(S^*_A \lrcorner W)$ and $ S_A^*/\Ann(W)$ are flat $A$-modules. 
  \item If $W$ is such that $S_A/(S^*_A \lrcorner W)$ is a flat $A$-module,
  then the base change of $a_W$ via any homomorphism between Noetherian rings $\varphi : A \to B$ is equal to 
  \[\Spec (S_B^*/ \Ann(\varphi(W))) \to \Spec B\]
  In particular, the fiber of $a_W$ over $t \in \Spec A$ is naturally $\Spec S_{k(t)}^*/\Ann(\iota_t(W))$.
  \end{enumerate}
\end{lemma}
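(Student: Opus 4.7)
The plan is to handle both parts together by first establishing flatness of $S^*_A \lrcorner W$ and then transferring that information back to $S^*_A/\Ann(W)$ via Lemma \ref{l:annihilators_and_homs} and its compatibility with base change. The algebraic input from the earlier lemmas of this appendix does essentially all the heavy lifting; the reduced-ring hypothesis in (i) enters only to pass from constancy of fiber dimensions at closed points to local freeness.

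For part (i), I would first note that as $A$-modules there is a splitting $S_A = (S_A)_{\leq m} \oplus (S_A)_{>m}$ in which the second summand is free; since $S^*_A \lrcorner W \subseteq (S_A)_{\leq m}$, flatness of $S_A/(S^*_A \lrcorner W)$ reduces to flatness of the finitely generated $A$-module $M := (S_A)_{\leq m}/(S^*_A \lrcorner W)$. To compute its fiber dimension at a closed point $t$, I would right-exactly tensor the short exact sequence
\[
0 \to S^*_A \lrcorner W \to (S_A)_{\leq m} \to M \to 0
\]
with $k(t)$; by Lemma \ref{l:base_change_of_annihilators} the image of $(S^*_A \lrcorner W)\otimes_A k(t)$ in $(S_{k(t)})_{\leq m}$ is exactly $S^*_{k(t)}\lrcorner \iota_t(W)$, whose $k(t)$-dimension is $r$ by Lemma \ref{l:annihilators_and_homs} applied over the field $k(t)$ together with the hypothesis that $S^*_{k(t)}/\Ann(\iota_t(W))$ has length $r$. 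Hence $\dim_{k(t)} M\otimes_A k(t) = \dim_\kk S_{\leq m} - r$ is independent of the closed point $t$.

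Next I would promote this to local freeness of $M$ using the reduced Noetherian (hence Jacobson) hypothesis. Upper semicontinuity of fiber dimension combined with density of closed points gives an upper bound of $\dim_\kk S_{\leq m}-r$ at every point; to get the matching lower bound I would pass to each minimal prime $\mathfrak{p}$ of $A$, restrict $M$ to the domain $A/\mathfrak{p}$, and invoke generic flatness on that component, which forces the generic fiber dimension to agree with the closed-point value. Constancy of fiber dimension on a reduced Noetherian ring yields local freeness of $M$ (see e.g.\ \cite[Ex.~20.13]{Eis95}). Then $S_A/(S^*_A\lrcorner W)$ is flat, and the Tor long exact sequence for $0 \to S^*_A\lrcorner W \to S_A \to S_A/(S^*_A\lrcorner W) \to 0$ (using that $S_A$ is free) shows $S^*_A \lrcorner W$ is flat as well, hence locally free of rank $r$; by Lemma \ref{l:annihilators_and_homs} its $A$-dual $S^*_A/\Ann(W)$ is locally free of rank $r$, proving both flatness statements. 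The main obstacle here is precisely this constant-fiber-dimension argument, where the reducedness hypothesis is essential.

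Part (ii) is then largely formal. Under the flatness hypothesis, the same Tor argument again yields that $S^*_A \lrcorner W$ is flat and (being finitely generated over Noetherian $A$) locally free of finite rank. Corollary \ref{c:base_change_when_flat} applied to $\varphi\colon A\to B$ gives an isomorphism $(S^*_A \lrcorner W)\otimes_A B \cong S^*_B \lrcorner \varphi(W)$. For any finitely generated locally free module, formation of the $A$-linear dual commutes with arbitrary base change; combining this with Lemma \ref{l:annihilators_and_homs} over $A$ and over $B$ gives
\[
(S^*_A/\Ann(W))\otimes_A B \;\cong\; \Hom_B\bigl((S^*_A \lrcorner W)\otimes_A B,\,B\bigr) \;\cong\; \Hom_B\bigl(S^*_B \lrcorner \varphi(W),\,B\bigr) \;\cong\; S^*_B/\Ann(\varphi(W)).
\]
Taking $\Spec$ of this isomorphism identifies the base change of $a_W$ along $\varphi$ with $\Spec(S^*_B/\Ann(\varphi(W))) \to \Spec B$. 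The fiber statement is the special case $\varphi = \iota_t\colon A\to k(t)$.
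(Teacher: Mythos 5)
Your proposal is correct and follows essentially the same route as the paper's proof: the same splitting $S_A=(S_A)_{\leq m}\oplus(S_A)_{>m}$, the same fiber-dimension computation via Lemma \ref{l:base_change_of_annihilators} and Lemma \ref{l:annihilators_and_homs}, the same ``constant fiber rank over a reduced finitely generated $\kk$-algebra implies locally free'' criterion (which you merely spell out in more detail), and the same dualization and base-change argument for part (ii) via Corollary \ref{c:base_change_when_flat}. No substantive differences.
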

\begin{proof}
\hfill
\begin{enumerate}[label=(\roman*)] 
 \item First we prove that $S_A/(S^*_A \lrcorner W)$ is a flat $A$-module. We know that
   \begin{equation*}
     S_A \cong (S_A)_{\leq m} \oplus (S_A)_{> m}\text{.}
   \end{equation*}
   Since $(S_A)_{> m}$ is a free $A$-module, it suffices to show that $(S_A)_{\leq m}/(S^*_A\lrcorner W)$ is $A$-flat. Denote this module by $P$.

   This module is finitely generated, hence $P$ is flat if and only if $P$ is locally free. Now $A$ is reduced and finitely generated, so  $P$ is $A$-flat if and only if it has locally constant rank: $\dim_{k(t)}(P \otimes k(t))$ is independent of the choice of a closed point $t \in \Spec A$. 

  We have an exact sequence
  \begin{equation*}
    0 \to S^*_A \lrcorner W \to (S_A)_{\leq m} \to P \to 0\text{.}
  \end{equation*}
  We tensor it by $k(t)$, getting the exact sequence
\begin{equation*}
  (S^* \lrcorner W)\otimes_A k(t) \xrightarrow{u} (S_A)_{\leq m} \otimes_A k(t) \to P \otimes_A k(t) \to 0\text{.}
\end{equation*}
Then
\begin{align*}
  \dim_{k(t)} (P \otimes_A k(t)) & \hspace{0.75cm} = \hspace{0.73cm}\dim_{k(t)} ((S_A)_{\leq m} \otimes_A k(t)) - \dim_{k(t)} \image u \\
  & \stackrel{\text{by Lemma \ref{l:base_change_of_annihilators}}}{=} \dim_{\kk} S_{\leq m} - \dim_{k(t)} S^*_{k(t)} \lrcorner \iota_t(W) \\
  & \stackrel{\text{by Lemma \ref{l:annihilators_and_homs}}}{=} \dim_{\kk} S_{\leq m} - \dim_{k(t)} S^*_{k(t)} /\Ann(\iota_t(W))\text{,}
\end{align*}
which is constant by assumption.

It remains to prove that  $ S^*/\Ann(W)$ is a flat $A$-module. It follows from Lemma \ref{l:annihilators_and_homs}, that 
\[ 
 S^*/\Ann(W) \simeq \Hom_A (S^*_A \lrcorner W,A). 
\]
Since $S^*_A \lrcorner W$ is the kernel of a surjection of flat $A$-modules,  it is a flat $A$-module. Because it is finite as an $A$-module, it is locally free of finite rank. Therefore $\Hom_A (S^*_A \lrcorner W,A)$ is a locally free $A$-module of finite rank, thus flat.
\item
 Let $N = S^*_A \lrcorner W$. Suppose that  $S_A / N$ is a flat $A$-module. By Corollary \ref{c:base_change_when_flat} the natural morphism $N\otimes_A B \to S_A\otimes_A B \simeq S_B$ sends $N\otimes_A B$ isomorphically to $S^*_B \lrcorner \varphi(W)$. By Lemma \ref{l:annihilators_and_homs} $S^*_A/ \Ann(W) \simeq \Hom_A (S^*_A \lrcorner W, A)$ and $S^*_B/ \Ann(\varphi(W)) \simeq \Hom_B (S^*_B \lrcorner \varphi(W), B)$.
 Thus it is enough to show that $\Hom_A (N, A) \otimes_A B \simeq \Hom_B ({N\otimes_A B}, B)$. This follows from \cite[Exe.~7.20(a)]{GW10} and the fact that $N = S^*_A \lrcorner W$ is flat and finitely generated over a Noetherian ring, hence locally free of finite rank, see \cite[Prop.~4.4.3]{Bos12}.
\end{enumerate}
\end{proof}

\begin{lemma}\label{l:two_definitions_agree}
  Let $W \subseteq (S_A)_{\leq m}$ be an $A$-submodule, and let $Q = (S_A)_{\leq m}/ W$. Let $t \in \Spec A$ be any closed point. If $Q$ is $A$-flat, then $\iota_t(W) = W\otimes_A k(t)$.
\end{lemma}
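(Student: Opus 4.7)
The plan is to extract the statement from the long exact Tor sequence associated to the short exact sequence
\[
0 \to W \to (S_A)_{\leq m} \to Q \to 0.
\]
Applying $- \otimes_A k(t)$ produces the exact sequence
\[
\operatorname{Tor}^A_1(Q, k(t)) \to W \otimes_A k(t) \to (S_A)_{\leq m} \otimes_A k(t) \to Q \otimes_A k(t) \to 0.
\]
By hypothesis $Q$ is $A$-flat, so $\operatorname{Tor}^A_1(Q, k(t)) = 0$, and the natural map
\[
\varphi \colon W \otimes_A k(t) \to (S_A)_{\leq m} \otimes_A k(t) \cong (S_{k(t)})_{\leq m}
\]
is injective.

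Next I would identify the image of $\varphi$ with $\iota_t(W)$. By definition, $\iota_t(W)$ is the $k(t)$-span of the images $\iota_t(w)$ for $w \in W$, and under the canonical identification $(S_A)_{\leq m} \otimes_A k(t) \cong (S_{k(t)})_{\leq m}$, an elementary tensor $w \otimes c \in W \otimes_A k(t)$ is sent to $c\cdot \iota_t(w)$. Hence $\varphi$ factors as a surjection $W \otimes_A k(t) \twoheadrightarrow \iota_t(W)$ followed by the inclusion $\iota_t(W) \hookrightarrow (S_{k(t)})_{\leq m}$.

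Since $\varphi$ is injective, the surjection $W \otimes_A k(t) \twoheadrightarrow \iota_t(W)$ is in fact an isomorphism, which is the claim. There is no real obstacle here; the only point to be careful about is simply noting that flatness of $Q$ is exactly what kills the Tor term, and the identification of the image of $\varphi$ with $\iota_t(W)$ is immediate from the construction of $\iota_t$.
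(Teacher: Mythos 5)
Your proof is correct and follows essentially the same approach as the paper: both arguments tensor the short exact sequence $0 \to W \to (S_A)_{\leq m} \to Q \to 0$ with $k(t)$ and use flatness of $Q$ to conclude that $W\otimes_A k(t) \to (S_{k(t)})_{\leq m}$ is injective (the paper phrases this via a commutative diagram of two short exact sequences, while you phrase it via the vanishing of $\operatorname{Tor}^A_1(Q,k(t))$), then identify the image with $\iota_t(W)$.
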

\begin{proof}
  Consider the following commutative diagram:
  \[\begin{tikzcd}
      0 \ar{r} & W \ar{r}\ar{d}{a} & (S_A)_{\leq m} \ar{r}\ar{d} & Q\ar{d} \ar{r} & 0 \\
      0 \ar{r} & W\otimes_A k(t)\ar{r}{b} & (S_{k(t)})_{\leq m} \ar{r} & Q\otimes_A k(t) \ar{r} & 0 \text{.}
  \end{tikzcd}\]
  The map $a$ is a surjection since tensoring is right-exact. The map $b$ is an injection, because $Q$ is $A$-flat. Hence $\iota_t(W) = W\otimes_A k(t)$.
\end{proof}

\begin{proof}[Proof of Theorem \ref{t:morphism}]
  Take any cover of $E$ by open affines $\Spec A_i$. We construct morphisms
  \begin{equation*}
    \varphi_i : \Spec A_i \to \Hilb_r(\mathbb{A}^n)\text{,}
  \end{equation*}
  and finally we show that these morphisms glue.

  Let $\mathcal{U}$ be the universal subbundle on $\Gr(l, S_{\leq m})$, treated as a locally free sheaf. Let $\mathcal{U}|_{\Spec A_i} = \widetilde{W_i}$, where $W_i \subseteq S_{\leq m}\otimes A_i$ is a submodule. Observe that $(S_{A_i})_{\leq m}/W_i$ is $A_i$-flat from the definition of the Grassmann functor, see \cite[\S8.4]{GW10}.

  Our morphism will be defined by the family $\Spec S^*_{A_i}/\Ann(W_i) \to
  \Spec A_i$. We know that the scheme $\Spec S^*_{A_i}/\Ann(W_i)$ is a closed subscheme of
  $\mathbb{A}^{n}_{A_i}$. We want to use Part (i) of Lemma \ref{l:joachim}
  for $W_i$. In order to do it, it suffices to show that for every closed point $t \in \Spec A_i$,
  the vector space $S^*_{k(t)}/\Ann(\iota_t(W_i))$ has dimension $r$. But this follows
  from the fact that $[W_i\otimes k(t)] =[\iota_t(W_i)]$ by Lemma
  \ref{l:two_definitions_agree}, and the fact that $[W_i \otimes k(t)] \in E \subseteq
  \Gr(l, S_{\leq m})$.  Hence, both modules $S^*_{A_i}/\Ann(W_i)$ and
  $S_{A_i}/S^*_{A_i}\lrcorner W_i$ are $A_i$-flat. Then we can use Part (ii) of Lemma \ref{l:joachim} to show that our
  family has fibers of length $r$. Hence, from the defining property of the
  Hilbert scheme, we have a morphism $\varphi_i : \Spec A_i \to
  \Hilb_r(\mathbb{A}^n)$. Moreover, the fiber of the family $\Spec
  S^*_{A_i}/\Ann(W_i)$ over the closed point $t$ is $S^*_{k(t)}/\Ann(\iota_t(W_i))$.
  Therefore $\varphi_i$ on closed points is defined by
  \begin{equation*}
    [W] \mapsto [\Spec S^*/\Ann(W)]\text{.}
  \end{equation*}
  Since the morphims $\varphi_i$ are defined on closed points by the same formula, they glue together.
\end{proof}

\section{Implementation of the algorithm in Macaulay2}\label{appendix}
We present the code of the algorithm from Theorem \ref{t:algorithm} (for $n=6$) written in Macaulay2 \cite{M2}.

\begin{verbatim}
KK=ZZ/7919
T=KK[x_0..x_6]

completeToBasis = (y) -> {
  use T;
  L := {y,x_0,x_1,x_2,x_3,x_4,x_5,x_6};
  A := {x_0,x_1,x_2,x_3,x_4,x_5,x_6};
  for i from 1 to #L-1 do{
    (M,C) := coefficients(matrix{drop(L, {i,i})},Monomials=> A);
    if rank(C) == 7 then return drop(L, {i,i});
  }
}

triangle = (d,f) -> {
  C := terms(f);
  C = apply(C, g -> (d-(degree g)#0)! * g);
  return sum(C);
}

generatorsUpToDegree = (d,I) -> {
  E := entries mingens I;
  E = E#0;
  E = select(E, (i)->((degree i)#0 <= d));
  return ideal E;
}

annihilatorUpToDegree = (d,G) -> {
  J := inverseSystem(G);
  return generatorsUpToDegree(d, J);
}

dualLinearGenerator = (I) -> {
  J := generatorsUpToDegree(1,I);
  K := inverseSystem(J);
  J = generatorsUpToDegree(1,K);
  y := entries mingens J;
  y = y#0;
  return y#0;
}

howManyTimes = (y,G) -> {
  i := 0;
  while (G % y) == 0 do{
    G=G//y;
    i=i+1;
  };
  return i;
}

dehomogenizationWrtBasis = (G, L) -> {
  y := L#0;
  R := T/ideal(y-1);
  G = substitute(G, R);
  Q := KK[L#1, L#2, L#3, L#4, L#5, L#6];
  q := map(R, Q, {L#1,L#2,L#3,L#4,L#5,L#6});
  J := preimage_q(ideal(G));
  E := entries mingens J;
  E = E#0;
  return (E#0, Q);
}

homogeneousPart = (d, G) -> {
  E := terms G;
  E = select(E, (i)->((degree i)#0 == d));
  return sum E;
}

localHilbertFunction = I -> {
  S := ring I;
  m := ideal vars S;
  R := S/I;
  m = sub(m, R);
  return apply({ R/m, m/m^2, m^2/m^3, m^3/m^4}, degree);
}
  

isInSecant = (G) -> {
  --Step 1:
  d := (degree(G))#0 - 3;
  I := annihilatorUpToDegree(d,G);
  J := radical(I);
  --Step 2:
  if (hilbertFunction(1, module(J)) != 6) then return true;
  y := dualLinearGenerator(J);
  --Step 3:
  if (howManyTimes(y, G) != d) then return true;
  --Step 4:
  for i from 0 to d-1 do G=G//y;
  L := completeToBasis(y);
  (f, R) := dehomogenizationWrtBasis(G, L);
  ftriangle = triangle(d+3,f);
  K := inverseSystem(ftriangle);
  if (localHilbertFunction(K) != {1,6,6,1}) then return true;
  --Step 5:
  deg := degree Hom(K, R/K);
  return (deg > 76);
}
\end{verbatim}

\bibliographystyle{abbrv}
%\bibliography{biblio}

\begin{thebibliography}{10}

\bibitem{AGMO18}
E.~Angelini, F.~Galuppi, M.~Mella, and G.~Ottaviani.
\newblock On the number of {W}aring decompositions for a generic polynomial
  vector.
\newblock {\em Journal of Pure and Applied Algebra}, 222(4):950--965, 2018.

\bibitem{BBCC13}
E.~Ballico, A.~Bernardi, M.~V. Catalisano, and L.~Chiantini.
\newblock Grassmann secants, identifiability, and linear systems of tensors.
\newblock {\em Linear Algebra and its Applications}, 438(1):121--135, Jan 2013.

\bibitem{BJMR17}
A.~Bernardi, J.~Jelisiejew, P.~M. Marques, and K.~Ranestad.
\newblock On polynomials with given {H}ilbert function and applications.
\newblock {\em Collectanea Mathematica}, 69(1):39--64, Jan 2017.

\bibitem{BR13}
A.~Bernardi and K.~Ranestad.
\newblock On the cactus rank of cubic forms.
\newblock {\em Journal of Symbolic Computation}, 50:291--297, 2013.

\bibitem{Bos12}
S.~Bosch.
\newblock {\em Algebraic Geometry and Commutative Algebra}.
\newblock Universitext. Springer, London, 2012.

\bibitem{Bro33}
J.~Bronowski.
\newblock The sums of powers as simultaneous canonical expressions.
\newblock {\em Mathematical Proceedings of the Cambridge Philosophical
  Society}, 29(2):245–256, 1933.

\bibitem{BH98}
W.~Bruns and H.~Herzog.
\newblock {\em Cohen-Macaulay Rings}.
\newblock Cambridge Studies in Advanced Mathematics. Cambridge University
  Press, Cambridge, 1998.

\bibitem{BB14}
W.~Buczy{\'n}ska and J.~Buczy{\'n}ski.
\newblock Secant varieties to high degree {V}eronese reembeddings,
  catalecticant matrices and smoothable {G}orenstein schemes.
\newblock {\em Journal of Algebraic Geometry}, 23:63--90, Jan 2014.

\bibitem{BB15}
W.~Buczy{\'n}ska and J.~Buczy{\'n}ski.
\newblock On differences between the border rank and the smoothable rank of a
  polynomial.
\newblock {\em Glasgow Mathematical Journal}, 57(02):401--413, May 2015.

\bibitem{BB20}
W.~{Buczy{\'n}ska} and J.~{Buczy{\'n}ski}.
\newblock {Apolarity for border cactus decomposition in case of {V}eronese
  embedding}, 2020.
\newblock Working notes provided as a temporary reference for other authors,
  available at
  \url{https://www.mimuw.edu.pl/~jabu/CV/publications/abcd_for_Veronese.pdf}.

\bibitem{BB19}
W.~{Buczy{\'n}ska} and J.~{Buczy{\'n}ski}.
\newblock {Apolarity, border rank, and multigraded Hilbert scheme}.
\newblock {\em Duke Mathematical Journal}, 170(16):3659--3702, 2021.

\bibitem{BBKT15}
W.~Buczy{\'n}ska, J.~Buczy{\'n}ski, J.~Kleppe, and Z.~Teitler.
\newblock Apolarity and direct sum decomposability of polynomials.
\newblock {\em Michigan Math. J.}, 64(4):675--719, Nov 2015.

\bibitem{BJJMM19}
J.~Buczy{\'n}ski, T.~Januszkiewicz, J.~Jelisiejew, and M.~Micha{\l{}}ek.
\newblock Constructions of $k$-regular maps using finite local schemes.
\newblock {\em Journal of the European Mathematical Society}, 21:1775--1808,
  2019.

\bibitem{BJ17}
J.~Buczy{\'n}ski and J.~Jelisiejew.
\newblock Finite schemes and secant varieties over arbitrary characteristic.
\newblock {\em Differential Geometry and its Applications}, 55:13--67, 2017.

\bibitem{BL13}
J.~Buczy{\'n}ski and J.~Landsberg.
\newblock Ranks of tensors and a generalization of secant varieties.
\newblock {\em Linear Algebra Appl.}, 438(2):668--689, 2013.

\bibitem{CV17}
E.~Carlini and E.~Ventura.
\newblock A note on the simultaneous {W}aring rank of monomials.
\newblock {\em Illinois Journal of Mathematics}, 61:517--530, Nov 2017.

\bibitem{CEVV09}
D.~A. Cartwright, D.~Erman, M.~Velasco, and B.~Viray.
\newblock Hilbert schemes of 8 points.
\newblock {\em Algebra Number Theory}, 3(7):763--795, 2009.

\bibitem{CJN15}
G.~Casnati, J.~Jelisiejew, and R.~Notari.
\newblock Irreducibility of the {G}orenstein loci of {H}ilbert schemes via ray
  families.
\newblock {\em Algebra Number Theory}, 9(7):1525--1570, 2015.

\bibitem{CN09}
G.~Casnati and R.~Notari.
\newblock On the {G}orenstein locus of some punctual {H}ilbert schemes.
\newblock {\em Journal of Pure and Applied Algebra}, 213:2055--2074, 2009.

\bibitem{CC08}
C.~Ciliberto and F.~Cools.
\newblock On {G}rassmann secant extremal varieties.
\newblock {\em Adv. Geom.}, 8(3):377--386, Aug 2008.

\bibitem{C02}
P.~Comon.
\newblock Tensor decompositions: state of the art and applications.
\newblock In {\em Mathematics in signal processing, {V}}, volume~71 of {\em
  Inst. Math. Appl. Conf. Ser. New Ser.}, pages 1--24. Oxford Univ. Press,
  Oxford, Coventry, 2002.

\bibitem{CGLM08}
P.~Comon, G.~Golub, L.-H. Lim, and B.~Mourrain.
\newblock Symmetric tensors and symmetric tensor rank.
\newblock {\em SIAM J. Matrix Anal. Appl.}, 30(3):1254--1279, Sep 2008.

\bibitem{CM96}
P.~Comon and B.~Mourrain.
\newblock Decomposition of quantics in sums of powers of linear forms.
\newblock {\em Signal Processing}, 53(2):93--107, 1996.

\bibitem{CHL19}
A.~Conner, A.~Harper, and J.~M. Landsberg.
\newblock New lower bounds for matrix multiplication and the 3x3 determinant.
\newblock arXiv:1911.07981 [math.AG], 2019.

\bibitem{CLO}
D.~A. Cox, J.~Little, and D.~O'Shea.
\newblock {\em Ideals, Varieties, and Algorithms: An Introduction to
  Computational Algebraic Geometry and Commutative Algebra, 4/e (Undergraduate
  Texts in Mathematics)}.
\newblock Springer-Verlag, Berlin, Heidelberg, 2015.

\bibitem{Eis95}
D.~Eisenbud.
\newblock {\em Commutative Algebra: With a View Toward Algebraic Geometry}.
\newblock Graduate Texts in Mathematics. Springer, New York, 1995.

\bibitem{Fon02}
C.~Fontanari.
\newblock On {W}aring's problem for many forms and {G}rassmann defective
  varieties.
\newblock {\em Journal of Pure and Applied Algebra}, 174(3):243--247, 2002.

\bibitem{Gal16}
M.~Ga{\l{}}\k{a}zka.
\newblock Vector bundles give equations of cactus varieties.
\newblock {\em Linear Algebra and its Applications}, 521:254--262, May 2017.

\bibitem{Gal20}
M.~Ga{\l{}}\k{a}zka.
\newblock Multigraded apolarity.
\newblock arXiv:1601.06211 [math.AG], 2020, to appear in \emph{Mathematische
  Nachrichten}.

\bibitem{GW10}
U.~G{\"o}rtz and T.~Wedhorn.
\newblock {\em Algebraic Geometry: Part I: Schemes. With Examples and
  Exercises}.
\newblock Advanced Lectures in Mathematics. Vieweg+Teubner Verlag, Wiesbaden,
  2010.

\bibitem{M2}
D.~R. Grayson and M.~E. Stillman.
\newblock Macaulay2, a software system for research in algebraic geometry.
\newblock Available at \url{http://www.math.uiuc.edu/Macaulay2/}.

\bibitem{HS04}
M.~Haiman and B.~Sturmfels.
\newblock Multigraded {H}ilbert schemes.
\newblock {\em Journal of Algebraic Geometry}, 13:725--769, Mar 2004.

\bibitem{H09}
R.~Hartshorne.
\newblock {\em Deformation {T}heory}.
\newblock Graduate Texts in Mathematics. Springer New York, New {Y}ork, 2009.

\bibitem{HMV20}
H.~Huang, M.~Micha{\l{}}ek, and E.~Ventura.
\newblock Vanishing {H}essian, wild forms and their border {VSP}.
\newblock {\em Mathematische Annalen}, 378(3):1505--1532, Dec 2020.

\bibitem{IK06}
A.~Iarrobino, V.~Kanev, and S.~Kleiman.
\newblock {\em Power Sums, {G}orenstein Algebras, and Determinantal Loci}.
\newblock Lecture Notes in Mathematics. Springer Berlin Heidelberg, Berlin,
  2006.

\bibitem{Jel16}
J.~Jelisiejew.
\newblock V{SP}s of cubic fourfolds and the {G}orenstein locus of the {H}ilbert
  scheme of 14 points on $\mathbb{A}^6$.
\newblock {\em Linear Algebra and its Applications}, 557:265--286, Nov 2018.

\bibitem{Lan17}
J.~M. Landsberg.
\newblock {\em Geometry and Complexity Theory}.
\newblock Cambridge Studies in Advanced Mathematics. Cambridge University
  Press, Cambridge, 2017.

\bibitem{LO13}
J.~M. Landsberg and G.~Ottaviani.
\newblock Equations for secant varieties of {V}eronese and other varieties.
\newblock {\em Annali di Matematica Pura ed Applicata}, 192:569--606, Aug 2013.

\bibitem{Man20}
T.~Ma{\'n}dziuk.
\newblock Identifying limits of ideals of points in the case of projective
  space.
\newblock {\em Linear Algebra and its Applications}, 634:149--178, 2022.

\bibitem{MS04}
E.~Miller and B.~Sturmfels.
\newblock {\em Combinatorial {C}ommutative {A}lgebra}, volume 227 of {\em
  Graduate Texts in Mathematics 227}.
\newblock Springer-Verlag New York, New {Y}ork, 1 edition, 2005.

\bibitem{Sha13}
I.~R. Shafarevich.
\newblock {\em {Basic {A}lgebraic {G}eometry; 3rd ed.}}
\newblock Springer, Berlin, 2013.

\bibitem{Tei14}
Z.~Teitler.
\newblock Geometric lower bounds for generalized ranks.
\newblock arXiv:1406.5145 [math.AG], 2014.

\bibitem{Ter1915}
A.~Terracini.
\newblock Sulla rappresentazione delle coppie di forme ternarie mediante somme
  di potenze di forme lineari.
\newblock {\em Annali di Matematica Pura ed Applicata (1923 -)}, 24:1--10,
  1915.

\bibitem{RV2017}
R.~Vakil.
\newblock The {R}ising {S}ea: {F}oundations of {A}lgebraic {G}eometry.
\newblock \url{http://math.stanford.edu/~vakil/216blog/FOAGnov1817public.pdf},
  Nov 2017.

\end{thebibliography}

\end{document}